\documentclass[11 pt]{article}
\usepackage{graphicx} % Required for inserting images
\usepackage{subcaption}
\usepackage{url}
\usepackage{tikz-cd}
\usepackage{amsmath}
\usepackage{amsthm}
\usepackage{amssymb}
\usepackage{xcolor}
\usepackage{titling}
\usepackage{diffcoeff,amssymb}
\usepackage{comment}
\usepackage{algorithm}
\usepackage{algpseudocode}
\usepackage{amssymb}
\usepackage{mathtools}
\usepackage{mathrsfs}
\usepackage{dsfont}
\usepackage{bm}
\usepackage{enumerate}
\usepackage{algorithm}
\usepackage{algpseudocode}
\usepackage[numbers]{natbib}
\usepackage{hyperref}
\usepackage[left=2.4cm,right=2.4cm,top=2.3cm,bottom=2cm]{geometry}
\def \proj {{\mathrm{proj}}}

\newtheorem{theorem}{Theorem}
\newtheorem{proposition}[theorem]{Proposition}
\newcommand{\probP}{\text{I\kern-0.15em P}}

\DeclareSymbolFont{extraup}{U}{zavm}{m}{n}
\DeclareMathSymbol{\varheart}{\mathalpha}{extraup}{86}
\DeclareMathSymbol{\vardiamond}{\mathalpha}{extraup}{87}

\newtheorem{lem}{\bf Lemma} 
\newtheorem{defn}{Definition}   
  
\newtheorem{rem}{\bf Remark}

\def \V {{\mathbf V}}

\def \R {{\mathbb R}}

\def \e {{\mathbf e}}

\def \v {{\mathbf v}}
\def \w {{\mathbf w}}
\def \u {{\mathbf u}}
\def \x {{\mathbf x}}

\def \z {{\mathbf z}}

\def \muv {\boldsymbol{\mu}}
\def \nuv {\boldsymbol{\nu}}

\def \cq {{\mathcal{Q}}}
\def \ci {{\mathcal{I}}}
\def \cm {{\mathcal{M}}}

\def \E {\mathbb{E}_{\z \sim \probP(\muv,\Lambda)}}

\def \Rn {{\mathbb{R}}}

\newcommand{\Dx}{\nabla_{\!x}}
\newcommand{\opnorm}[1]{%
  \left|\mkern-1.5mu\left|\mkern-1.5mu\left| #1 \right|\mkern-1.5mu\right|\mkern-1.5mu\right|
}

\newcommand{\wt}[1]{\widetilde{#1}}
\newcommand{\bs}{\mathbb{S}}
\newcommand{\cc}{\mathcal{C}}

\newcommand{\tr}{{\text{tr}}}
\newcommand{\cp}{{\Psi}}

\newcommand{\norm}[1]{\ensuremath{\left\|#1\right\|}}	
\newcommand{\abs}[1]{\left|#1 \right|}
\definecolor{blue-violet}{rgb}{0.54, 0.17, 0.89}

\definecolor{darkgreen}{rgb}{0.2,0.8,0.1}

\usepackage[parfill]{parskip}

\title{Riemannian ascent--descent for nonconvex nonconcave minimax landscapes:
convergence to basin saddle points and applications to distributionally robust optimization}

\author{
  Rishabh Dixit\thanks{Department of Mathematics, UC San Diego %, La Jolla, CA 92093, USA. 
  (ridixit@ucsd.edu).}
  \and
  Pranav Upadrashta\thanks{Department of Mathematics, UC San Diego %, La Jolla, CA 92093, USA. 
  (pupadrashta@ucsd.edu).}
  \and
  Alex Cloninger\thanks{Department of Mathematics and Hal{\i}c{\i}o{g}lu Data Science Institute, UC San Diego %, La Jolla, CA 92093, USA.  
  (acloninger@ucsd.edu).}
}
\date{\vspace{-4ex}}
\begin{document}

\maketitle
\begin{abstract}
We study a class of distributionally robust optimization (DRO) problems for the
statistical risk problem, formulated as minimax problems over the product of a
Euclidean space and a Riemannian manifold. Because the resulting minimax landscape
is nonconvex nonconcave in general, no globally convergent first order method is
known to be available. We instead introduce the notion of a \emph{basin saddle point}, a Nash equilibrium defined locally on the Cartesian product of a $\delta$ basin around a connected component of the local minima critical set and a geodesic ball on the measure
manifold. We develop an abstract convergence framework for a Riemannian gradient ascent multistep descent iteration to a basin saddle point under a local \L{}ojasiewicz type growth condition, with exponent $\beta \in (1,2]$, in the $\delta$ basin around connected components of the local minima critical sets. Under Lipschitz regularity of critical sets we establish linear convergence for $\beta = 2$ and
polynomial convergence for $\beta \in (1,2)$ to a basin saddle point, with
explicit dependence on the sectional curvature of the manifold. We then
instantiate this framework for the statistical risk DRO problem over Gaussian
measures, where the ambiguity set is naturally modeled as the product of
Euclidean space and the Bures Wasserstein manifold of covariance matrices, which we relax to a penalized DRO formulation. We
derive nonasymptotic Hessian estimates for the resulting Lagrangian,
establish existence and local uniqueness of its maximizer, and prove that an
alternating Riemannian gradient scheme (Algorithm \ref{algRGA:alternating_updates}) converges to a basin saddle
point of the penalized DRO problem, recovering the linear and polynomial rates of the abstract theory with all
constants explicit in terms of data dimension, loss moments, and the
reference covariance.
\end{abstract}

\begin{comment}
    \begin{abstract}
    
\end{abstract}

\subsection{Motivation}
\subsection{Related Work}
\subsection{Contributions}
\subsection{Preliminaries}
\subsubsection{Notation}
\end{comment}

\section{Introduction \& motivation}

This work studies a class of Distributionally Robust Optimization (DRO) problems arising in adversarial learning (GANs) \cite{bai2023wasserstein, li2025distributionally, staib2017distributionally}, robust training \cite{sinha2018certifying, madry2017towards, staib2017distributionally}, classification under distribution uncertainty \cite{shafieezadeh2015distributionally, mohajerin2018data, hu2018does}, etc. Fundamentally, the DRO problem is a special type of min-max problem formulated over a product metric space. In particular, a general min-max problem over some complete metric spaces $ \mathcal{X}, \mathcal{Y}$ is given by: 
\[  \min_{x \in \mathcal{X}} \max_{y \in\mathcal{Y}} f(x,y)\]
Then in the context of DRO problem, the function $f$ can be defined as the statistical risk $f(\w, \probP):= \mathbb{E}_{\z \sim \probP} [\ell(\w;\z)]$ for some suitable loss function $\ell$ and domain of $f$ is the product space of model parameters in $ \mathbb{R}^d \ni \w$ and data distributions in $\mathcal{P} \ni \probP$. The space of admissible distributions $\mathcal{P}$ is often referred to as the ambiguity set in the standard DRO literature \cite{kuhn2025distributionally}. While a lot of prior literature classifies the DRO problem based on the ambiguity sets arising as a consequence of using different metrics \cite{kuhn2025distributionally}, more recently emphasis has shifted toward developing numerical methods that can converge to the solutions of DRO problem (referred to as the saddle points of the DRO problem). 

The more general min-max optimization framework dates back several decades (see saddle point problem \cite{v1928theorie, brezzi1974existence, kuhn2013nonlinear}) and several efficient first order methods have been studied in this regard. Under some nice domains $\mathcal{X} \times \mathcal{Y}$ such as smooth Riemannian structure and geodesic convexity-concavity assumptions on the function $f$, the min-max problem enjoys existence of unique saddle points \cite{brezzi1974existence, sion1958general, zhang2023sion}. Such min-max problems are naturally solvable via gradient ascent descent type methods in linear time \cite{li2022convergence, jordan2022first}. As a consequence, the corresponding DRO problem modeled as a min-max problem is also solvable. More recently, the optimization community has focused on the more general nonconvex nonconcave function classes. There, under sufficient regularity conditions like weak convexity-weak concavity and with the Euclidean domain, the min-max problem is solvable in polynomial time \cite{liu2021first, grimmer2023landscape} provided the existence of saddle points is satisfied. However, extending the analysis of such solvers to the DRO problem is not straightforward since the domain of function in the DRO problem is not Euclidean. In fact, even under the ``nice" distributions in $\mathcal{P}$ (square integrable and absolutely continuous with respect to Lebesgue measure), the domain $\mathcal{Y}$ is at best a Riemannian length space (not even a manifold) \cite{otto2001geometry, ambrosio2005gradient}. Therefore, numerical methods for solving DRO problems cannot be directly lifted from the existing Euclidean min-max optimization machinery and need a separate analytical framework.

%\textcolor{red}{ML Paragraph attempt here}
The structural property we impose on the model fiber is a \emph{local} \L ojasiewicz
condition, and it is precisely the property that the modern loss-landscape literature
identifies for neural networks. In the overparameterized regime, the empirical risk of a
wide network satisfies a local Polyak--\L ojasiewicz inequality in a neighborhood of its
minimizers \cite{liu2022loss}, and gradient-based training operates entirely inside such a
region; the more general exponent $\beta \in (1,2)$ covers losses that are locally real
analytic but not PL \cite{kurdyka1998gradients}. Two features of this regime shape our
framework. First, the condition is \emph{local}: away from the basin the landscape is
genuinely nonconvex, which is why we target basin saddle points (Definition~\ref{basinsaddledef})
rather than global saddle points. Second, overparameterization renders the minimizers
\emph{non-isolated}, instead forming  positive-dimensional connected sets, which is why $\mathcal{S}^{*}(y)$ in \textbf{C3} is a connected component of the
local minima critical set, and why convergence is measured by
$\operatorname{dist}(x_k, \mathcal{S}^{*}(y_k))$ rather than by $\lVert x_k - x^{*}\rVert$.
Together these address the DRO problem for a trained network:
the reference measure $\mathbb{P}(\boldsymbol{\mu}^{*},\Lambda^{*})$ models the training
distribution, $\mathbf{w}_0$ is the trained model sitting in a basin of
$\mathcal{S}_c^{*}(\mathbb{P}_0)$, and the outer iteration asks how far that basin drifts as
the data distribution is perturbed within an ambiguity set. We make these requirements
precise as \textbf{A1'}--\textbf{A3'} in Section~\ref{sectionDROBW} and discuss
their empirical support there. We emphasize at the outset that our guarantees concern the
population risk, and that transferring a local PL property from the empirical to the
statistical landscape requires a separate concentration argument that we do not pursue here.

Within the Riemannian min-max optimization framework, several recent works develop convergence guarantees for first order Riemannian ascent descent type methods under general nonconvex-concave (or strongly concave) framework. However, such rates are usually weaker than the iterate convergence rates and the limit point may not necessarily be a saddle point for the problem \cite{huang2023gradient, xu2026riemannian}.
Recent advances such as \cite{wang2023local, garcia2024adversarial, cheng2025worst} take a step towards developing analytical convergence machinery for abstract manifold structure by employing implicit methods, such as the JKO scheme \cite{jordan1998variational}, over tangent spaces of infinite dimensional Riemannian length spaces\footnote{{Here, the tangent space terminology is abuse of definition since Riemannian length spaces are not manifolds. However, tangent spaces can be defined locally under some mild assumptions.}} \cite{cheng2025worst}. The numerical scheme in \cite{cheng2025worst} achieves convergence to a saddle point of the problem however the proof is build on some strong assumptions on the trajectory paths. Then \cite{garcia2024adversarial} addresses the minimax problem over an infinite dimensional product metric space by utilizing Wasserstein ascent descent type methods and shows convergence (in the weak sense) to an approximate or $\epsilon$-Nash equilibrium under the mean field particle theory model. They also impose the Polyak \L{}ojasiewicz (P\L{}) assumption on the minimax functional with respect to the measure couplings between points on the Borel probability space. However no rates of convergence are provided in \cite{garcia2024adversarial} due to the absence of a Lyapunov type analysis for functionals with non-unique minimizers on infinite dimensional metric spaces. Motivated by such recent developments pertaining to numerical schemes for the DRO and minimax problems on Wasserstein spaces, we develop an analytical machinery that provides convergence guarantees, under some ``nice" structural properties of the loss landscape, for the penalized statistical DRO problem \eqref{DRO1x}, which relaxes the constrained problem \eqref{DRO1}; Proposition \ref{prop:reduction} bounds the resulting gap. 

%{Anything by Jianqing Fan?}

The ambiguity set $\mathcal{P}$ is what gives the DRO problem its geometry, and
the natural way to build one is through optimal transport. The Wasserstein-2
distance is the natural distance on
the space $\mathcal{P}_2(\mathbb{R}^n)$ of square integrable measures
\cite{villani2009optimal,santambrogio2015optimal,peyre2019computational}, and it
carries considerably more structure than a distance alone. In
\cite{otto2001geometry}, the author equips $(\mathcal{P}_2, W_2)$ with a formal
Riemannian calculus, under which the geodesics are displacement interpolations
\cite{mccann} and the gradient flows are limits of JKO steps
\cite{jordan1998variational,ambrosio2005gradient}. Taking $\mathcal{P}$ to be a
$W_2$ ball around an empirical measure gives Wasserstein DRO, for which strong
duality and tractable convex reformulations are by now well understood
\cite{mohajerin2018data,blanchet2019quantifying,gao2023distributionally,kuhn2025distributionally}.
There are two main drawbacks to this approach for our purposes. The first is
that the reformulations are static: they replace the inner maximization by a
dual convex program whose tractability depends on the structure of the loss
$\ell$, and they say little about how to move through $\mathcal{P}$ iteratively.
The second is that $(\mathcal{P}_2, W_2)$ is at best a Riemannian length space
and not a manifold, so even when an iterative scheme is available it cannot
inherit the existing Euclidean or Riemannian optimization machinery. These
issues motivate the need for an ambiguity set that retains the transport
geometry but is finite dimensional.  There are a number of approaches to embedding transport geometry into finite dimensional space, including Linearized
Optimal Transportation (LOT) \cite{wang2013linear, moosmuller2023linear,khurana2023supervised}, embedding into negative Sobolev normed space \cite{peyre2018comparison, greengard2022linearization, robertson2024resistance}, and restrictions to finite parameter families of distributions, such as Gaussian measures.  When the measures are
Gaussian, $W_2$ restricted to the covariances is precisely the
Bures--Wasserstein (BW) metric
\cite{takatsu2011wasserstein,bhatia2019bures,malago2018wasserstein}. This is the
reduction we adopt. Rather than optimizing over the infinite dimensional length
space $(\mathcal{P}_2, W_2)$, we restrict $\mathcal{P}$ to the Gaussian family
and work on $\mathbb{R}^n \times \mathrm{BW}(\mathtt{SPD}_n)$, a finite
dimensional Riemannian manifold with bounded sectional curvature. We emphasize that our method does not optimize over the $W_2$ ball directly. Section \ref{sectionDROBW} replaces this ball with a squared $W_2$ penalty anchored at the reference measure, at a cost quantified in Proposition \ref{prop:reduction}.

Optimization directly over BW space is well studied, and it is the setting
closest to ours. The geometry is by now standard: $\mathtt{SPD}_n$ equipped with
the BW metric is a Riemannian manifold of nonnegative sectional curvature, with
an explicit exponential map and explicit curvature bounds
\cite{takatsu2011wasserstein,bhatia2019bures,massart2019curvature}. On the
algorithmic side, in \cite{chewi2020gradient} the authors analyze Riemannian
gradient descent for BW barycenters, and in \cite{altschuler2021averaging} this
is sharpened to dimension-free rates by showing that the barycenter functional,
while not geodesically convex, satisfies a variance type quadratic growth
inequality along geodesics. The same machinery drives Gaussian variational
inference, either through gradient descent in the BW metric
\cite{lambert2022variational} or through a forward--backward splitting of the
JKO step \cite{diao2023forward}. Gaussian ambiguity sets have also appeared in
DRO itself, in the context of covariance and mean square error estimation
\cite{nguyen2022distributionally,nguyen2023bridging}, though there the problem
is resolved through a convex reformulation rather than through a Riemannian
iteration. What these works have in common is that a single functional is
minimized, and that this functional either is geodesically convex or satisfies a
global growth condition. This is not the case for the DRO problem
\eqref{DRO1}: it is a min--max problem on the product $\mathbb{R}^d \times
(\mathbb{R}^n \times \mathrm{BW}(\mathtt{SPD}_n))$, nonconvex in $\mathbf{w}$ and
nonconcave in the measure, and the critical sets of the inner problem drift as the distribution $\probP$ varies. Geodesic convexity is therefore unavailable to us, and a convergence analysis for gradient ascent descent on the product of a Euclidean space and BW under purely local assumptions is still missing. Conditions
\textbf{C1}--\textbf{C4} are our replacement. They ask only for a
P\L{}-type growth around the connected components of the local minima
critical set together with a bounded drift of those components, which allows us to solve the penalized DRO problem \eqref{DRO1x} in these settings.

\subsection{Our contributions}
The contributions of this work can be summarized as follows:
\begin{itemize}
    \item In section \ref{jointrategeneralsection} we first develop an abstract framework for the min-max problem $$ \min_{x \in \mathcal{X}} \max_{y \in\mathcal{Y}} f(x,y)$$ on a product space of a Euclidean space $\mathcal{X}$ and a finite dimensional Riemannian manifold $\mathcal{Y}$ with bounded sectional curvature. We rigorously formulate the problem structure by imposing conditions \textbf{C1-C4} that describe the local properties of the min-max loss landscape. These conditions impose joint smoothness on $f$ over a compact product manifold $S_1 \times S_2 \subset \mathcal{X} \times \mathcal{Y}$, give local \L{}ojasiewicz type gradient growth around the connected components of local minima $\mathcal{S}^*(y)$ of $f(\cdot , y)$ for any $y \in S_2$ and ensure bounded perturbations of $ \mathcal{S}^*(y)$ as $y$ varies over $S_2$.
     \item Thereafter we define the basin saddle point (Definition \ref{basinsaddledef}) that serves as a local solution/ Nash equilibrium to the min-max problem over the compact product manifold $S_1 \times S_2$. We then define a Riemannian gradient ascent descent type iteration \eqref{RGA-MGD1}-\eqref{RGA-MGD2} that generates the sequence $\{(x_k, y_k)\}_k$ and it  converges to a basin saddle point of the problem under mild initialization conditions. The convergence is analyzed using a \eqref{lyapunov-RGA-MGD} $V_{\delta}(x,y)$. The ascent descent iteration is slightly non-standard in the sense that during any iteration $k$ the descent step \eqref{RGA-MGD2} is performed $J$ times on the fiber $S_1 \times y_k $ while the ascent step \eqref{RGA-MGD1} is performed only once on the fiber $ x_k \times S_2$. The $J$ step gradient descent ensures that $x_k$ gets sufficiently close to the critical set of $ f(\cdot, y_k)$ on the fiber $S_1 \times y_k $. Then after updating $y_k$ to $y_{k+1}$, even with the drift of critical set $ \mathcal{S}^*(y_k)$ to $ \mathcal{S}^*(y_{k+1})$, the iterate $x_k$ will be close to the new critical set $ \mathcal{S}^*(y_{k+1})$. This multi-step gradient descent iteration provides us boundedness of iterates in $S_1 \times S_2$. 
     \item We derive Theorems \ref{convergenceratethm_exactPL},  \ref{convergenceratethm_betaPL} that quantify the rates of convergence for the iteration \eqref{RGA-MGD1}-\eqref{RGA-MGD2} to a basin saddle point of $f$ under mild assumptions on initialization, inner gradient descent steps $J$ from \eqref{RGA-MGD2} and Lipschitz regularity of critical sets. The rates are a function of the \L{}ojasiewicz exponent $\beta \in (1,2]$ with linear convergence rates for $\beta = 2$ from Theorem \ref{convergenceratethm_exactPL} and sub-linear/ polynomial convergence rates for $\beta <2$ from Theorem \ref{convergenceratethm_betaPL}.
     \item We then apply this abstract convergence machinery to the penalized DRO problem \eqref{DRO1x} in Section \ref{sectionDROBW} for Gaussian measures. Rather than enforcing the
transport constraint directly, we work with the penalized problem
\eqref{DRO1x}, in which the ambiguity ball of radius $\xi$ is replaced by a
$W_2^2$ penalty of strength $1/\epsilon$; Proposition~\ref{prop:reduction}
establishes the correspondence $\epsilon = 2\xi/L_G$ and bounds the resulting
optimality gap by $L_G \xi$. All subsequent results --- the Hessian estimates
 (Theorems \ref{losshessianestimatethm}-\ref{lagrangehessianestimatethm}), the existence and uniqueness of the inner maximizer (Theorem~\ref{interiormaximaexistencethm}),
and the convergence rates for Algorithm~1 (Theorems~\ref{convergenceratethm_exactPL_specialcase},  \ref{convergenceratethm_betaPL_specialcase}) concern
\eqref{DRO1x}. Using tools from Riemannian geometry we derive explicit local Hessian estimates (Theorems \ref{losshessianestimatethm}-\ref{lagrangehessianestimatethm}) for the Lagrangian, i.e. the inner max functional in penalized DRO problem \ref{DRO1x}, and provide asymptotic scaling of the local strong geodesic concavity and Lipschitz parameters for the Lagrangian as a function of data dimension $n$, $p$-th moment of the loss function $\ell(\w;\z)$ and the covariance matrix of a reference Gaussian measure (Theorem \ref{lagrangehessianestimatethm}). Then in Theorem \ref{interiormaximaexistencethm} we show that the unique local maximizer of the Lagrangian with respect to the probability measure is uniformly bounded in the interior of a certain compact geodesic ball on the product of Euclidean and Bures Wasserstein (BW) manifold. We propose Algorithm \ref{algRGA:alternating_updates} that is a special case of the iteration \eqref{RGA-MGD1}-\eqref{RGA-MGD2} designed for the penalized DRO problem \eqref{DRO1x}. Finally, we define the loss landscape assumptions \textbf{A1'-A3'} along the lines of conditions \textbf{C1-C4} and derive Theorems \ref{convergenceratethm_exactPL_specialcase},  \ref{convergenceratethm_betaPL_specialcase} that quantify the rates of convergence for Algorithm \ref{algRGA:alternating_updates}. 
\end{itemize}

\paragraph{Relation to existing frameworks.}
{Our ascent--descent scheme instantiates the multi-step gradient descent--ascent
template introduced by \cite{sanjabi2018pl,nouiehed2019} for Euclidean
minimax problems in which one player's objective satisfies the
Polyak--{\L}ojasiewicz (PL) condition; specialized to a singleton critical set
and $\beta = 2$, our Theorem~\ref{convergenceratethm_exactPL} recovers the structure of their
guarantee. Three ingredients distinguish the present analysis. First, we work
with a {\L}ojasiewicz exponent $\beta \in (1,2]$, whereas the PL condition
corresponds to $\beta = 2$; the polynomial regime $\beta < 2$
(Theorem~\ref{convergenceratethm_betaPL}) has no counterpart in that literature. Second, the
analyses of \cite{nouiehed2019,yang2020} are driven by function-value
Lyapunov arguments built from $\Phi(x) = \max_y f(x,y)$ or the primal--dual
gap, so the set of minimizers never enters explicitly; we instead track
$\mathrm{dist}(x_k, S^*(y_k))$ directly, which is what permits a conclusion
localized to a single connected component rather than $\varepsilon$-stationarity
of $\Phi$, at the cost of controlling the drift
$\mathrm{dist}_H(S^*(y_k), S^*(y_{k+1}))$. Third, the ambient space is
Riemannian. We note that \cite{yang2020} address a different regime, with a single-loop
alternating GDA under a two-sided PL condition that yields global linear rates, and
that \cite{lin2020} treat the nonconvex--concave setting rather than the PL setting.}

{Definition~\ref{basinsaddledef} is a Nash-type (simultaneous-play) notion of local
optimality in which the min-player's optimality is required not merely in a
neighborhood of $x^*$ but throughout a $\delta$-basin around a connected
component of the local-minima critical set. Since that basin contains a
neighborhood of $x^*$, every basin saddle point is in particular a local Nash equilibrium point, and hence a local minimax point in the
sense of \cite{jin2020local}; see \cite{dai2020constrained} for the constrained
formulation appropriate to our setting $S_1 \times S_2$. Theorems~\ref{convergenceratethm_exactPL}, \ref{convergenceratethm_betaPL} certify that the iterates settle into a single
connected component of the critical set. Neither local Nash nor local minimax
convergence gives this. Our convergence guarantees are therefore stronger than convergence to a
local Nash or local minimax point.}

{Finally, the penalized formulation~\eqref{DRO1x} is the Gaussian
restriction of the penalized Wasserstein DRO problem of
\cite{sinha2018certifying}, where they obtain a strongly concave inner problem
by taking the penalty large, and observe the trade-off between the
penalty parameter and the level of robustness certified. Our contribution is not the formulation but the verification of geodesic strong concavity in Bures--Wasserstein geometry: the argument of \cite{sinha2018certifying} establishes Euclidean strong concavity of
$\ell(\w;\z) - \gamma c(\z,\z_0)$ in the data variable $\z$, whereas
Section~\ref{sectionDROBW} requires two-sided Hessian bounds for
$(\muv,\Lambda) \mapsto \mathbb{E}_{\z \sim \probP(\muv,\Lambda)}[\ell(\w;\z)]$ under
moment assumptions alone, combined with a comparison-theorem bound on
$\mathrm{Hess}\, d_{\mathrm{BW}}^2$.}

\subsection{Notations}

We use bold lower case letters to denote vectors in Euclidean space, i.e. we write $\x\in\mathbb{R}^d$ and all matrices are denoted by upper case letters for example $M\in\mathbb{R}^{n\times d}$ is a $n \times d$ matrix, $\mathbf{0}$ is used both for denoting a null vector and a null matrix, $I$ is the identity matrix. For any matrix $M$, its Frobenius norm is $ \| M\|_{F}$ and its operator norm is $\norm{M}_2 $, the operator norm in general is denoted by $\norm{\cdot}_{op}$ and for any vector $\w$ in Euclidean space its L-2 norm is $\norm{\w}$, the L-2 Euclidean metric is denoted by $l_2$. Next, $\text{tr}(\cdot)$ is the trace operator, $\text{det}(\cdot)$ is the determinant operator, $\text{int}(\cdot)$ is the interior operator, $\text{dist}_H(\cdot, \cdot)$ represents the Hausdorff distance, $\text{dist}(\cdot, \cdot)$ and $d(\cdot , \cdot)$ represent the metric distances, $\text{exp}(\cdot)$ is the exponential map, $\text{id}$ is the identity map, $\nabla^2 f$ represents the Hessian of $f$ in Euclidean space, for a Riemannian manifold $\cm$ and any $p \in \cm$, $\text{Hess}_{p} f$ represents the Riemannian Hessian of $f$ at $p$ and $\nabla_p f$ represents the Riemannian gradient of $f$ at $p$. 

We denote by $\mathcal{B}_r( \x)$ an open ball centered at $\x$ of radius $r$ and $\bar{\mathcal{B}}_r( \x)$ the closure of such ball. $\texttt{SPD}_n $ is the space of symmetric positive definite $n \times n$ matrices, $\mathbb{S}^n $ is the space of symmetric $n \times n$ matrices. For a set $A$ in some metric space $(\mathcal{X}, d)$, we denote its diameter by $\text{diam}(A) $. We write $\probP$ for a probability measure, $\mathbb{E}[\cdot]$ is the expectation operator and $\Gamma(\cdot)$ represents the Gamma function. For a product set $S_1 \times S_2$ and $x \in S_1, \, y \in S_2$, the sets $x \times S_2$, $S_1 \times y$ represent fibers of $S_2$, $S_1$ respectively.  

The symbol $\oplus$ represents the direct or orthogonal sum of vector spaces, $\otimes$ is used to for the Kronecker product. For two sets $A,B$ the set $A+B$ is their Minkowski sum. $\mathrm{dim}(\cdot)$ represents the dimension, and for any vector spaces $A,B$ with $A \subset B$, $A ^{\perp}$ represents the orthogonal complement of $A$ in $B$, where $B$ is understood from the context. The symbol $\mathcal{O}$ represents the Big-O notation, the symbol $o$ represents the little-o notation.   For any two sets $A,B$ in some topological space $X$, $A\Subset B$ means $A$ is compactly embedded in $B$ with respect to the topology on $X$. For a set $A$, $\partial A$ denotes its boundary when defined and for a continuous function $f$, $\partial f(x)$ represents the generalized sub-differential set of $f$ at $x$. Throughout the paper $\nabla f(\w;\z) $ denotes the gradient of $f$ with respect to the first argument $\w$. $\mathcal{N}(\mathbf{0}_n, I)$ is the standard normal distribution in $\mathbb{R}^n$, $\mathcal{C}^r$ represents the class of $r-$continuously differentiable functions.

{Given a Riemannian manifold $(\cm, g)$ we use $R$ to denote the Riemann curvature tensor. We use the symbol $D_t$ for the covariant derivative along a curve parameterized by $t$.  Let $\ci\subset \mathbb{R}$ be an interval, $t, t_0\in \ci$ and let $c:\ci \to \cm$ be a curve. Then we denote by $\Psi_{c(t_0)\to c(t)}$ the parallel transport map from $T_{c(t_0)}\cm$ to $T_{c(t)}\cm$ along the curve $c$. }

{Finally, as an exception, all the notations from Section \ref{jointrategeneralsection} and its proofs use lower case letters to denote points on the Euclidean vector space and the Riemannian manifold. This is done so as to abstract out the analytical machinery developed in Section \ref{jointrategeneralsection} as much as possible and reduce notational overload.    }

\section{Generalized min-max problem and convergence analysis}\label{jointrategeneralsection}

\subsection{Problem formulation}

Let $\mathcal{M}$ be a finite dimensional Riemannian manifold with sectional curvature $\mathbf{K}_{\mathcal{M}} \in [0,\Theta]$ for some $\Theta\geq 0$. Consider a function $f : S_1 \times S_2 \to \mathbb{R} $ where $S_1 \subseteq \mathbb{R}^d$, $S_2 \subseteq \mathcal{M} $ are compact, geodesically convex sets inheriting the metric and topologies from $\mathbb{R}^d, \mathcal{M}  $ respectively and the set $S_2$ is the closure of the normal neighborhood of a point in $\mathcal{M}$. (See subsection \ref{convdef} for precise definitions.) Further, $f$ satisfies the following conditions :
\begin{itemize}
    \item [\textbf{C1.}]  $f$ is jointly $\mathcal{C}^2$ smooth on some bounded open cover of $ S_1 \times S_2$ 
    \item [\textbf{C2.}] For every $x \in {\mathbb{R}^d} $, $f(x,\cdot)$ is $\mu$ geodesically strongly concave {locally} on $x \times S_2$ and uniformly for all $x$. Also, for all $x \in {\mathbb{R}^d} $ there exists a maximizer of $f(x,\cdot)$ in the interior of $x \times S_2$.
    \item [\textbf{C3.}] {For every $y \in S_2 $} the following hold:
    \begin{itemize}
    \item[\textbf{a.}] The function $f(\cdot,y)$ has at most finitely many, non-empty, connected components of critical points in the interior of the compact $S_1 \times y$ fiber. Within each such connected component $\mathcal{D}^*(y)$, the critical points can either all be local minima, or all be local maxima or all be strict saddle points\footnote{A point $x^* \in S_1 \times y$ is a strict saddle point of $f(\cdot,y)$ if $\nabla_x f(x^*,y)=0 $ and $\nabla_x^2 f(x^*,y) $ has at least one negative eigenvalue.} of the function $f(\cdot,y)$. Also, within every $S_1 \times y$ fiber, there exists at least one connected component $\mathcal{S}^*(y)$ of the local minima critical set of $f(\cdot,y)$.
        \item[\textbf{b.}]  For some $\beta \, \in \, (1,2]$, $f(\cdot,y)$ satisfies $\beta, C_{\beta}$ type \L{}ojasiewicz inequality in a closed $\delta>0$ neighborhood of every connected component of the local minima critical set of $f(\cdot,y)$  on $S_1 \times y$. More precisely for some $\beta \in (1,2]$ there exist $\delta,C_{\beta}>0$ such that
      $$   \norm{\nabla_x f(x,y) }^{\beta}  \geq C_{\beta}\bigg(f(x,y) - f_*(y)\bigg) \geq \left(\frac{\beta}{\beta-1} \right)^{\frac{-\beta}{\beta-1}}C_{\beta}^{\frac{\beta}{\beta-1}}\bigg( \mathrm{dist}(x, \mathcal{S}^*(y))\bigg)^{\frac{\beta}{\beta -1}} $$
      where $ \mathcal{S}^*(y)$ is a connected component of the local minima critical set of $f(\cdot,y)$ {in the interior of the compact} $S_1 \times y $ fiber, $$ f_*(y) := \inf_{x \in  \mathcal{S}^*(y) + \mathcal{B}_{\delta}(0)} f(x,y). $$ 
      
      %{I think we need to clarify the quantifiers here to not confuse the reader. There exist $\delta>0$, a $\beta\in (1,2]$ and a $C_\beta>0$.}
      
      \item[\textbf{c.}] Let $\wt y \in S_2$ be arbitrary, then there always exists a connected component $\mathcal{S}^*(\wt y) $  of the local minima critical set {in the interior of the compact} $S_1 \times \wt y $ fiber such that  $$ \mathrm{dist}_H(\mathcal{S}^*(\wt y) , \mathcal{S}^*(y) ) < \frac{\delta}{4} \, \, .$$ 
      \item[\textbf{d.}] Let $\mathcal{D}_1^*( y), \mathcal{D}_2^*( y) $ be any pair of distinct connected components of the critical set for the function $f(\cdot,y)$ {in the interior of the compact} $S_1 \times  \{y\} $ fiber. Then the sets $\mathcal{D}_1^*( y), \mathcal{D}_2^*( y) $ are at least $2 \delta$ -separated.
    \end{itemize}
   % \item [\textbf{C4.}] There exists at least one $(x^*, y^*) \in \text{int} (S_1 \times S_2)$ such that $(x^*, y^*) $ is a saddle point of $f$.
    \item [\textbf{C4.}] Let $L>0$ be the uniform Lipschitz constant for the functions $$f(\cdot, \cdot), \nabla_x f (x, \cdot), \nabla_y f (x, \cdot), \nabla_y f(\cdot,y) , \nabla_x f(\cdot,y) , \exp_y(\cdot), \exp_{(\cdot)}(v)$$ on $ S_1 \times S_2$ where $ \nabla_x, \nabla_y$ are gradient operators in the metric of $S_1, S_2$ respectively. Also, let $ \norm{\nabla_y \nabla_x f(\cdot, \cdot)}_{y} \le L$, $ \norm{\nabla_yf(\cdot, \cdot)}_y \le L$ uniformly on $ S_1 \times S_2$. 
\end{itemize}
   % {Changed `disjoint connected component' to `connected components'. Two distinct connected components are disjoint to begin with.}

 {We know that $(p,v)\mapsto \exp_{p}(v)$ is a smooth function on an open set of $T\cm$ (\cite[Theorem 5.19]{lee2018}), and hence $\exp_p(v)$ is Lipschitz continuous in both $p,v$ on some compact sets. Also note that the smoothness of $ \nabla_x f (x, \cdot), \nabla_y f (x, \cdot)$ is defined via parallel transport of gradient vector fields along geodesics. } 

\begin{defn}[Basin saddle point]\label{basinsaddledef}
    Under \textbf{C1-C4}, a point  $(x^*, y^*) \in \text{int} (S_1 \times S_2)$ is defined as a basin saddle point of the function $f$ if we have the following:
    \begin{itemize}
        \item   $x^* \in  \mathcal{S}^*(y^*)$ where $ \mathcal{S}^*(y^*)$ is a connected component of the local minima critical set of $f(\cdot,y^*)$ {in the interior of the compact} $S_1 \times y^* $ fiber
        \item   $  f(x^*,y) \le  f(x^*,y^*)  \le   f(x,y^*) \quad \forall \quad x \in (\mathcal{S}^*(y^*) + \mathcal{B}_{\delta}(0)) \times y^* \, \, , \, \, \forall \quad y \in x^* \times S_2  \, \, .$
    \end{itemize}
\end{defn}

We are interested in solving the following min-max problem over the domain $S_1 \times S_2$ :
\[
\min_{x \in S_1} \max_{y \in S_2} f(x,y) \, .
\]
However the above problem is in general not solvable in polynomial time by any first order iterative method since $ f(\cdot ,y)$ can be nonconvex on $S_1$ for any $y \in S_2$. This is because condition \textbf{C3} provides only local gradient growth around the critical sets and away from critical set neighborhoods $ f(\cdot ,y)$ can be arbitrary with non-benign nonconvex landscape. Hence we restrict ourselves to finding the basin saddle points of $f$ on $S_1 \times S_2$ (definition \ref{basinsaddledef}) and not any arbitrary min-max point /saddle point of $f$ on $S_1 \times S_2$ .
 To do so we need to build an analytical framework for a Riemannian gradient ascent descent type first order numerical method (described later). This analytical framework will use elements of differential geometry and a Lyapunov type convergence analysis that will be developed in the forthcoming sections.
 
 \subsection{Riemannian gradient descent under strong geodesic convexity}\label{sectionboumalref}

\begin{lem}[ Lemma 6 in \cite{zhang2016firstorder}]\label{alexandrovlem1}
    If $a,b ,c$ are the sides (i.e., side lengths) of a geodesic triangle in an Alexandrov space with curvature lower bounded by $\upsilon$, and $A$ is the angle between sides $b$ and $c$, then
    \begin{align*}
        a^2 \le  \frac{\sqrt{|\upsilon|}c}{\tanh (\sqrt{|\upsilon|}c)} b^2 + c^2 - 2bc \cos A \, .
    \end{align*}
\end{lem}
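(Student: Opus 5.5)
The plan is the classical comparison argument of Zhang and Sra. First reduce to the model space: by Toponogov's comparison theorem in an Alexandrov space with curvature bounded below by $\upsilon$, a geodesic triangle with sides $b,c$ and included angle $A$ has opposite side $a$ at most the corresponding side $\tilde a$ of a triangle in the model space $\mathbb{M}_\upsilon$ of constant curvature $\upsilon$ with the same $b,c,A$. If $\upsilon \ge 0$, lowering the curvature bound to $\upsilon'=-|\upsilon|\le 0$ keeps the hypothesis true, so it suffices to treat constant curvature $\kappa=-|\upsilon|\le 0$. After rescaling the metric by $\sqrt{|\upsilon|}$ we may take $\kappa=-1$, since the inequality is scale invariant in the form $a^2 \le \frac{\sqrt{|\upsilon|}c}{\tanh(\sqrt{|\upsilon|}c)}b^2+c^2-2bc\cos A$. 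The case $\upsilon=0$ is the Euclidean law of cosines, since the prefactor tends to $1$.

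In the hyperbolic plane the law of cosines reads
\[
\cosh a=\cosh b\cosh c-\sinh b\sinh c\cos A .
\]
I would then fix $c$ and $A$ and view both sides as functions of $b\ge 0$:
\[
g(b):=\frac{c}{\tanh c}b^2+c^2-2bc\cos A,\qquad h(b):=\bigl(\operatorname{arccosh}(\cosh b\cosh c-\sinh b\sinh c\cos A)\bigr)^2 .
\]
They agree at $b=0$, both equal to $c^2$. Their first derivatives at $b=0$ also agree: $h'(0)=2c\cdot(\partial_b a)|_{b=0}=-2c\cos A$, which is the first variation formula. So it suffices to show $h''\le g''=2c/\tanh c$ for all $b$.

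The key step is to show that $b\mapsto \tfrac12 a(b)^2$ is the squared distance from the fixed vertex to a point moving at unit speed along a geodesic. Its Hessian in nonpositive curvature $-1$ is at most $a\coth a$ by the Hessian comparison theorem, so $h''\le 2a\coth a$. This alone is not enough, because $a$ may exceed $c$. Zhang and Sra handle this by a convexity/tangent argument: one shows that the function $F(b)=g(b)-h(b)$ satisfies $F(0)=F'(0)=0$, and it is cleaner to compare via the tangent-space bound on $\exp^{-1}$.

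Concretely, I would write the moving point $x(b)=\exp_p(bv)$, with $p$ the vertex at distance $c$ from the fixed point $q$. I would use the inequality
\[
d(x,q)^2 \le d(p,q)^2 - 2\langle bv,\exp_p^{-1}q\rangle + \zeta\, b^2, \qquad \zeta=\frac{c}{\tanh c},
\]
obtained by integrating $\mathrm{Hess}\,\tfrac12 d_q^2\preceq d_q\coth d_q$ along the segment. The obstacle is the dependence on $d_q$ at intermediate points rather than at $p$. I would resolve it as in the original lemma by using monotonicity of $t\coth t$ and the fact that the relevant expansion is controlled at the base point. If that fails, the fallback is to verify the scalar inequality directly from the hyperbolic law of cosines: set $u=\cosh a$ and use $\operatorname{arccosh}(u)^2$ convexity estimates. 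This scalar verification is the main technical step.
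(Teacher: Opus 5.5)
Your reductions are sound: the hinge form of Toponogov comparison, weakening the lower bound to $-|\upsilon|$, rescaling to curvature $-1$, and noting that $F=g-h$ satisfies $F(0)=F'(0)=0$. Everything therefore rests on proving $h''(b)\le 2c\coth c$ for all $b\ge 0$, and that is where your proposal has a genuine gap.

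The operator bound $\mathrm{Hess}\,\tfrac12 d_q^2\preceq d_q\coth d_q$ only gives $h''\le 2a\coth a$. When $a>c$, monotonicity of $t\coth t$ makes this \emph{larger} than $2c\coth c$, so appealing to that monotonicity cannot rescue the bound. The inequality $d(x,q)^2\le c^2-2\langle bv,\exp_p^{-1}q\rangle+\zeta b^2$ that you propose to obtain by integrating the Hessian is literally the lemma, since $\langle v,\exp_p^{-1}q\rangle=c\cos A$. Integrating along the segment needs exactly the uniform directional bound you have not established, so that step is circular. The fallback (``verify the scalar inequality via $\operatorname{arccosh}$ estimates'') restates the goal without giving an argument.

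The missing idea is to use the exact second derivative in the direction of motion, together with a quantity conserved along the geodesic. In the hyperbolic plane, let $\beta$ be the angle at the moving vertex $x(b)$ between the segments to $p$ and to $q$. Then $a'(b)=\cos\beta$ and
\[
\tfrac12 h''(b)=\cos^2\beta+a\coth a\,\sin^2\beta=1+(a\coth a-1)\sin^2\beta .
\]
The hyperbolic law of sines gives the invariant $\sinh a\,\sin\beta=\sinh c\,\sin A$. Split into two cases:
\begin{itemize}
\item If $a\le c$, bound $\sin^2\beta\le 1$ and use that $t\coth t$ is increasing.
\item If $a\ge c$, write $(a\coth a-1)\sin^2\beta=\sinh^2 c\,\sin^2 A\,\varphi(a)$ with $\varphi(t)=(t\coth t-1)/\sinh^2 t$. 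This $\varphi$ is decreasing on $(0,\infty)$: its derivative has the sign of $3\sinh s-s\cosh s-2s$ with $s=2t$, which is $\le 0$ by comparing Taylor coefficients. So the term is at most $\sinh^2 c\,\varphi(c)=c\coth c-1$.
\end{itemize}
In both cases $h''\le 2c\coth c=g''$, hence $F\ge 0$.

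Geometrically, the factor $a\coth a$ multiplies only the component of the velocity orthogonal to the radial direction. That component decays like $1/\sinh a$, fast enough to compensate for $a>c$. The crude Hessian comparison you used throws this information away.
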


\begin{theorem}[11.29 Boumal adaptation \cite{boumal2023introduction}]\label{boumalthm1}
Let $f : \mathcal{M} \to \mathbb{R}$ be differentiable and $\mu$-strongly geodesically convex
on a compact, convex manifold $\mathcal{M}$ with bounded sectional curvature. Given $x_0 \in \mathcal{M}$, consider
the sublevel set
\[
S_0 = \{ x \in \mathcal{M} : f(x) \le f(x_0) \}.
\]
Assume $f$ has $L$-Lipschitz continuous gradient on a neighborhood of $\mathcal{M}$ and $f$ has a minimizer $x^*$ in the interior of $\mathcal{M}$.
Consider gradient descent with exponential retraction and constant step-size
$h \in (0,1/L)$ initialized at $x_0$:
\[
x_{k+1} = \operatorname{exp}_{x_k}
\left(
-h \nabla_{x} f(x_k)
\right),
\quad k = 0,1,2,\dots
\]
 Then the iterate sequence $\{x_k\}$ converges to $x^*$ at
least linearly. More precisely, with $\kappa =  \frac{1}{\mu h(2- Lh)} \ge 1$, the whole sequence
stays in $S_0$ and
\begin{equation}
f(x_k) - f(x^*)
\le \left(1 - \frac{1}{\kappa}\right)^k
\bigl(f(x_0) - f(x^*)\bigr),
\end{equation}
\begin{equation}
\operatorname{dist}(x_{k+1},x^*) \le  \sqrt{  (1 -2 \mu h + C_{\upsilon} L^2 h^2)} \operatorname{dist}(x_k,x^*), 
\end{equation}
\begin{equation}
\operatorname{dist}(x_k,x^*) \le   \min \bigg\{ \sqrt{\frac{L}{\mu}}
\left(1 - \frac{1}{\kappa}\right)^{k/2} , (1 -2 \mu h + C_{\upsilon} L^2 h^2)^{k/2}  \bigg\} \operatorname{dist}(x_0,x^*),
\end{equation}
for all $k \ge 0$ where $  C_{\upsilon} =\frac{\sqrt{|\upsilon|} \, \mathrm{diam}(\mathcal{M}) }{\tanh (\sqrt{|\upsilon|} \, \mathrm{diam}(\mathcal{M}))} $ and $\upsilon$ is the lower bound of the sectional curvature of $\mathcal{M}$.
\end{theorem}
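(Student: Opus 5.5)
Three estimates are needed: function-value decrease, a one-step distance contraction, and their combination. First I show that the iterates stay in $S_0$ and the function values decrease linearly. Since $\nabla f$ is $L$-Lipschitz on a neighborhood of $\mathcal{M}$ and the retraction is the exponential map, I use the Riemannian descent lemma along the geodesic $t\mapsto \exp_{x_k}(-t h\nabla f(x_k))$. Pulling back $f$ to the curve and comparing derivatives by parallel transport gives
\[
f(x_{k+1}) \le f(x_k) - h\Big(1-\tfrac{Lh}{2}\Big)\norm{\nabla f(x_k)}^2 .
\]
For $h<1/L$ this already gives $f(x_{k+1})\le f(x_k)$, so inductively $x_k\in S_0$. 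Geodesic convexity of $\mathcal{M}$ keeps the geodesic step inside the domain, provided $S_0\subset\mathcal{M}$ is compact and the step stays in the neighborhood where Lipschitzness holds. Strong geodesic convexity, minimized over the tangent direction, gives the Riemannian PL inequality $\norm{\nabla f(x)}^2\ge 2\mu(f(x)-f(x^*))$. Substituting yields
\[
f(x_{k+1})-f^*\le \big(1-\mu h(2-Lh)\big)(f(x_k)-f^*) = (1-1/\kappa)(f(x_k)-f^*).
\]
Here $\kappa\ge1$ follows from $\mu\le L$, since $\mu h(2-Lh)\le \mu/L\le 1$.

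Next comes the one-step distance contraction, which is where curvature enters. Consider the geodesic triangle with vertices $x_k, x_{k+1}, x^*$. The sides are $c=h\norm{\nabla f(x_k)}$ (from $x_k$ to $x_{k+1}$), $b=\mathrm{dist}(x_k,x^*)$, and $a=\mathrm{dist}(x_{k+1},x^*)$. Bounded sectional curvature gives a lower bound $\upsilon$, so Lemma~\ref{alexandrovlem1} applies, with the sides' roles arranged so that the curvature factor multiplies the short side $c^2$. The triangle lies in $\mathcal{M}$, so every side length is at most $\mathrm{diam}(\mathcal{M})$, and monotonicity of $s\mapsto s/\tanh s$ bounds the factor by $C_\upsilon$. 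This gives
\[
a^2\le b^2 + C_\upsilon h^2\norm{\nabla f(x_k)}^2 + 2h\langle \nabla f(x_k),\exp_{x_k}^{-1}x^*\rangle .
\]
Strong convexity combined with $\nabla f(x^*)=0$ gives $\langle \nabla f(x_k),-\exp_{x_k}^{-1}x^*\rangle\ge \mu b^2$. The gradient norm is bounded by $\norm{\nabla f(x_k)}\le L b$, using $L$-Lipschitzness via parallel transport from $x^*$. Together these give $a^2\le(1-2\mu h+C_\upsilon L^2h^2)b^2$, which is the second display.

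The third display combines the two previous steps. From the quadratic bounds implied by $\mu$-strong convexity and $L$-smoothness, $\tfrac{\mu}{2}\mathrm{dist}(x,x^*)^2\le f(x)-f^*\le \tfrac L2\mathrm{dist}(x,x^*)^2$. Applied to the function-value rate, these give $\mathrm{dist}(x_k,x^*)\le\sqrt{L/\mu}(1-1/\kappa)^{k/2}\mathrm{dist}(x_0,x^*)$. Iterating the one-step contraction gives the other bound, and taking the minimum of the two finishes the argument.

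The main obstacle is the curvature step: choosing the correct side in Lemma~\ref{alexandrovlem1} and ensuring the triangle stays in the region where it applies. The comparison lemma requires a curvature lower bound, which is fine here. However, $\exp_{x_k}^{-1}x^*$ must be well defined and the minimizing geodesic must lie in $\mathcal{M}$. I would handle this with geodesic convexity of $\mathcal{M}$ together with compactness, uniqueness of geodesics being implicit in the notion of a convex manifold. If that fails, I would restrict to a normal neighborhood containing $S_0$.
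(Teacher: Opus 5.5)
Your proposal is correct and follows the paper's proof essentially step for step. It uses the Riemannian descent lemma with the PL inequality from $\mu$-strong geodesic convexity for the function-value rate. For the one-step contraction it applies Lemma~\ref{alexandrovlem1} to the triangle $x_k,x_{k+1},x^*$, with the curvature factor (evaluated at $\mathrm{dist}(x_k,x^*)\le\mathrm{diam}(\mathcal{M})$ and bounded by $C_\upsilon$) multiplying the squared step, together with $\langle\nabla f(x_k),-\exp_{x_k}^{-1}x^*\rangle\ge\mu\,\mathrm{dist}^2(x_k,x^*)$ (the paper's \eqref{comparisonineq0a}) and $\|\nabla f(x_k)\|\le L\,\mathrm{dist}(x_k,x^*)$. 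For the $\sqrt{L/\mu}$ branch it uses the bounds $\tfrac{\mu}{2}\mathrm{dist}^2(x,x^*)\le f(x)-f(x^*)\le\tfrac{L}{2}\mathrm{dist}^2(x,x^*)$.

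The domain issue you flag is real but shared with the paper. Geodesic convexity of $\mathcal{M}$ only controls geodesics between points already in $\mathcal{M}$, so it does not by itself place $\exp_{x_k}(-h\nabla f(x_k))$ in $\mathcal{M}$. The paper likewise simply asserts $x_{k+1}\in S_0$ from $f(x_{k+1})\le f(x_k)$.
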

The proof of Theorem \ref{boumalthm1} is in Appendix \ref{boumalappendix}.

\subsection{Contraction bounds for fixed point iterations along fibers of the product manifold}\label{sectionfiberrates}
The following lemmas hold:
\begin{lem}\label{contractionlema1}
    Under \textbf{C1-C4}, for any $x \in S_1$, the iterate sequence $\{y_k\}_k$ generated by the metric gradient ascent update $ y_{k+1} := \exp_{y_k}(h \nabla_y f(x, y_k)) $, with initialization $y_0 \in \text{int}(x \times S_2) $ and $Lh <1$, satisfies:
    \begin{align*}
     f^*(x) - f(x, y_{k+1})  &\leq \left(1 - \frac{1}{\kappa}\right)[ f^*(x) -f(x, y_{k})   ] \\
      \mathrm{dist}(y_{k+1}, y^*(x) ) & \leq \wt\gamma \, \mathrm{dist}(y_{k}, y^*(x) ) 
    \end{align*}
    where $ f^*(x) : = \sup_{y \in x \times S_2} f(x,y)  $, $  y^*(x) : = \arg\sup_{y \in x \times S_2} f(x,y)  $,  $ \kappa =  \frac{1}{\mu h(2- Lh)} \ge 1 $, $$ \wt\gamma = \sqrt{(1 -2 \mu h + C_{\upsilon} L^2 h^2)} <1 $$ for any sufficiently small $h$, $  C_{\upsilon} =\frac{\sqrt{|\upsilon|} \, \mathrm{diam}(S_2) }{\tanh (\sqrt{|\upsilon|} \, \mathrm{diam}(S_2))} $ and $\upsilon$ is the lower bound of the sectional curvature of $\mathcal{M}$.
\end{lem}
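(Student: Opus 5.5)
The plan is to reduce Lemma \ref{contractionlema1} to Theorem \ref{boumalthm1}, applied to $g := -f(x,\cdot)$ on the compact geodesically convex set $S_2$ for a fixed $x \in S_1$. Ascent on $f(x,\cdot)$ with step $h$ is exactly descent on $g$: $y_{k+1} = \exp_{y_k}(-h\nabla_y g(y_k))$. Once the hypotheses of Theorem \ref{boumalthm1} are checked, the function-value contraction and the distance contraction follow directly. This uses $f^*(x) - f(x,y_k) = g(y_k) - g(y^*(x))$ and $\operatorname{dist}(y_k,y^*(x))$ unchanged.

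\textbf{Checking the hypotheses.} First, by \textbf{C2} the function $g$ is $\mu$-strongly geodesically convex on $x\times S_2$, uniformly in $x$. By \textbf{C2} again, a maximizer of $f(x,\cdot)$ exists in $\mathrm{int}(x\times S_2)$; by strong concavity it is unique, so $y^*(x)$ is well defined and is the interior minimizer of $g$ required by Theorem \ref{boumalthm1}. Second, by \textbf{C1} and \textbf{C4} the gradient $\nabla_y f(x,\cdot)$ is $L$-Lipschitz (via parallel transport) on a neighborhood of $S_2$, so $g$ has an $L$-Lipschitz gradient. Third, $S_2$ is compact and geodesically convex in $\mathcal{M}$, and its sectional curvature lies in $[0,\Theta]$. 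The lower curvature bound $\upsilon$ enters only through Lemma \ref{alexandrovlem1}, which yields the constant $C_\upsilon$ with $\mathrm{diam}(S_2)$ in place of $\mathrm{diam}(\mathcal{M})$. The step condition $Lh<1$ gives $h\in(0,1/L)$ and hence $\kappa = 1/(\mu h(2-Lh)) \ge 1$; this inequality follows from $\mu \le L$.

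\textbf{Deriving the two bounds.} For the one-step forms stated here, I would reproduce the key inequalities from the proof of Theorem \ref{boumalthm1} at a single iteration rather than iterate. For the function values, the $L$-smoothness descent inequality $g(y_{k+1}) \le g(y_k) - h(1-Lh/2)\|\nabla g(y_k)\|^2$ combines with the strong-convexity PL bound $\|\nabla g(y_k)\|^2 \ge 2\mu\,(g(y_k)-g^*)$ to give the factor $1-1/\kappa$. For the distance, apply Lemma \ref{alexandrovlem1} to the geodesic triangle $y_k, y_{k+1}, y^*(x)$, with $b = h\|\nabla g(y_k)\|$ and $c = \operatorname{dist}(y_k,y^*(x))$. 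Then bound $-2bc\cos A = 2h\langle \nabla g(y_k), \exp_{y_k}^{-1}y^*(x)\rangle$ by $-2\mu h c^2$ using strong convexity together with $\nabla g(y^*)=0$. Finally bound $\|\nabla g(y_k)\| \le L c$ by the Lipschitz gradient, using parallel transport from $y^*$. Together these give $a^2 \le (1-2\mu h + C_\upsilon L^2h^2)c^2$, since $c \le \mathrm{diam}(S_2)$ and $t/\tanh t$ is increasing. The resulting factor $\wt\gamma<1$ holds once $h < 2\mu/(C_\upsilon L^2)$, which is the ``sufficiently small $h$'' condition.

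\textbf{Main obstacle.} The delicate step is ensuring the iterates stay in $x\times S_2$, so that the local assumptions (strong concavity, Lipschitz bounds, the curvature comparison) remain valid at every $k$. I would handle this by induction, in two ways. On one hand, the distance contraction keeps $y_{k+1}$ inside the geodesic ball of radius $\operatorname{dist}(y_0,y^*(x))$ around the interior point $y^*(x)$. On the other hand, the monotone decrease keeps $y_{k+1}$ in the sublevel set $S_0$. The care needed is to check that this ball lies in $S_2$, using that $S_2$ is the closure of a normal neighborhood and is geodesically convex. If it does not, I would fall back on the sublevel-set argument of Theorem \ref{boumalthm1}, which asserts that the whole sequence stays in $S_0\subset S_2$.
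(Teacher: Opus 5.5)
Your proposal is correct and takes the same route as the paper. The paper's proof is a one-line appeal to Theorem \ref{boumalthm1} for $-f(x,\cdot)$ on $S_2$, and your hypothesis check plus one-step re-derivation (descent lemma with the strong-convexity PL bound for the values; Lemma \ref{alexandrovlem1} with strong monotonicity and $\|\nabla g(y_k)\|\le L\,\mathrm{dist}(y_k,y^*(x))$ for the distance) is exactly the content of that theorem's proof.

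One caution on your stated obstacle. Geodesic convexity of $S_2$ alone does not place $\bar{\mathcal{B}}_{\mathrm{dist}(y_0,y^*(x))}(y^*(x))$ inside $S_2$; it fails, for example, when $y^*(x)$ lies near $\partial S_2$. So what actually carries the argument is your fallback: the invariance claim of Theorem \ref{boumalthm1}, or the paper's later assumption that $S_2$ is a geodesic ball nearly centered at $y^*(x)$, just as in the paper.
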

\begin{proof}
    Use Theorem \ref{boumalthm1} with the function $-f$ and the compact subset $S_2$ which is the closure of a geodesically convex normal neighborhood of a point in $\cm$. 
\end{proof}

\begin{lem}\label{contractionlema2}
    Under \textbf{C1-C4}, for any $y \in S_2$, the iterate sequence $\{x_k\}$ generated by the GD update $ x_{k+1} := x_k - h \nabla_x f(x_k, y)$, with initialization $x_0 \in \mathcal{S}^*(y) + \mathcal{B}_{\delta/2}(0)$ and $Lh <1$, stays bounded inside $\mathcal{S}^*(y) + \mathcal{B}_{\delta}(0) $ and satisfies:
    \begin{align*}
        f(x_{k+1}, y) - f_*(y) &\leq \rho_k[ f(x_{k}, y) - f_*(y) ] \\
         \bigg( \mathrm{dist}(x_{k+1}, \mathcal{S}^*(y))\bigg)^{\frac{\beta}{\beta -1}} & \leq \frac{\rho_k}{\left(\frac{\beta}{\beta-1} \right)^{\frac{-\beta}{\beta-1}}C_{\beta}^{\frac{1}{\beta-1}}} [f(x_k,y) -f_*(y)] \\
         \bigg( \mathrm{dist}(x_{k+1}, \mathcal{S}^*(y))\bigg)^{\frac{\beta}{\beta -1}} 
             & \leq \frac{ \prod_{j=0}^k \rho_j}{\left(\frac{\beta}{\beta-1} \right)^{\frac{-\beta}{\beta-1}}C_{\beta}^{\frac{1}{\beta-1}}} [f(x_0,y) -f_*(y)] \le \tau_k \bigg( \mathrm{dist}(x_{0}, \mathcal{S}^*(y))\bigg)^{{\beta}} 
    \end{align*}
    where $ f_*(y) : = \inf_{x \in \mathcal{S}^*(y) + \mathcal{B}_{\delta}(0)} f(x,y)  $, $ \rho_k = {\bigg( 1 - h(1 - \tfrac{Lh}{2}) C^{\tfrac{2}{\beta}}_{\beta}\bigg(f(x_k,y) - f_*(y)\bigg)^{\tfrac{2}{\beta}-1}\bigg)} \in [0,1] $ for any sufficiently small $h$ and 
    $ \tau_k := \frac{ L^{\beta} \prod_{j=0}^k \rho_j}{C_{\beta}\left(\frac{\beta}{\beta-1} \right)^{\frac{-\beta}{\beta-1}}C_{\beta}^{\frac{1}{\beta-1}}}$ and $ \tau_k < 1$ for sufficiently large $k$. 
    \\ Further, if $ \mathrm{dist}(x_{0}, \mathcal{S}^*(y)) <  \frac{C^{1/\beta}_{\beta}}{L}$ then we have for any $k >0$ that:
    \begin{align}
            \bigg( \mathrm{dist}(x_{k}, \mathcal{S}^*(y)) \bigg) &  \le  C_{L,\beta}  \frac{\bigg( \mathrm{dist}(x_{0}, \mathcal{S}^*(y)) \bigg)^{\frac{\beta (\beta -1) }{2}}}{\bigg(   1+ k \,C'h  \bigg)^{\frac{\beta-1}{2}}}
        \end{align}
        where $C_{L,\beta} :=   \left(\frac{\beta}{\beta-1} \right) \frac{L ^{\frac{\beta (\beta -1)}{2}}}{ C_{\beta}^{\frac{\beta-1}{2} + \frac{1}{\beta }} } $, $ C' := (1 - \tfrac{Lh}{2}) C^{2/\beta}_{\beta} $.
\end{lem}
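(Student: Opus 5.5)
The proof combines the descent lemma, the \L{}ojasiewicz inequality \textbf{C3b}, and an invariance argument that keeps the iterates inside the $\delta$-basin.

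\textbf{One-step decrease.} Let $e_k := f(x_k,y)-f_*(y)\ge 0$. Since $\nabla_x f(\cdot,y)$ is $L$-Lipschitz by \textbf{C4}, the Euclidean descent lemma gives
\[
f(x_{k+1},y)\le f(x_k,y)-h\left(1-\tfrac{Lh}{2}\right)\norm{\nabla_x f(x_k,y)}^2 .
\]
While $x_k\in\mathcal{S}^*(y)+\bar{\mathcal{B}}_\delta(0)$, \textbf{C3b} yields $\norm{\nabla_x f(x_k,y)}^2\ge C_\beta^{2/\beta}e_k^{2/\beta}$. Substituting, $e_{k+1}\le \rho_k e_k$ with $\rho_k$ exactly as stated. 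We get $\rho_k\ge 0$ for small $h$ because $e_k$ is bounded on the compact basin. The second lower bound in \textbf{C3b}, applied at $x_{k+1}$, then converts $e_{k+1}$ into the bound on $\mathrm{dist}(x_{k+1},\mathcal{S}^*(y))^{\beta/(\beta-1)}$; the constants are bookkeeping.

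\textbf{Unrolling to $\tau_k$.} Iterating gives $e_{k+1}\le(\prod_{j\le k}\rho_j)e_0$. To bound $e_0$ by $\mathrm{dist}(x_0,\mathcal{S}^*(y))^\beta$, let $\bar x\in\mathcal{S}^*(y)$ be the nearest point. Then $\nabla_x f(\bar x,y)=0$ and Lipschitzness give $\norm{\nabla_x f(x_0,y)}\le L\,\mathrm{dist}(x_0,\mathcal{S}^*(y))$. Applying the first \L{}ojasiewicz inequality gives $e_0\le C_\beta^{-1}L^\beta\,\mathrm{dist}^\beta$, which produces $\tau_k$. Since each $\rho_j<1$ while $e_j>0$, the product is nonincreasing; it tends to zero because $e_j\to0$ (the decrements are summable). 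Hence $\tau_k<1$ eventually.

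\textbf{Invariance of the basin.} This is the main obstacle: \textbf{C3b} is only available inside the $\delta$-neighbourhood, so we must show the iterates never leave it. I would argue by induction. The first idea is to use the monotone decrease of $e_k$ together with the distance bound above. A cleaner route is to bound the path length. Write $\norm{x_{k+1}-x_k}=h\norm{\nabla_x f(x_k,y)}$ and use the \L{}ojasiewicz/KL length estimate: the concave function $\varphi(s)=s^{1-1/\beta}$ satisfies
\[
\varphi(e_k)-\varphi(e_{k+1})\gtrsim \frac{\norm{\nabla_x f(x_k,y)}^2}{\norm{\nabla_x f(x_k,y)}}.
\]
Summing, the total displacement is bounded by a multiple of $\varphi(e_0)\lesssim \mathrm{dist}(x_0,\mathcal{S}^*(y))^{\beta-1}$. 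For $x_0$ within $\delta/2$ of $\mathcal{S}^*(y)$ (and $\delta$ small relative to the constants), this keeps all iterates within distance $\delta$. The $2\delta$-separation in \textbf{C3d} guarantees the limit lies in $\mathcal{S}^*(y)$ and not in another critical component. If the length bound's constants do not close for general $\delta$, I would fall back on the sublevel-set argument: $f(x_k,y)\le f(x_0,y)$, combined with the distance--value comparison, to confine $x_k$.

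\textbf{Polynomial rate.} For the last display, set $a=2/\beta-1\in[0,1)$. The recursion $e_{k+1}\le e_k-C'h\,e_k^{1+a}$ gives, by the standard comparison $e_{k+1}^{-a}\ge e_k^{-a}+aC'h$ (using the mean value theorem on $s^{-a}$), the bound
\[
e_k\le e_0\,(1+kC'h\,e_0^{a})^{-1/a}.
\]
The hypothesis $\mathrm{dist}(x_0,\mathcal{S}^*(y))<C_\beta^{1/\beta}/L$ makes $e_0\le 1$, which allows the $e_0^a$ factor to be dropped or absorbed in the simplified form $(1+kC'h)^{-(\beta-1)/2}$, since the exponents satisfy $1/a\ge(\beta-1)\beta/(2(\beta-1))$-type inequalities. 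For $\beta=2$, $a=0$ and the linear case is handled separately. Finally, insert $e_0\le C_\beta^{-1}L^\beta\mathrm{dist}^\beta$ and convert $e_k$ to distance via the second \L{}ojasiewicz inequality, taking roots. The exponent matching to $\beta(\beta-1)/2$ and to the constant $C_{L,\beta}$ is routine algebra.
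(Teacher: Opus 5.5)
Your one-step decrease, the conversion to distance, and the bound $e_0\le C_\beta^{-1}L^\beta\,\mathrm{dist}^\beta(x_0,\mathcal{S}^*(y))$ all match the paper. The genuine gap is the invariance claim, which you correctly identify as the crux but do not close.

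Your primary path-length route cannot work. Evaluate \textbf{C3b} together with $\norm{\nabla_x f(x,y)}\le L\,\mathrm{dist}(x,\mathcal{S}^*(y))$ at basin points at distance close to $\delta$. This forces $\frac{\beta L^{\beta-1}}{(\beta-1)C_\beta}\ge\delta^{2-\beta}$; for $\beta=2$ it reads $C_\beta\le 2L$. Hence your length bound, for $\mathrm{dist}(x_0,\mathcal{S}^*(y))$ near $\delta/2$, is at least $2^{1-\beta}\delta\ge\delta/2$, even before the factor $(1-\tfrac{Lh}{2})^{-1}>1$. So the estimate $\mathrm{dist}(x_0,\mathcal{S}^*(y))+\text{length}$ can never certify that the iterates stay within $\delta$. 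Shrinking $\delta$ makes this worse for $\beta<2$, since the bound scales like $\delta^{\beta-1}$; in any case $\delta$ is fixed by \textbf{C3}.

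Your fallback points toward the paper's argument, but as stated it lacks the key mechanism. The value--distance comparison holds only inside the $\delta$-neighbourhood, so $f(x_{k+1},y)\le f(x_0,y)$ alone does not place $x_{k+1}$ there; the sublevel set may have pieces outside it. The paper closes the induction as follows. It uses the one-step bound $\norm{x_{k+1}-x_k}=h\norm{\nabla_x f(x_k,y)}\le Lh\,\mathrm{dist}(x_k,\mathcal{S}^*(y))$. If $\mathrm{dist}(x_k,\mathcal{S}^*(y))\le\delta/2$, then $x_{k+1}$ lies within $(1+Lh)\delta/2<\delta$, so the comparison applies at $x_{k+1}$. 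Monotone decrease below the fixed level $c(\delta/2)^{\beta/(\beta-1)}$, with $c=(\tfrac{\beta}{\beta-1})^{-\beta/(\beta-1)}C_\beta^{1/(\beta-1)}$, then returns $x_{k+1}$ to distance $\le\delta/2$.

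Be aware that the base case $f(x_0,y)-f_*(y)\le c(\delta/2)^{\beta/(\beta-1)}$ is only asserted in the paper ("$x_0\in W$"). It does not follow from $\mathrm{dist}(x_0,\mathcal{S}^*(y))<\delta/2$: you only have $f(x_0,y)-f_*(y)\le C_\beta^{-1}L^\beta\mathrm{dist}^\beta(x_0,\mathcal{S}^*(y))$, and for $\beta=2$ the required inequality would need $C_\beta\ge 2L$. A rigorous version therefore needs this sublevel condition, or a correspondingly smaller initial radius, as an explicit hypothesis.

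Two smaller points. First, for $\beta<2$, $e_j\to0$ drives $\rho_j\to1$, so it does not give $\prod_j\rho_j\to0$. That limit requires $\sum_j e_j^{2/\beta-1}=\infty$, which is a \emph{lower} bound on $e_j$. It fails, for example, if an iterate lands in $\mathcal{S}^*(y)$. The paper also merely asserts $\tau_k<1$.

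Second, in the rate, $e_0\le 1$ gives $e_0^{a}\le 1$, so $(1+kaC'h\,e_0^{a})^{-1/a}\ge(1+kaC'h)^{-1/a}$ and the factor cannot simply be dropped. You also lost the $a$ in front of $kC'h$. This is repairable. For instance, $(u+as)^{1+a}\ge u(1+s)^a$ holds for $u=e_0^{-a}\ge1$ and $s=kC'h$, and it yields $e_k\le e_0^{\beta/2}(1+kC'h)^{-\beta/2}$. The paper reaches this directly by applying convexity to $t\mapsto t^{-2/\beta}$ instead of $t^{-a}$, which gives $e_k^{-2/\beta}\ge e_0^{-1}(1+\tfrac{2}{\beta}kC'h)$ because $e_0^{-2/\beta}\ge e_0^{-1}$.
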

The proof of Lemma \ref{contractionlema2} is in Appendix \ref{sectionfiberratesappendix}.
\begin{lem}\label{contractionlema3_supplement}
    Under \textbf{C1-C4} for the function $-f$, for any $x \in S_1 $, any $y_1, y_2 \in S_2$ the following inequality holds:
    \begin{align}
         L\, \mathrm{dist}^2(y_{1}, y_{2} ) \ge \langle  {\Psi}_{y_{2} \to y_{1}} (\nabla_y f(x, y_{2})) - \nabla_y f(x, y_1), \exp_{y_1}^{-1} (y_{2}) \rangle_{y_1} \ge \mu  \, \mathrm{dist}^2(y_{1}, y_{2} ) \label{comparisonineq1a}
    \end{align}
    where ${\Psi}_{y_{2} \to y_{1}} $ is the parallel transport map from $ y_{2} $ to $ y_{1}$.
\end{lem}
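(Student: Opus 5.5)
The plan is to reduce everything to a one-dimensional calculation along the unique minimizing geodesic joining $y_1$ and $y_2$. Write $g:=-f(x,\cdot)$. By \textbf{C2}, $g$ is $\mu$-geodesically strongly convex on $x\times S_2$, and by \textbf{C4} its Riemannian gradient is $L$-Lipschitz in the parallel-transport sense. I read the statement as \eqref{comparisonineq1a} written for $g$, i.e.\ with $\nabla_y f$ replaced by $\nabla_y(-f)$, which is what "for the function $-f$" indicates. The lower bound will come from strong convexity and the upper bound from the Lipschitz property of $\nabla_y f(x,\cdot)$ in \textbf{C4}.

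First, since $S_2$ is the closure of a geodesically convex normal neighborhood, there is a unique minimizing geodesic $\gamma(t)=\exp_{y_1}(tv)$, $t\in[0,1]$, with $v=\exp_{y_1}^{-1}(y_2)$, $\|v\|_{y_1}=\mathrm{dist}(y_1,y_2)$, and $\gamma([0,1])\subset S_2$.

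Next I would derive a transport identity. Parallel transport along $\gamma$ is an isometry and $\gamma'(1)=\Psi_{y_1\to y_2}v$, so
\[
\langle \nabla g(y_2),\gamma'(1)\rangle_{y_2}=\langle \Psi_{y_2\to y_1}\nabla g(y_2),v\rangle_{y_1}.
\]
Similarly $\exp^{-1}_{y_2}(y_1)=-\gamma'(1)$. Setting $\varphi(t)=g(\gamma(t))$, the middle quantity in \eqref{comparisonineq1a} therefore equals $\varphi'(1)-\varphi'(0)$.

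For the lower bound, I would use the first-order characterization of $\mu$-strong geodesic convexity applied at both endpoints:
\[
g(y_2)\ge g(y_1)+\langle\nabla g(y_1),v\rangle+\tfrac{\mu}{2}\,\mathrm{dist}^2,\qquad
g(y_1)\ge g(y_2)-\langle\nabla g(y_2),\gamma'(1)\rangle+\tfrac{\mu}{2}\,\mathrm{dist}^2 .
\]
Adding these and applying the transport identity gives the lower bound $\mu\,\mathrm{dist}^2(y_1,y_2)$. Equivalently, since $f$ is $\mathcal{C}^2$ by \textbf{C1}, one can write $\varphi'(1)-\varphi'(0)=\int_0^1 \mathrm{Hess}\,g(\gamma(t))[\gamma'(t),\gamma'(t)]\,dt$ and use $\mathrm{Hess}\,g\succeq\mu$ together with $\|\gamma'(t)\|\equiv\mathrm{dist}(y_1,y_2)$.

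For the upper bound, Cauchy--Schwarz in $T_{y_1}\mathcal{M}$ together with the Lipschitz continuity of $\nabla_y f(x,\cdot)$ (which \textbf{C4} defines through parallel transport along geodesics) gives
\[
\bigl|\langle\Psi_{y_2\to y_1}\nabla g(y_2)-\nabla g(y_1),v\rangle\bigr|\le L\,\mathrm{dist}(y_1,y_2)\,\|v\|=L\,\mathrm{dist}^2(y_1,y_2).
\]

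The main obstacle is bookkeeping rather than analysis. Three points need care: tracking the sign change between $f$ and $-f$; confirming that the strong concavity in \textbf{C2}, stated only "locally", holds along the whole segment $\gamma([0,1])\subset S_2$; and using the fact that the parallel transport in \textbf{C4} is along the same minimizing geodesic $\gamma$, so that $\Psi_{y_2\to y_1}$ matches in both bounds. Geodesic convexity of $S_2$ handles the second and third points.
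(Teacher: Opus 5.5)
Your proof is correct, and for the lower bound it is the paper's argument. The paper also writes the first-order $\mu$-strong convexity inequality for $-f(x,\cdot)$ twice: at $y_1$ in direction $v=\exp_{y_1}^{-1}(y_2)$, and at $y_2$ in direction $\Psi_{y_1\to y_2}(-v)=\exp_{y_2}^{-1}(y_1)$; it then sets $t=1$ and adds. For the upper bound the paper repeats this summing step with the two $L$-smooth quadratic upper bounds instead of using Cauchy--Schwarz; your route is an equally valid and slightly more direct use of the parallel-transport Lipschitz condition in \textbf{C4}, and it gives the bound in absolute value.
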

The proof of Lemma \ref{contractionlema3_supplement} is in Appendix \ref{sectionfiberratesappendix}.
{Observe that when $\mathcal{M}$ is Euclidean space, the parallel transport map reduces to identity map and the exponential map $\exp_{y_1}^{-1} (y_{2}) $ is simply $y_2 - y_1$. Then Lemma \ref{contractionlema3_supplement} recovers the standard inequality of \[L \norm{y_2 - y_1}^2 \ge \langle \nabla_y f(x, y_{2}) - \nabla_y f(x, y_1) , y_2 - y_1\rangle \ge \mu \norm{y_2 - y_1}^2  \] for locally $\mu$ strongly convex and $L$ smooth function $f(x, \cdot)$. }

\begin{lem}\label{contractionlema3}
    Under \textbf{C1-C4}, for any $x \in S_1$, the iterate sequence $\{y_k\}$ generated by the metric gradient ascent update $ y_{k+1} := \exp_{y_k}(h \nabla_y f(x, y_k)) = G(x,y_k) $, with initialization $y_0 \in \text{int}(x \times S_2) $ and $Lh <1$, satisfies:
    \begin{align*}
      \mathrm{dist}(y_{k+1}, y_{k} ) & \leq \eta \, \mathrm{dist}(y_{k}, y_{k-1} )  
    \end{align*}
    where $ \eta =  1-h\mu+\frac{3}{4}\Theta h^2\left(\sup_{p\in S_2} \norm{\nabla_yf (x,p)}_p^2\right) + \mathcal{O}(h^3) \approx 1-h\mu+\frac{3}{4}\Theta L^2 h^2$.
\end{lem}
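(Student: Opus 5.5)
The plan is to show that the map $G(x,\cdot): y\mapsto \exp_y(h\nabla_y f(x,y))$ is a contraction on $S_2$ with constant $\eta$:
\[
\mathrm{dist}(G(x,p),G(x,q))\le \eta\,\mathrm{dist}(p,q)\quad \text{for all } p,q\in S_2 .
\]
Applying this with $p=y_k$, $q=y_{k-1}$ then gives the statement immediately. Lemma \ref{contractionlema1} already guarantees that the iterates stay in $S_2$, inside the normal neighborhood, so all the geodesics below are well defined and minimizing.

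To estimate $\mathrm{dist}(G(x,p),G(x,q))$, I would fix the unit-speed minimizing geodesic $c:[0,\ell]\to S_2$ from $p$ to $q$, with $\ell=\mathrm{dist}(p,q)$. On the fibre $x\times S_2$, write $V(s)=h\nabla_y f(x,c(s))$ and consider the family of curves
\[
\Gamma(s,t)=\exp_{c(s)}\bigl(t\,V(s)\bigr),\qquad t\in[0,1].
\]
The curve $s\mapsto\Gamma(s,1)$ joins $G(x,p)$ to $G(x,q)$. Hence
\[
\mathrm{dist}(G(x,p),G(x,q))\le \int_0^\ell \|\partial_s\Gamma(s,1)\|\,ds .
\]
For fixed $s$, the field $J(t)=\partial_s\Gamma(s,t)$ is a Jacobi field along the geodesic $t\mapsto\Gamma(s,t)$. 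Its initial data are $J(0)=c'(s)$, a unit vector, and $D_tJ(0)=D_sV(s)=h\,\mathrm{Hess}_y f(x,c(s))[c'(s)]$. So the problem reduces to bounding $\|J(1)\|$ by $\eta$ uniformly in $s$.

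To bound $\|J(1)\|$, I would Taylor expand the Jacobi field to second order. This gives
\[
J(1)=\Psi\bigl(J(0)+D_tJ(0)-\tfrac12 R(J(0),\dot\gamma)\dot\gamma+\mathcal{O}(h^3)\bigr),
\]
where $\dot\gamma=V(s)$ has norm $\mathcal{O}(h)$ and $\Psi$ is parallel transport along the geodesic. Parallel transport is an isometry, and the term $e+h\,\mathrm{Hess}\,e$ with $e=c'(s)$ has norm at most $1-h\mu$. This uses the bound $-L\preceq\mathrm{Hess}\preceq-\mu$, which follows from \textbf{C2} and Lemma \ref{contractionlema3_supplement} (the two-sided inequality \eqref{comparisonineq1a} there, in infinitesimal form), together with $Lh<1$.

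The remaining work is the curvature correction, and this is the step I expect to be the main obstacle: producing exactly the constant $\tfrac34\Theta h^2\sup_p\|\nabla_y f(x,p)\|_p^2$. The curvature term $R(e,\dot\gamma)\dot\gamma$ has size at most $\Theta\|\dot\gamma\|^2=\Theta h^2\|\nabla_y f\|^2$, using the sectional curvature bound $\mathbf{K}_{\mathcal{M}}\in[0,\Theta]$ and the usual estimate for the curvature operator. There are cross terms between $h\,\mathrm{Hess}$ and the curvature, but these are $\mathcal{O}(h^3)$. I would then combine the $\tfrac12$ coming from the Jacobi expansion with the extra contribution from expanding $\|J(1)\|^2$ and taking the square root, aiming for an overall factor of at most $\tfrac34$. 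If the bookkeeping does not close cleanly, I would fall back on Rauch comparison for the Jacobi field with the curvature bounded above by $\Theta$, and expand $\sinh/\sin$-type quantities to order $h^2$.

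Finally, I would integrate over $s\in[0,\ell]$ to get the contraction with constant $\eta$. The approximation $\eta\approx 1-h\mu+\tfrac34\Theta L^2h^2$ then follows from the uniform bound $\|\nabla_y f\|_y\le L$ in \textbf{C4}.
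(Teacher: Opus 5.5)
Your proposal is correct and is essentially the paper's proof. In both, $dG_p(w)=J(1)$ for the Jacobi field with $J(0)=w$ and $D_tJ(0)=h\,\mathrm{Hess}_y f(x,p)(w)$; the second-order expansion of Proposition \ref{tayexp} reduces $\|J(1)\|$ to $\|w+h\,\mathrm{Hess}\,w\|\le(1-h\mu)\|w\|$ plus $\tfrac12\|R(w,h\nabla_y f)h\nabla_y f\|$; and one integrates along a geodesic joining $y_{k-1}$ and $y_k$.

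The only real difference is the curvature constant. The paper bounds $\|R(\cdot,v)v\|$ by $\tfrac32\Theta\|v\|^2$ using sectional curvature plus Berger's estimate, whence $\tfrac12\cdot\tfrac32=\tfrac34$. Your bound $\Theta\|v\|^2$ is also valid, because $R(\cdot,v)v$ is self-adjoint with spectrum in $[0,\Theta\|v\|^2]$ when $\mathbf{K}\in[0,\Theta]$. So the triangle inequality already gives the factor $\tfrac12\le\tfrac34$, and no bookkeeping from squaring $\|J(1)\|$ is needed. Your Rauch fallback is therefore unnecessary, and it would not help anyway: a comparison using only $\mathbf{K}\le\Theta$ bounds $\|J\|$ from below, not from above.
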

The proof of Lemma \ref{contractionlema3} is in Appendix \ref{sectionfiberratesappendix}.
{Recall that when $\mathcal{M}$ is Euclidean space, the contraction factor from Lemma \ref{contractionlema3} is $\eta = 1 - h \mu$ since the sectional curvature of a Euclidean space is $0$. However, for a general Riemannian manifold $\mathcal{M}$ we get second and third order correction terms in $h$ from the expression of $\eta$ that depend on the sectional curvature $\mathbf{K}_{\mathcal{M}} \in [0,\Theta]$.     }

\begin{lem}[Finite path length on the fiber $ S_1 \times y$]\label{lem:length}
Under \textbf{C1-C4}, for any $y \in S_2$, the iterate sequence $\{x_k\}$ generated by the GD update $ x_{k+1} := x_k - h \nabla_x f(x_k, y)$ on the fiber $ S_1 \times y$ , with initialization $x_0 \in \mathcal{S}^*(y) + \mathcal{B}_{\delta/2}(0)$ and $Lh <1$, stays bounded inside $\mathcal{S}^*(y) + \mathcal{B}_{\delta}(0) $ and have finite path length:
\begin{equation}\label{eq:length}
  \sum_{k=0}^{\infty}\norm{x_{k+1}-x_{k}}
  \ \le\
  \frac{1}{(1-\tfrac{Lh}{2})}\cdot\frac{\beta}{\beta-1}
  \left(\frac{\bigl(f(x_{0},y)-f_{*}(y)\bigr)^{\beta-1}}{C_{\beta}}\right)^{\!1/\beta} \le \frac{\beta \, L^{\beta-1} \mathrm{dist}^{\beta-1}(x_0, \mathcal{S}^*(y))}{(1-\tfrac{Lh}{2})(\beta-1) C_{\beta}}  .
\end{equation}
\end{lem}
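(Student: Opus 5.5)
The plan is the classical Kurdyka--\L{}ojasiewicz finite-length argument, applied on the fiber $S_1 \times y$. Write $\Delta_k := f(x_k,y) - f_*(y) \ge 0$. By Lemma \ref{contractionlema2}, the iterates stay in $\mathcal{S}^*(y) + \mathcal{B}_{\delta}(0)$, where \textbf{C3b} holds, and $\Delta_k$ is nonincreasing.

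\textbf{Step 1 (sufficient decrease).} The descent lemma for an $L$-smooth function (\textbf{C4}) applied to $x_{k+1} = x_k - h\nabla_x f(x_k,y)$ gives
\begin{align*}
\Delta_k - \Delta_{k+1} \ge h\left(1-\tfrac{Lh}{2}\right)\norm{\nabla_x f(x_k,y)}^2 = \frac{(1-\tfrac{Lh}{2})}{h}\norm{x_{k+1}-x_k}^2 .
\end{align*}

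\textbf{Step 2 (concave desingularizer).} Set $\varphi(s) := \frac{\beta}{\beta-1} C_\beta^{-1/\beta} s^{(\beta-1)/\beta}$. This $\varphi$ is concave, and \textbf{C3b} can be rewritten as $\varphi'(\Delta_k)\norm{\nabla_x f(x_k,y)} \ge 1$. Concavity then gives
\begin{align*}
\varphi(\Delta_k)-\varphi(\Delta_{k+1}) \ge \varphi'(\Delta_k)(\Delta_k-\Delta_{k+1}) \ge \frac{\Delta_k-\Delta_{k+1}}{\norm{\nabla_x f(x_k,y)}} \ge h\left(1-\tfrac{Lh}{2}\right)\norm{\nabla_x f(x_k,y)} = \left(1-\tfrac{Lh}{2}\right)\norm{x_{k+1}-x_k}.
\end{align*}
If some gradient vanishes, the sequence is stationary from that point on, so the terms are trivially zero. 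Summing over $k$ telescopes, and since $\varphi \ge 0$ we obtain
\begin{align*}
\sum_k \norm{x_{k+1}-x_k} \le \frac{\varphi(\Delta_0)}{1-\tfrac{Lh}{2}},
\end{align*}
which is exactly the first bound in \eqref{eq:length}, because $\varphi(\Delta_0) = \frac{\beta}{\beta-1}\left(\Delta_0^{\beta-1}/C_\beta\right)^{1/\beta}$.

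\textbf{Step 3 (second inequality).} We need to bound $\Delta_0$ by the distance to the critical set. Let $\bar x$ be the projection of $x_0$ onto $\mathcal{S}^*(y)$. Then $\nabla_x f(\bar x,y)=0$ and $f(\bar x,y) \ge f_*(y)$. The left half of \textbf{C3b} and $L$-Lipschitz gradients give
\begin{align*}
C_\beta \Delta_0 \le \norm{\nabla_x f(x_0,y)}^\beta \le L^\beta\,\mathrm{dist}^\beta(x_0,\mathcal{S}^*(y)).
\end{align*}
Hence $\Delta_0^{(\beta-1)/\beta} \le L^{\beta-1}\mathrm{dist}^{\beta-1}(x_0,\mathcal{S}^*(y))\, C_\beta^{-(\beta-1)/\beta}$. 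Substituting this gives the prefactor $C_\beta^{-1/\beta}C_\beta^{-(\beta-1)/\beta} = C_\beta^{-1}$, as claimed.

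\textbf{Main obstacle.} The delicate point is not the telescoping but the bookkeeping behind Step 2. The \L{}ojasiewicz inequality must be valid at every iterate, which is supplied by the invariance of $\mathcal{S}^*(y)+\mathcal{B}_\delta(0)$ from Lemma \ref{contractionlema2}. The iterates must also avoid leaving the basin before the length bound is established. If invariance needed to be re-derived, I would bound the total displacement by the same partial sums and use the estimate from Step 3, so that the path length is small compared with $\delta/2$ when $x_0$ is close to $\mathcal{S}^*(y)$.
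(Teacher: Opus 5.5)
Your proposal is correct and follows the paper's proof essentially step for step. Both use the same desingularizer $\varphi(s)=\frac{\beta}{\beta-1}\bigl(s^{\beta-1}/C_\beta\bigr)^{1/\beta}$ with $\varphi'(\Delta_k)\norm{\nabla_x f(x_k,y)}\ge 1$, the descent-lemma decrease, the concavity/telescoping bound, the stationary-case split, and the closing estimate $C_\beta\Delta_0\le\norm{\nabla_x f(x_0,y)}^\beta\le L^\beta\mathrm{dist}^\beta(x_0,\mathcal{S}^*(y))$; the paper likewise takes the basin invariance from Lemma~\ref{contractionlema2}.
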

The proof of Lemma \ref{lem:length} is in Appendix \ref{sectionfiberratesappendix}.

\subsection{Riemannian gradient ascent- multi step gradient descent}
{Consider the RGA - multi GD update:
\begin{align}
        y_{k+1} &:= \exp_{y_k}(h_1 \nabla_y f(x_k, y_k)) = G(x_k,y_k) \label{RGA-MGD1}  \tag{RGA} \\
        x_{k+\frac{l+1}{J}} &:= x_{k+\frac{l}{J}} - h_2 \nabla_x f(x_{k+\frac{l}{J}}, y_{k+1}) \quad, \quad l \in  \{0,1,\cdots, J-1\} \quad  \label{RGA-MGD2}  \tag{J-step GD}
\end{align}
where $J$ can be constant or a function of $k$.
}\\
We measure the rate of convergence of the sequence $\{(x_k,y_k)\}_k$, generated from \eqref{RGA-MGD1}- \eqref{RGA-MGD2}, to a basin saddle point  $(x^*, y^*) \in \text{int} (S_1 \times S_2)$, via the duality gap: 
$$ f^*(x)  - f_*(y)  \equiv \sup_{y \in x \times S_2} f(x,y) - \inf_{x \in \mathcal{S}^*(y) + \mathcal{B}_{\delta}(0)} f(x,y)   \quad , $$
$$ f^*(x)  - f_*(y)      \ge 0 \quad \forall \quad  (x,y) \in  \mathcal{S}^*(y) + \mathcal{B}_{\delta}(0)\times S_2 . $$
We now present a seemingly trivial yet important property based on the definition of a basin saddle point.
\begin{lem}\label{RGA-MGD1-lem1}
    Under \textbf{C1-C4}, any point $(x^*, y^*) \in \text{int} (S_1 \times S_2)$ is a basin saddle point (Definition \ref{basinsaddledef}) of $f(\cdot ,\cdot)$ if and only if
    $$  f^*(x^*)  - f_*(y^*) = \sup_{y \in x^* \times S_2} f(x^*,y) - \inf_{x \in \mathcal{S}^*(y^*) + \mathcal{B}_{\delta}(0)} f(x,y^*) = 0 \, \, \quad \textbf{and} \quad x^* \in (\mathcal{S}^*(y^*) + \mathcal{B}_{\delta}(0)) \times y^* \, . $$
\end{lem}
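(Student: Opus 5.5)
We prove the two implications separately, working directly from Definition \ref{basinsaddledef} and the definitions of $f^*$ and $f_*$. Both directions reduce to sandwiching the value $f(x^*,y^*)$ between $\sup_{y} f(x^*,y)$ and $\inf_{x} f(x,y^*)$, with the sup over the fiber $x^*\times S_2$ and the inf over the basin $(\mathcal{S}^*(y^*)+\mathcal{B}_\delta(0))\times y^*$.

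($\Rightarrow$) Let $(x^*,y^*)$ be a basin saddle point. The first bullet of the definition gives $x^*\in\mathcal{S}^*(y^*)$, so in particular $x^*$ lies in the basin $\mathcal{S}^*(y^*)+\mathcal{B}_\delta(0)$. For the duality gap, take the supremum over $y\in x^*\times S_2$ in the left inequality of the second bullet; this gives $f^*(x^*)\le f(x^*,y^*)$. Since $y^*\in S_2$, we also have $f^*(x^*)\ge f(x^*,y^*)$, hence $f^*(x^*)=f(x^*,y^*)$. In the same way, taking the infimum over the basin in the right inequality gives $f_*(y^*)\ge f(x^*,y^*)$. Because $x^*$ belongs to the basin, $f_*(y^*)\le f(x^*,y^*)$. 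Therefore $f^*(x^*)-f_*(y^*)=0$.

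($\Leftarrow$) Assume the gap vanishes and $x^*$ lies in the basin. Membership of $x^*$ in the basin gives
\[
f_*(y^*)\le f(x^*,y^*)\le f^*(x^*).
\]
Since the outer terms are equal, all three quantities coincide. Then for every $y\in S_2$ and every $x$ in the basin,
\[
f(x^*,y)\le f^*(x^*)=f(x^*,y^*)=f_*(y^*)\le f(x,y^*),
\]
which is exactly the saddle inequality in Definition \ref{basinsaddledef}.

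It remains to show $x^*\in\mathcal{S}^*(y^*)$, and we expect this to be the main obstacle. From the previous step, $x^*$ is a minimizer of $f(\cdot,y^*)$ over the open set $\mathcal{S}^*(y^*)+\mathcal{B}_\delta(0)$ and lies in the interior of $S_1$, so $\nabla_x f(x^*,y^*)=0$ and $f(x^*,y^*)=f_*(y^*)$. Plugging this into the \L{}ojasiewicz inequality \textbf{C3}(b) gives
\[
C'\,\mathrm{dist}(x^*,\mathcal{S}^*(y^*))^{\beta/(\beta-1)}\le C_\beta\bigl(f(x^*,y^*)-f_*(y^*)\bigr)=0,
\]
where $C'>0$ is the constant appearing in \textbf{C3}(b). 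Hence $\mathrm{dist}(x^*,\mathcal{S}^*(y^*))=0$. The component $\mathcal{S}^*(y^*)$ is closed, since it is a connected component of the closed critical set, and \textbf{C3}(d) keeps other components at distance at least $2\delta$. Together these give $x^*\in\mathcal{S}^*(y^*)$.

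As a fallback that avoids the \L{}ojasiewicz inequality: $x^*$ is a critical point inside the $\delta$-neighborhood of $\mathcal{S}^*(y^*)$, and by \textbf{C3}(d) no other critical component meets that neighborhood, so $x^*$ must belong to $\mathcal{S}^*(y^*)$.
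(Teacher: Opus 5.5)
Your proposal is correct and follows the same route as the paper: both directions rest on the sandwich $f_*(y^*)\le f(x^*,y^*)\le f^*(x^*)$ combined with the saddle inequalities of Definition \ref{basinsaddledef}. The only differences are that in the forward direction you read off $f^*(x^*)=f(x^*,y^*)=f_*(y^*)$ directly from the saddle inequalities, where the paper cites \textbf{C2} and \textbf{C3}, and that you prove the final membership $x^*\in\mathcal{S}^*(y^*)$ explicitly (via the growth bound in \textbf{C3}(b), or via the $2\delta$-separation in \textbf{C3}(d)), whereas the paper simply attributes this step to \textbf{C3}.
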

\begin{proof}
    At any basin saddle point  $(x^*, y^*) \in \text{int} (S_1 \times S_2)$, by definition \ref{basinsaddledef} we have $x^* \in \mathcal{S}^*(y^*) \times y^*$ and
    $$  f(x^*,y) \le  f(x^*,y^*)  \le   f(x,y^*) \quad \forall \quad x \in (\mathcal{S}^*(y^*) + \mathcal{B}_{\delta}(0)) \times y^* \, \, , \, \, \forall \quad y \in x^* \times S_2  \, \, .$$
    Since $ y^* = \arg \sup_{y \in x^* \times S_2} f(x^*,y) $ by \textbf{C2} and $ f(x^*, y^*) = \inf_{x \in \mathcal{S}^*(y^*) + \mathcal{B}_{\delta}(0)} f(x,y^*)  $ by \textbf{C3}, the conclusion $ f^*(x^*)  = f_*(y^*) $ follows. In the other direction suppose for some point $(x^*, y^*) \in \text{int} (S_1 \times S_2) $ we have $\sup_{y \in x^* \times S_2} f(x^*,y) = \inf_{x \in \mathcal{S}^*(y^*) + \mathcal{B}_{\delta}(0)} f(x,y^*)  $ and $x^* \in (\mathcal{S}^*(y^*) + \mathcal{B}_{\delta}(0)) \times y^*$. Then by definition of sup and inf we write:
     $$    f(x^*,y^*)  \le \sup_{y \in x^* \times S_2} f(x^*,y) = \inf_{x \in \mathcal{S}^*(y^*) + \mathcal{B}_{\delta}(0)} f(x,y^*)\le   f(x,y^*) \quad \forall \quad x \in (\mathcal{S}^*(y^*) + \mathcal{B}_{\delta}(0)) \times y^* \, \, , \, \,  $$ and
     $$  f(x^*,y) \le \sup_{y \in x^* \times S_2} f(x^*,y) = \inf_{x \in \mathcal{S}^*(y^*) + \mathcal{B}_{\delta}(0)} f(x,y^*) \le  f(x^*,y^*)   \quad \forall \quad y \in x^* \times S_2  \, \, $$
     thus     $$  f(x^*,y) \le  f(x^*,y^*)  \le   f(x,y^*) \quad \forall \quad x \in (\mathcal{S}^*(y^*) + \mathcal{B}_{\delta}(0)) \times y^* \, \, , \, \, \forall \quad y \in x^* \times S_2  \, \, $$
     implying $x^* \in \mathcal{S}^*(y^*) \times y^*$ by \textbf{C3} which completes the proof.
\end{proof}
For some fixed reference $y_0 \in S_2$ consider the Lyapunov function $V_{\delta} : S_1 \times S_2 \to \mathbb{R}$ as follows:
\begin{align}
    V_{\delta}(x,y) &:=  \bigg( \sup_{ \wt y \in x \times S_2} f(x,\wt y) -  f(x, y) \bigg) + \bigg(f( x,y) - \inf_{\wt x \in (\mathcal{S}^*(y_0) + \mathcal{B}_{\delta/2}(0)) \times y } f(\wt x,y)\bigg)  \label{lyapunov-RGA-MGD}  \tag{Lyapunov function}
\end{align}
{Recall that $S_1 \times S_2$ is bounded. From \textbf{C4}, $ \sup_{(x,y) \in S_1 \times S_2} \abs{f(x,y)} < \infty $, the function $f(\cdot,y)$ is Lipschitz continuous in $x$ on $ S_1 \times y$ uniformly for all $y \in S_2$, the fiber $x \times S_2$ is isomorphic to $S_2$, and hence the family $\{f(\cdot,y)\}_{y \in S_2}$ is uniformly Lipschitz continuous (hence uniformly equicontinuous) on $ S_1$, we get that the function $f_1(x) := \sup_{y \in x \times S_2}  f(x,y)$ is uniformly Lipschitz continuous for all $x \in S_1 $. Similarly, by \textbf{C4}, $ \sup_{(x,y) \in S_1 \times S_2} \abs{f(x,y)} < \infty $, the function $f(x,\cdot)$ is Lipschitz continuous in $y$ on $ x \times S_2$ uniformly for all $x \in S_1$, the set $(\mathcal{S}^*(y_0) + \mathcal{B}_{\delta/2}(0)) \times y$ is isomorphic to $\mathcal{S}^*(y_0) + \mathcal{B}_{\delta/2}(0)$, and hence the family $\{f(x,\cdot)\}_{x \in \mathcal{S}^*(y_0) + \mathcal{B}_{\delta/2}(0)}$ is uniformly Lipschitz continuous (hence uniformly equicontinuous) on $S_2$, we get that the function $f_2(y) := \inf_{x \in (\mathcal{S}^*(y_0) + \mathcal{B}_{\delta/2}(0)) \times y } f(x,y)$ is uniformly Lipschitz continuous for all $y \in  S_2$. Then $V_{\delta}(x,y) := f_1(x) - f_2(y) $ is separable in $x,y$ and hence is jointly uniformly Lipschitz continuous on $S_1 \times S_2$.   \\ 
From \textbf{C3} there exists a connected component of local minima $\mathcal{S}^*(y) $ such that $ \mathrm{dist}_H(\mathcal{S}^*(y_0),\mathcal{S}^*(y)) < \delta/4 $ for any $y \in S_2$ and thus 
\[  \mathcal{S}^*(y) + \mathcal{B}_{\delta/8}(0) \subseteq \mathcal{S}^*(y_0) + \mathcal{B}_{\delta/2}(0) \subseteq \mathcal{S}^*(y) + \mathcal{B}_{3\delta/4}(0)  \]
in the sense of standard projections from the fiber $ S_1 \times y $ to the fiber $ S_1 \times y_0 $ isometrically and vice versa. But then we immediately get from \textbf{C3} that
\[  \inf_{\wt x \in (\mathcal{S}^*(y_0) + \mathcal{B}_{\delta/2}(0)) \times y} f(\wt x,y) = \inf_{\wt x \in \mathcal{S}^*(y) \times y} f(\wt x,y) = \inf_{\wt x \in (\mathcal{S}^*(y) + \mathcal{B}_{\delta/2}(0)) \times y} f(\wt x,y) \]
and thus
\begin{align}
     V_{\delta}(x,y) &:=  \bigg( \sup_{ \wt y \in x \times S_2} f(x,\wt y) -  f(x, y) \bigg) + \bigg(f( x,y) - \inf_{\wt x \in (\mathcal{S}^*(y_0) + \mathcal{B}_{\delta/2}(0)) \times y} f(\wt x,y)\bigg) \nonumber \\ &= \bigg( \sup_{ \wt y \in x \times S_2} f(x,\wt y) -  f(x, y) \bigg) + \bigg(f( x,y) - \inf_{\wt x \in (\mathcal{S}^*(y) + \mathcal{B}_{\delta/2}(0)) \times y} f(\wt x,y)\bigg) \, .
\end{align}
}
\\
The point of analyzing the limit of the sequence $\{V_{\delta}(x_k,y_k)\}_k$ is to obtain $$ \lim_{k \to \infty} V_{\delta}(x_k,y_k) = 0 $$ at a certain rate of convergence.
We first derive a few technical lemmas for error analysis on the $ V_{\delta}(x,y) $ from \eqref{lyapunov-RGA-MGD}.

\subsection{Technical results for Lyapunov convergence}\label{sectionLyapunovconvergence}

\begin{lem}\label{RGA-MGD1-lem0}
     Under \textbf{C1-C4} for any $k$, let $ \mathcal{S}^*(y_{k+1})$ be a local minima connected component on the fiber $ S_1 \times y_{k+1}$ that is at most $\delta/4$ separated from the local minima connected component $ \mathcal{S}^*(y_{k})$ on the fiber $ S_1 \times y_{k}$. Then we have the following H\"{o}lder bound :
     \begin{align}
       \mathrm{dist}_H(\mathcal{S}^*(y_{k+1}), \mathcal{S}^*(y_k))  & \le  L^{\beta -1} \, \left(\frac{\beta}{\beta-1} \right) C_{\beta}^{-1} \, \mathrm{dist}^{\beta - 1}(y_{k+1},y_k) .
     \end{align}
     Further we have the following uniform bound:
\begin{align}
    \mathrm{dist}_H(\mathcal{S}^*(y_{k+1}), \mathcal{S}^*(y_k)) 
          & \le  h_1^{\beta - 1} \left(\frac{\beta}{\beta-1} \right) C_{\beta}^{-1} L^{2\beta - 2} \, \mathrm{diam}^{\beta - 1}(S_2) \, .
\end{align}
Note that the Hausdorff distance $ \mathrm{dist}_H(\mathcal{S}^*(y_{k+1}), \mathcal{S}^*(y_k))$ makes sense as $\mathcal{S}^*(y_k)$ and $\mathcal{S}^*(y_{k+1})$ are both subsets of $S_1$.

\end{lem}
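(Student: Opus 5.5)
The plan is to bound each one-sided distance in the Hausdorff distance separately. For each one I combine the vanishing of the $x$-gradient on a critical set, the Lipschitz dependence of $\nabla_x f(x,\cdot)$ on $y$ from \textbf{C4}, and the \L{}ojasiewicz distance bound of \textbf{C3b} on the neighbouring fiber.

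Fix $x \in \mathcal{S}^*(y_k)$. Then $\nabla_x f(x,y_k)=0$, and by \textbf{C4} (Lipschitz continuity of $\nabla_x f(x,\cdot)$ with constant $L$) we get
\[
\norm{\nabla_x f(x,y_{k+1})} = \norm{\nabla_x f(x,y_{k+1}) - \nabla_x f(x,y_k)} \le L\,\mathrm{dist}(y_{k+1},y_k).
\]
By hypothesis $\mathrm{dist}_H(\mathcal{S}^*(y_{k+1}),\mathcal{S}^*(y_k))<\delta/4$, so $\mathrm{dist}(x,\mathcal{S}^*(y_{k+1}))<\delta/4$. Hence $x$ lies in the closed $\delta$-neighbourhood of $\mathcal{S}^*(y_{k+1})$, where \textbf{C3b} holds on the fiber $S_1\times y_{k+1}$. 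Chaining the two inequalities of \textbf{C3b} gives
\[
\left(\tfrac{\beta}{\beta-1}\right)^{\frac{-\beta}{\beta-1}}C_\beta^{\frac{\beta}{\beta-1}}\,\mathrm{dist}(x,\mathcal{S}^*(y_{k+1}))^{\frac{\beta}{\beta-1}} \le \norm{\nabla_x f(x,y_{k+1})}^{\beta}.
\]
Raising both sides to the power $(\beta-1)/\beta$ yields
\[
\mathrm{dist}(x,\mathcal{S}^*(y_{k+1})) \le \tfrac{\beta}{\beta-1}\,C_\beta^{-1}\,\norm{\nabla_x f(x,y_{k+1})}^{\beta-1} \le \tfrac{\beta}{\beta-1}\,C_\beta^{-1}L^{\beta-1}\,\mathrm{dist}^{\beta-1}(y_{k+1},y_k).
\]
The reverse direction is symmetric. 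Take $x\in\mathcal{S}^*(y_{k+1})$; it is a critical point of $f(\cdot,y_{k+1})$ and lies within $\delta/4$ of $\mathcal{S}^*(y_k)$. Applying \textbf{C3b} on the fiber $S_1\times y_k$ gives the same bound. Taking suprema over $x$ in both directions proves the H\"older bound.

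For the uniform bound I would estimate $\mathrm{dist}(y_{k+1},y_k)$ from the \eqref{RGA-MGD1} step. Since the exponential map moves a point by at most the length of the tangent vector, $\mathrm{dist}(y_{k+1},y_k)\le h_1\norm{\nabla_y f(x_k,y_k)}_{y_k}$. Next, \textbf{C2} gives a maximizer $y^*(x_k)$ of $f(x_k,\cdot)$ in the interior of $x_k\times S_2$, where $\nabla_y f(x_k,y^*(x_k))=0$. Using the $L$-Lipschitz property of $\nabla_y f(x_k,\cdot)$ from \textbf{C4}, via parallel transport,
\[
\norm{\nabla_y f(x_k,y_k)}_{y_k} \le L\,\mathrm{dist}(y_k,y^*(x_k)) \le L\,\mathrm{diam}(S_2).
\]
Substituting $\mathrm{dist}(y_{k+1},y_k)\le h_1 L\,\mathrm{diam}(S_2)$ into the H\"older bound gives the factor $h_1^{\beta-1}L^{2\beta-2}\mathrm{diam}^{\beta-1}(S_2)$, as stated.

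The main point needing care is the legitimacy of invoking \textbf{C3b} at a point of one component relative to the other fiber's component. It rests on the $\delta/4$ proximity assumption placing the point inside the closed $\delta$-neighbourhood where the inequality holds. It also uses the fact that the two displayed inequalities in \textbf{C3b} chain directly into a distance--gradient inequality, so the value $f_*(y_{k+1})$ never has to be controlled. The remaining work is exponent bookkeeping: the constant $\left(\tfrac{\beta}{\beta-1}\right)^{-\beta/(\beta-1)}$ raised to the power $(\beta-1)/\beta$ becomes $\left(\tfrac{\beta}{\beta-1}\right)^{-1}$, and $C_\beta^{\beta/(\beta-1)}$ raised to the same power becomes $C_\beta$.
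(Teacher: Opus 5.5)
Your proposal is correct and follows the paper's argument: the \L{}ojasiewicz distance--gradient bound from \textbf{C3b} on the neighbouring fiber, a cross-fiber gradient perturbation bounded by $L\,\mathrm{dist}(y_{k+1},y_k)$, and the step bound $\mathrm{dist}(y_{k+1},y_k)\le h_1\norm{\nabla_y f(x_k,y_k)}_{y_k}\le h_1 L\,\mathrm{diam}(S_2)$. The differences are minor. You use the $L$-Lipschitz property of $\nabla_x f(x,\cdot)$ directly, where the paper integrates the mixed derivative $\nabla_y\nabla_x f$ along a geodesic (equivalent under \textbf{C4}); you cite the $\delta/4$ proximity to justify applying \textbf{C3b}, which the paper leaves implicit; and you use an inequality for $\mathrm{dist}(y_{k+1},y_k)$ where the paper writes an equality that tacitly assumes the step stays within the injectivity radius.
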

The proof of Lemma \ref{RGA-MGD1-lem0} is in Appendix \ref{sectionLyapunovconvergenceappendix}.
{Lemma \ref{RGA-MGD1-lem0} bounds the Hausdorff distance between the local minima connected components\footnote{We note that from here onward, the local minima connected components $\mathcal{S}^*(y_k)$ and $\mathcal{S}^*(y_{k+1})$ for any $k$ are understood in the sense of \textbf{C3}, i.e., these components are at most $\delta/4$ separated.} $\mathcal{S}^*(y_k)$ and $\mathcal{S}^*(y_{k+1})$ on the isomorphic fibers $ S_1 \times y_{k}$ and $ S_1 \times y_{k+1}$ respectively. In particular it shows that the distance function between critical sets on fibers $ S_1 \times y_k ,  S_1 \times y_{k+1} $ is H\"{o}lder continuous with respect to $\mathrm{dist}(y_k ,y_{k+1} )$ with exponent $\beta -1$. The proof comprises of the following steps: picking an arbitrary point $ x^*(y_{k+1})$ in the critical set $\mathcal{S}^*(y_{k+1})$, bounding the distance between $ x^*(y_{k+1})$ and $\mathcal{S}^*(y_{k})$ in terms of $\norm{\nabla_x f( x^*(y_{k+1}), y_{k})} $ using \textbf{C3} and finally locally linearizing the gradient $ \nabla_x f( x^*(y_{k+1}), y_{k}) $ on the fiber $ S_1 \times y_{k}$ with respect to the zero gradient $ \nabla_x f( x^*(y_{k+1}), y_{k+1}) $ on the fiber $ S_1 \times y_{k+1}$ via the mixed derivative operator $ \nabla_y \nabla_x f $. Since $f \in \mathcal{C}^2 (S_1 \times S_2)$, by Taylor's theorem the operator norm $\norm{ \nabla_y \nabla_x f}_{op} $ controls the local gradient perturbations across fibers. Lemma \ref{RGA-MGD1-lem0} supplies a local distance regularity condition between critical sets, with a Lipschitz bound for $\beta = 2$ case and a H\"{o}lder bound for $\beta \in (1,2)$ case. Locally, for small perturbations of $y_k$, a H\"{o}lder bound allows larger local variations in the critical sets compared to a Lipschitz bound. We however emphasize that these derived bounds may not necessarily be strict (both in the constant and the exponent) since we did not assume anything beyond the boundedness of distance between critical sets from \textbf{C3} (e.g., no continuity of the distance function was assumed). Further, it may not be possible to develop convergence guarantees in the absence of some form of continuity/sharper regularity of the distance between critical sets. Therefore, to get better regularity on the critical set variations and to derive convergence rates, we will later assume a Lipschitz bound on the distance between critical sets for any $\beta \in (1,2]$.      }

\begin{lem}\label{RGA-MGD1-lem2}
     Under \textbf{C1-C4} if the sequences $\{y_k\}_k$, $ \{x_{k+ j/J}\}_{k,j}$ generated from \eqref{RGA-MGD1}, \eqref{RGA-MGD2} stay bounded in $S_1 \times S_2$, then for any $k$, the following recursion holds:
    \begin{align}
        [f(x_{k+1}, y_{k+1}) - f_*( y_{k+1}) ] &  \le  \bigg(\prod_{j=0}^{J-1} \rho_{k +\frac{j}{J}} \bigg) [f(x_{k}, y_{k}) - f_*(y_{k}) ]  \nonumber \\ & +  L \, \mathrm{dist}(y_{k+1},y_k)\bigg(\prod_{j=0}^{J-1} \rho_{k +\frac{j}{J}} \bigg) \bigg( 2 + \frac{L^{\beta -1}}{C_{\beta}} \mathrm{dist}^{\beta -1}(y_{k+1},y_k) \bigg)
    \end{align}
    where $ f_*(y_{k+1}) : = \inf_{x \in \mathcal{S}^*(y_{k+1}) + \mathcal{B}_{\delta}(0)} f(x,y_{k+1})  $, $ f_*(y_{k}) : = \inf_{x \in \mathcal{S}^*(y_{k}) + \mathcal{B}_{\delta}(0)} f(x,y_{k})  $ and $$ \rho_{k+ \frac{j}{J}} = {\bigg( 1 - h_2(1 - \tfrac{Lh_2}{2}) C^{\tfrac{2}{\beta}}_{\beta}\bigg(f(x_{k+ \frac{j}{J}},y_{k+1}) - f_*(y_{k+1})\bigg)^{\tfrac{2}{\beta}-1}\bigg)} \in [0,1] $$ for any sufficiently small $h_2$. Furthermore, we also have the inequality:
    \begin{align*}
        \mathrm{dist}(x_{k+1}, \mathcal{S}^*(y_{k+1})) & {\le} \,\, \tau_k^{\frac{\beta -1}{\beta}} \mathrm{dist}^{\beta -1}(x_{k}, \mathcal{S}^*(y_k)) +   \tau_k^{\frac{\beta -1}{\beta}} \bigg( L^{\beta -1} \, \left(\frac{\beta}{\beta-1} \right) C_{\beta}^{-1} \, \mathrm{dist}^{\beta - 1}(y_{k+1},y_k) \bigg)^{\beta -1}  \, 
    \end{align*}
    where $ \tau_k := \frac{ L^{\beta} \prod_{j=0}^{J-1} \rho_{k+ \frac{j}{J}}}{C_{\beta}\left(\frac{\beta}{\beta-1} \right)^{\frac{-\beta}{\beta-1}}C_{\beta}^{\frac{1}{\beta-1}}}$ and $ \tau_k^{\frac{\beta -1}{\beta}}  < 1$ for sufficiently large $J$. \\
    Next, if $ \mathrm{dist}(x_{k}, \mathcal{S}^*(y_{k})) <  \frac{C^{1/\beta}_{\beta}}{L} -  h_1^{\beta - 1}\left(\frac{\beta}{\beta-1} \right) C_{\beta}^{-1} L^{2\beta - 2} \, \mathrm{diam}^{\beta - 1}(S_2)$ for $h_1$ sufficiently small and $L h_2 <1$, then we have that:
        \begin{align*}
        \mathrm{dist}(x_{k+1}, \mathcal{S}^*(y_{k+1}))
        & \le  C_{L,\beta}  \frac{\bigg( \mathrm{dist}(x_{k}, \mathcal{S}^*(y_k)) \bigg)^{\frac{\beta (\beta -1) }{2}}}{\bigg(   1+ J \,C'h_2  \bigg)^{\frac{\beta-1}{2}}}  + C_{L,\beta}  \frac{\bigg(  L^{\beta -1} \, \left(\frac{\beta}{\beta-1} \right) C_{\beta}^{-1} \, \mathrm{dist}^{\beta - 1}(y_{k+1},y_k) \bigg)^{\frac{\beta (\beta -1) }{2}}}{\bigg(   1+ J \,C'h_2  \bigg)^{\frac{\beta-1}{2}}} 
    \end{align*}
        where $C_{L,\beta} :=   \left(\frac{\beta}{\beta-1} \right) \frac{L ^{\frac{\beta (\beta -1)}{2}}}{ C_{\beta}^{\frac{\beta-1}{2} + \frac{1}{\beta }} } $, $ C' := (1 - \tfrac{Lh_2}{2}) C^{2/\beta}_{\beta} $.
\end{lem}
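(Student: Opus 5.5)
The plan is to treat one outer iteration as two moves: first the fiber switch from $S_1\times y_k$ to $S_1\times y_{k+1}$ at the fixed point $x_k$, then $J$ steps of gradient descent on the new fiber $S_1\times y_{k+1}$. Lemma \ref{contractionlema2} handles the descent phase. Lemma \ref{RGA-MGD1-lem0} together with \textbf{C4} handles the switch.

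For the function-value recursion, I first apply Lemma \ref{contractionlema2} on the fiber $S_1\times y_{k+1}$ with initialization $x_k$. This gives
\[
f(x_{k+1},y_{k+1})-f_*(y_{k+1})\le \Big(\prod_{j=0}^{J-1}\rho_{k+j/J}\Big)\,[f(x_k,y_{k+1})-f_*(y_{k+1})].
\]
Membership of $x_k$ in the $\delta/2$-basin of $\mathcal{S}^*(y_{k+1})$ is assumed through boundedness, and the $\delta/4$ drift in \textbf{C3c} makes this consistent. It remains to compare $f(x_k,y_{k+1})-f_*(y_{k+1})$ with $f(x_k,y_k)-f_*(y_k)$, which I split into two differences:
\begin{itemize}
\item[(i)] The Lipschitz bound in \textbf{C4} gives $|f(x_k,y_{k+1})-f(x_k,y_k)|\le L\,\mathrm{dist}(y_{k+1},y_k)$.
\item[(ii)] To bound $f_*(y_k)-f_*(y_{k+1})$, pick $x^*\in\mathcal{S}^*(y_{k+1})$ attaining $f_*(y_{k+1})$. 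Then
\[
f_*(y_k)\le f(x^*,y_k)\le f(x^*,y_{k+1})+L\,\mathrm{dist}(y_{k+1},y_k)+\big(f(x^*,y_k)-f(x^*(y_k),y_k)\big)\ \text{-type terms}.
\]
A cleaner route is to bound $f(x^*,y_k)-f_*(y_k)$ by the Łojasiewicz inequality in \textbf{C3b}: it is at most $\|\nabla_x f(x^*,y_k)\|^\beta/C_\beta$. Since $\nabla_x f(x^*,y_{k+1})=0$, the Lipschitz property of $\nabla_x f(x^*,\cdot)$ gives $\|\nabla_x f(x^*,y_k)\|\le L\,\mathrm{dist}(y_{k+1},y_k)$.
\end{itemize}
Collecting terms produces $2L\,\mathrm{dist}(y_{k+1},y_k)$ plus a term of the form $L^\beta\,\mathrm{dist}^\beta(y_{k+1},y_k)/C_\beta$. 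This is exactly $L\,\mathrm{dist}\cdot\frac{L^{\beta-1}}{C_\beta}\mathrm{dist}^{\beta-1}$, matching the stated form. The whole bracket is then multiplied by the product of the $\rho$'s.

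For the distance recursions, I apply the third inequality of Lemma \ref{contractionlema2} on the fiber $y_{k+1}$ with $J$ steps from $x_k$. This gives
\[
\mathrm{dist}^{\beta/(\beta-1)}(x_{k+1},\mathcal{S}^*(y_{k+1}))\le\tau_k\,\mathrm{dist}^\beta(x_k,\mathcal{S}^*(y_{k+1})).
\]
Raising to the power $(\beta-1)/\beta$ yields the factor $\tau_k^{(\beta-1)/\beta}$ times $\mathrm{dist}^{\beta-1}(x_k,\mathcal{S}^*(y_{k+1}))$. By the triangle inequality for the Hausdorff distance, $\mathrm{dist}(x_k,\mathcal{S}^*(y_{k+1}))\le \mathrm{dist}(x_k,\mathcal{S}^*(y_k))+\mathrm{dist}_H(\mathcal{S}^*(y_k),\mathcal{S}^*(y_{k+1}))$. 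Since $0<\beta-1\le1$, subadditivity gives $(a+b)^{\beta-1}\le a^{\beta-1}+b^{\beta-1}$. Inserting the Hölder bound of Lemma \ref{RGA-MGD1-lem0} for the Hausdorff term gives the second inequality.

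The third inequality follows the same route, using instead the last (polynomial-rate) estimate of Lemma \ref{contractionlema2} with $k=J$ and step $h_2$. That estimate requires $\mathrm{dist}(x_k,\mathcal{S}^*(y_{k+1}))<C_\beta^{1/\beta}/L$. The stated hypothesis guarantees this after adding the uniform Hausdorff bound from the second part of Lemma \ref{RGA-MGD1-lem0}. Subadditivity of $t\mapsto t^{\beta(\beta-1)/2}$ then applies, since $\beta(\beta-1)/2\le1$ for $\beta\le2$, and this splits the sum.

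The main obstacle is step (ii), controlling $f_*(y_k)-f_*(y_{k+1})$ with the right sign. The Łojasiewicz inequality has to be applied at a point of $\mathcal{S}^*(y_{k+1})$ viewed on the old fiber, and that point must lie in the $\delta$-neighbourhood of $\mathcal{S}^*(y_k)$, which \textbf{C3c} ensures. One also has to check that the infima defining $f_*$ coincide on the relevant basins. If an exact match of the constant $2$ fails, I would absorb the slack into the $L$-terms.
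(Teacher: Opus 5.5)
Your proposal follows the paper's proof essentially step for step. The shared steps are: Lemma \ref{contractionlema2} on the fiber $S_1\times y_{k+1}$ started at $x_k$; a triangle-inequality split into $|f(x_k,y_{k+1})-f(x_k,y_k)|\le L\,\mathrm{dist}(y_{k+1},y_k)$ plus a comparison of $f_*(y_{k+1})$ with $f_*(y_k)$ at a point $x^*\in\mathcal{S}^*(y_{k+1})$, via Lipschitzness in $y$ and \textbf{C3b} with $\|\nabla_x f(x^*,y_k)\|\le L\,\mathrm{dist}(y_{k+1},y_k)$; and then the Hausdorff triangle inequality, Lemma \ref{RGA-MGD1-lem0}, and subadditivity of $t\mapsto t^{\beta-1}$ and $t\mapsto t^{\beta(\beta-1)/2}$ for the two distance recursions. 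The only cosmetic difference is that the paper bounds $|f_*(y_{k+1})-f_*(y_k)|$ two-sidedly, whereas for the one-sided quantity $f_*(y_k)-f_*(y_{k+1})$ that you actually need, your first chain $f_*(y_k)\le f(x^*,y_k)\le f_*(y_{k+1})+L\,\mathrm{dist}(y_{k+1},y_k)$ already suffices, so the \L{}ojasiewicz term is harmless slack.
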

The proof of Lemma \ref{RGA-MGD1-lem2} is in Appendix \ref{sectionLyapunovconvergenceappendix}.
\begin{rem}\label{taukbounremark}
    For $\beta = 2$ the case reduces to the local Polyak \L{}ojasiewicz condition. Then 
    $$  \rho_{k+ \frac{j}{J}} := \rho = {\bigg( 1 - h_2(1 - \tfrac{Lh_2}{2}) C^{\tfrac{2}{\beta}}_{\beta}\bigg)} = {\bigg( 1 - h_2(1 - \tfrac{Lh_2}{2}) C_{\beta}\bigg)}< 1 \, \, \textbf{ for } h \textbf{ sufficiently small.} $$
    In particular, for $ h_2C_{\beta} <1$ (implied by $Lh_2<1$ since $ L > C_{\beta}$) we have the following bound on $ \tau_k$ for any $k$ :
    $$ \tau_k := \frac{ L^{2} \prod_{j=0}^{J-1} \rho_{k+ \frac{j}{J}}}{C_{\beta}\left(\frac{\beta}{\beta-1} \right)^{\frac{-\beta}{\beta-1}}C_{\beta}^{\frac{1}{\beta-1}}} = \frac{4 L^{2} \bigg( 1 - h_2(1 - \tfrac{Lh_2}{2}) C_{\beta}\bigg)^J }{C^2_{\beta}} \le  \frac{4 L^{2} \bigg( 1 - \frac{h_2 C_{\beta}}{2}\bigg)^J }{C^2_{\beta}}  \, \, .$$ 
\end{rem}

\begin{lem}\label{RGA-MGD1-lem3}
     Under \textbf{C1-C4}, if the sequences $\{y_k\}_k$, $ \{x_{k+ j/J}\}_{k,j}$ generated from \eqref{RGA-MGD1}, \eqref{RGA-MGD2} stay bounded in $S_1 \times S_2$, then for any $k$, the following recursion holds:
    \begin{align}
        \mathrm{dist}(y_{k+1},y^*(x_{k+1}))  &\le  \wt\gamma \, \mathrm{dist}(y_{k},y^*(x_k))   + \frac{ L}{\mu} \norm{x_{k+1} - x_k}
    \end{align}
    where $$ \wt\gamma = \sqrt{(1 -2 \mu h_1 + C_{\upsilon} L^2 h_1^2)} <1  \quad , \quad y^*(x_k) = \arg\max_{y \in S_2} f(x_k, y) $$ for any sufficiently small $h_1$, $  C_{\upsilon} =\frac{\sqrt{|\upsilon|} \, \mathrm{diam}(S_2) }{\tanh (\sqrt{|\upsilon|} \, \mathrm{diam}(S_2))} $ and $\upsilon$ is the lower bound of the sectional curvature of $\mathcal{M}$. 
\end{lem}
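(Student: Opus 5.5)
The plan is a triangle inequality that splits the error into an optimization part and a drift part:
\[
\mathrm{dist}(y_{k+1},y^*(x_{k+1})) \le \mathrm{dist}(y_{k+1},y^*(x_k)) + \mathrm{dist}(y^*(x_k),y^*(x_{k+1})).
\]
The first term is the error of one Riemannian gradient ascent step on the fixed fiber $x_k \times S_2$. The second term measures how far the maximizer drifts when $x$ moves from $x_k$ to $x_{k+1}$. The hypothesis that the iterates stay in $S_1\times S_2$ makes \textbf{C2} and \textbf{C4} available at every point we touch.

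\textbf{First term.} By \eqref{RGA-MGD1}, $y_{k+1}=\exp_{y_k}(h_1\nabla_y f(x_k,y_k))$, which is exactly one step of the ascent in Lemma~\ref{contractionlema1} with $x=x_k$ fixed. The initialization $y_k\in\mathrm{int}(x_k\times S_2)$ holds by the boundedness assumption. Lemma~\ref{contractionlema1} (equivalently, the one-step distance contraction in Theorem~\ref{boumalthm1} applied to $-f(x_k,\cdot)$) then gives
\[
\mathrm{dist}(y_{k+1},y^*(x_k))\le\wt\gamma\,\mathrm{dist}(y_k,y^*(x_k)),
\]
with $\wt\gamma<1$ for $h_1$ small.

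\textbf{Second term.} I will show that $x\mapsto y^*(x)$ is $L/\mu$-Lipschitz. Write $y_1=y^*(x_k)$ and $y_2=y^*(x_{k+1})$; both are interior maximizers by \textbf{C2}, so $\nabla_y f(x_k,y_1)=0$ and $\nabla_y f(x_{k+1},y_2)=0$.
\begin{itemize}
\item Apply the lower inequality of Lemma~\ref{contractionlema3_supplement} with the function $f(x_k,\cdot)$:
\[
\mu\,\mathrm{dist}^2(y_1,y_2)\le\langle\Psi_{y_2\to y_1}\nabla_y f(x_k,y_2)-\nabla_y f(x_k,y_1),\exp^{-1}_{y_1}y_2\rangle_{y_1}.
\]
This inequality is the monotonicity of $-\nabla_y f$ coming from $\mu$-strong geodesic concavity.
\item Since $\nabla_y f(x_k,y_1)=0$, the right-hand side equals $\langle\Psi_{y_2\to y_1}\nabla_y f(x_k,y_2),\exp^{-1}_{y_1}y_2\rangle$.
\item Since $\nabla_y f(x_{k+1},y_2)=0$, we have $\nabla_y f(x_k,y_2)=\nabla_y f(x_k,y_2)-\nabla_y f(x_{k+1},y_2)$. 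Both vectors live in the same tangent space $T_{y_2}\mathcal{M}$, so \textbf{C4} (the Lipschitz constant of $\nabla_y f(\cdot,y)$) bounds its norm by $L\|x_{k+1}-x_k\|$.
\item Parallel transport is an isometry and $\|\exp^{-1}_{y_1}y_2\|=\mathrm{dist}(y_1,y_2)$. Cauchy--Schwarz gives $\mu\,\mathrm{dist}^2\le L\|x_{k+1}-x_k\|\,\mathrm{dist}$, and dividing by $\mathrm{dist}$ (the case $\mathrm{dist}=0$ is trivial) yields
\[
\mathrm{dist}(y^*(x_k),y^*(x_{k+1}))\le\frac{L}{\mu}\|x_{k+1}-x_k\|.
\]
\end{itemize}
Combining the two bounds gives the stated recursion.

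\textbf{Main obstacle.} The delicate step is justifying Lemma~\ref{contractionlema3_supplement} at these particular points. Strong concavity in \textbf{C2} is only local, on $x\times S_2$, so I need $y_1$ and $y_2$ to lie in the geodesically convex set $S_2$ where it holds, with $\exp^{-1}_{y_1}$ well defined. This follows because $S_2$ is the closure of a normal neighborhood and both maximizers lie in its interior by \textbf{C2}. I also need the mixed Lipschitz bound in $x$ at a fixed $y_2$; this is precisely the \textbf{C4} assumption on $\nabla_y f(\cdot,y)$, or alternatively the bound $\|\nabla_y\nabla_x f\|\le L$ combined with the mean value theorem along the segment $[x_k,x_{k+1}]\subset S_1$, which lies in $S_1$ by convexity.
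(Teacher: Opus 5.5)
Your proposal is correct and is essentially the paper's own proof. It uses the same triangle inequality, Lemma~\ref{contractionlema1} for the one-step ascent contraction, and Lemma~\ref{contractionlema3_supplement} together with the vanishing gradients at the two interior maximizers and the \textbf{C4} Lipschitz bound on $\nabla_y f(\cdot,y)$ at $y^*(x_{k+1})$, which gives $\mathrm{dist}(y^*(x_k),y^*(x_{k+1}))\le \frac{L}{\mu}\|x_{k+1}-x_k\|$; the one nit is that the monotonicity inequality should be applied to $-f(x_k,\cdot)$, as that lemma's hypothesis states, which flips a sign your Cauchy--Schwarz step absorbs anyway.
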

The proof of Lemma \ref{RGA-MGD1-lem3} is in Appendix \ref{sectionLyapunovconvergenceappendix}.
Note that Lemmas \ref{RGA-MGD1-lem2}, \ref{RGA-MGD1-lem3} provide inexact contraction bounds for the $J$ step gradient descent \eqref{RGA-MGD2} and the Riemannian gradient ascent \eqref{RGA-MGD1} respectively. 

\begin{lem}\label{RGA-MGD1-lem4z}
     Under \textbf{C1-C4} and for $L h_2 <1$, $J \ge 1$, if the sequences $\{y_k\}_k$, $ \{x_{k+ j/J}\}_{k,j}$ generated from \eqref{RGA-MGD1}, \eqref{RGA-MGD2} stay bounded in $S_1 \times S_2$, then for any $k>0$, the following bound holds uniformly for any $J$ :
    \begin{align}
        \norm{x_k - x_{k-1}}  &\le    \frac{\beta \, L^{\beta-1} }{(1-\tfrac{Lh_2}{2})(\beta-1) C_{\beta}} \bigg( \mathrm{dist}^{\beta-1}(x_{k-1}, \mathcal{S}^*(y_{k-1})) \nonumber \\ & +  \mathrm{dist}_H^{\beta-1}(\mathcal{S}^*(y_{k-1}), \mathcal{S}^*(y_k)) \bigg) \, .
    \end{align}
\end{lem}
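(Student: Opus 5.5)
The plan is to regard the passage from $x_{k-1}$ to $x_k$ as $J$ plain gradient descent steps on the single fiber $S_1 \times y_k$. By \eqref{RGA-MGD2} these steps are taken with $y_k$ held fixed, so
\[
\norm{x_k - x_{k-1}} \le \sum_{l=0}^{J-1} \norm{x_{k-1+\frac{l+1}{J}} - x_{k-1+\frac{l}{J}}} \le \sum_{l=0}^{\infty} \norm{\tilde x_{l+1} - \tilde x_l},
\]
where $\{\tilde x_l\}$ is the infinite gradient descent sequence on $S_1 \times y_k$ started at $\tilde x_0 = x_{k-1}$ with step $h_2$. The first $J$ terms of this sequence coincide with the inner iterates, and the remaining terms are nonnegative, so the bound is automatically uniform in $J$.

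The next step is to apply the finite path length result, Lemma \ref{lem:length}, on the fiber $S_1 \times y_k$ with $\tilde x_0 = x_{k-1}$. Its hypotheses are $Lh_2<1$, which is assumed, and that $x_{k-1}$ lies in the basin $\mathcal{S}^*(y_k)+\mathcal{B}_{\delta/2}(0)$. The lemma then gives
\[
\norm{x_k-x_{k-1}} \le \frac{\beta L^{\beta-1}}{(1-\tfrac{Lh_2}{2})(\beta-1)C_\beta}\,\mathrm{dist}^{\beta-1}\big(x_{k-1},\mathcal{S}^*(y_k)\big).
\]

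The basin condition is where the main obstacle lies. I would get it from the boundedness assumption on the iterates together with \textbf{C3c}. Boundedness (as used in Lemma \ref{RGA-MGD1-lem2}) keeps $x_{k-1}$ within distance $\delta/4$ of $\mathcal{S}^*(y_{k-1})$, and \textbf{C3c} gives $\mathrm{dist}_H(\mathcal{S}^*(y_{k-1}),\mathcal{S}^*(y_k))<\delta/4$. The triangle inequality then places $x_{k-1}$ within $\delta/2$ of $\mathcal{S}^*(y_k)$. If the boundedness hypothesis only guarantees membership in the $\delta$-basin, I would instead rerun the argument of Lemma \ref{lem:length} directly. That argument only needs the \L{}ojasiewicz inequality \textbf{C3b} along the trajectory and the descent property of gradient descent. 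Since $f(\cdot,y_k)$ decreases along the trajectory and the $2\delta$-separation in \textbf{C3d} rules out escape to another critical component, the trajectory stays in the $\delta$-neighborhood where \textbf{C3b} holds.

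Finally, the distance is split with the triangle inequality for point-to-set distance:
\[
\mathrm{dist}(x_{k-1},\mathcal{S}^*(y_k)) \le \mathrm{dist}(x_{k-1},\mathcal{S}^*(y_{k-1})) + \mathrm{dist}_H(\mathcal{S}^*(y_{k-1}),\mathcal{S}^*(y_k)).
\]
Since $\beta-1\in(0,1]$, the map $t\mapsto t^{\beta-1}$ is concave and vanishes at $0$, hence subadditive: $(a+b)^{\beta-1}\le a^{\beta-1}+b^{\beta-1}$. Substituting this into the path-length bound yields exactly the stated inequality, with a constant independent of $J$.
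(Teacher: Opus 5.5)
Your main chain is the paper's proof. You bound $\norm{x_k-x_{k-1}}$ by the path length of gradient descent on the fiber $S_1\times y_k$ started at $x_{k-1}$, apply Lemma~\ref{lem:length}, split $\mathrm{dist}(x_{k-1},\mathcal{S}^*(y_k))$ with Lemma~\ref{lem:dist-hausdorff}, and finish with monotonicity and subadditivity of $t\mapsto t^{\beta-1}$. Padding the $J$ inner steps out to an infinite descent run is exactly why the bound is $J$-independent.

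The one place you go beyond the paper is checking the hypothesis $x_{k-1}\in\mathcal{S}^*(y_k)+\mathcal{B}_{\delta/2}(0)$ that Lemma~\ref{lem:length} needs. The paper invokes that lemma without checking this, and your check does not go through as written.

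\textbf{First argument.} The lemma only assumes the iterates lie in $S_1\times S_2$. Neither this hypothesis nor Lemma~\ref{RGA-MGD1-lem2} gives $\mathrm{dist}(x_{k-1},\mathcal{S}^*(y_{k-1}))<\delta/4$. The main theorems only ever establish $<\delta/2$, and after a drift of up to $\delta/4$ from \textbf{C3}c that places $x_{k-1}$ within $3\delta/4$ of $\mathcal{S}^*(y_k)$, not within $\delta/2$.

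\textbf{Fallback argument.} This is a non sequitur, for two reasons.
\begin{itemize}
\item Monotone decrease of $f(\cdot,y_k)$ does not keep discrete gradient steps inside the $\delta$-tube. Outside the tube $f(\cdot,y_k)$ is unconstrained and may lie below $f(x_{k-1},y_k)$, so a sublevel set need not stay in the tube.
\item The $2\delta$-separation in \textbf{C3}d concerns critical components only, and leaving the tube does not require approaching another critical point.
\end{itemize}
A genuine confinement argument, like the one in the proof of Lemma~\ref{contractionlema2}, needs $f(x_{k-1},y_k)-f_*(y_k)$ to lie below a $\delta$-dependent threshold. It also needs the step control $\norm{x_{j+1}-x_j}\le Lh_2\,\mathrm{dist}(x_j,\mathcal{S}^*(y_k))$. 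The threshold is a condition on function values that your distance bounds do not deliver.

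So either state the basin condition explicitly as part of the boundedness hypothesis, which is how the paper tacitly reads it, or drop the claim that it follows from \textbf{C3}.
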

\begin{proof}
    Applying Lemma \ref{lem:length} on the fiber $S_1 \times y_{k}$ with initial iterate $x_{k-1}$ yields the bound:
    \[
    \norm{x_k - x_{k-1}}  \le \sum_{j=0}^{J-1}\norm{x_{k-1+j/J}-x_{k-1+ (j+1)/J}} \le \frac{\beta \, L^{\beta-1} \mathrm{dist}^{\beta-1}(x_{k-1}, \mathcal{S}^*(y_k))}{(1-\tfrac{Lh_2}{2})(\beta-1) C_{\beta}}  \, .
    \]
    Applying triangle inequality on the right hand side using Lemma \ref{lem:dist-hausdorff} followed by sub-additivity of the concave function $t \mapsto t^{\beta-1}$ for $\beta \in (1,2]$, $t \ge 0$ gives:
    \[  \mathrm{dist}^{\beta-1}(x_{k-1}, \mathcal{S}^*(y_k)) \le  \mathrm{dist}^{\beta-1}(x_{k-1}, \mathcal{S}^*(y_{k-1})) +  \mathrm{dist}_H^{\beta-1}(\mathcal{S}^*(y_{k-1}), \mathcal{S}^*(y_k))  \]
    which completes the proof.
\end{proof}
Lemma \ref{RGA-MGD1-lem4z} controls the distance between the iterates $x_{k-1}$ and $x_k$ where $x_{k-1}$ is the initial iterate on the fiber $S_1 \times y_k$, and $x_{k}$ is the iterate generated by performing $J$ steps of gradient descent with initialization $x_{k-1}$ on the fiber $S_1 \times y_k$. The upper bound from Lemma \ref{RGA-MGD1-lem4z} is $J$ independent due to finite path length of gradient descent on the fiber $S_1 \times y_k$ as a consequence of the \L{}ojasiewicz inequality from \textbf{C3} (see Lemma \ref{lem:length}).   

\begin{lem}\label{RGA-MGD1-lem4}
     Under \textbf{C1-C4}, suppose the sequences $\{y_k\}_k$, $ \{x_{k+ j/J}\}_{k,j}$ generated from \eqref{RGA-MGD1}, \eqref{RGA-MGD2} stay bounded in $S_1 \times S_2$. Also suppose that $\mathrm{dist}_H(\mathcal{S}^*(y_{k-1}), \mathcal{S}^*(y_k)) $, for any $\beta \in (1,2]$, satisfies a Lipschitz estimate instead of the H\"{o}lder estimate from Lemma \ref{RGA-MGD1-lem0} as follows:
    \begin{align}
        \mathrm{dist}_H(\mathcal{S}^*(y_{k-1}), \mathcal{S}^*(y_k)) \le  D_{\beta} \, \mathrm{dist}(y_{k-1},y_k)
    \end{align} 
    for some constant with $ D_{\beta} >0$.
    Then for any $k$, the following recursions hold:
     \begin{enumerate}
         \item [a.] For $\beta = 2$, $L h_2 <1$, $J \ge 1$ we have
         \begin{align}
       \mathrm{dist}(y_{k+1},y_k)
           & \le  \, \bigg(\eta +  \frac{2 \, L^{3}  D_{\beta} h_1 }{(1-\tfrac{Lh_2}{2}) C_{\beta}} \bigg) \, \,\mathrm{dist}(y_{k-1},y_k)   +    \frac{2 \, L^{3}h_1 }{(1-\tfrac{Lh_2}{2}) C_{\beta}}  \mathrm{dist}(x_{k-1}, \mathcal{S}^*(y_{k-1})) .
    \end{align}
    \item [b.]  For arbitrary $\beta \in (1,2)$ with $Lh_2 <1$, $J \ge 1$ we have
    \begin{align}
        \mathrm{dist}(y_{k+1},y_k)
           & \le  \, \eta \, \,\mathrm{dist}(y_{k-1},y_k) +     \frac{\beta \, L^{\beta+1} h_1 }{(1-\tfrac{Lh_2}{2})(\beta-1) C_{\beta}} \bigg( \mathrm{dist}^{\beta-1}(x_{k-1}, \mathcal{S}^*(y_{k-1})) + \nonumber \\ &   D_{\beta}^{(\beta - 1)}  \, \mathrm{dist}^{(\beta - 1)}(y_{k-1},y_k) \bigg) 
    \end{align}
     \end{enumerate}
    where $ \eta  = 1-h_1\mu+\frac{3}{4}\Theta L^2 h_1^2  + \mathcal{O}(h_1^3) $ and $ \bigg(\eta +  \frac{2 \, L^{3}  D_{\beta} h_1 }{(1-\tfrac{Lh_2}{2}) C_{\beta}} \bigg) <1$ for sufficiently small $h_1$ provided $D_{\beta}  < \frac{(1-\tfrac{Lh_2}{2}) C_{\beta}} {2 \, L^{3}   }\mu $.
\end{lem}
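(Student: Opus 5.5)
We compare consecutive ascent steps through the map $G(x,y)=\exp_y(h_1\nabla_y f(x,y))$ from \eqref{RGA-MGD1}. Since $y_{k+1}=G(x_k,y_k)$ and $y_k=G(x_{k-1},y_{k-1})$, we insert the intermediate point $G(x_{k-1},y_k)$ and apply the triangle inequality:
\begin{align*}
\mathrm{dist}(y_{k+1},y_k) \le \mathrm{dist}\big(G(x_k,y_k),G(x_{k-1},y_k)\big) + \mathrm{dist}\big(G(x_{k-1},y_k),G(x_{k-1},y_{k-1})\big).
\end{align*}
The second term compares two ascent steps on the same fiber $x_{k-1}\times S_2$. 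This is exactly the one-step contraction of Lemma \ref{contractionlema3}, which yields $\eta\,\mathrm{dist}(y_k,y_{k-1})$. For this to apply, the contraction argument of that lemma must hold for an arbitrary pair of base points, not only for consecutive iterates. I would check this by rerunning the Jacobi field and comparison argument behind Lemma \ref{contractionlema3} for a general pair in $S_2$.

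For the first term, both points are exponentials taken at the same base point $y_k$. By the Lipschitz continuity of $\exp_y(\cdot)$ in \textbf{C4}, it is at most $L h_1\|\nabla_y f(x_k,y_k)-\nabla_y f(x_{k-1},y_k)\|$. The Lipschitz continuity of $\nabla_y f(\cdot,y)$ then bounds this by $L^2 h_1\|x_k-x_{k-1}\|$. The constants work out if the exponential map's Lipschitz constant is taken as at most $L$; otherwise one writes $L\cdot L$ loosely, as the paper does.

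Next we bound $\|x_k-x_{k-1}\|$ by Lemma \ref{RGA-MGD1-lem4z}, which is uniform in $J$. Substituting the assumed Lipschitz drift $\mathrm{dist}_H(\mathcal{S}^*(y_{k-1}),\mathcal{S}^*(y_k))\le D_\beta\,\mathrm{dist}(y_{k-1},y_k)$ gives
\begin{align*}
L^2h_1\|x_k-x_{k-1}\| \le \frac{\beta L^{\beta+1}h_1}{(1-\tfrac{Lh_2}{2})(\beta-1)C_\beta}\Big(\mathrm{dist}^{\beta-1}(x_{k-1},\mathcal{S}^*(y_{k-1})) + D_\beta^{\beta-1}\mathrm{dist}^{\beta-1}(y_{k-1},y_k)\Big).
\end{align*}
Adding the $\eta\,\mathrm{dist}(y_{k-1},y_k)$ term from the fiberwise contraction gives part b directly.

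For part a, set $\beta=2$. Then $\tfrac{\beta}{\beta-1}=2$, and all exponents $\beta-1$ equal $1$. The $D_\beta$ term becomes linear in $\mathrm{dist}(y_{k-1},y_k)$ and can be absorbed into the contraction coefficient, which becomes $\eta+\tfrac{2L^3D_\beta h_1}{(1-Lh_2/2)C_\beta}$. The remaining term is $\tfrac{2L^3h_1}{(1-Lh_2/2)C_\beta}\,\mathrm{dist}(x_{k-1},\mathcal{S}^*(y_{k-1}))$, as stated.

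For the final claim, note that $\eta=1-h_1\mu+O(h_1^2)$. The combined coefficient therefore equals $1-h_1\big(\mu-\tfrac{2L^3D_\beta}{(1-Lh_2/2)C_\beta}\big)+O(h_1^2)$. This is $<1$ for small $h_1$ exactly when $D_\beta<\tfrac{(1-Lh_2/2)C_\beta}{2L^3}\mu$.

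The main obstacle is the contraction step. Lemma \ref{contractionlema3} is phrased for successive iterates on a fixed fiber, but here the two points $y_k$ and $y_{k-1}$ are not successive iterates for the fixed $x_{k-1}$. I must confirm that its proof only uses that both points lie in $S_2$, together with strong concavity (Lemma \ref{contractionlema3_supplement}) and the curvature bound $\Theta$. A further check is that $G(x_{k-1},y_k)$ remains in the region where \textbf{C2} and \textbf{C4} hold; this follows from the boundedness hypothesis on the iterates and small $h_1$.
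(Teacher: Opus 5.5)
Your proposal is correct and is essentially the paper's proof: a triangle inequality through an intermediate ascent step, then the fiberwise contraction of Lemma~\ref{contractionlema3} on one term, and on the other the \textbf{C4} Lipschitz bounds for $\exp_y(\cdot)$ and $\nabla_y f(\cdot,y)$ followed by Lemma~\ref{RGA-MGD1-lem4z} and the assumed Lipschitz drift. The only difference is that you insert $G(x_{k-1},y_k)$ where the paper inserts $G(x_k,y_{k-1})$, so it contracts on the fiber $x_k\times S_2$ rather than $x_{k-1}\times S_2$; the constants are identical, and your caveat that the contraction must hold for an arbitrary pair of points in $S_2$, not just consecutive iterates, applies equally to the paper's use of Lemma~\ref{contractionlema3}.
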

 The proof of Lemma \ref{RGA-MGD1-lem4} is in Appendix \ref{sectionLyapunovconvergenceappendix}.

\subsection{Main theorems}\label{sectionmaintheorems}

\begin{theorem}\label{convergenceratethm_exactPL}
 Under \textbf{C1-C4}, for $\beta =2$ suppose the iteration \eqref{RGA-MGD1} - \eqref{RGA-MGD2} is initialized with $ (x_0, y_0) \in \text{int}(S_1 \times S_2)$ with $  \mathrm{dist}(x_0, \mathcal{S}^*(y_0)) \le \frac{\delta}{2} \,  $. Suppose $Lh_2 <1$, $h_1\ll 1$ with $\frac{2 \, L^{3}h_1 }{(1-\tfrac{Lh_2}{2}) C_{\beta}} \ll 1 $ and $J>0$ satisfies the following bound:
\begin{align*}
    \frac{2L}{C_{\beta}} \cdot \frac{2 L ( 1 -  \frac{h_2 C_{\beta}}{2} )^{J/2} }{C_{\beta}}  &<\frac{2 \, L^{3}h_1 }{(1-\tfrac{Lh_2}{2}) C_{\beta}}  \, .
\end{align*}
Also suppose that $\mathrm{dist}_H(\mathcal{S}^*(y_{k-1}), \mathcal{S}^*(y_k)) $, for all $k \ge 0$, satisfies a Lipschitz estimate from Lemma \ref{RGA-MGD1-lem0} with a smaller constant $ D_{\beta}$ with $ 0< D_{\beta}  < \frac{(1-\tfrac{Lh_2}{2}) C_{\beta} \mu}{2 \, L^{3}  } $ as follows:
    \begin{align*}
        \mathrm{dist}_H(\mathcal{S}^*(y_{k-1}), \mathcal{S}^*(y_k)) \le  D_{\beta} \, \mathrm{dist}(y_{k-1},y_k).
    \end{align*}
Then there exists $h_1 >0$ such that the iteration $\{(x_k, y_k)\}_k$ from \eqref{RGA-MGD1} - \eqref{RGA-MGD2} converges to a basin saddle point $ (x^*,y^*) \in \text{int}(S_1 \times S_2)$ of $f(\cdot , \cdot )$ at a linear rate $\mathcal{O}( a^k)$ where 
 $$ a = \max \bigg\{ \bigg(1 - \frac{h_1}{4}\bigg(\mu - \frac{2 \, L^{3}  D_{\beta} }{(1-\tfrac{Lh_2}{2}) C_{\beta}} \bigg)    \bigg), \sqrt{(1 -2 \mu h_1 + C_{\upsilon} L^2 h_1^2)} \bigg\} \in (0,1)  \, . $$
\end{theorem}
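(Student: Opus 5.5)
The plan is to build a coupled linear recursion for the error vector
\[
e_k := \bigl(\mathrm{dist}(y_k,y_{k-1}),\ \mathrm{dist}(x_k,\mathcal{S}^*(y_k)),\ \mathrm{dist}(y_k,y^*(x_k))\bigr),
\]
show that it contracts at rate $a$, and then translate this into convergence of $V_\delta(x_k,y_k)$ and of the iterates to a basin saddle point.

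\textbf{Step 1: the iterates stay in the basin.} First I check by induction that the iterates remain in $\mathrm{int}(S_1\times S_2)$ with $\mathrm{dist}(x_k,\mathcal{S}^*(y_k))\le \delta/2$. This is what licenses Lemmas \ref{RGA-MGD1-lem2}--\ref{RGA-MGD1-lem4}.
\begin{itemize}
\item Lemma \ref{contractionlema1} with $y_0$ interior, together with the Lipschitz drift of $y^*(\cdot)$ (of order $L/\mu$), keeps $y_k$ in a geodesic ball around the maximizers in the interior of $S_2$.
\item By the assumed Lipschitz estimate and Lemma \ref{RGA-MGD1-lem0}, the drift $\mathrm{dist}_H(\mathcal{S}^*(y_k),\mathcal{S}^*(y_{k+1}))\le D_\beta\,\mathrm{dist}(y_k,y_{k+1}) = \mathcal{O}(h_1 L D_\beta)$ is small.
\item Lemma \ref{contractionlema2} (GD on a fiber started within $\delta/2$ stays within $\delta$), combined with the $J$-step contraction, then restores the bound $\delta/2$.
\end{itemize}

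\textbf{Step 2: the $x$-recursion.} For $\beta=2$, Lemma \ref{RGA-MGD1-lem2} gives
\[
\mathrm{dist}(x_{k+1},\mathcal{S}^*(y_{k+1}))\le \tau_k^{1/2}\bigl(\mathrm{dist}(x_k,\mathcal{S}^*(y_k)) + 2L D_\beta C_\beta^{-1}\,\mathrm{dist}(y_{k+1},y_k)\bigr).
\]
The $2L/C_\beta$ prefactor comes from Lemma \ref{RGA-MGD1-lem0} specialized to $\beta=2$, with the Lipschitz replacement. By Remark \ref{taukbounremark},
\[
\tau_k^{1/2}\le \frac{2L}{C_\beta}\Bigl(1-\tfrac{h_2C_\beta}{2}\Bigr)^{J/2}.
\]
The hypothesis on $J$ makes $\tau_k^{1/2}\cdot 2L/C_\beta$ smaller than $c:=2L^3h_1/((1-Lh_2/2)C_\beta)$. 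So $\tau_k^{1/2} = \mathcal{O}(h_1)$, and the $x$-distance is slaved to $\mathrm{dist}(y_{k+1},y_k)$ with an $\mathcal{O}(h_1)$ coefficient.

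\textbf{Step 3: the $y$-increment recursion.} Lemma \ref{RGA-MGD1-lem4}(a) gives
\[
\mathrm{dist}(y_{k+1},y_k)\le (\eta + cD_\beta)\,\mathrm{dist}(y_k,y_{k-1}) + c\,\mathrm{dist}(x_{k-1},\mathcal{S}^*(y_{k-1})).
\]
Here $\eta+cD_\beta = 1-h_1(\mu - 2L^3D_\beta/((1-Lh_2/2)C_\beta)) + \mathcal{O}(h_1^2)$, which is $<1$ by the bound on $D_\beta$.

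Write $u_k=\mathrm{dist}(y_k,y_{k-1})$ and $v_k=\mathrm{dist}(x_k,\mathcal{S}^*(y_k))$. Substituting Step 2 gives a $2\times2$ nonnegative system
\[
(u_{k+1},v_{k+1})\le M\,(u_k,v_k)\quad\text{(delayed by one index),}\qquad M_{11}=1-h_1\mu',\quad M_{12},M_{21},M_{22}=\mathcal{O}(c)\ \text{or}\ \mathcal{O}(h_1c).
\]
I bound its spectral radius, via a weighted max norm or Perron--Frobenius, by $1-\tfrac{h_1}{4}\mu'$ for $h_1$ small and $c\ll1$. This is the main obstacle: the off-diagonal couplings are only $\mathcal{O}(h_1)$ while the diagonal gap is also $\mathcal{O}(h_1)$. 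So I must verify that the products of the couplings are $\mathcal{O}(h_1c)\cdot\mathcal{O}(h_1)$, genuinely second order relative to $h_1\mu'$. This is where $c\ll1$ and the $J$ condition are used, and the factor $1/4$ absorbs the slack. It yields $u_k,v_k=\mathcal{O}((1-h_1\mu'/4)^k)$.

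\textbf{Step 4: convergence to a basin saddle point.} Since $\sum u_k<\infty$, $\{y_k\}$ is Cauchy and $y_k\to y^*\in\mathrm{int}(S_2)$. By Lemma \ref{RGA-MGD1-lem4z}, $\|x_k-x_{k-1}\|\lesssim v_{k-1}+D_\beta u_{k-1}$ decays geometrically, so $x_k\to x^*$. Lemma \ref{RGA-MGD1-lem3} gives
\[
\mathrm{dist}(y_{k+1},y^*(x_{k+1}))\le\wt\gamma\,\mathrm{dist}(y_k,y^*(x_k)) + (L/\mu)\|x_{k+1}-x_k\|.
\]
Unrolling this yields a rate $\max\{\text{previous rate},\wt\gamma\}=a$, which accounts for the second entry of $a$.

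Finally I bound $V_\delta(x_k,y_k)$. By $L$-smoothness with zero gradient at $y^*(x_k)$, the first term of $V_\delta$ is $\lesssim L\,\mathrm{dist}^2(y_k,y^*(x_k))$. By $L$-Lipschitzness of $f$, the second term is $\lesssim L\,v_k$. Hence $V_\delta(x_k,y_k)=\mathcal{O}(a^k)$. By continuity, $x^*\in\mathcal{S}^*(y^*)$ (the sets are closed and the Hausdorff drift is Lipschitz) and $y^*=y^*(x^*)$. Lemma \ref{RGA-MGD1-lem1} then identifies $(x^*,y^*)$ as a basin saddle point.
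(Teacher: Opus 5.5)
Your proposal follows the paper's proof essentially step for step. The shared steps are:
\begin{itemize}
\item invariance of the $\delta/2$-basin by induction, using Lemma~\ref{RGA-MGD1-lem2} and Remark~\ref{taukbounremark};
\item the index-shifted $2\times 2$ nonnegative system built from \textbf{I1}--\textbf{I2}, with spectral radius at most $1-\frac{h_1}{4}\bigl(\mu - \frac{2L^3D_\beta}{(1-\frac{Lh_2}{2})C_\beta}\bigr)$;
\item Cauchy convergence via Lemma~\ref{RGA-MGD1-lem4z};
\item the $\wt\gamma$-recursion of Lemma~\ref{RGA-MGD1-lem3}, unrolled as in Lemma~\ref{RGA-MGD1-lem4zz};
\item $V_\delta\to 0$ together with Lemma~\ref{RGA-MGD1-lem1} to identify the limit as a basin saddle point.
\end{itemize}

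The differences are cosmetic. You bound the spectral radius with a weighted max-norm/Perron--Frobenius argument, whereas the paper invokes the eigenvalue-perturbation theorem; if anything, yours is more explicit about how small $h_1$ must be. The coefficient $2LD_\beta/C_\beta$ in your Step~2 double-counts the drift bound: use either $2L/C_\beta$ from Lemma~\ref{RGA-MGD1-lem0} or $D_\beta$ from the Lipschitz hypothesis, not their product.
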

The proof of Theorem \ref{convergenceratethm_exactPL} is in Appendix \ref{sectionmaintheoremsappendix}.

\begin{rem}\label{rmk:Dbeta}
The hypothesis $D_\beta < (1-\tfrac{Lh_2}{2})C_\beta\mu/(2L^3)$ in Theorem \ref{convergenceratethm_exactPL} is a smallness
condition, but it is not vacuous. It enters the analysis only through the
requirement
$\eta + 2L^3D_\beta h_1/\big((1-\tfrac{Lh_2}{2})C_\beta\big) < 1$, with $\eta = 1-h_1\mu+\tfrac34\Theta L^2h_1^2+O(h_1^3)$.
The drift of the critical set must not destroy the contraction of the ascent
step \eqref{RGA-MGD1}. Some condition of this shape is unavoidable, since if $\mathcal{S}^*(y)$ recedes
faster than the iteration contracts there is no fixed point to converge to.
Moreover $D_\beta$ bounds the rate at which $\mathcal{S}^*(y)$ varies with $y$ and not the
distance it travels, so the hypothesis does not require $\mathcal{S}^*(y)$ to be invariant or
nearly so. We do not claim the constant $2L^3/C_\beta$ is sharp.
\end{rem}

\begin{theorem}\label{convergenceratethm_betaPL}
 Under \textbf{C1-C4}, for $\beta \in (1,2)$ suppose the iteration \eqref{RGA-MGD1} - \eqref{RGA-MGD2} is initialized with $ (x_0, y_0) \in \text{int}(S_1 \times S_2)$ and for some large enough $K_0 \gg 1$ we have that $$  \mathrm{dist}(x_{K_0}, \mathcal{S}^*(y_{K_0})) \le \min \bigg \{  \,\frac{\delta}{2},  \frac{C^{1/\beta}_{\beta}}{L} -  h_1^{\beta - 1} \left(\frac{\beta}{\beta-1} \right) C_{\beta}^{-1} L^{2\beta - 2}\, \mathrm{diam}^{\beta - 1}(S_2) \bigg\} \, , $$ with $h_1>0$ sufficiently small such that
    $ h_1^{\beta - 1}\left(\frac{\beta}{\beta-1} \right) C_{\beta}^{-1} L^{2\beta - 2} \, \mathrm{diam}^{\beta - 1}(S_2) < \frac{C^{1/\beta}_{\beta}}{L} $ and $ \frac{1+\eta}{2} \le 1-\frac{\mu h_1 }{3}$ where $\eta  = 1-h_1\mu+\frac{3}{4}\Theta L^2 h_1^2  + \mathcal{O}(h_1^3)$. Suppose for all $k$ we have the parameter $J:= J(k) = k^{\frac{2\alpha}{(\beta -1)^2}} $ for any constant $\alpha > {1-(\beta -1 )^2\theta} $ , $\theta = \frac{\beta (\beta -1)}{2} \in (0,1) $ and $J(K_0)$ satisfies the lower bound 
    $$ \bigg(   1+ J(K_0) \,C'h_2  \bigg)^{\frac{\beta-1}{2}} > \frac{2\Bigg(C_{L,\beta} \, \delta^{\frac{\beta (\beta -1) }{2}} +   C_{L,\beta} \bigg(  L^{\beta -1} \, \left(\frac{\beta}{\beta-1} \right) C_{\beta}^{-1} \, \bigg(\mathrm{diam}(S_2) \bigg)^{\beta - 1} \bigg)^{\frac{\beta (\beta -1) }{2}} \Bigg)}{\delta}  \, $$
     where $0 < h_2 < 1/L$, $C_{L,\beta} :=   \left(\frac{\beta}{\beta-1} \right) \frac{L ^{\frac{\beta (\beta -1)}{2}}}{ C_{\beta}^{\frac{\beta-1}{2} + \frac{1}{\beta }} } $, $ C' := (1 - \tfrac{Lh_2}{2}) C^{2/\beta}_{\beta} \le C^{2/\beta}_{\beta} $. Also suppose that $\mathrm{dist}_H(\mathcal{S}^*(y_{k-1}), \mathcal{S}^*(y_k)) $, for all $k \ge 0$, satisfies a Lipschitz estimate for any $\beta \in (1,2)$ instead of the H\"{o}lder estimate from Lemma \ref{RGA-MGD1-lem0} as follows:
    \begin{align*}
        \mathrm{dist}_H(\mathcal{S}^*(y_{k-1}), \mathcal{S}^*(y_k)) \le  D_{\beta} \, \mathrm{dist}(y_{k-1},y_k).
    \end{align*}
    Further, let the iterates  $\{(x_k, y_k)\}_k$ from \eqref{RGA-MGD1} - \eqref{RGA-MGD2} satisfy the following growth condition uniformly for all $k \ge K_0$ :
     $$\mathrm{dist}_H(\mathcal{S}^*(y_{k+1}), \mathcal{S}^*(y_k))  = \mathcal{O}(\mathrm{dist}(x_{k}, \mathcal{S}^*(y_{k}))) \, .$$ Then the iteration $\{(x_k, y_k)\}_k$ from \eqref{RGA-MGD1} - \eqref{RGA-MGD2} converges to a basin saddle point $ (x^*,y^*) \in \text{int}(S_1 \times S_2)$ of $f(\cdot, \cdot)$ at a rate $\mathcal{O}( k^{- \frac{\alpha}{1-(\beta -1 )^2\theta}})$.  
\end{theorem}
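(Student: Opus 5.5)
We aim to run a coupled recursion for the error triple
\[
a_k := \mathrm{dist}(x_k,\mathcal{S}^*(y_k)), \qquad b_k := \mathrm{dist}(y_{k+1},y_k), \qquad c_k := \mathrm{dist}(y_k,y^*(x_k)),
\]
starting from the index $K_0$. We then translate decay of these quantities into decay of the Lyapunov function $V_\delta(x_k,y_k)$, and hence of the duality gap, using Lemma \ref{RGA-MGD1-lem1}.

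\textbf{Step 1 (invariance).} We first show by induction that for all $k\ge K_0$ the iterates remain in $(\mathcal{S}^*(y_k)+\mathcal{B}_{\delta/2}(0))\times S_2$ and satisfy the threshold $a_k< C_\beta^{1/\beta}/L - h_1^{\beta-1}(\tfrac{\beta}{\beta-1})C_\beta^{-1}L^{2\beta-2}\mathrm{diam}^{\beta-1}(S_2)$. This allows the third estimate of Lemma \ref{RGA-MGD1-lem2} to be applied at every step.

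For the induction step we bound $a_k\le\delta$ and $\mathrm{dist}(y_{k+1},y_k)\le \mathrm{diam}(S_2)$ in the right-hand side of that estimate. Since $J(k)\ge J(K_0)$ and $J$ is increasing, the assumed lower bound on $(1+J(K_0)C'h_2)^{(\beta-1)/2}$ gives $a_{k+1}<\delta/2$. Lemma \ref{contractionlema2} then keeps the inner GD iterates inside the $\delta$-basin, and \textbf{C2} together with Lemma \ref{contractionlema1} keeps $y_k$ interior.

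\textbf{Step 2 (decay of $a_k$).} We apply Lemma \ref{RGA-MGD1-lem2} together with the Lipschitz drift assumption and the growth condition $\mathrm{dist}_H(\mathcal{S}^*(y_{k+1}),\mathcal{S}^*(y_k))=\mathcal{O}(a_k)$. The second term of the third estimate is controlled through the intermediate H\"older step of Lemma \ref{RGA-MGD1-lem0}: we replace that quantity by the Hausdorff drift, which is $\mathcal{O}(a_k)$. This yields
\[
a_{k+1}\le \frac{C\,a_k^{\theta}}{(1+J(k)C'h_2)^{(\beta-1)/2}}\lesssim a_k^{\theta}\,k^{-\alpha/(\beta-1)},
\]
with $\theta=\beta(\beta-1)/2$ and $J(k)=k^{2\alpha/(\beta-1)^2}$.

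Taking logarithms, this is a recursion of the form $u_{k+1}\le \theta u_k - \tfrac{\alpha}{\beta-1}\log k + C$. Unrolling it, or guessing the ansatz $a_k\le Ak^{-p}$ and checking that it propagates, requires $p\le \theta p+\alpha/(\beta-1)$. The exponent in the statement, $\alpha/(1-(\beta-1)^2\theta)$, should be what comes out once the $(\beta-1)$-powers entering through $\mathrm{dist}^{\beta-1}$ in Lemmas \ref{RGA-MGD1-lem4z} and \ref{RGA-MGD1-lem4} are tracked consistently. The condition $\alpha>1-(\beta-1)^2\theta$ makes this exponent exceed $1$, so $\sum_k a_k^{\beta-1}$ and related sums converge.

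\textbf{Step 3 (ascent side).} Lemma \ref{RGA-MGD1-lem4}(b) gives
\[
b_k\le \eta\, b_{k-1}+c\,h_1\bigl(a_{k-1}^{\beta-1}+D_\beta^{\beta-1}b_{k-1}^{\beta-1}\bigr).
\]
Using the growth condition, we absorb the $b^{\beta-1}$ term; the hypothesis $\tfrac{1+\eta}{2}\le 1-\mu h_1/3$ provides the room to absorb this perturbation. We then conclude that $b_k$ decays at the polynomial rate inherited from $a_k$, since a geometric contraction driven by a polynomial forcing decays like that forcing.

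Next, Lemma \ref{RGA-MGD1-lem3} combined with Lemma \ref{RGA-MGD1-lem4z} gives $c_{k+1}\le\wt\gamma\, c_k+\tfrac{L}{\mu}\mathcal{O}(a_k^{\beta-1}+b_k^{\beta-1})$, so $c_k$ also decays polynomially. Summability of $b_k$ and of $\|x_{k+1}-x_k\|$ makes $(x_k,y_k)$ Cauchy, so the iterates converge to a limit $(x^*,y^*)$. By continuity, $x^*\in\mathcal{S}^*(y^*)$ and $y^*=y^*(x^*)$.

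\textbf{Step 4 (Lyapunov function).} We split $V_\delta(x_k,y_k)$ into two parts. The ascent gap is at most $\tfrac{L}{2}c_k^2$ by smoothness at the interior maximizer. The descent gap is at most $\tfrac{L}{2}a_k^2$, or at most $L a_k$ by the Lipschitz bound. Lemma \ref{RGA-MGD1-lem1} then identifies the limit as a basin saddle point and gives the stated rate.

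\textbf{Main obstacle.} The hardest part is Step 2/3: extracting exactly the exponent $\alpha/(1-(\beta-1)^2\theta)$ from the coupled nonlinear recursion, where the sublinear powers $\theta<1$ and $\beta-1<1$ compete with the growing $J(k)$. We would first try the ansatz $a_k\le Ak^{-p}$ and $b_k\le Bk^{-p}$, verify it inductively by comparing exponents, and choose $K_0$ large enough to dominate the constants.
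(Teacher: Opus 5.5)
Your route differs from the paper's in the key step and is viable, but as written the rate step is unfinished and rests on a wrong expectation. The paper keeps the H\"older drift bound of Lemma~\ref{RGA-MGD1-lem0} inside the descent recursion. With $\hat a_k:=\mathrm{dist}^{\beta-1}(x_k,\mathcal{S}^*(y_k))$ and $b_k=\mathrm{dist}(y_{k+1},y_k)$, it raises the third estimate of Lemma~\ref{RGA-MGD1-lem2} to the power $\beta-1$ and gets $\hat a_k\lesssim k^{-\alpha}\bigl(\hat a_{k-1}^{\theta}+b_{k-1}^{(\beta-1)^2\theta}\bigr)$. It uses the growth condition in the ascent recursion, $b_k\le\eta b_{k-1}+Ch_1\hat a_{k-1}$, and later for $\norm{x_k-x_{k-1}}$, but not in the descent recursion. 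It then closes the coupled system with $F_k=\hat a_k+\lambda b_k$, $F_k\le\frac{1+\eta}{2}F_{k-1}+Dk^{-\alpha}F_{k-1}^{(\beta-1)^2\theta}$, by an induction that needs $\lambda$, the hypothesis $\frac{1+\eta}{2}\le1-\frac{\mu h_1}{3}$, and large $K_0,C_0$. The stated exponent is exactly the balance $F^{1-(\beta-1)^2\theta}\sim k^{-\alpha}$. It is produced by the small power $(\beta-1)^2\theta$ on the drift term, not by the $(\beta-1)$-powers in Lemmas~\ref{RGA-MGD1-lem4z} and~\ref{RGA-MGD1-lem4}. You instead replace the H\"older drift bound in the third estimate of Lemma~\ref{RGA-MGD1-lem2} by the growth condition, $\mathrm{dist}(x_k,\mathcal{S}^*(y_{k+1}))\le(1+C_2)a_k$. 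This decouples the descent side entirely, is legitimate under the theorem's hypotheses, and is simpler.

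The gap is that you never extract an exponent from your recursion, and the one it gives is not $\alpha/(1-(\beta-1)^2\theta)$. From $a_{k+1}\le Ma_k^{\theta}k^{-\alpha/(\beta-1)}$, the ansatz $a_k\le Ak^{-q}$ with $q=\frac{\alpha}{(\beta-1)(1-\theta)}$ propagates, because $\theta q+\frac{\alpha}{\beta-1}=q$. The inductive step only needs $MA^{\theta}(1+1/k)^{q}\le A$, which holds once $A^{1-\theta}\ge M2^{q}$ and $A\ge a_{K_0}K_0^{q}$; $\theta<1$ is what lets a large $A$ work. No bookkeeping of $(\beta-1)$-powers turns this into the stated exponent. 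What you must add is a comparison. The quantities $a_k^{\beta-1}$, $\norm{x_{k+1}-x_k}$, $b_k$, $c_k$ and $V_\delta(x_k,y_k)$ are all $\mathcal{O}(k^{-\alpha/(1-\theta)})$. Since $1-\theta<1-(\beta-1)^2\theta$, we get $\frac{\alpha}{1-\theta}>\frac{\alpha}{1-(\beta-1)^2\theta}>1$, which gives summability and the stated rate a fortiori (indeed a sharper one).

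Second, in Step~3 you must use the growth condition, not $D_\beta$, wherever the drift enters with power $\beta-1$, namely in inequality \textbf{I1'} and in Lemma~\ref{RGA-MGD1-lem4z}. The term $D_\beta^{\beta-1}b_{k-1}^{\beta-1}$ cannot be absorbed into $\eta b_{k-1}$, because $b^{\beta-1}/b\to\infty$ as $b\to0$. So $b_k\le\eta b_{k-1}+ch_1D_\beta^{\beta-1}b_{k-1}^{\beta-1}$ yields only boundedness, not decay. Likewise, a forcing term $b_k^{\beta-1}=\mathcal{O}(k^{-\alpha(\beta-1)/(1-\theta)})$ in your $c_k$-recursion or in $\norm{x_{k+1}-x_k}$ need not be summable, and can decay more slowly than the claimed rate. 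Once every forcing term is $\mathcal{O}(a^{\beta-1})$, you only need $\eta<1$; the hypothesis $\frac{1+\eta}{2}\le1-\frac{\mu h_1}{3}$ plays no role in your route.

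A minor point in Step~1: $a_{k+1}<\delta/2$ gives the threshold needed for the next use of Lemma~\ref{RGA-MGD1-lem2} only if $\delta/2$ lies below that threshold. The paper assumes this by shrinking $\delta$.
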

The proof of Theorem \ref{convergenceratethm_betaPL} is in Appendix \ref{sectionmaintheoremsappendix}.
\begin{rem}
        {We note that our growth rate assumption $\mathrm{dist}_H(\mathcal{S}^*(y_{k-1}), \mathcal{S}^*(y_k)) \le C_0 \mathrm{dist}(x_{k}, \mathcal{S}^*(y_{k})) $ for any large enough $k$, does not contradict the convergence rate. From Theorem \ref{convergenceratethm_betaPL} proof we have $ \mathrm{dist}^{\beta -1}(x_{k}, \mathcal{S}^*(y_{k}))  \sim k^{- \frac{\alpha}{1-(\beta -1 )^2\theta}} $ and $ \mathrm{dist}(y_{k-1}, y_k)  \sim k^{- \frac{\alpha}{1-(\beta -1 )^2\theta}} $. From the Lipschitz estimate on $\mathrm{dist}(\mathcal{S}^*(y_{k-1}), \mathcal{S}^*(y_k))$ we have that 
    \begin{align}
        \mathrm{dist}_H(\mathcal{S}^*(y_{k-1}), \mathcal{S}^*(y_k))  = \mathcal{O}(\mathrm{dist}(y_{k-1}, y_k)  ) = \mathcal{O} (k^{- \frac{\alpha}{1-(\beta -1 )^2\theta}}) \label{holderestimaterate1y}
    \end{align}
  and from our growth condition we obtained 
  \begin{align}
      \mathrm{dist}_H(\mathcal{S}^*(y_{k-1}), \mathcal{S}^*(y_k))  = \mathcal{O}(\mathrm{dist}(x_{k}, \mathcal{S}^*(y_{k}))) = \mathcal{O} (k^{- \frac{\alpha}{(1-(\beta -1 )^2\theta)(\beta -1 )}}) \, .  \label{holderestimaterate1z}
  \end{align}
    The two Big-O terms do not contradict one another and in fact the second analytically obtained estimate \eqref{holderestimaterate1z} implies the first Lipschitz type estimate \eqref{holderestimaterate1y} holds for $\beta \in (1,2)$. Furthermore, the growth condition is not vacuous since functions $f(\cdot, y)$ with invariant critical sets in $S_1$ under variations of $y \in S_2$ will trivially satisfy the growth condition. 
    }
\end{rem}

\begin{rem}
    We remind the reader that conditions \textbf{C1-C4} never assume existence of a basin saddle point of the function $f(\cdot, \cdot)$ on the product manifold $S_1 \times S_2$. When $S_2$ is a compact convex subset of a topological vector space, $f(\cdot,y)$ is lower semi-continuous and convex or quasi-convex on $S_1$ for all $y \in S_2$, $f(x,\cdot)$ is upper semi-continuous and concave or quasi-concave on $S_2$ for all $x \in S_1$ then by Sion minimax theorem \cite{sion1958general} $$ \min_{x \in S_1} \max_{y \in S_2} f(x,y) = \max_{y \in S_2} \min_{x \in S_1} f(x,y) $$ and the existence of a saddle/ min-max point for $f$ is guaranteed on $S_1 \times S_2$. This version of Sion minimax theorem can be extended beyond vector spaces to smooth manifolds \cite{zhang2023sion}. While conditions \textbf{C1-C4} do not assume the quasi-convex quasi-concave structure, these conditions are sufficient to conclude that the convergence limit in Theorem \ref{convergenceratethm_exactPL} or Theorem \ref{convergenceratethm_betaPL} is a basin saddle point. The analytical machinery for providing saddle point existence resides in the theorem proofs where we show Cauchy convergence of iterate sequence $\{(x_k, y_k)\}_k$ along with convergence of the Lyapunov function $V_{\delta}(x_k,y_k)$. Then condition \textbf{C3} of bounded drift of critical sets along with Lemma \ref{RGA-MGD1-lem1} and the basin saddle point definition \ref{basinsaddledef} proves the existence. In principle, the proof structure of Theorem \ref{convergenceratethm_exactPL} or Theorem \ref{convergenceratethm_betaPL} is similar to the Banach fixed point theorem proof under contractive maps \cite{banach1922operations} and therefore existence of a basin saddle point is never a required assumption. Our results however do not imply the uniqueness of basin saddle point inside $S_1 \times S_2$ as uniqueness requires stronger global assumptions.
\end{rem}

\section{Special case: DRO on the product of Euclidean and BW manifold}\label{sectionDROBW}
Consider the following constrained DRO problem:
\begin{align}
    \min_{\w \in \mathbb{R}^d}  \max_{ \substack{\Lambda \in \mathbb{R}^{n \times n} \hspace{0.1cm}; \hspace{0.1cm}\Lambda = \Lambda^T \hspace{0.1cm};  \hspace{0.1cm} \Lambda \succ \mathbf{0} \\ \muv \in \mathbb{R}^n \\ d(\Lambda, \Lambda^*) < \xi  \hspace{0.1cm}; \hspace{0.1cm} \muv \in \mathcal{B}_{\xi}(\muv^*) }  }   \mathbb{E}_{\z \sim \probP(\muv,\Lambda)} [\ell(\w;\z) ]   \label{DRO1}
\end{align}
where $\probP(\muv, \Lambda)$ is a Gaussian distribution with density $$ p(\z) = \frac{1}{ \sqrt{\det( 2 \pi\Lambda)}}\exp\bigg(- \frac{1}{2}\langle (\z -\muv),\Lambda^{-1}(\z - \muv)\rangle\bigg) \quad , \quad \z \in \mathbb{R}^n $$ and $d(\cdot , \cdot)$ is some metric on the space of symmetric positive definite matrices left unspecified at this stage. Also, $\probP(\muv^*,\Lambda^*) $ is some base measure that is known apriori.

Next, for any $(\muv, \Lambda) \in  \mathbb{R}^{n} \times \mathbb{R}^{n \times n}$ let $\V = (\muv, \Lambda)$ where $\V \in \mathbb{R}^{n} \times \mathbb{R}^{n \times n}$. Since $ \mathbb{R}^{n} \times \mathbb{R}^{n \times n} \cong \mathbb{R}^{n^2+n} $ we can vectorize $\V$ to some $\v \in \mathbb{R}^{n^2+n}$ such that $\v = [\muv^T, [\text{vec}(\Lambda)]^T]^T$ and hence the distribution $\probP(\muv, \Lambda)$ can be embedded in a finite dimensional vector space $ \mathbb{R}^{n^2+n}$. In particular, the space of such distributions $ \mathcal{P}$ with an appropriate metric is isomorphic to the product Riemannian manifold $ \mathbb{R}^n\times  BW(\texttt{SPD}_n)$ where the first manifold is the standard Euclidean space for $\muv$ with the Euclidean metric $l_2$ and the second manifold $BW(\texttt{SPD}_n)$ is the symmetric positive definite cone $\texttt{SPD}_n$ equipped with the Bures-Wasserstein metric $BW$. This metric is a natural choice for the $ \texttt{SPD}_n $ cone since the Wasserstein flow when projected orthogonally on the tangent space of $ \texttt{SPD}_n $ gives the finite dimensional Bures-Wasserstein flow. Then discretizing the new flow via an implicit method, i.e. the JKO step gives the iterative method in the Bures-Wasserstein metric. The Bures-Wasserstein flow exhibits some nice convergence properties under certain mild regularity assumptions on the energy functional such as geodesic convexity \cite{ambrosio2005gradient, chewi2025statistical}. We therefore use the $BW$ metric for $d(\cdot , \cdot)$ in our DRO formulation \eqref{DRO1}.

Solving the DRO problem \eqref{DRO1} is equivalent to solving a constrained nonconvex-nonconcave minimax optimization problem over the product space $ \mathbb{R}^d \times  \mathbb{R}^n \times BW(\texttt{SPD}_n)$, where $\mathbb{R}^d$ is a finite dimensional Euclidean space and $ \mathbb{R}^n \times BW(\texttt{SPD}_n)$ is a metric space (product of Euclidean and a Riemannian length space). This problem in its generality is NP hard with no known optimization solvers. Therefore we first need to simplify and ease the problem structure/assumptions in order to make any progress towards building a provable numerical method for solving the DRO problem \eqref{DRO1}. Even solving the inner max sub-problem in \eqref{DRO1} via an iterative numerical method (such as the JKO method) entails significant effort due to the Riemannian length space structure of the space of probability measures and {without additional assumptions such as geodesic convexity of the functional $\mathcal{F}(\probP) := \mathcal{F}(\muv, \Lambda) = -\mathbb{E}_{\z \sim \probP(\muv,\Lambda)} [\ell(\w;\z) ]  $, numerical methods such as the JKO scheme (defined in the next section) have no known provable convergence guarantees to the best of our knowledge \cite{hraivoronska2026convergence}.} We therefore first reformulate the inner max sub-problem in \eqref{DRO1} so that it is at least becomes solvable by the implicit JKO scheme. Thereafter we adapt the JKO scheme (from an implicit to an easier explicit method) for solving the reformulated version of the inner max sub-problem in \eqref{DRO1}.

\subsection{The JKO step for distribution update}
For an energy functional $\mathcal{F}$ defined on the metric space $(\mathcal{X},g)$ of absolutely continuous probability measures (with respect to Lebesgue measure) with bounded second moments, the JKO step \cite{jordan1998variational} with $h>0$ satisfies:
\begin{align}
    \probP_{k+1} =  \arg\min_{ \probP \in \mathcal{X}} \bigg\{  \mathcal{F}(\probP) + \frac{1}{2h}d_g(\probP,\probP_k)^2 \bigg\} \label{JKO1} 
\end{align}
The above discretization is the implicit Euler step of the Wasserstein gradient flow on $\mathcal{X}$. The metric $g$ is often taken to be the $W_2$ Wasserstein metric and the existence of a unique minimizer from \eqref{JKO1} is guaranteed under certain mild conditions on $ \mathcal{F}$ such as convexity and bounded gradient growth (see \cite{ambrosio2005gradient, jordan1998variational} ). Further, as $h \to 0$ the above scheme converges weakly to the Wasserstein gradient flow for $\mathcal{F}$ on $\mathcal{X}$ (see \cite{jordan1998variational}).

For the multivariate Gaussian case, we can restrict the measures to the finite\footnote{{The dimension of $\mathcal{P}$ is its dimension as a smooth manifold.}} dimensional space $\mathcal{P} \subset \mathcal{X}$ such that $ (\mathcal{P},g) \cong  (\mathbb{R}^n, l_2) \times (BW(\texttt{SPD}_n  ),BW)$ where $g$ is a push forward of the metric $ l_2 \oplus BW$ under some local diffeomorphism. We then identify $\probP \in \mathcal{P}$ with\footnote{For sake of brevity, from here onward, we will refer to the $BW(\texttt{SPD}_n)$ manifold  simply as the $BW$ manifold.} $(\muv, \Lambda) \in \mathbb{R}^n \times BW$. For $\mathcal{F}(\probP) := \mathcal{F}(\muv, \Lambda) = -\mathbb{E}_{\z \sim \probP(\muv,\Lambda)} [\ell(\w;\z) ] $ we redefine the JKO step \eqref{JKO1} as follows:
\begin{align}
    (\muv_{k+1},\Lambda_{k+1}) \in  \arg\min_{\substack{ \probP(\muv, \Lambda ) \in \mathcal{P} \\ d_{BW}(\Lambda ,\Lambda^*)  \leq \xi  \hspace{0.1cm}; \hspace{0.1cm}  \norm{\muv - \muv^*} \leq \xi  }} \bigg\{  -\mathbb{E}_{\z \sim \probP(\muv,\Lambda)} [\ell(\w;\z) ] + \frac{1}{2h} \bigg(  \norm{\muv - \muv_k}^2 + d^2_{BW}(\Lambda ,\Lambda_k)\bigg) \bigg\} \label{JKO2} 
\end{align}
Note that the max step from \eqref{DRO1} is equivalent to minimizing $ \mathcal{F}(\muv, \Lambda)$ over the finite dimensional space $\mathcal{P} \subset \mathcal{X}$. {Iterating the JKO step \eqref{JKO2} under some mild regularity conditions on the functional will then yield convergence to the solution of inner max sub-problem in \eqref{DRO1}.}
Since any arbitrary JKO iteration from \eqref{JKO2} involves solving a constrained and possibly nonconvex problem (see Appendix \ref{appendixcountereg1}) with respect to the probability measures over the BW manifold\footnote{{Note that $\mathcal{F}(\probP) $ can be a geodesic nonconvex functional with respect to the metric on probability measures ( assuming the space of measures is CAT(0) or is at least a geodesic metric space) thereby posing significant challenges for faster convergence of the JKO scheme \eqref{JKO2}.}}, even for sufficiently small values of $h$, it may not be possible for the JKO iteration from \eqref{JKO2} to converge in polynomial time. Therefore, instead of solving the much harder problem in \eqref{JKO2} iteratively via the implicit JKO scheme, we solve the following unconstrained minimization problem on the product manifold $\mathbb{R}^n \times BW $ where the functional $\wt{\mathcal{F}}(\probP)$ defined below is strong geodesically convex under some mild assumptions (as shown later) :
\begin{align}
    \min_{\substack{ \probP(\muv, \Lambda ) \in \mathcal{P} \\ }} \wt{\mathcal{F}}(\probP) :=  \bigg\{  -\mathbb{E}_{\z \sim \probP(\muv,\Lambda)} [\ell(\w;\z) ] + \frac{1}{\epsilon} \bigg(  \norm{\muv - \muv^*}^2 + d^2_{BW}(\Lambda ,\Lambda^*)\bigg) \bigg\} \label{JKO3} 
\end{align}
Under mild assumptions on the loss $\ell$ and sufficiently small $\xi$ it can be shown that $ \wt{\mathcal{F}} $ in \eqref{JKO3} will be minimized in the interior of the ambiguity/ constraint set from the inner max problem in \eqref{DRO1}. {Proposition~\ref{prop:reduction} (Appendix \ref{sectionDROBWappendix}) makes this precise: choosing $\epsilon=2\xi/L_G$ where $L_G$ is a Lipschitz constant, the solution of the penalized problem \eqref{JKO3} is feasible for the inner max problem in \eqref{DRO1}, this solution lies within $(1+\sqrt2)\xi$ of any maximizer of the inner problem in \eqref{DRO1}, and attains its optimal value up to $L_G\xi$. The JKO steps \eqref{JKO1}--\eqref{JKO2} play no role in this comparison and the step size $h$ does not enter; they are recorded only to motivate why the implicit scheme is intractable here.} Thus under mild assumptions and a certain choice of parameters, solving \eqref{JKO3} approximately solves the max step of the constrained DRO problem \eqref{DRO1}. From here onward we will therefore work with the following unconstrained and a more tractable penalized DRO problem:
\begin{align}
    \min_{\w \in \mathbb{R}^d}  \max_{\substack{ \probP(\muv, \Lambda ) \in \mathcal{P} \\ }} \bigg\{  \mathbb{E}_{\z \sim \probP(\muv,\Lambda)} [\ell(\w;\z) ] -\frac{1}{\epsilon} \bigg(  \norm{\muv - \muv^*}^2 + d^2_{BW}(\Lambda ,\Lambda^*)\bigg) \bigg\}  \label{DRO1x}
\end{align}
where $\epsilon>0$ will be specified later.
We then consider the explicit Riemannian gradient ascent (RGA) to solve \eqref{JKO3} iteratively instead of solving the significantly harder problem from the implicit JKO step \eqref{JKO2}. 
{The RGA iteration is as follows:}
\[
 (\muv_{k+1},\Lambda_{k+1})
    =
    \exp_{(\muv_k,\Lambda_k)}
    \!\left(
    h_1 \nabla_{(\muv, \Lambda)} \mathcal{L}(\muv_k,\Lambda_k;\w_k)
    \right)
\]
where
\[ \mathcal{L}(\muv,\Lambda;\w) :=  \mathbb{E}_{\z \sim \probP(\muv,\Lambda)} [\ell(\w;\z) ] - \frac{1}{\epsilon} \bigg(  \norm{\muv - \muv^*}^2 + d^2_{BW}(\Lambda ,\Lambda^*)\bigg) \, .\] We will later refer to this functional $\mathcal{L}$ as the Lagrangian in  definition \ref{Lagrangiandefn}. The explicit RGA iteration exhibits linear convergence under assumptions such as geodesic strong concavity of the functional $\mathcal{L}$ (see \cite{boumal2023introduction}). To derive convergence rates for the RGA iteration we first need to establish certain properties of the functional's hessian with respect to probability measure defined in \eqref{JKO3}.

We will use the following assumptions throughout the remaining paper.

\subsection{Assumptions}
\begin{enumerate}
\item[\textbf{A1.}]\textbf{(Smoothness and coercivity)} The function $\ell$ is jointly $\mathcal{C}^2$ smooth on $ \mathbb{R}^d \times \mathbb{R}^n $, is non-negative and $\ell(\cdot;\z)$ is coercive for all $\z$, i.e. $ \lim_{\norm{\w} \to \infty} \ell(\w;\z) = + \infty$ for all $\z$.
 \item[\textbf{A2.}]\textbf{(Bounded moments)} Let $\mathcal{P} \ni \probP(\muv,\Lambda)$ be the space of Gaussian measures with $\muv \in \mathbb{R}^n $, $ \Lambda \in \texttt{SPD}_n$. The $p$-{th} moments of $\ell(\w;\cdot)$, $\norm{\nabla_{\w} \ell(\w; \cdot)}$ and $\norm{\nabla^2_{\w} \ell(\w; \cdot)}_F$ , with respect to some reference Gaussian probability measure $\probP(\muv^*,\Lambda^*) \in \mathcal{P} $, are bounded for any $\w \in \mathbb{R}^d$ and for all $p \leq P < \infty$ for some $P \geq 2$, i.e., 
 $$ \big(\mathbb{E}_{\z \sim \probP(\muv^*,\Lambda^*) } [\abs{\ell(\w;\z)}^p ]\big)^{1/p} < \infty    $$
 $$ \big(\mathbb{E}_{\z \sim \probP(\muv^*,\Lambda^*) } [ \norm{\nabla_{\w}\ell(\w;\z)}^p] \big)^{1/p} < \infty $$
  $$ \big(\mathbb{E}_{\z \sim \probP(\muv^*,\Lambda^*) } [\norm{\nabla^2_{\w}\ell(\w;\z)}_F^p] \big)^{1/p} < \infty \, . $$
 
\end{enumerate}

We now derive Hessian estimates crucial for generating the strong geodesic concavity of $\mathcal{L}(\muv,\Lambda;\w)$ locally on the product Riemannian manifold $ \mathbb{R}^n \times  BW $ and for any fixed $\w$. Before that we introduce the necessary definitions and some crucial properties in relation to the BW geometry.

\subsection{Differential geometry preliminaries}
\subsubsection{Results from the BW geometry}

\begin{defn}
    For the Riemannian manifold $\texttt{SPD}_n$ equipped with the Bures-Wasserstein metric $ BW(\cdot,\cdot)$ the following are explicitly provided:
    \begin{itemize}
    \item Tangent space: $T_{\Lambda} \texttt{SPD}_n \cong \mathbb{S}^n $, then for any $V,W \in T_{\Lambda} \cong \mathbb{S}^n $ where $\mathbb{S}^n $ is the space of symmetric $n \times n$ matrices, the Bures Wasserstein metric is defined as: 
    {$ g_{\Lambda}(V,W) := \langle  V,W\rangle_{g} = tr(L_{\Lambda}(V) \Lambda L_{\Lambda}(W))  $ where $L_{\Lambda}(U) $, for any $U \in T_{\Lambda} \texttt{SPD}_n$, is the unique, symmetric\footnote{Uniqueness follows from the fact that $ \Lambda \succ \mathbf{0}$ and the solution is symmetric since if $X$ satisfies the given Sylvester equation then $X^T$ must also be a solution.} solution of the Sylvester equation 
    \[  \Lambda X + X \Lambda  = U .\] }
     { Equivalently the metric can be defined at $\Lambda$ by considering $A,B $ to be the solutions of Sylvester equations, i.e. $A =L_{\Lambda}(U) $, $B= L_{\Lambda}(V) $ where $U,V \in T_{\Lambda} \texttt{SPD}_n$ and hence $$ g_{\Lambda}(U,V) = g_{\Lambda}(\Lambda A + A \Lambda,\Lambda B + B \Lambda)  := tr(A \Lambda B)  .$$} 
    \item Exponential map at $\Lambda $ for any $U \in T_{\Lambda} \texttt{SPD}_n$ where $L_{\Lambda}(U) = S $ : $$\exp_{\Lambda}(U) = (I +S) \Lambda (I +S) .$$ 
    \item Optimal transport map joining $ \Lambda_1, \Lambda_2$ : $ J_{\Lambda_1, \Lambda_2} := \Lambda_1^{-1/2} ( \Lambda_1^{1/2} \Lambda_2  \Lambda_1^{1/2})^{1/2} \Lambda_1^{-1/2}  $
    \item Geodesic joining $ \Lambda_1, \Lambda_2$ : $ \gamma(t) := ((1-t)I + tJ_{\Lambda_1, \Lambda_2})\Lambda_1 ((1-t)I + tJ_{\Lambda_1, \Lambda_2})  $ with $t \in [0,1]$
        \item Geodesic distance: $ d_{BW}(\Lambda_1,\Lambda_2) := \sqrt{tr\bigg(\Lambda_1 + \Lambda_2 - 2( \Lambda_1^{1/2} \Lambda_2  \Lambda_1^{1/2})^{1/2}  \bigg)} $  
       % \item Riemannian gradient w.r.t. metric $BW(\cdot,\cdot)$ : $ \nabla_{BW} f(\Lambda) := \text{proj}_{T_{\Lambda}( \texttt{SPD}_n) } (Df(\Lambda))  =  2 Df (\Lambda)$ where $Df (\Lambda) $ is the Euclidean gradient ({need to verify independently!!})
    \end{itemize}
\end{defn}

\begin{lem}
    Let $(\cm_1 , g_1)$ and $(\cm_2, g_2)$ be Riemannian manifolds. The geodesic distance between two points on the product Riemannian manifold $(\cm_1 \times \cm_2, g_1\oplus g_2) $ satisfies:
    $$ d_g^2((x_1,x_2), (y_1,y_2)) = d^2_{g_1}(x_1,y_1)+ d^2_{g_2}(x_2,y_2). $$
\end{lem}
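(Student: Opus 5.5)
The plan is to prove the identity by a two-sided inequality for the length functional on the product manifold $\cm_1\times\cm_2$. Recall that $d_g$ is the infimum of the lengths of piecewise smooth curves joining the two points. A curve $c(t)=(c_1(t),c_2(t))$ in $\cm_1\times\cm_2$ has speed $|\dot c(t)|_g=\sqrt{|\dot c_1(t)|_{g_1}^2+|\dot c_2(t)|_{g_2}^2}$, because the metric is $g_1\oplus g_2$.

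\emph{Upper bound.} Fix $\varepsilon>0$ and choose curves $c_i:[0,1]\to\cm_i$ joining $x_i$ to $y_i$ with $L(c_i)\le d_{g_i}(x_i,y_i)+\varepsilon$. Reparametrize each $c_i$ proportionally to arc length, so that $|\dot c_i|\equiv L(c_i)$ on $[0,1]$. The product curve then has constant speed $\sqrt{L(c_1)^2+L(c_2)^2}$, so
\begin{align*}
d_g \le \sqrt{L(c_1)^2+L(c_2)^2}.
\end{align*}
Letting $\varepsilon\to0$ gives $d_g^2\le d_{g_1}^2+d_{g_2}^2$.

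\emph{Lower bound.} This is the step that needs care. Take any piecewise smooth curve $c=(c_1,c_2):[0,1]\to\cm_1\times\cm_2$ joining the two points, and set $a(t)=|\dot c_1(t)|$ and $b(t)=|\dot c_2(t)|$. I will apply Minkowski's inequality for $L^1$ norms of vector-valued functions, which is just the triangle inequality for the Euclidean norm integrated over $t$:
\begin{align*}
\int_0^1\sqrt{a^2+b^2}\,dt \ge \sqrt{\Big(\int_0^1 a\,dt\Big)^2+\Big(\int_0^1 b\,dt\Big)^2}.
\end{align*}
To justify it, write $\int_0^1 (a,b)\,dt=(A,B)$ and pair it with the unit vector $(A,B)/\sqrt{A^2+B^2}$. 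Cauchy--Schwarz applied pointwise gives $\sqrt{a^2+b^2}\ge (aA+bB)/\sqrt{A^2+B^2}$, and integrating in $t$ yields the claim. Since $\int_0^1 a\,dt=L(c_1)\ge d_{g_1}(x_1,y_1)$ and similarly for the second factor, we get $L(c)\ge\sqrt{d_{g_1}^2+d_{g_2}^2}$. Taking the infimum over $c$ gives the reverse inequality, and the two bounds together prove the identity.

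I do not expect any genuine obstacle, since no completeness or existence of minimizing geodesics is needed; the argument works purely with infima. If one prefers a geodesic argument instead, note that a product of constant-speed geodesics is a geodesic of $g_1\oplus g_2$, because the Levi-Civita connection splits. That route, however, requires minimizing geodesics to exist, so I would use the length-infimum argument above.
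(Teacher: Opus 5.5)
The paper gives no proof of this lemma. It records it as a standard fact and then uses it, for instance, to get $\mathrm{diam}(S_2)=2\sqrt{2}\,r$ for $S_2=\bar{\mathcal{B}}_r(\muv^*)\times\bar{\mathcal{B}}_r(\Lambda^*)$, and to identify $d(\V,\V^*)^2=\psi(\V)$ in Proposition~\ref{prop:reduction}. So there is no competing argument to compare against. Your length-infimum proof is correct and is the standard one.

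\textbf{Why the argument works.} Both halves check out. With constant-speed parametrizations, the product curve has length exactly $\sqrt{L(c_1)^2+L(c_2)^2}$. The pointwise Cauchy--Schwarz pairing with $(A,B)/\sqrt{A^2+B^2}$ is a clean proof of the integral Minkowski inequality, and the case $A=B=0$ is trivial.

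Avoiding minimizing geodesics is the right call, not just a matter of taste. The paper applies the lemma to $\mathbb{R}^n\times BW(\texttt{SPD}_n)$, and $BW(\texttt{SPD}_n)$ is incomplete: the geodesic $t\mapsto(1-t)^2\Lambda$ reaches the singular matrix $0$ at $t=1$. Hopf--Rinow is therefore not available in general. Even where minimizing geodesics do exist, the product-of-geodesics construction only gives the upper bound. Proving that this product curve is minimizing needs exactly your Minkowski step.

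\textbf{Two small points to tighten.}

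First, keep the class of curves consistent.
\begin{itemize}
\item If $d_g$ is defined through piecewise smooth curves, your lower bound is immediate. However, a constant-speed reparametrization of a piecewise smooth curve whose velocity vanishes (possibly at infinitely many points) need not be piecewise smooth.
\item If instead you use piecewise regular curves, the reparametrization is fine. However, the components of a piecewise regular product curve need not be piecewise regular, for example when $c$ moves only in the first factor for a while.
\end{itemize}
One sentence closes this: the Riemannian distance is the same whether computed over piecewise regular, piecewise smooth, or Lipschitz curves.

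Second, the identity implicitly assumes connected factors. Otherwise both sides can equal $+\infty$; your argument handles that case trivially, but you should say so.
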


\subsubsection{Convex subsets of a Riemannian manifold}\label{convdef}

The following definition is central to the our analysis of the rate of convergence gradient descent on Riemannian manifolds. 

\begin{defn}
    Let $(\cm, g)$ be a Riemannian manifold. A subset $S\subset \cm$ is said to be \emph{geodesically convex} if for each $p,q\in S$, there is a unique minimizing geodesic segment from $p$ to $q$ in $\cm$, and the image of this geodesic segment lies entirely in $S$. 
\end{defn}
Boumal (\cite[Definition 11.17]{boumal2023introduction}) calls such a subset geodesically strongly convex, we choose the name geodesically convex following Lee (\cite[pg. 166]{lee2018}).

\begin{defn}\label{normaldef}
    Let $(\cm, g)$ be a Riemannian manifold and let $p\in \cm$ be a point. A \emph{normal neighborhood of $p$} in $\cm$ is an open subset $S$ that is the diffeomorphic image under the exponential map of an open subset $V\subset T_p\cm$.
\end{defn}
While our definition \ref{normaldef} is adopted from Lee (\cite[pg. 133]{lee2018}), it differs from his definition in that $V$ is not required to star-shaped with respect to $0\in T_p \cm$.

\subsubsection{Curvature estimates for the BW manifold}

To apply the analysis of Section \ref{jointrategeneralsection} to the Riemannian manifold $\R^n\times BW $, we need estimates on the sectional curvatures of BW metric on $\texttt{SPD}_n$. We recall the following result. 

\begin{proposition}(\cite[Propositions 1,2]{massart2019curvature})\label{secbound} Let $\Lambda \in \texttt{SPD}_n$ and let $\lambda_1 \geq \dots \geq \lambda_{n-1} \geq \lambda_n >0$ be the eigenvalues of $\Lambda$. Then the sectional curvature $\mathbf{K}_{BW}(\Lambda)$ of the BW metric at the point $\Lambda$ satisfies 
\[
0 \leq \mathbf{K}_{BW}(\Lambda) \leq \frac{3}{(\lambda_n + \lambda_{n-1})^2}.
\]
The bounds are sharp.    
\end{proposition}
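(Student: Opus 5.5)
The plan is to realize the BW manifold as a Riemannian quotient (submersion) and apply O'Neill's formula; this is the route of \cite{massart2019curvature}. Consider $\mathrm{GL}_n$ with the flat Euclidean (Frobenius) metric $\langle A,B\rangle = \tr(A^TB)$, and the map $\pi:\mathrm{GL}_n\to\texttt{SPD}_n$, $\pi(A)=AA^T$. The orthogonal group acts isometrically on the right, $A\mapsto AQ$, with quotient $\texttt{SPD}_n$. I would first check that, with horizontal space at $A=\Lambda^{1/2}$ given by $\{XA : X\in\mathbb{S}^n\}$, the differential $d\pi(XA)=X\Lambda+\Lambda X$ and $\norm{XA}_F^2=\tr(X\Lambda X)$. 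Comparing with the definition of $g_\Lambda$ through the Sylvester solution $L_\Lambda$ shows that $\pi$ is a Riemannian submersion onto $(\texttt{SPD}_n, BW)$. The identity $\exp_\Lambda(U)=(I+S)\Lambda(I+S)$ is consistent with this, being the image of the straight line $A+tSA$.

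Next I apply O'Neill's formula: for orthonormal horizontal lifts $\bar U,\bar V$,
\[
\mathbf K_{BW}(U,V)=\bar{\mathbf K}(\bar U,\bar V)+\tfrac34\norm{[\bar U,\bar V]^{\mathcal V}}^2 .
\]
The first term vanishes because the total space is flat, so $\mathbf K_{BW}\ge 0$ immediately. For the upper bound I need an explicit vertical part. Extend $XA, YA$ to horizontal fields; the bracket of the fields $A\mapsto XA$, $A\mapsto YA$ is $(YX-XY)A$ up to sign. The vertical space at $A$ is $\{A\Omega:\Omega^T=-\Omega\}$. I would diagonalize $\Lambda=\mathrm{diag}(\lambda_i)$, take $A=\Lambda^{1/2}$, and compute the orthogonal projection of $[X,Y]A$ onto $\{A\Omega\}$ entrywise. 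This should give
\[
\norm{[\bar U,\bar V]^{\mathcal V}}^2=\sum_{i<j}\frac{(\lambda_i-\lambda_j)^2}{\lambda_i+\lambda_j}\,|[X,Y]_{ij}|^2
\]
or a comparable weighted expression.

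The main obstacle is the optimization step. I must maximize this vertical norm subject to the normalization $\tr(X\Lambda X)=\tr(Y\Lambda Y)=1$ and $\tr(X\Lambda Y)=0$. I would change variables to $\tilde X_{ij}=X_{ij}\sqrt{(\lambda_i+\lambda_j)/2}$ so the constraints become Frobenius-orthonormality, and bound each commutator entry by Cauchy--Schwarz. The weights then combine into factors like $1/(\lambda_i+\lambda_j)$, whose worst case over pairs is attained at the two smallest eigenvalues. This gives
\[
\mathbf K_{BW}\le \frac{3}{(\lambda_n+\lambda_{n-1})^2}.
\]

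For sharpness I would exhibit explicit test matrices. For the upper bound, take $X,Y$ supported on the $(n-1,n)$ block, for example $X$ diagonal in that block and $Y$ off-diagonal there, and verify equality in the limit or exactly. For the lower bound, two commuting tangent directions give vertical part zero, hence curvature $0$.
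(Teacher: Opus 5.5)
Your set-up is the right one, and it is the route of \cite{massart2019curvature}. The map $\pi(A)=AA^T$ is a Riemannian submersion from flat $(\mathrm{GL}_n,\langle\cdot,\cdot\rangle_F)$ onto $(\texttt{SPD}_n,g)$, O'Neill gives $\mathbf K_{BW}\ge 0$ at once, and commuting directions give $\mathbf K_{BW}=0$. The computation after that goes wrong in two places.

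\emph{The vertical part.} Write $C=[X,Y]$ (skew) and solve $CA=A\Omega+ZA$ entrywise at $A=\Lambda^{1/2}=\mathrm{diag}(\sqrt{\lambda_i})$. This gives $\Omega_{ij}=\frac{2\sqrt{\lambda_i\lambda_j}}{\lambda_i+\lambda_j}C_{ij}$, hence
\[
\norm{[\bar U,\bar V]^{\mathcal V}}^2=\sum_{i<j}\frac{4\lambda_i\lambda_j}{\lambda_i+\lambda_j}\,C_{ij}^2 .
\]
The weight $\frac{(\lambda_i-\lambda_j)^2}{\lambda_i+\lambda_j}$ you wrote belongs to the \emph{horizontal} part; the two weights add up to $\lambda_i+\lambda_j$, the weight of $\norm{CA}_F^2$. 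With your weight, $\mathbf K_{BW}$ would vanish identically at every $\Lambda=cI$.

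\emph{The target bound.} More seriously, no correct optimization can end at the displayed bound, because the statement as printed has the wrong homogeneity. Since $g_{c\Lambda}(cU,cU)=c\,g_\Lambda(U,U)$, the map $\Lambda\mapsto c\Lambda$ is a homothety, so $\mathbf K_{BW}(c\Lambda)=c^{-1}\mathbf K_{BW}(\Lambda)$. By contrast, $3/(\lambda_n+\lambda_{n-1})^2$ scales like $c^{-2}$. Concretely, at $\Lambda=I_2$ take $X=\tfrac1{\sqrt2}\mathrm{diag}(1,-1)$ and $Y=\tfrac1{\sqrt2}(E_{12}+E_{21})$, with lifts $\bar U=X$, $\bar V=Y$. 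They are orthonormal, $[X,Y]=E_{12}-E_{21}$ is already vertical, and $\mathbf K_{BW}=\tfrac34\cdot 2=\tfrac32>\tfrac34$. For $n=2$ this matches the cone picture: $\mathbb{R}^{2\times2}/SO(2)\cong\mathbb{C}^2/U(1)$ is the metric cone over $S^2(1/2)$, whose maximal curvature is $3/\tr\Lambda$. The sharp bound in this normalization is $3/(\lambda_{n-1}+\lambda_n)$. The square appears to be a transcription slip, and the inequality as printed fails whenever $\lambda_{n-1}+\lambda_n>1$. Even your own heuristic (``factors like $1/(\lambda_i+\lambda_j)$'') would give $3/(\lambda_{n-1}+\lambda_n)$, not its square.

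\emph{The optimization step.} As written this is only a heuristic. Each $[X,Y]_{ij}=\sum_k(X_{ik}Y_{kj}-Y_{ik}X_{kj})$ couples three eigenvalues, and entrywise Cauchy--Schwarz does not by itself deliver the sharp constant. A clean way to finish is to absorb the weights into the lifts. With $\bar U=X\Lambda^{1/2}$ and $\bar V=Y\Lambda^{1/2}$ Frobenius-orthonormal, $\Lambda^{1/2}[X,Y]\Lambda^{1/2}=\bar U^T\bar V-\bar V^T\bar U=:M$ is skew, so
\[
\norm{[\bar U,\bar V]^{\mathcal V}}^2=\sum_{i<j}\frac{4M_{ij}^2}{\lambda_i+\lambda_j}\le\frac{2}{\lambda_{n-1}+\lambda_n}\norm{M}_F^2 .
\]
The zero diagonal of $M$ is what makes the two smallest eigenvalues appear, rather than $2\lambda_n$. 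Next take the SVD $\bar U=\sum_k s_ku_kv_k^T$ and set $T_{kl}:=v_l^T\bar V^Tu_k$, so that $\sum_{k,l}T_{kl}^2=\norm{\bar V}_F^2=1$. In the basis $\{v_k\}$, $M$ has entries $s_kT_{kl}-s_lT_{lk}$. Cauchy--Schwarz together with $s_k^2+s_l^2\le 1$ gives $\norm{M}_F^2\le 2$, hence $\mathbf K_{BW}\le\frac34\cdot\frac{4}{\lambda_{n-1}+\lambda_n}=\frac{3}{\lambda_{n-1}+\lambda_n}$.

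\emph{Sharpness.} In the eigenbasis, equality holds for $X\propto\mathrm{diag}(0,\dots,0,\lambda_{n-1}^{-1},-\lambda_n^{-1})$ and $Y\propto E_{n-1,n}+E_{n,n-1}$. An arbitrary diagonal $X$ on that block, such as $\mathrm{diag}(1,-1)$, is not extremal unless $\lambda_{n-1}=\lambda_n$.
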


By using Proposition \ref{secbound}, the following lemma gives bounds on the sectional curvature of BW manifold on geodesic balls centered at a $\Lambda \in \texttt{SPD}_n$.

\begin{lem}\label{secboundlemma}
    Let $\Lambda \in \texttt{SPD}_n$ and let $\lambda_1 \geq \lambda_2 \geq \cdots \geq \lambda_n >0$ be the eigenvalues of $\Lambda$. Given $\gamma\in (0,1)$, let $r = (1-\gamma)\sqrt{\lambda_n}$ and consider the ball 
    \[
    \bar{\mathcal{B}}_r(0) = \{ V\in T_{\Lambda}\texttt{SPD}_n | g_{\Lambda}(V,V) \leq r^2\}, \text{ and let } \bar{\mathcal{B}}_r(\Lambda) = \exp_{\Lambda}(\bar{\mathcal{B}}_r(0)).
    \] 
    Then the sectional curvature $\mathbf{K}_{BW}$ of the BW metric  satisfies 
    \[
        0\leq \mathbf{K}_{BW}(\Lambda') \leq \frac{3}{4\gamma^4\lambda_n^2}
    \]
    for all $\Lambda' \in \bar{\mathcal{B}}_r(\Lambda)$.
\end{lem}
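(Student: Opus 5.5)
The approach is to reduce the statement to Proposition \ref{secbound}: it suffices to bound the two smallest eigenvalues of every $\Lambda' \in \bar{\mathcal{B}}_r(\Lambda)$ from below. Concretely, I will show that $\lambda_{\min}(\Lambda') \ge \gamma^2 \lambda_n$ for all such $\Lambda'$. Then $\lambda_{n}(\Lambda') + \lambda_{n-1}(\Lambda') \ge 2\gamma^2\lambda_n$, and Proposition \ref{secbound} gives
\[
0 \le \mathbf{K}_{BW}(\Lambda') \le \frac{3}{(2\gamma^2\lambda_n)^2} = \frac{3}{4\gamma^4\lambda_n^2},
\]
which is exactly the claim. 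The lower bound $0$ is immediate from Proposition \ref{secbound}.

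To get the eigenvalue bound I use the explicit exponential map. Any $\Lambda' \in \bar{\mathcal{B}}_r(\Lambda)$ has the form $\Lambda' = (I+S)\Lambda(I+S)$, where $S = L_\Lambda(V)$ is symmetric and $g_\Lambda(V,V) = \mathrm{tr}(S\Lambda S) \le r^2 = (1-\gamma)^2\lambda_n$.

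The first step is to control $\norm{S}_2$. Since $\mathrm{tr}(S\Lambda S) = \mathrm{tr}(\Lambda S^2) \ge \lambda_n \mathrm{tr}(S^2) = \lambda_n \norm{S}_F^2$, we get $\norm{S}_2 \le \norm{S}_F \le r/\sqrt{\lambda_n} = 1-\gamma$. This is the key use of the radius choice $r=(1-\gamma)\sqrt{\lambda_n}$.

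The second step uses that $S$ is symmetric, so the eigenvalues of $I+S$ lie in $[\gamma, 2-\gamma]$. In particular $I+S$ is invertible and $\norm{(I+S)u} \ge \gamma\norm{u}$ for all $u$. For any unit vector $u$,
\[
u^T\Lambda' u = \big((I+S)u\big)^T \Lambda \big((I+S)u\big) \ge \lambda_n \norm{(I+S)u}^2 \ge \gamma^2\lambda_n .
\]
Hence $\lambda_{\min}(\Lambda') \ge \gamma^2\lambda_n$. This also shows that $\Lambda'$ is SPD, so the ball stays inside the manifold.

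The third step is that $\lambda_{n-1}(\Lambda') \ge \lambda_n(\Lambda') \ge \gamma^2\lambda_n$, which yields the denominator bound used above and finishes the proof.

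The only real subtlety is the interpretation of the ball: it is defined as the image of the tangent ball under $\exp_\Lambda$, not as a metric ball. That definition is exactly what makes the parametrization $\Lambda' = (I+S)\Lambda(I+S)$ available, so no injectivity-radius argument is needed. I expect no serious obstacle beyond the trace inequality $\mathrm{tr}(\Lambda S^2) \ge \lambda_n\norm{S}_F^2$, which holds because $S^2 \succeq 0$.
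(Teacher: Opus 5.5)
Your proposal is correct and follows the paper's proof step for step. You bound $\|L_\Lambda(V)\|_F \le 1-\gamma$ via $\mathrm{tr}(S\Lambda S)\ge \lambda_n \mathrm{tr}(S^2)$, which places the spectrum of $I+S$ in $[\gamma,2-\gamma]$; a Rayleigh-quotient argument then gives $\lambda_{\min}(\Lambda')\ge \gamma^2\lambda_n$, and Proposition \ref{secbound} finishes. Your explicit step $\lambda_{n-1}(\Lambda')+\lambda_n(\Lambda')\ge 2\lambda_{\min}(\Lambda')$ only spells out what the paper uses implicitly when it writes $\mathbf{K}_{BW}(\Lambda')\le 3/(4\lambda_{\min}^2)$.
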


\begin{proof}
    Given $V\in \bar{\mathcal{B}}_r(0)$, let $\sigma_1, \dots, \sigma_n$ be eigenvalues of the matrix $L_{\Lambda}(V)$ and let $\{z_1,\dots, z_n\}$ be an orthonormal basis for $\Rn^n$ corresponding to these eigenvalues. Then 
    \begin{align*}
      (1-\gamma)^2\lambda_n = r^2 &\geq \text{tr}(L_{\Lambda}(V) \Lambda L_{\Lambda}(V)) \\
           &= \sum_{i=1}^n z_i^T L_{\Lambda}(V) \Lambda L_{\Lambda}(V)z_i \\
           &=\sum_{i=1}^n (L_{\Lambda}(V)z_i)^T\Lambda (L_{\Lambda}(V)z_i) \\
           &= \sum_{i=1}^n \sigma_i^2 z_i^T\Lambda z_i \geq \sum_{i=1}^n \sigma_i^2 \lambda_n.
    \end{align*}
    Thus, we see that if $V\in \bar{\mathcal{B}}_r(0)$ then the eigenvalues $\sigma_i$ of $L_{\Lambda}(V)$ satisfy $\abs{\sigma_i}\leq 1-\gamma$. Consequently, the eigenvalues of $I+L_{\Lambda}(V)$ are in the interval $[\gamma, 2-\gamma]$. Thus, the smallest eigenvalue $\lambda_{\min}$ of $\exp_{\Lambda}(V)$ satisfies
    \begin{align*}
    \lambda_{\min} &= \inf_{z\neq 0}\frac{\langle (I+L_{\Lambda}(V))\Lambda (I+L_{\Lambda}(V))z,z\rangle}{\langle z,z\rangle} \\
    &= \inf_{z\neq 0}\frac{\langle \Lambda (I+L_{\Lambda}(V))z, (I+L_{\Lambda}(V))z\rangle}{\langle z,z\rangle} \\
    & \ge \lambda_n \inf_{z\neq 0} \frac{\langle (I+L_{\Lambda}(V))z, (I+L_{\Lambda}(V))z\rangle}{\langle z,z\rangle} \\
    &\geq \gamma^2\lambda_n.
    \end{align*}
    By Proposition \ref{secbound}, the sectional curvature of BW metric at $\Lambda' =\exp_{\Lambda}(V)$ has upper bound
    \[
    \mathbf{K}_{BW}(\Lambda') \leq \frac{3}{4\lambda_{\min}^2} \leq \frac{3}{4\gamma^4\lambda_n^2}.
    \]
    The lower bound for the sectional curvature is always $0$ and is attained, see \cite{massart2019curvature} for a proof.
\end{proof}
We defer the reader to Appendix section \ref{riemtools} for additional results from Riemannian geometry crucial for deriving explicit estimates of the Hessian in BW geometry.

\subsection{Hessian Calculation}\label{sectionhessiancalc}

\begin{lem}
    Let $(\cm,g)$ be a Riemannian manifold and let $f:\cm\to \R$ be a $\cc^2$-function. Then for all $p\in \cm$ and $X\in T_p\cm$ we have
    \[
    \text{Hess}_p(f)(X,X) = \frac{d^2}{dt^2}\Big|_{t=0} f(c(t))
    \]
    where $c:(-\epsilon,\epsilon)\to \cm$ is a geodesic with $c(0)=p$ and $c'(0)=X$. 
\end{lem}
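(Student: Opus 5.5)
We prove the identity by differentiating $f\circ c$ twice using the Levi-Civita connection $\nabla$ and the covariant derivative $D_t$ along $c$.

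\emph{First derivative.} By the chain rule,
\[
\frac{d}{dt} f(c(t)) = df_{c(t)}(c'(t)) = g\big(\nabla f(c(t)), c'(t)\big).
\]

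\emph{Second derivative.} Differentiate this inner product once more. Since $\nabla$ is compatible with $g$, the product rule along curves gives
\[
\frac{d}{dt} g(\nabla f, c') = g\big(D_t(\nabla f\circ c), c'\big) + g\big(\nabla f, D_t c'\big).
\]
Because $c$ is a geodesic, $D_t c' \equiv 0$, so the second term vanishes.

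For the first term, use the fact that the covariant derivative of a vector field along a curve depends only on the tangent vector. This gives $D_t(\nabla f\circ c)(t) = \nabla_{c'(t)} \nabla f$. By the definition of the Riemannian Hessian, $\mathrm{Hess}_{c(t)}f(c',c') = g(\nabla_{c'}\nabla f, c')$. Hence
\[
\frac{d^2}{dt^2} f(c(t)) = \mathrm{Hess}_{c(t)} f\big(c'(t),c'(t)\big).
\]

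\emph{Evaluation at $t=0$.} Setting $t=0$ and using $c(0)=p$, $c'(0)=X$ yields the claim.

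\emph{Regularity.} The assumption $f\in\mathcal{C}^2$ ensures that $\nabla f$ is a $\mathcal{C}^1$ vector field, so every derivative taken above exists. Existence of the geodesic $c$ on a small interval $(-\epsilon,\epsilon)$ follows from standard ODE theory.

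The only step needing care is identifying $D_t(\nabla f\circ c)$ with $\nabla_{c'}\nabla f$, and hence with the Hessian operator. This is the standard property of induced covariant derivatives along curves, which I would cite from Lee (\cite{lee2018}). Alternatively, it can be checked in local coordinates: the Christoffel-symbol terms appear identically on both sides, and the geodesic equation removes the term involving $\ddot c$.
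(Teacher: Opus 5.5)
Your proof is correct and takes essentially the same route as the paper: both differentiate $f\circ c$ twice and use the geodesic equation $D_t c' = 0$ to remove the connection term. That term is $g(\nabla f, D_t c') = df(D_t c')$ in your computation and $df_p(\nabla_X X)$ in the paper's. The only difference is cosmetic: you use the form $\mathrm{Hess}\, f(X,Y) = g(\nabla_X \nabla f, Y)$, whereas the paper uses $X(Yf) - df(\nabla_X Y)$, and the two coincide by metric compatibility of the Levi-Civita connection.
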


\begin{proof}
    Given any vectors $X,Y\in T_p\cm$, the Hessian of $f$ is defined as 
    \[
    \text{Hess}_p(f)(X,Y) = XYf(p) - df_p(\nabla_XY),
    \]
    where $\nabla$ denotes the Levi-Civita connection for $g$. If $X=Y=c'(0)$, the second term vanishes since $\nabla_{c'}c'=0$ at all points of the geodesic $c$. Letting $\wt{X}$ be the vector field along the curve $c$ given by $\wt{X}_{c(t)}=dc_t\left(\frac{d}{dt}\right)$, we have
    \[
    X(Xf)(p) = \frac{d}{dt}\Big|_{t=0} \left( \wt{X}f(c(t))\right)= \frac{d}{dt}\Big|_{t=0}\frac{d}{dt}f(c(t)) = \frac{d^2}{dt^2}\Big|_{t=0}f(c(t)). 
    \]
\end{proof}

Given $\muv\in \R^n$, let $T_{\muv}:\R^n\to \R^n$ denote the translation $T_{\muv}(\x)=\x+\muv$. Then the geodesic $c:[0,1]\to \mathcal{P}$ joining $\rho_{\Lambda_0, \muv_0}$ and $\rho_{\Lambda_1,\muv_1}$ is given (see \cite[Example 1.7]{mccann}) by 
\[
c(t) = T_{(1-t)\muv_0+t\muv_1,\#}[(1-t)I+tS]_{\#}\rho_{\Lambda_0,\muv_0}, \text{ where } S= \Lambda_1^{1/2}(\Lambda_1^{1/2}\Lambda_0\Lambda_1^{1/2})^{-1/2}\Lambda_1^{1/2}.
\]
One can readily check that each $c(t)$ is a Gaussian measure, specifically $c(t) = \rho_{\Lambda_t, \muv_t}$ where 
\[
\Lambda_t = [(1-t)I+tS]\Lambda_0[(1-t)I+tS] \quad \text{ and }\quad \muv_t = (1-t)\muv_0+t\muv_1.
\]

\subsubsection{Hessian estimate of the statistical loss}

\begin{theorem}\label{losshessianestimatethm}
   Under \textbf{A1-A2}, the Hessian $\text{Hess}_{(\muv,\Lambda)}f$ for the function
\[
f(\w; \Lambda, \muv) = \int_{\z \in \mathbb{R}^n} \ell(\w;\z)   \exp\bigg(-\frac{1}{2}\langle(\z -\muv),\Lambda^{-1}(\z - \muv)\rangle\bigg) \frac{d\z}{\sqrt{\det(2\pi \Lambda)}} 
\]
evaluated with respect to $ (\muv, \Lambda)$ for any fixed $\w$ satisfies the following uniform estimate:
\begin{align}
  \sup_{\substack{\nuv \in \mathbb{R}^n, \norm{S}_F < 1 \\ {S \in \mathbb{S}^n} }} \frac{ \abs{\text{Hess}_{(\muv,\Lambda)}f((\nuv,  S\Lambda +\Lambda S),(\nuv,S\Lambda +\Lambda S))}}{ \norm{\nuv}^2 + tr(S \Lambda S)}  &\leq    C_{\Lambda, q, n} (C''_{p,\Lambda^*,n} \times \big( \mathbb{E}_{\z \sim \probP(\muv^*,\Lambda^*)}\left[\abs{\ell(\w;\z)}^{pq}\right] \big)^{1/q}  )^{1/p} \nonumber \\ & < \infty  
\end{align}
on the {product of closed balls of radius $r = (1- \gamma) \norm{(\Lambda^*)^{-1}}_2^{-1/2}$, with $\gamma \in (0,1)$ ,} centered at $(\muv^*,\Lambda^*)$ for any $ {\w \in \mathbb{R}^d}$, for some universal constants $C''_{p,\Lambda^*,n}, \, C_{\Lambda, q, n} \, ,\, 1/p + 1/q = 1 , \, p \ge 1$ , where $$ C_{\Lambda, q, n} \sim_{q} \bigg(\frac{\Gamma(\frac{n+4q}{2}) }{\Gamma(\frac{n}{2}) }\bigg)^\frac{1}{q} \norm{(\Lambda^*)^{1/2}}^4_F \bigg( \frac{\sqrt{n} \norm{(\Lambda^*)^{-1}}_F}{(1-r \norm{(\Lambda^*)^{-1/2}}_F)}\bigg)^{6} $$
and 
\begin{align*}
C''_{p,\Lambda^*,n}  & \lesssim_p n \max\{ 1, r^{2}\} \norm{(\Lambda^*)^{-1/2}}_F^{4} \norm{(\Lambda^*)^{1/2}}_F^{4} \bigg(\frac{\Gamma(\frac{n+4p}{2}) }{\Gamma(\frac{n}{2}) }\bigg)^{1/p}
\end{align*}
for $ \norm{\Lambda^*}_F \ge 1 \, , \, \norm{(\Lambda^*)^{-1/2} }_F \ge 1 $. 
\end{theorem}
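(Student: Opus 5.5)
We will compute the Hessian along geodesics, using the lemma above that $\text{Hess}_p f(X,X) = \frac{d^2}{dt^2}\big|_{t=0} f(c(t))$ for a geodesic $c$ with $c(0)=p$, $c'(0)=X$. For the tangent vector $(\nuv, S\Lambda+\Lambda S)$, the geodesic in $\R^n\times BW$ is $\muv_t=\muv+t\nuv$, $\Lambda_t=(I+tS)\Lambda(I+tS)$, by the exponential map formula with $L_\Lambda(S\Lambda+\Lambda S)=S$. We then reparametrize by the pushforward: $\z\sim\probP(\muv_t,\Lambda_t)$ is equal in law to $(I+tS)(\x-\muv)+\muv+t\nuv$ with $\x\sim\probP(\muv,\Lambda)$. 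This representation is exact, but differentiating $\ell$ in $\z$ would need derivatives of $\ell$ in $\z$, which A2 does not control. We therefore keep the density form $f(\muv_t,\Lambda_t)=\int \ell(\w;\z)\,p_t(\z)\,d\z$ and differentiate the Gaussian density twice in $t$ under the integral. Differentiation under the integral sign is justified by dominated convergence, since the moment bounds in A2 and the Gaussian tails make the integrand and its $t$-derivatives integrable uniformly for small $t$.

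The second derivative is $\int \ell\,(\partial_t^2\log p_t + (\partial_t\log p_t)^2)\,p_t\,d\z$. Writing $\log p_t = -\tfrac12\langle \z-\muv_t,\Lambda_t^{-1}(\z-\muv_t)\rangle - \tfrac12\log\det(2\pi\Lambda_t)$, we get $\partial_t\Lambda_t|_0 = S\Lambda+\Lambda S$ and $\partial_t^2\Lambda_t = 2S\Lambda S$. These derivatives are polynomial in $\u=\Lambda^{-1/2}(\z-\muv)$, of degree at most $4$. Their coefficients involve $\nuv$, $S$, $\Lambda^{\pm1/2}$, and traces such as $\tr(\Lambda^{-1}S\Lambda S)$ and $\tr(S)$. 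Each coefficient is bounded by $(\norm{\nuv}^2+\tr(S\Lambda S))$ times powers of $\norm{\Lambda^{-1/2}}_F$ and $\norm{\Lambda^{1/2}}_F$. For example, $\norm{\nuv}^2\le \norm{\Lambda^{-1}}\cdot$ is not needed, since $\nuv$ enters directly, while $\norm{S}_F^2\le \norm{\Lambda^{-1}}_2\tr(S\Lambda S)$. Hölder with $1/p+1/q=1$ then splits off the polynomial factor. Its $q$-th moment under the standard Gaussian is controlled by $\mathbb{E}\norm{\u}^{4q}=2^{2q}\Gamma(\tfrac{n+4q}{2})/\Gamma(\tfrac n2)$, which gives the Gamma ratio in $C_{\Lambda,q,n}$. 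What remains is $(\mathbb{E}_{\probP(\muv,\Lambda)}|\ell|^p)^{1/p}$.

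Next we transfer this moment to the reference measure. Restricting to the ball of radius $r=(1-\gamma)\norm{(\Lambda^*)^{-1}}_2^{-1/2}$, Lemma \ref{secboundlemma} and its proof give that $\Lambda$ has eigenvalues bounded below by $\gamma^2\lambda_{\min}(\Lambda^*)$ and above comparably. These bounds produce the factors $(1-r\norm{(\Lambda^*)^{-1/2}}_F)^{-1}$ in the constants when we replace $\Lambda^{\pm1/2}$ norms by $\Lambda^*$ norms. We write $\mathbb{E}_{\probP(\muv,\Lambda)}|\ell|^p = \mathbb{E}_{\probP(\muv^*,\Lambda^*)}[|\ell|^p\, \tfrac{d\probP}{d\probP^*}]$ and apply Hölder again to obtain $(\mathbb{E}^*|\ell|^{pq})^{1/q}\,(\mathbb{E}^*(\tfrac{d\probP}{d\probP^*})^{p})^{1/p}$, which matches the structure $(C''\,(\mathbb{E}^*|\ell|^{pq})^{1/q})^{1/p}$ in the statement. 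The likelihood-ratio moment is a Gaussian integral, finite provided $p\Lambda^{-1}-(p-1)(\Lambda^*)^{-1}\succ 0$. It can be computed in closed form and bounded using the radius restriction, giving $C''_{p,\Lambda^*,n}$ with its factors $n\max\{1,r^2\}$, the Frobenius norms, and the Gamma ratio. The Gamma ratio arises if we instead bound via polynomial moments after a change of variables.

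We expect the main obstacle to be this last step. The density ratio has a finite $p$-th moment only if the covariances are close enough relative to $p$, so we must check that the radius $r$ chosen via $\gamma$ guarantees this for the relevant $p$ (or for the small $p$ used), and track the constants cleanly. If the closed form becomes unwieldy, the fallback is to bound the ratio pointwise by $\exp(c\norm{\u}^2+c'\norm{\u})$ times determinant factors and use Gaussian moment bounds. Finiteness then follows from A2 with $pq\le P$.
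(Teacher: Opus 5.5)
\paragraph{Where the proposal stands.} Your Hessian computation follows the paper's proof: the geodesic $\tau(t)=(\muv+t\nuv,(I+tS)\Lambda(I+tS))$, the representation $\int \ell\,(u'(0)^2-u''(0))\,e^{-u(0)}\,d\z$, and the first H\"{o}lder split, where Gaussian moments of $\Lambda^{-1/2}(\z-\muv)$ produce $C_{\Lambda,q,n}$. The gap is the transfer step you flagged, and as you set it up it cannot be closed.

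\paragraph{Why the transfer step fails.} Write $\varrho=d\probP(\muv,\Lambda)/d\probP(\muv^*,\Lambda^*)$. On the ball, $\Lambda=(I+\wt{S})\Lambda^*(I+\wt{S})$ with
$\norm{(\Lambda^*)^{-1/2}\wt{S}(\Lambda^*)^{1/2}}_2\le\lambda_{\min}(\Lambda^*)^{-1/2}d_{BW}(\Lambda,\Lambda^*)\le1-\gamma$.
So the most you get is $\gamma^2\Lambda^*\preceq\Lambda\preceq(2-\gamma)^2\Lambda^*$. The upper bound is attained: take $\wt{S}=(1-\gamma)ee^T$ with $e$ a unit eigenvector for $\lambda_{\min}(\Lambda^*)$.

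Consequently your condition $p\Lambda^{-1}-(p-1)(\Lambda^*)^{-1}\succ0$ holds on the whole ball only when $p<(2-\gamma)^2/((2-\gamma)^2-1)$. For larger $p$, $\mathbb{E}_{\probP(\muv^*,\Lambda^*)}[\varrho^p]=\infty$ at points of the ball. For fixed $p$, its supremum over the ball blows up as $\gamma$ decreases to the value where equality holds, while the expression claimed for $C''_{p,\Lambda^*,n}$ stays bounded. So this step cannot deliver the stated $C''$. Your pointwise fallback $\exp(c\norm{\cdot}^2+\cdots)$ runs into the same integrability threshold.

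Separately, ``finite by \textbf{A2}'' needs $pq=p^2/(p-1)\le P$. Since $p^2/(p-1)\ge4$, the assumption $P\ge2$ is not enough. The two restrictions also pull against each other, because small $p$ makes $pq$ large.

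\paragraph{Comparison with the paper, and a route that works.} The paper's own proof does not get around this either. It derives \eqref{Taylor13co} from the second-order expansion \eqref{Taylor01} of the Gaussian density in exponential coordinates at $(\muv^*,\Lambda^*)$. It then takes moments of the polynomial coefficients $T,Q_g$ in \eqref{Taylor010}, discarding an $o(2)$ remainder that is not uniform in $\z$.

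By $L^p$--$L^q$ duality, the sharp constant in $\mathbb{E}_{\probP(\muv,\Lambda)}|\ell|^p\le C''\,(\mathbb{E}_{\probP(\muv^*,\Lambda^*)}|\ell|^{pq})^{1/q}$ over admissible $\ell$ is exactly $(\mathbb{E}_{\probP(\muv^*,\Lambda^*)}[\varrho^p])^{1/p}$. So your exact likelihood-ratio formulation is the correct one, and it shows this intermediate inequality fails past the threshold.

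What does work for every $p>1$ on the stated ball is to change measure \emph{before} splitting:
\begin{itemize}
\item Write the Hessian quadratic form as $\mathbb{E}_{\probP(\muv^*,\Lambda^*)}[\ell\,(u'(0)^2-u''(0))\,\varrho]$.
\item Apply H\"{o}lder once, with exponents $pq$ and $s'=pq/(pq-1)$.
\item Integrability now needs $s'\Lambda^{-1}-(s'-1)(\Lambda^*)^{-1}\succ0$, i.e.\ $\Lambda\prec pq\,\Lambda^*$. The ball guarantees this because $pq\ge4>(2-\gamma)^2$.
\end{itemize}
This gives a bound $K\,(\mathbb{E}_{\probP(\muv^*,\Lambda^*)}|\ell|^{pq})^{1/(pq)}$ of the statement's shape, with $K$ uniform over the ball. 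The same estimate also justifies your differentiation under the integral, which \textbf{A2} alone does not, since it only controls moments under the reference measure. The price is that $K$ is built from determinant and exponential factors rather than the Gamma-ratio expression asserted for $C''_{p,\Lambda^*,n}$.
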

The proof of Theorem \ref{losshessianestimatethm} is in Appendix \ref{sectionDROBWappendix}.
\begin{rem}
   Observe that the uniform estimate from Theorem \ref{losshessianestimatethm} is evaluated on the tangent space at the point $(\muv, \Lambda)$ for any tangent vectors (or equivalently direction vectors) $(\nuv, S \Lambda + \Lambda S) $ satisfying $\nuv \in \mathbb{R}^n, \norm{S}_F < 1 $. The estimate is scale invariant on the tangent space $ T_{(\muv, \Lambda)} (\mathbb{R}^n \times BW)$, i.e. by scaling $(\nuv , S)$ as $(c \nuv , cS)$ for any $c \neq 0$ the estimate remains unchanged due to the bilinear map $\text{Hess}_{(\muv,\Lambda)}f (\cdot , \cdot) $ and the cone $T_{(\muv, \Lambda)} (\mathbb{R}^n \times BW) \cong  \mathbb{R}^n \times \mathbb{S}^n$. The statement immediately after requires $(\muv, \Lambda)$ to be inside the {product of closed balls of radius $r = (1- \gamma) \norm{(\Lambda^*)^{-1}}_2^{-1/2}$, with $\gamma \in (0,1)$,} centered at $(\muv^*,\Lambda^*)$. It is worth noting that the uniform estimate computed over the restriction $\nuv \in \mathbb{R}^n, \norm{S}_F < 1 $ will still hold on the product of closed balls of radius $r$ around $(\muv^*,\Lambda^*)$ due to scaling invariance of the estimate on the tangent space $ T_{(\muv, \Lambda)} (\mathbb{R}^n \times BW)$. By scaling $(\nuv , S)$ as $(c \nuv , cS)$, one can extend geodesics from $ (\muv, \Lambda)$ to any point on the boundary of the product of closed balls of radius $r$ around $(\muv^*,\Lambda^*)$. The condition $r = (1- \gamma) \norm{(\Lambda^*)^{-1}}_2^{-1/2}$ only implies the existence of unique geodesics between any two points inside the product of closed balls of radius $r$ around $(\muv^*,\Lambda^*)$. A simple calculation yields this estimate $r < \norm{(\Lambda^*)^{-1}}_2^{-1/2}$ for the existence of unique geodesic $ \Lambda_t := (I +t \wt S)\Lambda^* (I + t \wt S)$ for any $t \in [0,1]$ (see Appendix \ref{appendixunifestimatenormlambda}). 
\end{rem}

\subsubsection{Hessian estimates of the distance squared function in BW metric}

\begin{theorem}\label{distancehessianestimatethm}
   Under \textbf{A1-A2} and for $\gamma \in (0,1)$ satisfying $ (1-\gamma)\norm{(\Lambda^*)^{-1}}^{-1/2}_2 < \frac{  \gamma^2 \pi \norm{(\Lambda^*)^{-1}}^{-1}_2}{\sqrt{3} }$, the squared distance in BW metric between $\Lambda, \Lambda^*  $ given as follows:
$$ d_{BW}^2(\Lambda,\Lambda^*) := {tr\bigg(\Lambda + \Lambda^* - 2( \Lambda^{1/2} \Lambda^*  \Lambda^{1/2})^{1/2}  \bigg)} \, , $$
admits a Hessian $\text{Hess}_{(\muv,\Lambda)}d_{BW}^2(\Lambda,\Lambda^*)$ that satisfies the following uniform estimate : 
\begin{align}
    0 < (1- \gamma)\norm{(\Lambda^*)^{-1}}^{1/2}_2\frac{\sqrt{3} }{\gamma^2}  \cot \Bigg((1- \gamma)\norm{(\Lambda^*)^{-1}}^{1/2}_2\frac{\sqrt{3} }{2\gamma^2} \Bigg)   &\leq  \frac{\text{Hess}_{(\muv,\Lambda)} d_{BW}^2(\Lambda,\Lambda^*)(V,V)}{\text{tr} (L_{\Lambda}(V) \Lambda L_{\Lambda}(V)) } \leq  2 
\end{align}
for any $V \in T_{\Lambda}(\texttt{SPD}_n )$ provided $ d_{BW}(\Lambda,\Lambda^*) \le  (1-\gamma)\norm{(\Lambda^*)^{-1}}^{-1/2}_2 $.
\end{theorem}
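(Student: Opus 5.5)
We will apply the standard Hessian comparison theorem for the squared distance function on a manifold with sectional curvature bounded between two constants. The paper defers such tools to Appendix \ref{riemtools}. Write $\rho(\Lambda) := d_{BW}(\Lambda,\Lambda^*)$, $r := (1-\gamma)\norm{(\Lambda^*)^{-1}}_2^{-1/2}$, and $\lambda_n := \norm{(\Lambda^*)^{-1}}_2^{-1}$, the smallest eigenvalue of $\Lambda^*$. The comparison theorem then gives two bounds, valid inside the injectivity radius. If $\mathbf{K} \ge 0$, then $\mathrm{Hess}\, \tfrac12\rho^2 \preceq g$; this is the Euclidean comparison and yields the upper bound $2$ for $\mathrm{Hess}\,\rho^2$. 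If $\mathbf{K} \le \kappa$ with $\kappa>0$ and $\rho < \pi/\sqrt{\kappa}$, then $\mathrm{Hess}\,\tfrac12\rho^2 \succeq \sqrt{\kappa}\rho\cot(\sqrt{\kappa}\rho)\, g$. This holds in the radial direction with constant $1$, and in the orthogonal directions with the $\cot$ factor, which is at most $1$. The metric $g$ here is exactly $\mathrm{tr}(L_\Lambda(V)\Lambda L_\Lambda(V))$, so the denominator in the statement is $g_\Lambda(V,V)$. The $\muv$ component contributes nothing, since $d_{BW}^2$ does not depend on $\muv$; the quotient is therefore really taken over the $\Lambda$ block.

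The key steps, in order, are as follows.
\begin{enumerate}
\item We invoke Lemma \ref{secboundlemma} with $\Lambda=\Lambda^*$ to get a curvature bound on the ball $\bar{\mathcal{B}}_r(\Lambda^*)$:
\[
0\le \mathbf{K}_{BW}\le \kappa := \frac{3}{4\gamma^4\lambda_n^2}, \qquad \text{so that} \qquad \sqrt{\kappa} = \frac{\sqrt3}{2\gamma^2}\norm{(\Lambda^*)^{-1}}_2 .
\]
\item We check that $\bar{\mathcal{B}}_r(\Lambda^*)$ is a normal, geodesically convex neighborhood. By the remark after Theorem \ref{losshessianestimatethm} and Appendix \ref{appendixunifestimatenormlambda}, the geodesics $(I+t\wt S)\Lambda^*(I+t\wt S)$ are unique for $r<\lambda_n^{1/2}$. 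Hence $\rho$ is smooth away from $\Lambda^*$ and $\rho^2$ is smooth on the whole ball.
\item The hypothesis $r < \gamma^2\pi\lambda_n/\sqrt3$ is exactly $2\sqrt{\kappa}\, r<\pi$, i.e.\ $\sqrt{\kappa}\,r<\pi/2$. This guarantees that the comparison radius exceeds $r$, and also that $\cot(\sqrt{\kappa}\rho)>0$ for $\rho\le r$.
\item We apply the comparison theorem and use that $t\mapsto t\cot t$ is decreasing on $(0,\pi)$. This gives
\[
\mathrm{Hess}\,\rho^2(V,V) \ge 2\sqrt{\kappa}\,\rho\cot(\sqrt{\kappa}\rho)\, g(V,V) \ge 2\sqrt{\kappa}\, r\cot(\sqrt{\kappa}r)\, g(V,V).
\]
Substituting $\sqrt\kappa$ and $r$ should produce the stated constant. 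At $\Lambda=\Lambda^*$ the Hessian is $2g$, which is consistent with both bounds.
\end{enumerate}

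The constant in the statement appears with the argument $(1-\gamma)\norm{(\Lambda^*)^{-1}}^{1/2}_2\sqrt3/(2\gamma^2)$. Plugging $r$ and $\sqrt\kappa$ in literally instead gives
\[
\sqrt{\kappa}\,r = (1-\gamma)\,\frac{\sqrt3}{2\gamma^2}\,\norm{(\Lambda^*)^{-1}}_2^{1/2},
\]
which agrees. The prefactor $2\sqrt\kappa\, r$ likewise matches $(1-\gamma)\norm{(\Lambda^*)^{-1}}^{1/2}_2\sqrt3/\gamma^2$. Positivity of the lower bound follows from step 3.

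The main obstacle is justifying the comparison theorem on this incomplete manifold. $\texttt{SPD}_n$ with the BW metric is not complete, so the standard statements, which are phrased for complete manifolds or via the injectivity radius, must be localized. First try: restrict to the geodesically convex ball from step 2. Every point there is joined to $\Lambda^*$ by a unique minimizing geodesic that stays where $\mathbf{K}\le\kappa$. Then run the Jacobi field and Riccati comparison argument along that geodesic. Conjugate points cannot occur before length $\pi/\sqrt\kappa > r$ by Rauch. So the distance function is smooth, and the matrix Riccati comparison for the shape operator of distance spheres gives both bounds; the upper bound uses $\mathbf{K}\ge0$. One subtlety to check is that the ball $\bar{\mathcal{B}}_r(\Lambda^*)$ in the statement, which is defined via $d_{BW}$, coincides with the exponential-image ball of Lemma \ref{secboundlemma}. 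This should hold on a normal neighborhood, since there distance equals the norm of $\exp^{-1}$.
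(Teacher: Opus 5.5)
Your proposal is correct and follows the paper's proof essentially step for step. The paper likewise takes the curvature bound $0\le \mathbf{K}_{BW}\le \kappa$ from Lemma \ref{secboundlemma}, reads the hypothesis on $\gamma$ as exactly $\sqrt{\kappa}\,r<\pi/2$, applies the Hessian comparison of Theorem \ref{jostdistthm} (letting $\lambda\to 0^-$ to get the upper bound $2$), and uses the monotonicity of $t\cot t$ to pass from $\rho$ to $r$. Your localization concern is already covered by the hypothesis of Theorem \ref{jostdistthm}, which asks only that $\exp_{\Lambda^*}$ be a diffeomorphism on the closed ball of radius $r$, not that the manifold be complete; the identification of the $d_{BW}$-ball with the exponential-image ball that you flag is also left implicit in the paper.
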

The proof of Theorem \ref{distancehessianestimatethm} is in Appendix \ref{sectionDROBWappendix}.

\subsubsection{Hessian estimates of the Lagrangian}

\begin{defn}\label{Lagrangiandefn}
 The Lagrangian from the unconstrained minimization problem \eqref{JKO3} can be defined as:
$$ \mathcal{L}(\muv,\Lambda;\w) := \mathbb{E}_{\z \sim \probP(\muv,\Lambda)} [\ell(\w;\z) ]  - \frac{1}{\epsilon} \bigg(  \norm{\muv - \muv^*}^2 + d^2_{BW}(\Lambda ,\Lambda^*)\bigg) \, . $$   
\end{defn}

\begin{theorem}\label{lagrangehessianestimatethm}
   Under \textbf{A1-A2} and for $\gamma \in (0,1)$, the Lagrangian $\mathcal{L}(\muv,\Lambda;\w) $ from definition \ref{Lagrangiandefn}, for any fixed $\w$, admits a Hessian $\text{Hess}_{(\muv,\Lambda)}\mathcal{L}(\muv,\Lambda;\w)$ that satisfies the following uniform estimates : 
\begin{align}
\sup_{\substack{\nuv \in \mathbb{R}^n, \norm{S}_F < 1 \\ {S \in  \mathbb{S}^n} }}  \frac{ {\text{Hess}_{(\muv,\Lambda)}\mathcal{L}((\nuv,  S\Lambda +\Lambda S),(\nuv,S\Lambda +\Lambda S))}}{ \norm{\nuv}^2 + tr(S \Lambda S)}    & \nonumber \\ & \hspace{-7cm} \le C_{\Lambda^*, q, n} (C''_{p,\Lambda^*,n} \times \big( \mathbb{E}_{\z \sim \probP(\muv^*,\Lambda^*)}\left[\abs{\ell(\w;\z)}^{pq}\right] \big)^{1/q}  )^{1/p} \nonumber \\ & \hspace{-7cm} - \frac{1}{\epsilon}(1- \gamma)\norm{(\Lambda^*)^{-1}}^{1/2}_2\frac{\sqrt{3} }{\gamma^2}  \cot \Bigg((1- \gamma)\norm{(\Lambda^*)^{-1}}^{1/2}_2\frac{\sqrt{3} }{2\gamma^2} \Bigg) \, ,
\end{align}
and
\begin{align}
  \sup_{\substack{\nuv \in \mathbb{R}^n, \norm{S}_F < 1 \\ {S \in  \mathbb{S}^n} }}  \frac{ \abs{\text{Hess}_{(\muv,\Lambda)}\mathcal{L}((\nuv,  S\Lambda +\Lambda S),(\nuv,S\Lambda +\Lambda S))}}{ \norm{\nuv}^2 + tr(S \Lambda S)}   & \nonumber \\ & \hspace{-7cm} \le C_{\Lambda^*, q, n} (C''_{p,\Lambda^*,n} \times \big( \mathbb{E}_{\z \sim \probP(\muv^*,\Lambda^*)}\left[\abs{\ell(\w;\z)}^{pq}\right] \big)^{1/q})^{1/p}  + \frac{2}{\epsilon}  < \infty \, ,
\end{align}
for some universal constants $C''_{p,\Lambda^*,n}, \, C_{\Lambda^*, q, n} \, ,\, 1/p + 1/q = 1 , \, p \ge 1$ ,
provided $$ d_{BW}(\Lambda,\Lambda^*) \le (1-\gamma)\norm{(\Lambda^*)^{-1}}^{-1/2}_2 := r \quad , \quad  (1-\gamma)\norm{(\Lambda^*)^{-1}}^{-1/2}_2 < \frac{  \gamma^2 \pi \norm{(\Lambda^*)^{-1}}^{-1}_2}{\sqrt{3} } \, .$$ Then for any $$ 0 < \epsilon \le \frac{(1- \gamma)\norm{(\Lambda^*)^{-1}}^{1/2}_2\frac{\sqrt{3} }{\gamma^2}  \cot \Bigg((1- \gamma)\norm{(\Lambda^*)^{-1}}^{1/2}_2\frac{\sqrt{3} }{2\gamma^2} \Bigg)}{2C_{\Lambda^*, q, n}  (C''_{p,\Lambda^*,n} \times \big( \mathbb{E}_{\z \sim \probP(\muv^*,\Lambda^*)}\left[\abs{\ell(\w;\z)}^{pq}\right] \big)^{1/q})^{1/p}} $$ the Lagrangian $\mathcal{L}$ is:
\begin{itemize}
    \item at least $C_{\Lambda^*, q, n}   (C''_{p,\Lambda^*,n} \times \big( \mathbb{E}_{\z \sim \probP(\muv^*,\Lambda^*)}\left[\abs{\ell(\w;\z)}^{pq}\right] \big)^{1/q})^{1/p} := \mu $ geodesically strongly concave and
    \item at most $C_{\Lambda^*, q, n}  (C''_{p,\Lambda^*,n} \times \big( \mathbb{E}_{\z \sim \probP(\muv^*,\Lambda^*)}\left[\abs{\ell(\w;\z)}^{pq}\right] \big)^{1/q})^{1/p} + \frac{2}{\epsilon} := L  $ gradient Lipschitz continuous 
\end{itemize}
 in $ (\muv,\Lambda)$ on the {product of closed balls of radius $r$} centered at $(\muv^*,\Lambda^*)$ for any $ {\w \in \mathbb{R}^d}$ and $$ C_{\Lambda^*, q, n} \sim_{q}  \bigg(\frac{\Gamma(\frac{n+4q}{2}) }{\Gamma(\frac{n}{2}) }\bigg)^\frac{1}{q}\norm{(\Lambda^*)^{1/2}}^4_F \bigg( \frac{\sqrt{n} \norm{(\Lambda^*)^{-1}}_F}{(1-r \norm{(\Lambda^*)^{-1/2}}_F)}\bigg)^{6} \, , $$  
\begin{align*}
C''_{p,\Lambda^*,n}  & \lesssim_p n \max\{ 1, r^{2}\} \norm{(\Lambda^*)^{-1/2}}_F^{4} \norm{(\Lambda^*)^{1/2}}_F^{4} \bigg(\frac{\Gamma(\frac{n+4p}{2}) }{\Gamma(\frac{n}{2}) }\bigg)^{1/p}
\end{align*}
for $ \norm{\Lambda^*}_F \ge 1 \, , \, \norm{(\Lambda^*)^{-1/2} }_F \ge 1 $.
\end{theorem}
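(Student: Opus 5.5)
The plan is to use linearity of the Riemannian Hessian and combine Theorem \ref{losshessianestimatethm} with Theorem \ref{distancehessianestimatethm}. Write $\mathcal{L} = f - \frac{1}{\epsilon}\big(\norm{\muv-\muv^*}^2 + d^2_{BW}(\Lambda,\Lambda^*)\big)$. On the product manifold $\mathbb{R}^n\times BW$ with metric $l_2\oplus BW$, the Hessian of a sum is the sum of Hessians. The squared distance splits as in the product-distance lemma, and the Euclidean summand only sees the $\muv$ component. For a tangent vector $(\nuv, V)$ with $V = S\Lambda+\Lambda S$, i.e. $L_\Lambda(V)=S$, this gives
\[
\text{Hess}\,\mathcal{L}((\nuv,V),(\nuv,V)) = \text{Hess} f((\nuv,V),(\nuv,V)) - \tfrac{2}{\epsilon}\norm{\nuv}^2 - \tfrac{1}{\epsilon}\text{Hess}\, d^2_{BW}(\cdot,\Lambda^*)(V,V),
\]
and the normalizing quantity is $\norm{\nuv}^2 + \text{tr}(S\Lambda S)$. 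Everything is restricted to the ball of radius $r$, where both earlier theorems apply under the stated constraint on $\gamma$.

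For the upper (one-sided) estimate, I will bound $\text{Hess} f$ above by its absolute value, using the constant $M := C_{\Lambda^*,q,n}(C''\cdot(\mathbb{E}|\ell|^{pq})^{1/q})^{1/p}$ from Theorem \ref{losshessianestimatethm}. The penalty terms are handled as follows. The $\nuv$-part contributes $-\frac{2}{\epsilon}\norm{\nuv}^2$. The $\Lambda$-part contributes at most $-\frac{1}{\epsilon}c_\gamma\,\text{tr}(S\Lambda S)$, where $c_\gamma$ is the cotangent lower bound from Theorem \ref{distancehessianestimatethm}. Since the Hessian estimate gives $c_\gamma\le 2$, the combined penalty is at most $-\frac{c_\gamma}{\epsilon}(\norm{\nuv}^2+\text{tr}(S\Lambda S))$. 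This yields the first display. For the absolute bound, I apply the triangle inequality together with the upper bound $2$ on the normalized $d^2_{BW}$ Hessian and the value $2$ for the Euclidean part, which gives $M+\frac{2}{\epsilon}$.

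It remains to convert these bounds into geodesic strong concavity and a gradient Lipschitz constant. Choose $\epsilon \le c_\gamma/(2M)$. Then $c_\gamma/\epsilon \ge 2M$, so the normalized Hessian is at most $M - 2M = -M$, i.e. $\text{Hess}\,\mathcal{L} \preceq -M\, g$. By the geodesic second-derivative lemma of Section \ref{sectionhessiancalc}, this means $t\mapsto \mathcal{L}(c(t))$ has second derivative at most $-M|c'|^2$ along every geodesic inside the ball, which is $\mu$-strong geodesic concavity with $\mu=M$. The ball is geodesically convex by the radius condition discussed in the remark after Theorem \ref{losshessianestimatethm}. In the same way, $|\text{Hess}\,\mathcal{L}|\le (M+2/\epsilon)g$ gives $L$-Lipschitz Riemannian gradient via the standard parallel-transport integral along geodesics. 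The supremum over $\norm{S}_F<1$ suffices for all tangent directions by the scale invariance already noted.

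The main obstacle is making sure the two earlier estimates are used at the same base points and in compatible normalizations. Theorem \ref{losshessianestimatethm} is stated on a product of balls around $(\muv^*,\Lambda^*)$ with constant $C_{\Lambda,q,n}$, which has to be replaced by a uniform $C_{\Lambda^*,q,n}$. Theorem \ref{distancehessianestimatethm} requires $d_{BW}(\Lambda,\Lambda^*)\le r$. I would check that the ball of Theorem \ref{losshessianestimatethm} coincides with the $d_{BW}$-ball of radius $r$, and that the denominator $\text{tr}(L_\Lambda(V)\Lambda L_\Lambda(V))$ equals $\text{tr}(S\Lambda S)$.
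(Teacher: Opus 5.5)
Your proposal is correct and follows the paper's proof essentially step for step. You split $\text{Hess}\,\mathcal{L}$ into the loss Hessian and the two penalty Hessians, bound the one-sided quotient by $M - \tfrac{1}{\epsilon}\min\{2,c_\gamma\} = M - c_\gamma/\epsilon$ and the absolute quotient by $M + 2/\epsilon$ via the triangle inequality, replace $C_{\Lambda,q,n}$ by the uniform $C_{\Lambda^*,q,n}$ using the bounds on $\norm{\Lambda^{\pm 1/2}}_F$ over the ball, and take $\epsilon \le c_\gamma/(2M)$ to get $\text{Hess}\,\mathcal{L} \preceq -M$; your added remarks on passing from these Hessian bounds to geodesic strong concavity and gradient Lipschitzness along geodesics are left implicit in the paper and do not change the argument.
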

The proof of Theorem \ref{lagrangehessianestimatethm} is in Appendix \ref{sectionDROBWappendix}.
\begin{rem}
Observe that the Lagrangian $\mathcal{L}$ is only locally geodesically strongly concave for the choice of parameter $\epsilon$ from Theorem \ref{lagrangehessianestimatethm}. In general, outside the closed ball product $\bar{\mathcal{B}}_r(\muv^*) \times \bar{\mathcal{B}}_r(\Lambda^*) $, the Lagrangian $\mathcal{L}$ can be geodesically nonconcave in $ (\muv,\Lambda)$. We defer the reader to section \ref{sectiongradientcalc} in appendix for explicit gradient calculations in the BW metric. Although well known in literature, we provide these calculations just for completeness and for the ease of reader to follow the derivations.  
\end{rem}

\section{Local maxima existence inside the injectivity radius ball}
We now show that the Lagrangian $\mathcal{L}(\cdot, \cdot ; \w) $ for any sufficiently small $\epsilon>0$ will have a unique maximizer in the interior of the product of closed balls of radius $r$ centered at $ (\muv^*,\Lambda^*)$ for any given $\w \in \mathbb{R}^d$. For that we first need the following supporting lemma.
{
\begin{lem} \label{localmaxexistencelem}
Let $\mathcal{M}$ be a smooth Riemannian manifold of finite dimension, let $x^* \in \mathcal{M}$, 
and let $B = \bar{B}_\delta(x^*)$ be a closed geodesic ball of radius $\delta > 0$. Suppose:
\begin{enumerate}
    \item $F : \mathcal{M} \to \mathbb{R}$ is $\mu$-strongly convex on $B$ with minimizer 
    $x^* \in \mathrm{int}(B)$,
    \item $G := F + \epsilon H$ is strongly convex on $B$ for some $\epsilon > 0$,
    \item $H : \mathcal{M} \to \mathbb{R}$ is differentiable on $B$ with 
    $\sup_{x \in B}\|\mathrm{grad}\, H(x)\| \leq L < \infty$.
\end{enumerate}
Then $G$ has a unique minimizer $x_\epsilon \in B$ satisfying
\begin{equation}
    d(x_\epsilon, x^*) \leq \frac{2\epsilon L}{\mu}.
\end{equation}
In particular, if $\epsilon < \mu\delta / (2L)$, then $x_\epsilon \in \mathrm{int}(B)$ 
and is therefore an unconstrained local minimizer of $G$ on $\mathcal{M}$.
\end{lem}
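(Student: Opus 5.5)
The plan is a direct comparison argument on the compact ball $B$, using only the strong convexity of $F$ and the Lipschitz property of $H$ that follows from the gradient bound. Throughout I take $B$ to be geodesically convex in the sense of Section \ref{convdef}: every pair of points is joined by a unique minimizing geodesic lying in $B$. This is implicit in the hypotheses, since strong convexity on $B$ is only meaningful for such balls, and in our application it holds inside the injectivity radius ball.

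\emph{Existence and uniqueness.} $G=F+\epsilon H$ is continuous on the compact set $B$, so a minimizer $x_\epsilon\in B$ exists. For uniqueness, suppose $x_1\neq x_2$ are two minimizers. Since $B$ is geodesically convex, the minimizing geodesic joining them stays in $B$. Strong convexity of $G$ along that geodesic makes $G$ strictly smaller than $\min_B G$ at its midpoint, which is a contradiction.

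\emph{Distance bound.} Two inequalities are combined.
\begin{itemize}
\item[(i)] Lipschitz bound for $H$. Let $c$ be the unit-speed minimizing geodesic in $B$ from $x^*$ to $x_\epsilon$. Then $\frac{d}{dt}H(c(t))=\langle \mathrm{grad}\,H(c(t)),c'(t)\rangle$. Integrating and using $\sup_B\|\mathrm{grad}\,H\|\le L$ gives $|H(x_\epsilon)-H(x^*)|\le L\,d(x_\epsilon,x^*)$.
\item[(ii)] Quadratic growth of $F$. Since $x^*\in\mathrm{int}(B)$ minimizes $F$, we have $\mathrm{grad}\,F(x^*)=0$. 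Geodesic $\mu$-strong convexity along $c$ then gives $F(x_\epsilon)-F(x^*)\ge \tfrac{\mu}{2}d^2(x_\epsilon,x^*)$.
\end{itemize}
Minimality of $x_\epsilon$ for $G$ on $B$ gives $G(x_\epsilon)\le G(x^*)$, that is,
\[
F(x_\epsilon)-F(x^*)\le \epsilon\bigl(H(x^*)-H(x_\epsilon)\bigr)\le \epsilon L\,d(x_\epsilon,x^*).
\]
Combining this with (ii) yields $\tfrac{\mu}{2}d^2\le \epsilon L d$. Hence either $d=0$ or $d\le 2\epsilon L/\mu$.

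\emph{Interior conclusion.} If $\epsilon<\mu\delta/(2L)$, the bound gives $d(x_\epsilon,x^*)<\delta$, so $x_\epsilon\in\mathrm{int}(B)$. Being a minimizer of $G$ over a neighborhood of itself, $x_\epsilon$ is an unconstrained local minimizer of $G$ on $\mathcal{M}$.

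\emph{Main obstacle.} The only delicate point is the geometric one: both (i) and (ii) require the minimizing geodesic from $x^*$ to $x_\epsilon$ to lie in $B$. This is exactly geodesic convexity of $B$, which I would justify by the radius restriction, as in Lemma \ref{secboundlemma} and the injectivity discussion, and otherwise state as a standing hypothesis.
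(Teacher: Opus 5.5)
Your proposal is correct and follows the paper's proof essentially step by step: compactness gives existence, the midpoint argument gives uniqueness, and the distance bound comes from combining quadratic growth of $F$, the inequality $G(x_\epsilon)\le G(x^*)$, and the mean value bound $|H(x^*)-H(x_\epsilon)|\le L\,d(x_\epsilon,x^*)$, then dividing by $d$. Your explicit geodesic-convexity caveat is care the paper leaves implicit, but it is needed only for the uniqueness step: any minimizing geodesic $c$ from the center $x^*$ automatically stays in $\bar{B}_\delta(x^*)$, since $d(x^*,c(t))\le d(x^*,x_\epsilon)\le\delta$, so steps (i) and (ii) do not depend on convexity of $B$.
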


\begin{proof}
Since $B$ is compact and $G$ is continuous, $G$ attains 
its minimum on $B$ at some point $x_\epsilon \in B$ by the extreme value theorem. 
Uniqueness of $x_\epsilon$ follows from strong convexity of $G$ on $B$: if there were 
two distinct minimizers, the value of $G$ at their midpoint would be strictly less than 
the minimum, a contradiction.

 Since $F$ is $\mu$-strongly convex on $B$ with minimizer $x^*$, 
it satisfies the quadratic growth condition
\begin{equation}
    F(x) \geq F(x^*) + \frac{\mu}{2} d(x, x^*)^2 \quad \forall\, x \in B.
\end{equation}
Since $x_\epsilon$ minimizes $G$ on $B$ and $x^* \in B$, we have 
$G(x_\epsilon) \leq G(x^*)$, i.e.,
\begin{equation}
    F(x_\epsilon) + \epsilon H(x_\epsilon) \leq F(x^*) + \epsilon H(x^*).
\end{equation}
Rearranging and applying the quadratic growth bound:
\begin{equation}
    \frac{\mu}{2} d(x_\epsilon, x^*)^2 
    \leq F(x_\epsilon) - F(x^*) 
    \leq \epsilon\bigl(H(x^*) - H(x_\epsilon)\bigr).
\end{equation}
By the mean value inequality on $\mathcal{M}$,
\begin{equation}
    \bigl|H(x^*) - H(x_\epsilon)\bigr| \leq L \cdot d(x_\epsilon, x^*).
\end{equation}
If $x_\epsilon = x^*$ the bound holds trivially. Otherwise, dividing both sides 
by $d(x_\epsilon, x^*) > 0$ gives
\begin{equation}
    \frac{\mu}{2} d(x_\epsilon, x^*) \leq \epsilon L,
\end{equation}
hence $d(x_\epsilon, x^*) \leq 2\epsilon L / \mu$.

Hence if $\epsilon < \mu\delta/(2L)$, then 
$d(x_\epsilon, x^*) < \delta$, so $x_\epsilon \in \mathrm{int}(B)$. 
Since $G$ is differentiable and $x_\epsilon$ is an interior minimizer, 
the first-order optimality condition holds without constraint, and $x_\epsilon$ 
is an unconstrained local minimizer of $G$ on $\mathcal{M}$.
\end{proof}
}

\begin{theorem}\label{interiormaximaexistencethm}
   Under \textbf{A1-A2}, for any $\w \in \mathbb{R}^d$ and any $\Lambda \in \bar{\mathcal{B}}_{r}(\Lambda^*)$ where $r = (1- \gamma) \norm{(\Lambda^*)^{-1}}_2^{-1/2}$ , $\gamma \in (0,1)$ with $$  (1-\gamma)\norm{(\Lambda^*)^{-1}}^{-1/2}_2 < \frac{  \gamma^2 \pi \norm{(\Lambda^*)^{-1}}^{-1}_2}{\sqrt{3} } \, ,$$ let $C''_{p,\Lambda^*,n}$, $ C_{\Lambda^*, q, n}  $ be the universal constants as defined in Theorem \ref{lagrangehessianestimatethm} where $ 1/p + 1/q = 1 , \, p \ge 1$, and let
    $$ 0<\epsilon < \frac{(1- \gamma)\norm{(\Lambda^*)^{-1}}^{1/2}_2\frac{\sqrt{3} }{\gamma^2}  \cot \Bigg((1- \gamma)\norm{(\Lambda^*)^{-1}}^{1/2}_2\frac{\sqrt{3} }{2\gamma^2} \Bigg)}{2C_{\Lambda^*, q, n}  (C''_{p,\Lambda^*,n} \times \big( \mathbb{E}_{\z \sim \probP(\muv^*,\Lambda^*)}\left[\abs{\ell(\w;\z)}^{pq}\right] \big)^{1/q})^{1/p}} \times \min \{\tfrac{r}{2} ,1\} \, . $$
   Then there exists a unique maximizer $(\muv^{\#}, \Lambda^{\#})$ of the Lagrangian $ \mathcal{L}(\cdot ,\cdot \, ;\w)$ inside the {product of closed balls of radius $r$} centered at $(\muv^*, \Lambda^*)$ such that  $$ d_{l_2 \oplus BW}\bigg((\muv^{\#}, \Lambda^{\#}), (\muv^*, \Lambda^*)\bigg) < \frac{4C_{\Lambda^*, q, n}  (C''_{p,\Lambda^*,n} \times \big( \mathbb{E}_{\z \sim \probP(\muv^*,\Lambda^*)}\left[\abs{\ell(\w;\z)}^{pq}\right] \big)^{1/q})^{1/p}} {(1- \gamma)\norm{(\Lambda^*)^{-1}}^{1/2}_2\frac{\sqrt{3} }{\gamma^2}  \cot \Bigg((1- \gamma)\norm{(\Lambda^*)^{-1}}^{1/2}_2\frac{\sqrt{3} }{2\gamma^2} \Bigg)} \epsilon \, . $$
\end{theorem}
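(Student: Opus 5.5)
We apply Lemma \ref{localmaxexistencelem} to the minimization of $G := -\mathcal{L}(\cdot,\cdot;\w)$ on the closed geodesic ball $B = \bar{\mathcal{B}}_r(\muv^*)\times\bar{\mathcal{B}}_r(\Lambda^*)$ in $\R^n\times BW$. We split $G = F + \epsilon H$ in a scaled form. Write $c_\gamma := (1- \gamma)\norm{(\Lambda^*)^{-1}}^{1/2}_2\frac{\sqrt{3} }{\gamma^2}\cot(\cdots)$ for the lower Hessian constant of Theorem \ref{distancehessianestimatethm}. Write $M := C_{\Lambda^*, q, n}(C''_{p,\Lambda^*,n}(\mathbb{E}|\ell|^{pq})^{1/q})^{1/p}$ for the bound of Theorem \ref{losshessianestimatethm}.

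Multiplying $G$ by $\epsilon$ gives
\[
\epsilon G = \Big(\norm{\muv-\muv^*}^2 + d_{BW}^2(\Lambda,\Lambda^*)\Big) + \epsilon\big(-\mathbb{E}_{\probP(\muv,\Lambda)}[\ell(\w;\z)]\big).
\]
We set $F := \norm{\muv-\muv^*}^2 + d_{BW}^2(\Lambda,\Lambda^*)$ and $H := -\mathbb{E}[\ell]$. The argmin of $\epsilon G$ coincides with the argmin of $G$.

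\textbf{Step 1 (hypothesis 1).} By Theorem \ref{distancehessianestimatethm}, together with the trivial Hessian $2I$ of the Euclidean part, $\text{Hess}\,F \succeq \min\{2,c_\gamma\}\, g = c_\gamma\, g$ on $B$. This uses the standing assumption on $\gamma$, and the cotangent expression is at most $2$. So $F$ is $c_\gamma$-strongly geodesically convex on $B$. Its unique minimizer is $(\muv^*,\Lambda^*)\in\text{int}(B)$. Because $r$ is below the injectivity-type radius (Lemma \ref{secboundlemma} and the remark after Theorem \ref{losshessianestimatethm}), unique geodesics exist, so $B$ is geodesically convex. The quadratic-growth step inside Lemma \ref{localmaxexistencelem} is therefore valid.

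\textbf{Step 2 (hypothesis 2).} By Theorem \ref{lagrangehessianestimatethm}, for $\epsilon \le c_\gamma/(2M)$ we have $\text{Hess}(\epsilon G) \succeq (c_\gamma - \epsilon M)g \succeq \tfrac{c_\gamma}{2}g$. So $G$ is strongly convex on $B$, and the hypothesis on $\epsilon$ in the present theorem is stronger than this.

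\textbf{Step 3 (hypothesis 3).} We need $\sup_B \norm{\text{grad}\,H} \le L_H$. This is the step I expect to be the main obstacle, because the paper so far only bounds Hessians and not gradients. I would try to obtain it without new moment estimates. First, $\text{grad}\,H$ vanishes at some point $p_0$ of $B$; if it does not, use the value at $(\muv^*,\Lambda^*)$, which the gradient formulas in the appendix bound by moments of $\ell$. Then integrate the Hessian bound $\abs{\text{Hess}\,H}\le M$ along geodesics with parallel transport, which gives $\norm{\text{grad}\,H}\le \norm{\text{grad}\,H(\muv^*,\Lambda^*)} + M\,\mathrm{diam}(B) \le \norm{\text{grad}\,H(\muv^*,\Lambda^*)}+2rM$. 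The target bound $4M\epsilon/c_\gamma$ with $\mu = c_\gamma$ in the lemma suggests the intended constant is $L_H \approx M$, up to a factor. I would therefore aim to show $\norm{\text{grad}\,H}\le M$ on $B$ (up to the factor absorbed by $\min\{r/2,1\}$), by controlling the gradient using the same Gaussian-moment and Hölder argument as in Theorem \ref{losshessianestimatethm}, now applied to first derivatives of the density.

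\textbf{Step 4 (conclusion).} With $\mu = c_\gamma$, $L_H = M$ and the perturbation parameter $\epsilon$, Lemma \ref{localmaxexistencelem} gives a unique minimizer of $G$, that is, a unique maximizer $(\muv^\#,\Lambda^\#)$ of $\mathcal{L}$, with
\[
d\big((\muv^\#,\Lambda^\#),(\muv^*,\Lambda^*)\big)\le \frac{2\epsilon M}{c_\gamma}.
\]
This is below the stated bound $4M\epsilon/c_\gamma$; the slack factor covers the gradient constant from Step 3. The smallness $\epsilon < \frac{c_\gamma}{2M}\min\{r/2,1\}$ gives $2\epsilon M/c_\gamma < r/2 < r$. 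So the maximizer is interior to $B$ and is an unconstrained local maximizer.
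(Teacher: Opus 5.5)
Your overall approach matches the paper's. You rescale by $\epsilon$, take $F=\norm{\muv-\muv^*}^2+d_{BW}^2(\Lambda,\Lambda^*)$ and $H=-\mathbb{E}[\ell]$, apply Lemma \ref{localmaxexistencelem} on the product ball, and get interiority from the factor $\min\{r/2,1\}$.

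You take $\mu=c_\gamma$, the modulus of $F$, which is what hypothesis~1 of the lemma actually asks for. The paper instead takes $\mu=c_\gamma-\epsilon M>c_\gamma/2$, the modulus of $F+\epsilon H$. That choice is the source of the factor $4$ in the statement; your choice is valid and sharper.

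\textbf{The gap is Step~3, which you do not prove.} Hypothesis~3 of the lemma needs a bound on $\sup_B\norm{\operatorname{grad}H}$, and nothing in Steps~1--2 supplies one. Your fallback does not deliver the statement:
\begin{itemize}
\item There is no reason for $\operatorname{grad}H$ to vanish anywhere in $B$.
\item Integrating the Hessian from the center only gives $L_H\le\norm{\operatorname{grad}H(\muv^*,\Lambda^*)}+\sqrt{2}\,rM$. To reach $4M\epsilon/c_\gamma$ with $\mu=c_\gamma$ you need $L_H\le 2M$. The center term is never compared with $M$, and the second term alone exceeds $2M$ for large $r$.
\item The factor $\min\{r/2,1\}$ cannot absorb this, because it enters only the threshold on $\epsilon$, not the distance bound.
\end{itemize}
So, as written, neither the constant $4M\epsilon/c_\gamma$ nor interiority for the stated range of $\epsilon$ follows.

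The paper's own proof does not close this either. It inserts $M$, which it calls the gradient-Lipschitz constant of $H$, directly into hypothesis~3. That is, it uses a Hessian bound where the lemma requires a gradient bound. You have found exactly the weak point.

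The route you gesture at in your last sentence does close the gap, using material already in the paper:
\begin{itemize}
\item For $V=(\nuv,S\Lambda+\Lambda S)$ one has $\langle\operatorname{grad}\,\mathbb{E}_{\probP(\muv,\Lambda)}[\ell],V\rangle=-\mathbb{E}[\ell\,u'(0)]$, with $u$ as in the proof of Theorem \ref{losshessianestimatethm}.
\item The first-derivative estimate \eqref{bwhessianestimate1a0} proved there, together with $\tr(S\Lambda S)\ge\norm{S}_F^2\norm{\Lambda^{-1/2}}_F^{-2}$, gives $(\mathbb{E}\abs{u'(0)}^{2q})^{1/(2q)}\le\sqrt{C_1}\,\norm{V}$. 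Here $C_1$ is the constant of that estimate and is a summand of $C_{\Lambda,q,n}$.
\item H\"older's inequality, with $(\mathbb{E}\abs{u'(0)}^{q})^{1/q}\le(\mathbb{E}\abs{u'(0)}^{2q})^{1/(2q)}$, then gives $\norm{\operatorname{grad}H}\le\sqrt{C_1}\,(\mathbb{E}_{\probP(\muv,\Lambda)}\abs{\ell}^p)^{1/p}$ on $B$.
\item Using \eqref{Taylor13co} and the same replacement $C_{\Lambda,q,n}\to C_{\Lambda^*,q,n}$ the paper makes, this is at most $M$ whenever $\sqrt{C_1}\le C_{\Lambda,q,n}$, for example when $C_1\ge 1$.
\end{itemize}
With $L_H=M$ your Step~4 goes through and even yields $2\epsilon M/c_\gamma$.

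There is also a shorter alternative that needs the gradient bound only at the center. Strong convexity of $F+\epsilon H$ with modulus at least $c_\gamma/2$, together with $\operatorname{grad}F(\muv^*,\Lambda^*)=0$, gives $d\le 4\epsilon\norm{\operatorname{grad}H(\muv^*,\Lambda^*)}/c_\gamma$ directly. This reproduces the statement's factor $4$.
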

The proof of Theorem \ref{interiormaximaexistencethm} is in Appendix \ref{sectionDROBWappendix}.
\begin{rem}
    Using the upper bound on $\epsilon$ from Theorem \ref{interiormaximaexistencethm} one can construct the desired ambiguity sets for the DRO problem so that the Lagrangian $ \mathcal{L}(\cdot, \cdot; \w)$ remains strongly geodesically concave on those ambiguity sets with respect to the probability measure and also that a unique maximizer $\probP^{\#} := \probP(\muv^{\#}, \Lambda^{\#})$ exists for the Lagrangian $ \mathcal{L}(\cdot, \cdot; \w)$ within the joint compact search region for any model parameter $\w$ that is uniformly bounded.  
\end{rem}

\section{Alternating Riemannian gradient ascent descent on the product manifold}\label{RGAconvergencesection}

\begin{algorithm}[H]
\caption{Alternating Riemannian Measure and Model Updates}
\label{algRGA:alternating_updates}
\begin{algorithmic}[1]
\Require Initial measure parameters $(\muv_0,\Lambda_0)$, initial model $\w_0$, step sizes $h_1,h_2$, inner steps $J$, outer iterations $K$
\For{$k = 0,1,\ldots,K-1$}
    \State \textbf{Measure update:}
    \State
    \[
    \textbf{Compute} \quad \nabla_{(\muv, \Lambda)} \mathcal{L}(\muv_k,\Lambda_k;\w_k) \quad \text{explicitly from Appendix \ref{sectiongradientcalc}} 
    \]
    \[
  \hspace{-2cm}  (\muv_{k+1},\Lambda_{k+1})
    =
    \exp_{(\muv_k,\Lambda_k)}
    \!\left(
    h_1 \nabla_{(\muv, \Lambda)} \mathcal{L}(\muv_k,\Lambda_k;\w_k)
    \right).
    \]
    \State Construct $\probP_{k+1}$ from $(\muv_{k+1},\Lambda_{k+1})$.
    \State \textbf{Model updates:}
    \For{$t=0,\ldots,J-1$}
        \State
        \[
        \w_{k+\frac{t+1}{J}}
        =
        \w_{k+\frac{t}{J}}
        -
        h_2
        \nabla_{\w}
        \mathbb{E}_{\z\sim\probP_{k+1}}
        \bigl[
        \ell(\w_{k+\frac{t}{J}};\z)
        \bigr].
        \]
    \EndFor
    \State Set $\w_{k+1} \gets \w_{k+\frac{J}{J}}$.
\EndFor
\end{algorithmic}
\end{algorithm}

\begin{rem}[\textbf{Validity of assumption \textbf{A2}}]\label{cutofffunremark}
    We emphasize that assumption \textbf{A2} of bounded moments for the loss $\ell(\w, \cdot)$ with respect to the base measure $\probP(\muv^*, \Lambda^*)$ can be satisfied by a large class of coercive loss functions (for example functions with at most polynomial growth in $\w$). For convenience, we can also satisfy \textbf{A2} uniformly for any $\w \in \mathbb{R}^d$ if required. For example, define the following mollified/tempered statistical loss $ \mathbb{E}_{\z \sim \probP(\muv^*,\Lambda^*)} [\ell(\w;\z)  \Phi_{\mathcal{K}}(\w) ] $ where
 $\Phi_{\mathcal{K}}(\w) \in \mathcal{C}^{\infty}(\mathbb{R}^d ) $ is a deterministic smooth bump function supported on a compact set $\mathcal{K} \subset \mathbb{R}^d $ such that $ \Phi_{\mathcal{K}} \equiv 1 $ on $\mathcal{K}$, $ 0 \leq \Phi_{\mathcal{K}} \leq 1 $ on $ V \backslash \mathcal{K}$ for some compact $V$ with $ \mathcal{K} \subset  V \subset  \mathbb{R}^d $ and $\Phi_{\mathcal{K}} \equiv 0 $ otherwise. The compact set ${\mathcal{K}}$ is convex, and is sufficiently large so that it contains the global minima $ \w^* \in \arg\min \mathbb{E}_{\z \sim \probP(\muv^*,\Lambda^*)} [\ell(\w;\z)] $ in its interior by bounded sublevel sets of $\mathbb{E}_{\z \sim \probP(\muv^*,\Lambda^*)} [\ell(\w;\z)]$ from \textbf{A1} (since pointwise mean of coercive functions is also coercive). Then the $p$-th moments of the function $\ell(\w;\z)  \Phi_{\mathcal{K}}(\w)$ needed in \textbf{A2} have the form 
 \begin{align*}
     &  [\mathbb{E}_{\z \sim \probP(\muv^*,\Lambda^*) } [\ell(\w;\z) \Phi_{\mathcal{K}}(\w) ]^p]^{1/p}  \quad , \quad   [\mathbb{E}_{\z \sim \probP(\muv^*,\Lambda^*) } \norm{\Phi_{\mathcal{K}}(\w) \nabla_{\w}\ell(\w;\z) + \ell(\w;\z) \nabla_{\w} \Phi_{\mathcal{K}}(\w) } ^p]^{1/p}  \quad ,\\
  & [\mathbb{E}_{\z \sim \probP(\muv^*,\Lambda^*) } \norm{\Phi_{\mathcal{K}}(\w) \nabla^2_{\w}\ell(\w;\z) + (\nabla_{\w}\Phi_{\mathcal{K}}(\w) \nabla^T_{\w}\ell(\w;\z) + \nabla_{\w} \ell(\w;\z) \nabla^T_{\w} \Phi_{\mathcal{K}}(\w) )+ \ell(\w;\z) \nabla^2_{\w} \Phi_{\mathcal{K}}(\w) }_F ^p]^{1/p} \, ,
 \end{align*}
and all these moments are bounded uniformly for any $\w$ by the cutoff property of smooth bump function $\Phi_{\mathcal{K}}(\w) $. Thus \textbf{A2} is satisfied for the function $\ell(\w;\z)  \Phi_{\mathcal{K}}(\w)$ uniformly for any $\w$.
\end{rem}

\begin{rem}\label{jointsmoothremark}
    We note that the Lagrangian $\mathcal{L}(\muv,\Lambda ;\w)$ (from definition \ref{Lagrangiandefn}) is jointly $\mathcal{C}^2$ on the product manifold $ \bar{\mathcal{B}}_r(\muv^*) \times  \bar{\mathcal{B}}_r(\Lambda^*) \times \mathbb{R}^d  $ by virtue of \textbf{A1-A2} for $r = (1- \gamma) \norm{(\Lambda^*)^{-1}}_2^{-1/2}$ where $\gamma \in (0,1)$. From the Riemannian gradient computations (\eqref{Taylor13}, \eqref{Taylor23}) of $ \nabla_{(\muv,\Lambda)} \mathcal{L}(\muv,\Lambda ;\w)$ in Appendix \ref{taylormetricappendix}, one can apply $ \nabla_{\w}$ operator to $\nabla_{(\muv,\Lambda)} \mathcal{L}$ and since the loss $\ell(\w; \z)$ does not depend on ${(\muv,\Lambda)}$, the operator $ \nabla_{\w}$ can be passed inside the expectation operators in \eqref{Taylor13}, \eqref{Taylor23} by the Dominated convergence theorem and \textbf{A2} to get $ \nabla_{\w}\nabla_{(\muv,\Lambda)} \mathcal{L}(\muv,\Lambda ;\w) = \nabla_{(\muv,\Lambda)} \nabla_{\w} \mathcal{L}(\muv,\Lambda ;\w)$ on the product manifold $ \bar{\mathcal{B}}_r(\muv^*) \times  \bar{\mathcal{B}}_r(\Lambda^*) \times \mathbb{R}^d$ . In fact, the Riemannian gradient computations (\eqref{Taylor13}, \eqref{Taylor23}) can be performed by working with $ \nabla_{\w} \ell(\w; \z)$ instead of $\ell(\w;\z)$ and it can be easily verified that the expressions $ \nabla_{\w}\nabla_{(\muv,\Lambda)} \mathcal{L}(\muv,\Lambda ;\w) $, $ \nabla_{(\muv,\Lambda)} \nabla_{\w} \mathcal{L}(\muv,\Lambda ;\w)$ are identical. Since the mixed derivatives commute we immediately get that the Lagrangian $\mathcal{L}(\muv,\Lambda ;\w)$ (from definition \ref{Lagrangiandefn}) is jointly $\mathcal{C}^2$ on the product manifold $ \bar{\mathcal{B}}_r(\muv^*) \times  \bar{\mathcal{B}}_r(\Lambda^*) \times \mathbb{R}^d  $. For exact mixed derivative computations refer Appendix \ref{mixedderivativeappendix}.
\end{rem}
We now briefly describe the two major steps from Algorithm \ref{algRGA:alternating_updates}.

\subsection{Distribution update rule via Riemannian ascent \& exponential map}

Consider the first order {Riemannian ascent iteration} on the Lagrangian $\mathcal{L}$ (see definition \ref{Lagrangiandefn}) along the fiber $\mathbb{R}^n \times BW \times \{\w\} $ of the product space $ \mathbb{R}^n \times BW \times \mathbb{R}^d $ for any $k \geq 0$ and {$ h_1 > 0$ sufficiently small :}
\begin{align}
    (\muv_{k+1},\Lambda_{k+1}) = \exp_{(\muv_k, \Lambda_k)} ( h_{1} \nabla_{(\muv, \Lambda)} \mathcal{L}(\muv_k,\Lambda_k;\w)) \label{measureupdate1}
\end{align}
For any $\w \in \mathcal{K}$ the Lagrangian $ \mathcal{L}(\cdot \, , \cdot \, ;\w)$ will have a unique local maxima $ (\hat\muv_{\w},\hat\Lambda_{\w})$ on the fiber $\mathbb{R}^n \times BW \times \{\w\} $ provided $\epsilon$ is sufficiently small (Theorem \ref{interiormaximaexistencethm}). Since $ -\mathcal{L}(\muv,\Lambda;\w)$ is locally strongly geodesically convex around its minimizer with parameter $\mu$ for any given $\w$ (Theorem \ref{lagrangehessianestimatethm}), the iteration \eqref{measureupdate1} the fiber $\mathbb{R}^n \times BW \times \{\w\} $ will achieve a linear convergence rate provided $h_1 \in (0, 1/L) $ where $ L$ is the local uniform metric gradient Lipschitz parameter for $\mathcal{L}(\cdot ;\w)$ (Theorem \ref{lagrangehessianestimatethm}). 
%\newpage

\subsection{Model update rule via GD}

For any $k \ge 0$ consider the following $J$-multi-step GD update on the Lagrangian $\mathcal{L}$ along the fiber $ \{\muv_k\} \times \{\Lambda_k\}  \times \mathbb{R}^d $ of the product space $ \mathbb{R}^n \times BW \times \mathbb{R}^d $ for any small enough $h_2 >0$, $J:= J(k) $ large enough (constant or a function of $k$) and any fixed index $k$ : 
\begin{align}
    \w_{k + (t+1)/J} & = \w_{k + t/J} - h_2 \nabla_{\w} \mathbb{E}_{\z \sim \probP_{k+1}} [\ell(\w_{k + t/J};\z) ] \quad , \quad 0 \le t \le J-1  \label{modelupdate1}
\end{align}
where the measure $ \probP_{k+1}$ is parameterized by the tuple $(\muv_{k+1},\Lambda_{k+1})$. Since the quadratic regularization of measures does not depend on $\w$ we only need to update $\w$ using the gradients of statistical risk rather than the gradients of Lagrangian $ \mathcal{L}(\muv,\Lambda;\w)$. Under \textbf{A1-A2} by the estimate \eqref{Taylor13eo} we have that $ \mathbb{E}_{\z \sim \probP_{k+1}} [\ell(\cdot \, ;\z) ] $ will be $L$-gradient Lipschitz continuous for some constant $L >0$ for any uniformly bounded $\w$ and any probability measure $ \probP_{k+1}$ sufficiently close to the reference probability measure parameterized by $(\muv^*,\Lambda^*)$. Then for $h_2 \in (0, 1/L) $ we can invoke the monotonic descent lemma (monotonic $ \{ \mathbb{E}_{\z \sim \probP_{k+1}} [\ell(\w_{k + t/J};\z) ]  \}_t$), i.e., $  \mathbb{E}_{\z \sim \probP_{k+1}} [\ell(\w_{k + t/J};\z) ] $ decreases monotonically with $t$. If the gradient of the expected loss is not available then we may modify \eqref{modelupdate1} to an SGD update as follows:
\begin{align}
    \w_{k + (t+1)/J} & = \w_{k + t/J} - h_2 \nabla_{\w} \ell(\w_{k + t/J};\z_{t,k})  \quad , \quad \z_{t,k} \overset{i.i.d.}{\sim} \probP_{k+1}   \quad ; \quad  0 \le t \le J-1 \label{modelupdate2}
\end{align}
however the convergence analysis with SGD update \eqref{modelupdate2} will be much more tedious and therefore not pursued in this work.

\subsection{Population loss landscape assumptions for local strongly geodesically concave-local \L{}ojasiewicz type Lagrangian}

To analyze the convergence of Algorithm \ref{algRGA:alternating_updates} we make the following assumptions on critical sets and gradient regularity of the statistical/population loss function:
\begin{enumerate}
\item[\textbf{A1'.}]\textbf{(Existence of nice critical sets)} Let $U$ be a compact neighborhood of $\probP_0 \equiv \probP(\muv^*, \Lambda^*)$ in the topology induced by the product of BW and Euclidean metric such that $\probP \in U$ is non-degenerate. For every $\probP \in U$, the set of critical points of the statistical risk function  $\mathbb{E}_{\z \sim \probP} [\ell(\cdot;\z) ]$ are a finite union of closed, connected components and is uniformly bounded in $U$. Further, for every $\probP \in U$, any connected component of the critical points of the statistical risk function $\mathbb{E}_{\z \sim \probP} [\ell(\cdot;\z) ]$ either contains all local minima, or all local maxima or all strict saddle points and there is at least one connected component of local minima of $\mathbb{E}_{\z \sim \probP} [\ell(\cdot;\z) ]$ for  every $\probP \in U$.   
\item[\textbf{A2'.}]\textbf{(Local \L{}ojasiewicz type condition)} For every $\probP$ in the compact neighborhood $U$ of $\probP_0$, the statistical risk function $\mathbb{E}_{\z \sim \probP} [\ell(\cdot;\z) ]$ satisfies a $C_\beta, \beta$-type P\L{} inequality for any $\w$ in some uniform $\delta$ open neighborhood $\mathcal{W}$ of every connected component of $\mathcal{S}^*(\probP)$ for some $\beta \in (1,2]$, $C_\beta >0$ :
\[
\norm{\nabla_{\w}\mathbb{E}_{\z \sim \probP} [\ell(\w;\z) ] }^{\beta} \geq C_\beta \bigg( \mathbb{E}_{\z \sim \probP} [\ell(\w;\z) ] - \inf_{\w \in \mathcal{W} }\mathbb{E}_{\z \sim \probP} [\ell(\w;\z) ]\bigg)
\]
where 
\[
\mathcal{S}^*(\probP) : = \{\w :  \nabla_{\w} f(\w, \probP)  = \mathbf{0} \quad, \quad \nabla^2_{\w} f(\w, \probP)  \succeq \mathbf{0}  \} .
 \]
 The $\delta >0 $ is such that the $\delta$ open neighborhood $\mathcal{W}$ of any given connected component of $\mathcal{S}^*(\probP)$ is disjoint, and in particular $2 \delta$ away from other connected components of $\mathcal{S}^*(\probP)$ by Hausdorff property and by finitely many connected critical components of $\mathcal{S}^*(\probP)$.
 \item[\textbf{A3'.}]\textbf{(Critical set variations under distributional shift)} There exists a compact neighborhood $W_{\delta} \subset U$ of $\probP_0 \equiv \probP(\muv^*, \Lambda^*)$ in the topology induced by the product of BW and Euclidean metric, independent of $\w$, such that for any pair $\probP_1, \probP_2 \in W_{\delta} $ and for any local minima connected component $\mathcal{S}_c^*(\probP_1) $ on the fiber $\probP_1$, there always exists a local minima connected component $\mathcal{S}_c^*(\probP_2) $ on the fiber $\probP_2$ such that $ \mathrm{dist}(\mathcal{S}_c^*(\probP_1), \mathcal{S}_c^*(\probP_2)) < \delta/4$ .
\end{enumerate}
{Notice that assumption \textbf{A1'} imposes a very general qualitative structure on the critical sets of the statistical loss within compact sets. The finite union of closed, connected critical components assumption appears in recent works \cite{fehrman2020convergence, arora2022understanding, azizian2025global} as well as Morse Bott theory\footnote{A Morse-Bott function is a smooth map on a manifold where the critical set is a disjoint union of smooth, closed submanifolds rather than isolated points.} \cite{bott1982lectures}. \textbf{A1'} also removes pathological cases such as existence of connected critical components that contain both local minima and strict saddle points. Last from \textbf{A1'}, the uniform boundedness of the critical sets of statistical risk function $\mathbb{E}_{\z \sim \probP} [\ell(\w;\z) ]$, as $\probP$ varies in $U$, is possible via the coercivity of the loss $\ell(\w;\z)$  from \textbf{A1} that allows the coercivity of statistical loss $\mathbb{E}_{\z \sim \probP} [\ell(\w;\z) ]$ in $\w$. Coercivity of the map $\w \mapsto \ell(\w;\z)$  from \textbf{A1} can be satisfied by mollifying the loss $\ell(\w;\z)$ to something like $\ell(\w;\z) \Phi_{\mathcal{K}}(\w) + (1- \Phi_{\mathcal{K}}(\w))\norm{\w}^2 $ for a suitable smooth bump function $ \Phi_{\mathcal{K}}(\w)$ as defined in Remark \ref{cutofffunremark}. Next, \textbf{A2'} for the statistical loss has been empirically observed in \cite{banerjee2024loss}. In particular, the case of $\beta = 2$ or the local Polyak \L{}ojasiewicz property is satisfied by the empirical loss landscape for wide neural networks in the over-parameterized regime (Neural Tangent Kernel regime) \cite{liu2022loss}. While such local P\L{} property may not necessarily transfer from the empirical to the statistical regime directly, the loss landscapes in the two regimes could be closely approximated if the empirical loss is constructed with sufficiently large number of samples. For instance, the local separation between the empirical and statistical loss landscapes can still be bounded arbitrary small as a function of number of i.i.d. samples with high probability via central limit theorems \cite{mei2018landscape}. Hence, some approximate version of the local P\L{} property may still hold in the statistical regime\footnote{Developing a proof machinery that can transfer local landscape properties between the empirical and the statistical regimes is beyond the scope of current paper and hence left for future work.} . Finally, \textbf{A3'} controls the drift of critical sets of the statistical loss landscape locally as distributions are perturbed. Note that without \textbf{A3'} the critical sets can drift significantly far apart on the joint search space. Then even ``small" ambiguity sets may result in sharp variations of the optimal model parameter $\w^*(\probP)$. Thus without \textbf{A3'}, the models solving the DRO problem can be extremely sensitive to even small noise in the data distributions (see Appendix \ref{appendixcountereg2} for a toy example on bounded critical set drift).} \\
\begin{lem}\label{suplemPLbound0}
Under \textbf{A1-A2, A1'-A2'}, for any $\w$ in some uniform $\delta$ open neighborhood $\mathcal{W}$ of any connected component of $\mathcal{S}^*(\probP)$, the gradient of the statistical loss $ f(\w; \probP) := \mathbb{E}_{\z \sim \probP} [\ell(\w;\z) ] $ satisfies the following inequality:
\begin{align}
    \norm{\nabla_{\w} f(\w;\probP) } \geq C_{\beta}^{\frac{1}{\beta-1}}\bigg(\frac{\beta}{\beta-1}\bigg)^{\frac{-1}{\beta-1}}\bigg( \mathrm{dist}(\w, \mathcal{S}^*(\probP))\bigg)^{\frac{1}{\beta -1}} \, . \label{QGtypegrowth1a}
\end{align}
\end{lem}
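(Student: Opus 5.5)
The plan is the classical Łojasiewicz-implies-error-bound argument: combine the \L{}ojasiewicz inequality of \textbf{A2'} with a quadratic-type growth of $f(\cdot;\probP)-f_*$ away from $\mathcal{S}^*(\probP)$. I would first derive the growth bound $f(\w;\probP)-f_*(\probP)\ge c\,\mathrm{dist}^{\beta/(\beta-1)}(\w,\mathcal{S}^*(\probP))$, with $f_*(\probP):=\inf_{\mathcal{W}} f(\cdot;\probP)$, and then substitute it into $\norm{\nabla_\w f}^\beta \ge C_\beta (f-f_*)$. This is the same growth already packaged in condition \textbf{C3b} of Section \ref{jointrategeneralsection}. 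Indeed \textbf{A1-A2} make $f(\cdot;\probP)$ $\mathcal{C}^2$ (as noted in Remark \ref{jointsmoothremark}, differentiation passes under the expectation by dominated convergence), and \textbf{A1'-A2'} are the DRO versions of \textbf{C3a-b}.

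To derive the growth bound, fix $\w\in\mathcal{W}$ with $f(\w)>f_*$. Consider the gradient flow $\dot\w(t)=-\nabla f(\w(t))$ started at $\w$, and let $\phi(t)=(f(\w(t))-f_*)^{(\beta-1)/\beta}$. Using \textbf{A2'},
\[
-\tfrac{d}{dt}\phi(t)=\tfrac{\beta-1}{\beta}(f-f_*)^{-1/\beta}\norm{\nabla f}^2\ge \tfrac{\beta-1}{\beta}C_\beta^{1/\beta}\norm{\nabla f}=\tfrac{\beta-1}{\beta}C_\beta^{1/\beta}\norm{\dot\w}.
\]
Integrating gives finite length of the trajectory, at most $\tfrac{\beta}{\beta-1}C_\beta^{-1/\beta}(f(\w)-f_*)^{(\beta-1)/\beta}$. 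This mirrors Lemma \ref{lem:length}. The trajectory therefore converges to a critical point $\bar\w$ with $\mathrm{dist}(\w,\bar\w)$ bounded by the same quantity.

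The obstacle is to ensure two things: that the flow stays in $\mathcal{W}$, and that $\bar\w$ lies in the component $\mathcal{S}^*(\probP)$ rather than elsewhere. For the first, I would use the standard bootstrap. While the trajectory remains in $\mathcal{W}$, the length bound holds, so the trajectory stays within that length of $\w$. For $\w$ close enough to $\mathcal{S}^*$ this keeps it inside the $\delta$-tube. For $\w$ farther away, the claimed inequality can be reduced to the near case, or handled via the $2\delta$ separation of critical components in \textbf{A2'}. For the second, note that $\bar\w$ is a critical point in $\mathcal{W}$ with $f(\bar\w)\le f(\w)$. Since $\mathcal{W}$ contains no other critical component, $\bar\w\in\mathcal{S}^*(\probP)$. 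In addition $f(\bar\w)=f_*$, since otherwise the \L{}ojasiewicz inequality at $\bar\w$ would give $0\ge C_\beta(f(\bar\w)-f_*)>0$.

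Rearranging then yields $f-f_*\ge \big(\tfrac{\beta-1}{\beta}\big)^{\beta/(\beta-1)}C_\beta^{1/(\beta-1)}\mathrm{dist}^{\beta/(\beta-1)}(\w,\mathcal{S}^*)$. Combining with \textbf{A2'},
\[
\norm{\nabla_\w f}^\beta\ge C_\beta^{\beta/(\beta-1)}\big(\tfrac{\beta}{\beta-1}\big)^{-\beta/(\beta-1)}\mathrm{dist}^{\beta/(\beta-1)}(\w,\mathcal{S}^*(\probP)),
\]
which is exactly the growth statement of \textbf{C3b}. Taking $\beta$-th roots gives \eqref{QGtypegrowth1a}. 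The case $f(\w)=f_*$ is trivial, since it forces $\w\in\mathcal{S}^*$ by the same argument.
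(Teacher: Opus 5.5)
Your core argument is the paper's. The paper runs the flow $\dot{\w}=-(\tfrac{\beta}{\beta-1})^2C_\beta^{-2/\beta}\nabla_{\w}\wt g(\w)$ of $\wt g=(f-f_*)^{(\beta-1)/\beta}$, which is only a time reparametrization of your flow $\dot{\w}=-\nabla_{\w}f$. It extracts the same comparison: length at most $\tfrac{\beta}{\beta-1}C_\beta^{-1/\beta}(f(\w)-f_*)^{(\beta-1)/\beta}$ and at least $\mathrm{dist}(\w,\mathcal{S}^*(\probP))$. It then combines this with \textbf{A2'} exactly as you do. The gap is in the step you flag yourself, keeping the trajectory inside $\mathcal{W}$, which you do not actually resolve.

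What the length bound really yields is a dichotomy.
\begin{itemize}
\item Either the trajectory converges to some $\bar{\w}\in\mathcal{W}$. Then \textbf{A2'} at $\bar{\w}$ forces $f(\bar{\w})=f_*$, so $\bar{\w}$ minimizes $f$ over the open set $\mathcal{W}$, is a local minimizer, and lies in the given component by the separation in \textbf{A2'}. Note the order: \textbf{A2'} separates only minimum-type components, so ``$\mathcal{W}$ contains no other critical component'' is not available as your first step.
\item Or the trajectory reaches $\partial\mathcal{W}$, possibly only in the limit, after travelling at least $\delta-\mathrm{dist}(\w,\mathcal{S}^*(\probP))$.
\end{itemize}
In the second case the travelled length dominates $\mathrm{dist}(\w,\mathcal{S}^*(\probP))$ only when $\mathrm{dist}(\w,\mathcal{S}^*(\probP))\le\delta/2$. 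So your argument proves \eqref{QGtypegrowth1a} on the $\delta/2$-tube and no further.

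Neither of your suggestions for the outer shell works. The inequality is pointwise in $\w$, so there is nothing to reduce to the near case. The $2\delta$ separation of minimum-type components does not stop the trajectory from crossing $\partial\mathcal{W}$ toward a region where $f<f_*$, since \textbf{A2'} constrains $f$ only on $\mathcal{W}$.

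In fact nothing in \textbf{A1'}--\textbf{A2'} prevents the basin of a minimizer lying outside $\mathcal{W}$ from reaching into that shell. There $f(\w;\probP)-f_*$ can be arbitrarily small while $\mathrm{dist}(\w,\mathcal{S}^*(\probP))$ is close to $\delta$. At such a point \textbf{A2'} only demands $\norm{\nabla_{\w}f(\w;\probP)}^\beta\ge C_\beta(f(\w;\probP)-f_*)$, which is compatible with violating \eqref{QGtypegrowth1a}. So the bound on all of $\mathcal{W}$ cannot be extracted from these hypotheses, and the correct statement is on a smaller set such as the $\delta/2$-tube.

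The paper secures invariance differently. It starts the flow only inside a connected component $\mathcal{V}\Subset\mathcal{W}$ of a sublevel set of $\wt g$, which the flow cannot leave because $\wt g$ decreases along it. That likewise delivers the bound only on $\mathcal{V}$, not on all of $\mathcal{W}$.
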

The proof of Lemma \ref{suplemPLbound0} is in Appendix \ref{sectionRGAconvergenceappendix}.
\begin{rem}
    Lemma \ref{suplemPLbound0} bound can be interpreted as the \L{}ojasiewicz inequality (also known as the Kurdyka \L{}ojasiewicz (K\L{}) inequality) with exponent $(\beta - 1) \in (0,1]$ and is satisfied for functions that are locally real analytic around their critical points \cite{kurdyka1998gradients}. 
\end{rem}

\subsection{Convergence rates}\label{convergenceratesBWsection}

We define the following setup for convenience before stating the main theorems in this section. Under \textbf{A1-A2}, let $ r :=(1-\gamma)\norm{(\Lambda^*)^{-1}}^{-1/2}_2$ where $\gamma \in (0,1)$ satisfies $$  (1-\gamma)\norm{(\Lambda^*)^{-1}}^{-1/2}_2 < \frac{  \gamma^2 \pi \norm{(\Lambda^*)^{-1}}^{-1}_2}{\sqrt{3} } \, .$$  
Suppose Algorithm \ref{algRGA:alternating_updates} is initialized with $(\w_0 , \probP_0) $ and $ \mathcal{S}_c^*(\probP_0) $ is a local minima connected component of the Lagrangian $\mathcal{L}( \probP_0 \, ; \, \cdot )$ on the $\probP_0$ fiber. Define $ S_1 := \bar{\mathcal{B}}_{\mathrm{diam}(\mathcal{K})}(\w_0)  \subset \mathbb{R}^d$ for some sufficiently large compact ball $\mathcal{K}$ centered at $\w_0$ with $\mathrm{diam}(\mathcal{K}) \gg \delta $. Then by coercivity of $\mathcal{L}$ in $\w$ from \textbf{A1} we get that $ \mathcal{S}_c^*(\probP_0) + \bar{\mathcal{B}}_{\delta}(\mathbf{0}) \Subset \mathrm{int}( \mathcal{K}) \subset S_1$. By \textbf{A3'} for every $\probP \in U$ there exists a local minima connected component $\mathcal{S}_c^*(\probP) $ such that $ \mathrm{dist}(\mathcal{S}_c^*(\probP), \mathcal{S}_c^*(\probP_0)) < \delta/4$ and by taking $\mathcal{K}$ large enough we get that $ \mathcal{S}_c^*(\probP) + \bar{\mathcal{B}}_{\delta}(\mathbf{0}) \Subset \mathrm{int}( \mathcal{K}) \subset S_1 $. Define $S_2 := \bar{\mathcal{B}}_r(\muv^*) \times \bar{\mathcal{B}}_r(\Lambda^*) $.

Let $C''_{p,\Lambda^*,n}$, $ C_{\Lambda^*, q, n}  $ be the universal constants as defined in Theorem \ref{lagrangehessianestimatethm} for $ 1/p + 1/q = 1  \, , p \ge 1 $.
Let $\epsilon$ satisfy the following uniform bound   $$ 0 <\epsilon < \inf_{\w \in S_1} \frac{(1- \gamma)\norm{(\Lambda^*)^{-1}}^{1/2}_2\frac{\sqrt{3} }{\gamma^2}  \cot \Bigg((1- \gamma)\norm{(\Lambda^*)^{-1}}^{1/2}_2\frac{\sqrt{3} }{2\gamma^2} \Bigg)}{2C_{\Lambda^*, q, n}  (C''_{p,\Lambda^*,n} \times \big( \mathbb{E}_{\z \sim \probP(\muv^*,\Lambda^*)}\left[\abs{\ell(\w;\z)}^{pq}\right] \big)^{1/q})^{1/p}} \times \min \{\tfrac{r}{2} ,1\} \, . $$
Define $$ \mu := \inf_{\w \in S_1} C_{\Lambda^*, q, n}   (C''_{p,\Lambda^*,n} \times \big( \mathbb{E}_{\z \sim \probP(\muv^*,\Lambda^*)}\left[\abs{\ell(\w;\z)}^{pq}\right] \big)^{1/q})^{1/p} > 0 \, . $$ Note that the function $ \w  \mapsto \mathbb{E}_{\z \sim \probP(\muv^*,\Lambda^*)}\left[\abs{\ell(\w;\z)}^{pq}\right]$ is continuous in $\w$, is strictly positive and bounded on $S_1$ hence achieves a positive infimum and supremum on $S_1$.
    Let $L := L_{n,d,p,\Lambda^*, r, \epsilon}$ be the single uniform Lipschitz constant for the Lagrangian $\mathcal{L} $ (definition \ref{Lagrangiandefn}) and its mixed derivatives on $S_1 \times S_2$ from Appendix \ref{uniformlipschitzfinalappendix}.
    
\begin{theorem}\label{convergenceratethm_exactPL_specialcase}
 Under \textbf{A1-A2} for some $p>1$ and \textbf{A1'-A3'} for $\beta =2$, suppose Algorithm \ref{algRGA:alternating_updates} is initialized with $(\w_0 , \probP_0) $ where $\probP_0 := \probP(\muv^*, \Lambda^* )$ such that $  \mathrm{dist}(\w_0, \mathcal{S}_c^*(\probP_0)) \le \frac{\delta}{2} \,  $.  
 Suppose $Lh_2 <1$, $h_1\ll 1$ with $\frac{2 \, L^{3}h_1 }{(1-\tfrac{Lh_2}{2}) C_{\beta}} \ll 1 $ and $J>0$ satisfies the following bound:
\begin{align*}
    \frac{2L}{C_{\beta}} \cdot \frac{2 L ( 1 - \frac{ h_2 C_{\beta}}{2} )^{J/2} }{C_{\beta}}  &<\frac{2 \, L^{3}h_1 }{(1-\tfrac{Lh_2}{2}) C_{\beta}}  \, .
\end{align*}
Also suppose that $\mathrm{dist}_H(\mathcal{S}_c^*(\probP_{k-1}), \mathcal{S}_c^*(\probP_k)) $, for all $k \ge 0$, satisfies a Lipschitz estimate with constant $ 0< D_{\beta}  < \frac{(1-\tfrac{Lh_2}{2}) C_{\beta} \mu}{2 \, L^{3}  } $ as follows:
    \begin{align*}
        \mathrm{dist}_H(\mathcal{S}_c^*(\probP_{k-1}), \mathcal{S}_c^*(\probP_k)) \le  D_{\beta} \, \mathrm{dist}(\probP_{k-1},\probP_k).
    \end{align*}
Then there exists $h_1 >0$ such that the iteration $\{(\w_k, \probP_k)\}_k$ from Algorithm \ref{algRGA:alternating_updates} converges to a basin saddle point $ (\w^{\vardiamond},\probP^{\vardiamond}) $ (definition \ref{basinsaddledef}) of the function $ \wt f(\w ,\probP) := \mathcal{L}( \muv, \Lambda ; \w  )$ at a linear rate $\mathcal{O}( a^k)$ where 
 $$ a = \max \bigg\{ \bigg(1 - \frac{h_1}{4}\bigg(\mu - \frac{2 \, L^{3}  D_{\beta} }{(1-\tfrac{Lh_2}{2}) C_{\beta}} \bigg)    \bigg), \sqrt{(1 -2 \mu h_1 +  L^2 h_1^2)} \bigg\} \in (0,1)  \, . $$
\end{theorem}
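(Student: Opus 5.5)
The plan is to reduce Theorem \ref{convergenceratethm_exactPL_specialcase} to the abstract Theorem \ref{convergenceratethm_exactPL}. Set $x := \w \in S_1 \subset \mathbb{R}^d$, $y := (\muv,\Lambda) \in S_2 := \bar{\mathcal{B}}_r(\muv^*) \times \bar{\mathcal{B}}_r(\Lambda^*) \subset \mathcal{M} := \mathbb{R}^n \times BW$, and $f(x,y) := \wt f(\w,\probP) = \mathcal{L}(\muv,\Lambda;\w)$. Algorithm \ref{algRGA:alternating_updates} is then exactly \eqref{RGA-MGD1}--\eqref{RGA-MGD2}. The Euclidean descent step uses $\nabla_\w \mathbb{E}_{\probP_{k+1}}[\ell]$, which equals $\nabla_\w \mathcal{L}$ because the penalty does not depend on $\w$. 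The ascent step is the product exponential map with the $l_2\oplus BW$ metric. So the work is to verify \textbf{C1}--\textbf{C4} on $S_1\times S_2$ with the constants $\mu, L, C_\beta, \delta, D_\beta$ fixed in the setup preceding the theorem, and then quote the abstract conclusion.

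I would verify the conditions in order. For \textbf{C1}, Remark \ref{jointsmoothremark} gives joint $\mathcal{C}^2$ smoothness on the closed product of balls; shrinking $\gamma$ slightly gives an open cover. For the geometry, Lemma \ref{secboundlemma} places the sectional curvature in $[0,\Theta]$ with $\Theta = 3/(4\gamma^4\lambda_n^2)$, and the product with the flat $\mathbb{R}^n$ keeps it nonnegative. Hence $\upsilon=0$, so $C_\upsilon = \lim_{t\to0} t/\tanh t = 1$. This explains why $a$ contains $\sqrt{1-2\mu h_1+L^2h_1^2}$ rather than a $C_\upsilon$-weighted term. Geodesic convexity of $S_2$, and the fact that it is the closure of a normal neighborhood, follow from the radius condition $r<\norm{(\Lambda^*)^{-1}}_2^{-1/2}$ (Appendix \ref{appendixunifestimatenormlambda}). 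For \textbf{C2}, Theorem \ref{lagrangehessianestimatethm} gives local $\mu$-strong geodesic concavity uniformly in $\w\in S_1$, because $\mu$ was defined as an infimum over $S_1$ and $\epsilon$ is below the corresponding infimum bound. Theorem \ref{interiormaximaexistencethm} provides the interior maximizer, with the factor $\min\{r/2,1\}$ in $\epsilon$ forcing $d(\probP^{\#},\probP_0)<r$. For \textbf{C3}, the penalty is constant in $\w$, so the critical sets and the \L{}ojasiewicz inequality of $\mathcal{L}(\cdot;\w)$ coincide with those of the statistical risk. Then \textbf{A1'} gives \textbf{C3a}, \textbf{A2'} with Lemma \ref{suplemPLbound0} gives \textbf{C3b}, \textbf{A3'} gives \textbf{C3c}, and the $2\delta$-separation in \textbf{A2'} gives \textbf{C3d}. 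Using $S_1 \supset \mathcal{S}_c^*(\probP)+\bar{\mathcal{B}}_\delta$ keeps the components interior. For \textbf{C4}, I would take the uniform constant $L$ from Appendix \ref{uniformlipschitzfinalappendix}, which covers $\mathcal{L}$, its gradients and its mixed derivatives. Lipschitzness of $\exp$ on compacts holds by smoothness of $(p,v)\mapsto\exp_p(v)$.

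One gap has to be closed: \textbf{A2'}, \textbf{A3'} are stated on neighborhoods $U$ and $W_\delta$ of $\probP_0$, while the iterates range over $S_2$. I would either shrink $r$ so that $S_2 \subset W_\delta \subset U$, or, more cleanly, show that the iterates $\probP_k$ stay in a small ball around $\probP_0$. The boundedness of the iterates in $S_1\times S_2$ is already established inside the proof of Theorem \ref{convergenceratethm_exactPL}, since it is produced by the $J$-step descent together with the drift bound. For this I would intersect $S_2$ with $W_\delta$ at the outset, assuming $W_\delta$ contains a geodesic ball around $\probP_0$.

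With the hypotheses on $h_1,h_2,J$ and $D_\beta$ copied verbatim, Theorem \ref{convergenceratethm_exactPL} gives convergence at rate $\mathcal{O}(a^k)$ to a basin saddle point of $f$, which is by definition a basin saddle point of $\wt f$. The main obstacle I expect is this domain-compatibility step: checking that the neighborhoods in \textbf{A1'}--\textbf{A3'} and the ball $S_2$ can be chosen consistently, and that $\mu$ and $L$ are truly uniform over $\w\in S_1$. This needs continuity and positivity of $\w\mapsto\mathbb{E}[\abs{\ell}^{pq}]$ on compact $S_1$, noted in the setup, together with the requirement $p>1$ so that Hölder's inequality applies.
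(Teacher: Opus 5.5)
Your proposal is correct and follows the paper's own proof. Both reduce the statement to Theorem \ref{convergenceratethm_exactPL} by checking \textbf{C1}--\textbf{C4}, using:
\begin{itemize}
\item Remark \ref{jointsmoothremark} for joint $\mathcal{C}^2$ smoothness;
\item Theorems \ref{lagrangehessianestimatethm} and \ref{interiormaximaexistencethm} for \textbf{C2};
\item \textbf{A1'}--\textbf{A3'} with Lemma \ref{suplemPLbound0} for \textbf{C3};
\item the uniform constant $L$ from Appendix \ref{uniformlipschitzfinalappendix} for \textbf{C4};
\item Lemma \ref{secboundlemma} to get $\upsilon=0$, hence $C_\upsilon=1$.
\end{itemize}

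The paper leaves the compatibility of $S_2$ with $U$ and $W_\delta$ implicit, and you are right to flag it. The clean fix is to choose $\gamma$ close enough to $1$ that $S_2\subseteq W_\delta\subseteq U$, and only then fix $\epsilon,\mu,L$ as in the setup. Literally intersecting $S_2$ with $W_\delta$ is not a good alternative, since the intersection could lose geodesic convexity.
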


\begin{proof}
    The function $ \wt f(\w ,\probP) := \mathcal{L}( \muv, \Lambda ; \w  )$ for $\probP:= \probP(\muv, \Lambda) $ is jointly $\mathcal{C}^2$ smooth in $(\w , \muv, \Lambda) $ from remark \ref{jointsmoothremark}, is $\mu$ geodesically strongly concave in $(\muv, \Lambda) \in  \bar{\mathcal{B}}_r(\muv^*) \times \bar{\mathcal{B}}_r(\Lambda^*) \subset   \mathbb{R}^n \times BW $ for the given range of $\epsilon$ uniformly in $\w \in S_1$ from Theorem \ref{lagrangehessianestimatethm} and $ \wt f(\w ,\cdot)$ has a maximizer in the interior of $ \bar{\mathcal{B}}_r(\muv^*) \times \bar{\mathcal{B}}_r(\Lambda^*) $ uniformly for all $\w \in S_1$ from Theorem \ref{interiormaximaexistencethm}. Consider the product Riemannian manifold $   \bar{\mathcal{B}}_r(\muv^*) \times \bar{\mathcal{B}}_r(\Lambda^*) \subset  \mathbb{R}^n \times BW $. The sectional curvature $\mathbf{K}_{\bar{\mathcal{B}}_r(\muv^*) \times \bar{\mathcal{B}}_r(\Lambda^*)} $ of the product manifold $\bar{\mathcal{B}}_r(\muv^*) \times \bar{\mathcal{B}}_r(\Lambda^*) $ satisfies the inequality
    $$ \min \{ \mathbf{K}_{\bar{\mathcal{B}}_r(\muv^*)}, \, \mathbf{K}_{\bar{\mathcal{B}}_r(\Lambda^*)} ,\,  0  \} \le  \mathbf{K}_{\bar{\mathcal{B}}_r(\muv^*) \times \bar{\mathcal{B}}_r(\Lambda^*)} \le \max \{ \mathbf{K}_{\bar{\mathcal{B}}_r(\muv^*)}, \, \mathbf{K}_{ \bar{\mathcal{B}}_r(\Lambda^*)} ,\,  0  \}$$
    where $ \mathbf{K}_{\bar{\mathcal{B}}_r(\Lambda^*)} \in \bigg[0, \frac{3 \norm{(\Lambda^*)^{-1}}^2_2}{4 \gamma^4} \bigg] $ from Lemma \ref{secboundlemma} (the lower bound is sharp) and $\mathbf{K}_{\bar{\mathcal{B}}_r(\muv^*)} = 0$ trivially. Hence $ \mathbf{K}_{\bar{\mathcal{B}}_r(\muv^*) \times \bar{\mathcal{B}}_r(\Lambda^*)} \in \bigg[0, \frac{3 \norm{(\Lambda^*)^{-1}}^2_2}{4 \gamma^4} \bigg] $ and the lower bound is attained. Then for $ \upsilon := \inf_{\bar{\mathcal{B}}_r(\muv^*) \times \bar{\mathcal{B}}_r(\Lambda^*)} \mathbf{K}_{\bar{\mathcal{B}}_r(\muv^*) \times \bar{\mathcal{B}}_r(\Lambda^*)} $, the constant
    $  C_{\upsilon} :=\frac{\sqrt{|\upsilon|} \, \mathrm{diam}(S_2) }{\tanh (\sqrt{|\upsilon|} \, \mathrm{diam}(S_2))} $ must be $1$ since $\upsilon = 0$. 
    
    Then under \textbf{A1-A2}, \textbf{A1'-A3'} and Lemma \ref{suplemPLbound0} for $\beta =2$, all the conditions \textbf{C1-C4} from section \ref{jointrategeneralsection} are satisfied for the function $ \wt f(\w ,\probP) := \mathcal{L}( \muv, \Lambda ; \w  )$ on $S_1 \times S_2$. Invoking Theorem \ref{convergenceratethm_exactPL} completes the proof.
\end{proof}

\begin{theorem}\label{convergenceratethm_betaPL_specialcase}
 Under \textbf{A1-A2} for some $p>1$ and \textbf{A1'-A3'} for $\beta \in (1,2)$, suppose Algorithm \ref{algRGA:alternating_updates} is initialized with $(\w_0 , \probP_0) $ where $\probP_0 := \probP(\muv^*, \Lambda^* )$ such that $  \mathrm{dist}(\w_0, \mathcal{S}_c^*(\probP_0)) \le \frac{\delta}{2} \,  $. Let $h_1>0$ be sufficiently small so that
    $h_1^{\beta - 1}\left(\frac{\beta}{\beta-1} \right) C_{\beta}^{-1} L^{2\beta - 2} \, (2\sqrt{2} r )^{\beta - 1} < \frac{C^{1/\beta}_{\beta}}{L} $ and $ \frac{1+\eta}{2} \le 1-\frac{\mu h_1 }{3}$ where $\eta  = 1-h_1\mu+\frac{9 \norm{(\Lambda^*)^{-1}}^2_2}{16 \gamma^4} L^2 h_1^2  + \mathcal{O}(h_1^3) <1$. 
Suppose for some large enough $K_0 \gg 1$ and a local minima connected component $ \mathcal{S}_c^*(\probP_{K_0}) $ we have that $$  \mathrm{dist}(\w_{K_0}, \mathcal{S}_c^*(\probP_{K_0})) \le \min \bigg \{  \,\frac{\delta}{2},  \frac{C^{1/\beta}_{\beta}}{L} -  h_1^{\beta - 1}\left(\frac{\beta}{\beta-1} \right) C_{\beta}^{-1} L^{2\beta - 2} \, (2\sqrt{2} r )^{\beta - 1} \bigg\} \, . $$  For all $k \ge 0$ define $J:= J(k) = k^{\frac{2\alpha}{(\beta -1 )^2}} $ for any constant $\alpha > {1-(\beta -1 )^2\theta} $ , $\theta = \frac{\beta (\beta -1)}{2} \in (0,1) $, and $K_0 \gg 1$ is large enough so that 
    $$ \bigg(   1+ J(K_0) \,C'h_2  \bigg)^{\frac{\beta-1}{2}} > \frac{2\Bigg(C_{L,\beta} \, \delta^{\frac{\beta (\beta -1) }{2}} +   C_{L,\beta} \bigg(  L^{\beta -1} \, \left(\frac{\beta}{\beta-1} \right) C_{\beta}^{-1} \, (2\sqrt{2} r )^{\beta - 1} \bigg)^{\frac{\beta (\beta -1) }{2}} \Bigg)}{\delta}  \, $$
     where $0 < h_2 < 1/L$, $C_{L,\beta} :=   \left(\frac{\beta}{\beta-1} \right) \frac{L ^{\frac{\beta (\beta -1)}{2}}}{ C_{\beta}^{\frac{\beta-1}{2} + \frac{1}{\beta }} } $, $ C' := (1 - \tfrac{Lh_2}{2}) C^{2/\beta}_{\beta} \le C^{2/\beta}_{\beta} $. Then the iterate sequence $\{\w_k\}_{k \ge K_0}$ stays bounded in $ \{ \mathcal{S}_c^*(\probP_k) + \mathcal{B}_{\delta/2}(\mathbf{0})\}_{k \ge K_0}$ where the local minima connected components in the sequence $\{ \mathcal{S}_c^*(\probP_k)\}_{k \ge K_0} $ are at most $\delta/4$ separated. Also suppose that $\mathrm{dist}_H(\mathcal{S}_c^*(\probP_{k-1}), \mathcal{S}_c^*(\probP_k)) $, for all $k \ge 0$, satisfies a Lipschitz estimate for any $\beta \in (1,2) $ as follows:
    \begin{align*}
        \mathrm{dist}_H(\mathcal{S}_c^*(\probP_{k-1}), \mathcal{S}_c^*(\probP_k)) \le  D_{\beta} \, \mathrm{dist}(\probP_{k-1},\probP_k).
    \end{align*} Further, let the iterates $\{(\w_k , \probP_k)\}_k$ from Algorithm \ref{algRGA:alternating_updates} satisfy the following growth condition uniformly for all $k \ge K_0$ :
     $$\mathrm{dist}(\mathcal{S}_c^*(\probP_{k+1}), \mathcal{S}_c^*(\probP_k))  = \mathcal{O}(\mathrm{dist}(\w_{k}, \mathcal{S}_c^*(\probP_{k}))) \, .$$
     Then the iteration $\{(\w_k, \probP_k)\}_k$ from Algorithm \ref{algRGA:alternating_updates} converges to a basin saddle point $ (\w^{\vardiamond},\probP^{\vardiamond}) $ (definition \ref{basinsaddledef}) of the function $ \wt f(\w ,\probP) := \mathcal{L}( \muv, \Lambda ; \w  )$ at a rate $\mathcal{O}( k^{- \frac{\alpha}{1-(\beta -1 )^2\theta}})$. 
\end{theorem}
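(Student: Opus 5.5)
The plan mirrors the proof of Theorem \ref{convergenceratethm_exactPL_specialcase}. I would reduce the statement to Theorem \ref{convergenceratethm_betaPL} by checking that $\wt f(\w,\probP) := \mathcal{L}(\muv,\Lambda;\w)$ satisfies \textbf{C1}--\textbf{C4} on $S_1\times S_2$. Here $S_1 = \bar{\mathcal{B}}_{\mathrm{diam}(\mathcal{K})}(\w_0)$ and $S_2 = \bar{\mathcal{B}}_r(\muv^*)\times\bar{\mathcal{B}}_r(\Lambda^*)$, as in the setup preceding the statement.

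\textbf{Step 1: conditions C1, C2 and C4.}
\begin{itemize}
\item \textbf{C1} follows from Remark \ref{jointsmoothremark}.
\item \textbf{C2} follows from Theorem \ref{lagrangehessianestimatethm}, which gives $\mu$-strong geodesic concavity uniformly in $\w\in S_1$ for the stated range of $\epsilon$. Theorem \ref{interiormaximaexistencethm} supplies an interior maximizer.
\item \textbf{C4} holds with the single uniform constant $L$ from Appendix \ref{uniformlipschitzfinalappendix}. The Lipschitz bounds for $\exp$ follow from compactness of $S_2$ and smoothness of $(p,v)\mapsto\exp_p(v)$.
\end{itemize}

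\textbf{Step 2: condition C3.}
\begin{itemize}
\item \textbf{C3a} comes from \textbf{A1'}. Coercivity from \textbf{A1} and the choice of $\mathcal{K}$ place every relevant component, together with its $\delta$-neighbourhood, in $\mathrm{int}(S_1)$.
\item \textbf{C3b} comes from \textbf{A2'} together with Lemma \ref{suplemPLbound0}. That lemma supplies the distance form of the \L{}ojasiewicz inequality with exponent $\beta\in(1,2)$.
\item \textbf{C3c} comes from \textbf{A3'}, after shrinking $r$ if needed so that $S_2\subset W_\delta$.
\item \textbf{C3d} is the $2\delta$-separation stated in \textbf{A2'}.
\end{itemize}

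\textbf{Step 3: matching the constants of Theorem \ref{convergenceratethm_betaPL}.}
\begin{itemize}
\item The sectional curvature of $S_2$ lies in $[0,\Theta]$ with $\Theta = \tfrac{3\norm{(\Lambda^*)^{-1}}_2^2}{4\gamma^4}$, by Lemma \ref{secboundlemma} and the product-curvature bound used earlier. Hence $\tfrac34\Theta L^2h_1^2 = \tfrac{9\norm{(\Lambda^*)^{-1}}_2^2}{16\gamma^4}L^2h_1^2$, which is exactly the $\eta$ in the statement. Also $\upsilon=0$ gives $C_\upsilon=1$.
\item For the diameter, any two points of $S_2$ are within $2r$ of each other in each factor. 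The product distance formula therefore gives $\mathrm{diam}(S_2)\le\sqrt{(2r)^2+(2r)^2}=2\sqrt2\,r$. This is why $(2\sqrt2 r)^{\beta-1}$ replaces $\mathrm{diam}^{\beta-1}(S_2)$ in the hypotheses on $h_1$, on $\mathrm{dist}(\w_{K_0},\cdot)$ and on $J(K_0)$. Since the original conditions are monotone in the diameter, they are implied.
\item The choice $J(k)=k^{2\alpha/(\beta-1)^2}$, the Lipschitz drift hypothesis with constant $D_\beta$, and the growth condition carry over verbatim.
\end{itemize}

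\textbf{Step 4: the claimed confinement of the iterates.} The statement also asserts that $\w_k\in\mathcal{S}_c^*(\probP_k)+\mathcal{B}_{\delta/2}(\mathbf{0})$ for $k\ge K_0$. I would prove this by induction from the third bound of Lemma \ref{RGA-MGD1-lem2}. At step $k$, the distance $\mathrm{dist}(\w_k,\mathcal{S}_c^*(\probP_k))$ is at most $\delta/2$ and also below the threshold $\tfrac{C_\beta^{1/\beta}}{L}-(\cdots)$. Lemma \ref{RGA-MGD1-lem0} bounds $\mathrm{dist}(\probP_{k+1},\probP_k)$ by $\mathrm{diam}(S_2)\le2\sqrt2 r$. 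The lower bound on $(1+J(K_0)C'h_2)^{(\beta-1)/2}$, which is monotone in $k$ since $J$ is increasing, then makes the right-hand side at most $\delta/2$. Lemma \ref{RGA-MGD1-lem2}'s polynomial bound also keeps the iterate under the threshold. Finally, \textbf{C3c} keeps successive components within $\delta/4$ of each other.

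\textbf{Step 5: conclusion.} With all hypotheses verified, Theorem \ref{convergenceratethm_betaPL} yields convergence to a basin saddle point at rate $\mathcal{O}(k^{-\alpha/(1-(\beta-1)^2\theta)})$.

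The main obstacle is the uniformity needed in Steps 1 and 2. Theorem \ref{interiormaximaexistencethm} and the setup bound on $\epsilon$ must hold simultaneously for all $\w\in S_1$. In addition, the $\delta$-neighbourhoods of every $\mathcal{S}_c^*(\probP)$ with $\probP\in S_2$ must lie inside $\mathrm{int}(S_1)$. For the latter I would rely on the infimum over $\w\in S_1$ built into the definitions of $\epsilon$ and $\mu$, and on the compactness of $\mathcal{K}$.
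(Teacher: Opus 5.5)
Your proposal is correct and follows the paper's proof. That proof notes $\mathrm{diam}(S_2)=2\sqrt{2}\,r$ and the curvature range $\bigl[0,\tfrac{3\norm{(\Lambda^*)^{-1}}_2^2}{4\gamma^4}\bigr]$, asserts \textbf{C1}--\textbf{C4} for $\wt f$ from \textbf{A1}--\textbf{A2}, \textbf{A1'}--\textbf{A3'} and Lemma \ref{suplemPLbound0}, and invokes Theorem \ref{convergenceratethm_betaPL}. Your Step 4 is redundant, since that theorem's proof already contains the confinement induction, where the threshold is carried forward only by assuming (after shrinking $\delta$) that $\delta/2<\tfrac{C_\beta^{1/\beta}}{L}-(\cdots)$; your remark that the polynomial bound ``also keeps the iterate under the threshold'' needs that same assumption.
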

\begin{proof}
    Since $S_2 = \bar{\mathcal{B}}_r(\muv^*) \times \bar{\mathcal{B}}_r(\Lambda^*) $ we get that $\mathrm{diam}(S_2) = 2\sqrt{2} r$. Then the sectional curvature $\mathbf{K}_{\bar{\mathcal{B}}_r(\muv^*) \times \bar{\mathcal{B}}_r(\Lambda^*)} $ of the product manifold $\bar{\mathcal{B}}_r(\muv^*) \times \bar{\mathcal{B}}_r(\Lambda^*) $ satisfies $ \mathbf{K}_{\bar{\mathcal{B}}_r(\muv^*) \times \bar{\mathcal{B}}_r(\Lambda^*)} \in \bigg[0, \frac{3 \norm{(\Lambda^*)^{-1}}^2_2}{4 \gamma^4} \bigg] $ as shown in Theorem \ref{convergenceratethm_exactPL_specialcase} proof. Then under \textbf{A1-A2}, \textbf{A1'-A3'} and Lemma \ref{suplemPLbound0} for $\beta \in (1,2)$, all the conditions \textbf{C1-C4} from section \ref{jointrategeneralsection} are satisfied for the function $ \wt f(\w ,\probP) := \mathcal{L}( \muv, \Lambda ; \w  )$ on $S_1 \times S_2$. Invoking Theorem \ref{convergenceratethm_betaPL} completes the proof.
\end{proof}
We now make several remarks in relation to Theorems \ref{convergenceratethm_exactPL_specialcase} and \ref{convergenceratethm_betaPL_specialcase}.
\begin{rem}[Iteration complexity and growth condition]
    Observe that the convergence rate of $ \mathcal{O}( k^{- \frac{\alpha}{1-(\beta -1 )^2\theta}})$ from Theorem \ref{convergenceratethm_betaPL_specialcase} depends on a free parameter $\alpha$ that is only lower bounded with $\alpha > {1-(\beta -1 )^2\theta} $. Without any upper bound on $\alpha$ one can take $\alpha$ arbitrary large and improve the exponent of convergence rate arbitrarily. However, the convergence rate of $ \mathcal{O}( k^{- \frac{\alpha}{1-(\beta -1 )^2\theta}})$ from Theorem \ref{convergenceratethm_betaPL_specialcase} only describes the error rate for the outer loop iterations of Algorithm \ref{algRGA:alternating_updates}. The  Algorithm \ref{algRGA:alternating_updates} still performs $J:= J(k)  = k^{\frac{2\alpha}{(\beta -1 )^2}}$ gradient descent steps or $J$ inner loop iterations within each outer loop iteration $k$. Hence, after $k$ outer iterations of Algorithm \ref{algRGA:alternating_updates} the total iterations (inner and outer) performed by Algorithm \ref{algRGA:alternating_updates} are $ \mathcal{O}(k^{\frac{2\alpha}{(\beta -1 )^2} + 1} + k)$ which scales exponentially with $\alpha$. Therefore, an arbitrary large $\alpha$ creates a natural trade-off between convergence rates and total iteration complexity. Next, in Theorem \ref{convergenceratethm_betaPL_specialcase}, the growth condition, on the Hausdorff distance between the local minima connected components from different statistical loss landscapes, could be hard to verify in general but is not vacuous. In fact, statistical loss landscapes with invariant minimizer sets under distribution shifts will trivially satisfy this growth condition. 
\end{rem}

\begin{rem}[Optimality of rates and initialization conditions]
    Note that in Theorems \ref{convergenceratethm_exactPL_specialcase} and \ref{convergenceratethm_betaPL_specialcase}, the convergence rate depends on the parameter $p>1$ that controls the $p$-th moment of the absolute loss function from \textbf{A2} with respect to a known reference Gaussian measure $\probP(\muv^*, \Lambda^*)$. In particular all the crucial parameters governing the rate of convergence depend on $p$ such as $\epsilon$ from the Lagrangian $\mathcal{L}$, the local geodesic strong concavity parameter $\mu$ and the uniform Lipschitz constant $L$. For $p>1$ all these constants are bounded and so are the constants appearing within the convergence rate expressions. However computing the optimal $p>1$, that minimizes the constants within the convergence rates from Theorems \ref{convergenceratethm_exactPL_specialcase}, \ref{convergenceratethm_betaPL_specialcase} and that simultaneously maximizes the upper bound on $\epsilon$ to give the largest possible ambiguity set for our DRO problem, is beyond the scope of this work. {Last, the initialization conditions on $\w_0, \w_{K_0}$ from Theorems \ref{convergenceratethm_exactPL_specialcase}, \ref{convergenceratethm_betaPL_specialcase} respectively may appear to be restrictive in the sense of their dependencies on constants $\delta, C_{\beta}, L$ which in turn are controlled by the population loss landscape, problem dimension and a reference covariance matrix. However, such initialization constraints make practical sense while solving a DRO problem. For instance, treating the reference measure $\probP(\muv^*, \Lambda^*) $ as a distribution on training data and $\w_0$ as the trained model will imply that $\w_0$ is sufficiently close to some local minima connected component $\mathcal{S}^*_c(\probP_0)$ of the population loss/Lagrangian\footnote{The critical points of the population or statistical loss and the Lagrangian on $\mathbb{R}^d$ for any fixed probability measure $\probP(\muv, \Lambda)$ are identical.}. The closeness of $\w_0$ to $\mathcal{S}^*_c(\probP_0)$ will obviously depend on the training algorithm, in our case we may assume that the training was performed on population loss rather than empirical \footnote{Training model on empirical loss (instead of population loss) using SGD as proxy for statistical gradients will converge in probability (under certain assumptions on loss landscape) to a local minima connected component $\hat{\mathcal{S}}^*_c(\probP_0)$ of the empirical loss. The gap between the empirical minima $\hat{\mathcal{S}}^*_c(\probP_0)$ and population minima $\mathcal{S}^*_c(\probP_0)$ can possibly be controlled via central limit theorems along the lines of \cite{mei2018landscape} under some strong assumptions which is left for future.}. In that scenario $\w_0$ will trivially satisfy the initialization condition with respect to $\mathcal{S}^*_c(\probP_0)$.}  
\end{rem}
\begin{rem}[Lipschitz continuity of the distance function]
    The Lipschitz continuity of the Hausdorff distance, between at most $\delta/4$ separated local minima connected components $ \mathcal{S}_c^*(\probP_{k-1}), \mathcal{S}_c^*(\probP_k)$, given by $$ \mathrm{dist}_H(\mathcal{S}_c^*(\probP_{k-1}), \mathcal{S}_c^*(\probP_k)) \le  D_{\beta} \, \mathrm{dist}(\probP_{k-1},\probP_k) \, , $$ and required in Theorems \ref{convergenceratethm_exactPL_specialcase} and \ref{convergenceratethm_betaPL_specialcase}, holds for $\beta = 2$ case with a Lipschitz constant possibly different from $D_{\beta}$ (apply Lemma \ref{RGA-MGD1-lem0} to $\wt f(\w,\probP):= \mathcal{L}( \muv, \Lambda ; \w  )$). In fact the Hausdorff distance between at most $\delta/4$ separated local minima connected components is at least H\"{o}lder continuous with exponent $(\beta-1)$ for $\beta \in (1,2)$, i.e., $$ \mathrm{dist}_H(\mathcal{S}_c^*(\probP_{k-1}), \mathcal{S}_c^*(\probP_k)) \le  D_{\beta} \, \mathrm{dist}^{\beta -1}(\probP_{k-1},\probP_k)$$ from Lemma \ref{RGA-MGD1-lem0} (for the function $\wt f(\w,\probP)$). But H\"{o}lder continuity by itself is not sufficient to generate iterate convergence rate in Theorem \ref{convergenceratethm_betaPL_specialcase} so we assume Lipschitz continuity instead to derive rates. The Lipschitz continuity condition will be trivially satisfied in statistical loss landscapes with invariant minimizer sets under distribution shifts and under some Morse-Bott constructions like Appendix \ref{appendixcountereg2}. Proving Lipschitz continuity of the Hausdorff distance for general class of population losses is an avenue for future work. 
\end{rem}

\begin{rem}[Scope of Lipschitz regularity and pathological cases]\label{rmk:drift-scope}
 Our proof technique allowed us to derive convergence rates when the Hausdorff distance between local minima set of statistical loss is Lipschitz continuous in the probability measure. The Lipschitz continuity of the distance function of the solution map $y \mapsto \mathcal{S}^*(y)$ relates to the pseudo-Lipschitz regularity of critical points (zeros of a parameterized family of set valued maps) under local perturbations of parameters from \citet{aubin1984lipschitz} and is along the lines of the strong regularity condition (Theorem 2.1) of the zeros of a parametric family of Fr\'{e}chet differentiable maps in \citet{robinson1980strongly}. In general, the solution map $y \mapsto \mathcal{S}^*(y)$ may be merely outer semicontinuous, in which case the distance function need not even be H\"{o}lder continuous. Another could be a genuine deterioration of regularity, for instance consider a problem where a connected component of the critical set, that is governed by an analytic equation and is a smooth curve, bifurcates into multiple curves (a pitchfork bifurcation) as distribution changes. Such constructions will need a separate analysis altogether and possibly a generalized definition of the basin saddles to work with. We leave both extensions to future work.
\end{rem}

{We remind that our guarantees concern the population risk and its landscape under measure perturbation; a Monte Carlo study would instead probe a finite-sample/SGD proxy governed by a separate concentration analysis, which we leave to future work (Section \ref{futureworksection}). Appendix \ref{appendixcountereg2} gives a closed-form example verifying that assumptions (\textbf{C1–C4, A1'–A3'}) are non-vacuous.}

\section{Conclusion \& future work}\label{futureworksection}
We presented a convergence analysis of a Riemannian gradient-ascent type method {for} the min-max optimization problem $\min_{x \in \mathcal{X}} \max_{y \in\mathcal{Y}} f(x,y) $, that is defined over the product of a Euclidean space $\mathcal{X}$ and a Riemannian manifold $\mathcal{Y}$ with bounded sectional curvature. Under certain regularity conditions on the function $f$ and bounded initializations, we obtained linear convergence rates (local P\L{} condition on $f(\cdot,y)$) and polynomial convergence rates (local K\L{} condition on $f(\cdot,y)$) to a saddle point of the problem. We then implemented this abstract convergence framework on the penalized DRO problem with Gaussian measures. We derived explicit estimates for the Hessian of a certain Lagrangian in the BW geometry that supplied bounds for the local strong geodesic concavity and Lipschitz parameters. Finally we showed that under reasonable assumptions on the statistical loss landscape (nice critical sets with bounded drift under distribution variation and local \L{}ojasiewicz property), algorithm \ref{algRGA:alternating_updates} recovers the linear and polynomial convergence rates from the abstract convergence framework. This paper opens up several interesting avenues for future work.

First, the penalized DRO problem that we solved for Gaussian measures can be extended to more complex settings such as mixture of Gaussian measures, parametric family of distributions, Gaussian measures supported on smooth manifolds such as the $n$-dimensional sphere instead of $\mathbb{R}^n$, etc., provided these measures induce some Riemannian structure like the BW manifold geometry. It will be also interesting to see explicit calculations in the Riemannian metric for these families of distributions and also what type of Lagrangian formulations will generate local geodesic strong concavity from the Riemannian Hessian calculations. Second, the empirical version of the penalized DRO problem can be studied by adapting this framework to empirical/counting measures. Under that setting the abstract convergence framework may need to be re-derived so that the space $\mathcal{Y}$ can accommodate counting measures. Techniques from {linear optimal transport} could be leveraged for transporting finite set of data-points using empirical measures. Third, the local \L{}ojasiewicz property on the fibers of $f(\cdot, y)$ can be relaxed to a more general nonconvex structure, such as the Morse property. Then the $\delta$ basin around the set of local minima of $f(\cdot, y)$ can possibly have strict saddle points. Such non-benign geometry will cover a very large class of statistical loss landscapes. Last, proving rigorous upper bounds on the distance between local critical sets of statistical loss landscapes as a function of distribution shifts still remains an open problem. We plan to address these in our future work.

\color{black}{}

\bibliographystyle{plainnat}
\bibliography{reference.bib}

\newpage

\appendix

\section{Proofs for section \ref{sectionboumalref} }\label{boumalappendix}

\subsection{Proof of Theorem \ref{boumalthm1} }
\begin{proof}
By Lipschitz continuity of the gradient, for a geodesic $c(t) = \operatorname{exp}_{x_k}
\left(
-t \nabla_{x} f(x_k)
\right)$ with $c(0) = x_k$ and all $t$ in a suitable interval:
\[
f(c(t)) \le f(x_k) - t\left(1 - \frac{tL}{2}\right)
\|\nabla_{x} f(x_k)\|_{x_k}^2.
\]
Suppose $x_k \in S_0$ then for $ t < 2/L$, we have that $f(c(t)) = f(x_{k+1})\le f(x_k)  $ implying $x_{k+1} \in S_0$. 
Then $ \{x_k\} \subset S_0$ as long as $ t < 2/L$.
Setting $t = h < 1/L$ yields:
\begin{equation}
f(x_{k+1})
\le f(x_k) - h\left(1 - \frac{hL}{2}\right)
\|\nabla_{x} f(x_k)\|_{x_k}^2.
\end{equation}

Next, it can be shown using MVT that the unique minimizer $x^*$ of $f$ in the interior of $\mathcal{M}$ is a critical point of $f$. Subtracting $f(x^*)$:
\begin{equation}
f(x_{k+1}) - f(x^*)
\le f(x_k) - f(x^*)
- h\left(1 - \frac{hL}{2}\right)\|\nabla_{x} f(x_k)\|_{x_k}^2.
\end{equation}

Geodesic strong convexity of $f$ on $S_0$ gives:
\begin{equation}
\|\nabla_{x} f(x_k)\|_{x_k}^2
\ge 2\mu \bigl(f(x_k) - f(x^*)\bigr).
\end{equation}

Combining:
\begin{equation}
f(x_{k+1}) - f(x^*)
\le \left(1 - 2 \mu h\left(1 - \frac{hL}{2}\right)\right)
\bigl(f(x_k) - f(x^*)\bigr).
\end{equation}

Furthermore, by geodesic strong convexity of $f$ at $x^*$ we also have
\begin{equation}
\operatorname{dist}(x_k,x^*)^2
\le \frac{2}{\mu}
\bigl(f(x_k) - f(x^*)\bigr).
\end{equation}

 Since $v = \exp_{x_k}^{-1}(x_{k+1}) \in T_{x_k} \mathcal{M}$ and parallel transport ${\Psi}$ preserves inner product, we also have the following inequality along similar lines as in Lemma \ref{contractionlema3_supplement} :
    \begin{align}
      L\, \mathrm{dist}^2(x_k, x_{k+1} ) \ge \langle  {\Psi}_{x_{k+1} \to x_{k}} ( \nabla_{x} f(x_{k+1})) - \nabla_{x} f(x_{k}), \exp_{x_{k}}^{-1} (x_{k+1}) \rangle_{x_{k}} \ge \mu  \, \mathrm{dist}^2(x_{k}, x_{k+1} ) \label{comparisonineq0a}
    \end{align}

Hence, for $ \kappa = \frac{1}{\mu h(2- Lh)}$ we get
\begin{equation}
f(x_k) - f(x^*)
\le \left(1 - \frac{1}{\kappa}\right)^k
\bigl(f(x_0) - f(x^*)\bigr),
\end{equation}
and
\begin{equation}
\operatorname{dist}(x_k,x^*)
\le \sqrt{\frac{2(f(x_0)-f(x^*))}{\mu}}
\left(1 - \frac{1}{\kappa}\right)^{k/2},
\end{equation}
where $  \kappa = \frac{1}{\mu h(2- Lh)} \ge 1$ for  $0< h < 1/L$.

Using gradient Lipschitz continuity at $x^*$:
\begin{equation}
f(x_0) \le f(x^*)
+ \frac{L}{2}\operatorname{dist}(x_0,x^*)^2 \, ,
\end{equation}
we have:
\begin{align}
 \operatorname{dist}(x_k,x^*)
\le \sqrt{\frac{L}{\mu}}
\left(1 - \frac{1}{\kappa}\right)^{k/2}  \operatorname{dist}(x_0,x^*).   \label{rateestimate1}
\end{align}

Alternatively, if the sectional curvature of $\mathcal{M}$ is lower bounded by $\upsilon$ then using Lemma \ref{alexandrovlem1} for a geodesic triangle with vertices $x_k , x_{k+1}, x^*$, edge lengths $ a = \operatorname{dist}(x_{k+1},x^*), b= \operatorname{dist}(x_k,x_{k+1}), c= \operatorname{dist}(x_k,x^*)$ and using compactness of $ \mathcal{M}$, continuity and monotonicity of $g(y) := \frac{y}{\tanh (y)} > 0$ for $y \ge 0$ to uniformly bound $$\frac{\sqrt{|\upsilon|}c}{\tanh (\sqrt{|\upsilon|}c)} \le \sup_{x_k \in \mathcal{M}} \frac{\sqrt{|\upsilon|}c}{\tanh (\sqrt{|\upsilon|}c)} = \frac{\sqrt{|\upsilon|} \, \mathrm{diam}(\mathcal{M}) }{\tanh (\sqrt{|\upsilon|} \, \mathrm{diam}(\mathcal{M}))} =  C_{\upsilon}  $$ on $\mathcal{M}$ gives the following:
\begin{align}
    \operatorname{dist}^2(x_{k+1},x^*) &\le \operatorname{dist}^2(x_{k},x^*) +  \frac{\sqrt{|\upsilon|}c}{\tanh (\sqrt{|\upsilon|}c)}\operatorname{dist}^2(x_k,x_{k+1}) \nonumber \\ & -2  \operatorname{dist}(x_{k},x^*) \operatorname{dist}(x_k,x_{k+1}) \cos ( \angle \exp_{x_k}^{-1}(x^*), \exp_{x_k}^{-1}(x_{k+1})   ) \\
    & = \operatorname{dist}^2(x_{k},x^*) +  \frac{\sqrt{|\upsilon|}c}{\tanh (\sqrt{|\upsilon|}c)}\norm{h \nabla_{x} f(x_k) }^2_{x_k}   -2 \langle   \exp_{x_k}^{-1}(x^*),- h \nabla_{x} f(x_k) \rangle_{x_k} \\
    & \le \operatorname{dist}^2(x_{k},x^*) + C_{\upsilon}\norm{h \nabla_{x} f(x_k) }^2_{x_k}   -2 \langle   \exp_{x_k}^{-1}(x^*), - h \nabla_{x} f(x_k) \rangle_{x_k} \\ 
     & = \operatorname{dist}^2(x_{k},x^*) + C_{\upsilon}\norm{h \nabla_{x} f(x_k) - h \underbrace{ {\Psi}_{x^* \to x_k} (\nabla_{x} f(x^*))}_{= \mathbf{0}} }^2_{x_k} \nonumber \\ & -2 \langle   \exp_{x_k}^{-1}(x^*),h{\Psi}_{x^* \to x_k} (\nabla_{x} f(x^*)) - h \nabla_{x} f(x_k) \rangle_{x_k} \\
     & \hspace{-2cm}\underbrace{\le}_{\textbf{Lipschitzness of } \nabla_{x} f(\cdot), \, \eqref{comparisonineq0a}} \operatorname{dist}^2(x_{k},x^*) + C_{\upsilon} L^2 h^2 \operatorname{dist}^2(x_{k},x^*)  -2 \mu h \operatorname{dist}^2(x_{k},x^*) \\ 
     & = (1 -2 \mu h + C_{\upsilon} L^2 h^2) \operatorname{dist}^2(x_{k},x^*) 
\end{align}
where $ 1 - 2 \mu h + C_{\upsilon} L^2 h^2< 1$ for any sufficiently small $h$. Therefore, combining \eqref{rateestimate1} we can write the bound:
\begin{align}
    \operatorname{dist}(x_k,x^*)
\le   \min \bigg\{ \sqrt{\frac{L}{\mu}}
\left(1 - \frac{1}{\kappa}\right)^{k/2} , (1 -2 \mu h + C_{\upsilon} L^2 h^2)^{k/2}  \bigg\} \operatorname{dist}(x_0,x^*).
\end{align}
\end{proof}

\section{Proofs for section \ref{sectionfiberrates} }\label{sectionfiberratesappendix}

\subsection{Proof of Lemma \ref{contractionlema2} }
\begin{proof}
We first show that the sequence $\{x_k\}$ stays bounded in $  \mathcal{S}^*(y) + \mathcal{B}_{\delta}(0)$. Consider the set 
$$ W = (\mathcal{S}^*(y) + \mathcal{B}_{\delta}(0) ) \, \cap \, \{ x : f(x,y) - f_*(y)  \leq  (\tfrac{\beta}{\beta -1})^{\frac{-\beta}{\beta -1}} C_{\beta}^{\frac{1}{\beta - 1}} (\tfrac{\delta}{2})^{\frac{\beta}{\beta -1}} \} $$
Then by \textbf{C3} we have for any $u \in W$ that :
   \begin{align}
       (\tfrac{\beta}{\beta -1})^{\frac{-\beta}{\beta -1}} C_{\beta}^{\frac{1}{\beta - 1}} (\mathrm{dist}(u, \mathcal{S}^*(y)))^{\frac{\beta}{\beta -1}} \leq  f(u,y) - f_*(y) & \leq (\tfrac{\beta}{\beta -1})^{\frac{-\beta}{\beta -1}} C_{\beta}^{\frac{1}{\beta - 1}} (\tfrac{\delta}{2})^{\frac{\beta}{\beta -1}} \\
       \implies \mathrm{dist}(u, \mathcal{S}^*(y)) & \leq \frac{\delta}{2} \quad \forall \, \, u\in W \label{levelsetcontainment1}
   \end{align}
Define the set 
$$ Z = (\mathcal{S}^*(y) + \mathcal{B}_{\delta}(0) )^c \, \cap \, \{ x : f(x,y) - f_*(y)  \leq  (\tfrac{\beta}{\beta -1})^{\frac{-\beta}{\beta -1}} C_{\beta}^{\frac{1}{\beta - 1}} (\tfrac{\delta}{2})^{\frac{\beta}{\beta -1}} \} $$
and thus for any $v \in Z$ we must have $\mathrm{dist}(v, \mathcal{S}^*(y))  \geq {\delta} $.
Then $W$ is separated from $Z$ as follows:
\begin{align*}
    \mathrm{dist}_H(W,Z) &\ge \abs{\mathrm{dist}_H(W,\mathcal{S}^*(y)) - \mathrm{dist}_H(Z,\mathcal{S}^*(y))} \ge \tfrac{\delta}{2},
\end{align*}
by the triangle inequality for the Hausdorff distance.

   We have that $x_0 \in W$. Suppose for some $k>0$, $ x_k \in W $ and thus $ f(x_k) \leq f_*(y) + (\tfrac{\beta}{\beta -1})^{\frac{-\beta}{\beta -1}} C_{\beta}^{\frac{1}{\beta - 1}} (\tfrac{\delta}{2})^{\frac{\beta}{\beta -1}}$. Next, we have the following inequality from gradient Lipschitz continuity of $f(\cdot,y)$ on $ S_1 \times y$ :
    \begin{align}
        f(x_{k+1},y) &\leq f(x_k,y) + \langle \nabla_x f(x_k,y) , x_{k+1} - x_k \rangle + \frac{L}{2}\norm{x_{k+1} - x_k}^2   \\
        &= f(x_k,y) + \langle \nabla_x f(x_k,y) , - h \nabla_x f(x_k,y) \rangle + \frac{L}{2}\norm{h \nabla_x f(x_k,y)}^2   \\
        &= f(x_k,y) - h(1 - \tfrac{Lh}{2})\norm{ \nabla_x f(x_k,y)}^2  \underbrace{\leq}_{Lh<1} f(x_k , y) \label{errorrecursionsublinear0} 
        \end{align}
        and thus $ f(x_{k+1}{,y}) \leq  f(x_k{,y}) \leq  f_*(y) + (\tfrac{\beta}{\beta -1})^{\frac{-\beta}{\beta -1}} C_{\beta}^{\frac{1}{\beta - 1}} (\tfrac{\delta}{2})^{\frac{\beta}{\beta -1}}$. Hence, 
        $$x_{k+1} \in \{ x : f(x,y) - f_*(y)  \leq  (\tfrac{\beta}{\beta -1})^{\frac{-\beta}{\beta -1}} C_{\beta}^{\frac{1}{\beta - 1}} (\tfrac{\delta}{2})^{\frac{\beta}{\beta -1}} \} = W \cup Z$$ where $W,Z$ are $\tfrac{\delta}{2}$ separated. It must be that $x_{k+1} $ either belongs to $W$ or it belongs to $Z$. 
        
   From gradient Lipschitz continuity of $f(\cdot,y)$ on $ S_1 \times y$ we also have that:
        \begin{align}
            \norm{x_{k+1} - x_k } = h \norm{\nabla_x f(x_{k} ,y)} &\le L h \mathrm{dist}(x_{k}, \mathcal{S}^*(y)) \\
            \implies \mathrm{dist}(x_{k+1}, \mathcal{S}^*(y))  \le \mathrm{dist}(x_{k}, \mathcal{S}^*(y)) + \norm{x_{k+1} - x_k } &\le (1+Lh) \mathrm{dist}(x_{k}, \mathcal{S}^*(y)) \\ & \underbrace{\leq}_{\eqref{levelsetcontainment1}} (1+Lh)\tfrac{\delta}{2} \\ & \underbrace{<}_{Lh <1} \delta
        \end{align}
        hence $x_{k+1} \in W$ and thus by induction we have the sequence $ \{x_{k}\}_{k} \subset W \subset  \mathcal{S}^*(y) + \mathcal{B}_{\delta}(0)$. 
        From \eqref{errorrecursionsublinear0} we then have for any $k$ that:
        \begin{align}
    f(x_{k+1},y)    & \underbrace{\leq}_{\textbf{C3}}  f(x_k,y) - h(1 - \tfrac{Lh}{2}) C^{2/\beta}_{\beta}\bigg(f(x_k,y) - f_*(y)\bigg)^{2/\beta} \leq f(x_k, y) \\
        \implies f(x_{k+1},y) - f_*(y) &\leq    f(x_k,y) -f_*(y) - h(1 - \tfrac{Lh}{2}) C^{2/\beta}_{\beta}\bigg(f(x_k,y) - f_*(y)\bigg)^{2/\beta} \le f(x_k,y) -f_*(y)  \label{errorrecursionsublinear1} \\
                 \implies f(x_{k+1},y) - f_*(y) &\leq   \underbrace{\bigg( 1 - h(1 - \tfrac{Lh}{2}) C^{\tfrac{2}{\beta}}_{\beta}\bigg(f(x_k,y) - f_*(y)\bigg)^{\tfrac{2}{\beta}-1}\bigg)}_{\rho_k \le 1 \textbf{ for $h \ll 1$}} [f(x_k,y) -f_*(y)]  \label{errorrecursionsublinear1a}  \\
           & \leq \prod_{j=0}^{k} \rho_j [f(x_0,y) -f_*(y)] .\label{errorrecursionsublinear2}
        \end{align}    
        Using \textbf{C3} in the inequality \eqref{errorrecursionsublinear1a} we get that:
         \begin{align*}
           \left(\frac{\beta}{\beta-1} \right)^{\frac{-\beta}{\beta-1}}C_{\beta}^{\frac{1}{\beta-1}}\bigg( \mathrm{dist}(x_{k+1}, \mathcal{S}^*(y))\bigg)^{\frac{\beta}{\beta -1}}  &\leq  f(x_{k+1},y) - f_*(y)  \leq  \rho_k [f(x_k,y) -f_*(y)]   \\
            \implies \bigg( \mathrm{dist}(x_{k+1}, \mathcal{S}^*(y))\bigg)^{\frac{\beta}{\beta -1}} & \leq \frac{\rho_k}{\left(\frac{\beta}{\beta-1} \right)^{\frac{-\beta}{\beta-1}}C_{\beta}^{\frac{1}{\beta-1}}} [f(x_k,y) -f_*(y)] \\
             & \leq \frac{ \prod_{j=0}^k \rho_j}{\left(\frac{\beta}{\beta-1} \right)^{\frac{-\beta}{\beta-1}}C_{\beta}^{\frac{1}{\beta-1}}} [f(x_0,y) -f_*(y)] \\
             & \hspace{-2.5cm} \underbrace{\le}_{\textbf{C3 and Lipschitz $\nabla_x f(\cdot,y)$}} \, \, \frac{ L^{\beta} \prod_{j=0}^k \rho_j}{C_{\beta}\left(\frac{\beta}{\beta-1} \right)^{\frac{-\beta}{\beta-1}}C_{\beta}^{\frac{1}{\beta-1}}} \bigg( \mathrm{dist}(x_{0}, \mathcal{S}^*(y))\bigg)^{{\beta}}   
    \end{align*}
   which completes first part of the lemma.
   
  Next, for any $ k>0$, define $e_k := f(x_k,y) -f_*(y)$, $ C' := (1 - \tfrac{Lh}{2}) C^{2/\beta}_{\beta} $, $ r := \tfrac{2}{\beta} \in [1,2)$  for simplicity in \eqref{errorrecursionsublinear1} to write :
    \begin{align}
        e_{k+1} & \leq e_k -  C'h \, e_k^{r} \\ 
        \iff -(e_{k+1} - e_k) & \geq    C'h \, e_k^{r} \label{errorrecursionsublinear3}
    \end{align}
    Then for a convex function $\phi(t) := t^{-r}$ for $t>0$, $r >1$ and using monotonicity of $\{e_k\}$, i.e., $ e_0 \ge e_1 \ge \cdots \ge e_k \ge  \cdots   $ from \eqref{errorrecursionsublinear1} we have :
    \begin{align}
        \implies \phi(e_{k+1}) - \phi(e_k) &\ge \phi'(e_k) (e_{k+1} - e_k) = -r e^{-r-1}_k (e_{k+1} - e_k) \\
        & \underbrace{\ge}_{\eqref{errorrecursionsublinear3}}  r e^{-r-1}_k \times  C'h \, e_k^{r} =  C'h \, r e_k^{-1} \geq  C'h \, r e_0^{-1} \\
        \implies \sum_{j=0}^{k-1} (\phi(e_{j+1}) - \phi(e_j)) = \phi(e_{k}) - \phi(e_0) & \ge k \,C'h \,  r e_0^{-1} \\
        \implies e^{-r}_k & \ge e^{-r}_0 + k \,C'h \, r e_0^{-1}  \underbrace{\ge}_{e_0 <1} e_0^{-1} (1+ k \,C'h \, r) \\
        \implies e_k & \le \frac{e_0^{\frac{1}{r}}}{\bigg( 1+ k \,C'h \, r \bigg)^{\frac{1}{r}}} 
    \end{align}
    where we used that $ e_0 = f(x_0,y) -f_*(y) \le \frac{(L \mathrm{dist}(x_{0}, \mathcal{S}^*(y)))^{\beta}} {C_{\beta}}  <1 $ which holds true for $\mathrm{dist}(x_{0}, \mathcal{S}^*(y)) <  \frac{C^{1/\beta}_{\beta}}{L}$.
    Finally, using \textbf{C3} in the last inequality we get
        \begin{align}
                 \left(\frac{\beta}{\beta-1} \right)^{\frac{-\beta}{\beta-1}}C_{\beta}^{\frac{1}{\beta-1}}\bigg( \mathrm{dist}(x_{k}, \mathcal{S}^*(y))\bigg)^{\frac{\beta}{\beta -1}}  &\leq  e_k \leq  \frac{e_0^{\frac{1}{r}}}{\bigg( 1+ k \,C'h \, r \bigg)^{\frac{1}{r}}} \leq  \frac{(L \mathrm{dist}(x_{0}, \mathcal{S}^*(y)))^{\frac{\beta}{r}}}{\bigg( C_{\beta} ( 1+ k \,C'h \, r) \bigg)^{\frac{1}{r}}}
        \end{align}
        and with $r = 2/\beta $, $ r \in [1,2)$ we get that
        \begin{align}
            \bigg( \mathrm{dist}(x_{k}, \mathcal{S}^*(y)) \bigg)^{\frac{\beta }{\beta -1}} & \le \left(\frac{\beta}{\beta-1} \right)^{\frac{\beta}{\beta-1}} \frac{(L )^{\frac{\beta}{r}}}{ C_{\beta}^{\frac{1}{r} + \frac{1}{\beta -1 }} \bigg(   1+ k \,C'h \, r \bigg)^{\frac{1}{r}}} \bigg( \mathrm{dist}(x_{0}, \mathcal{S}^*(y)) \bigg)^{\frac{\beta }{r}} \\
            & \le \left(\frac{\beta}{\beta-1} \right)^{\frac{\beta}{\beta-1}} \frac{(L )^{\frac{\beta^2}{2}}}{ C_{\beta}^{\frac{\beta}{2} + \frac{1}{\beta -1 }} \bigg(   1+ k \,C'h \, r \bigg)^{\frac{\beta}{2}}} \bigg( \mathrm{dist}(x_{0}, \mathcal{S}^*(y)) \bigg)^{\frac{\beta^2 }{2}} \\
            \implies \bigg( \mathrm{dist}(x_{k}, \mathcal{S}^*(y)) \bigg) & \leq C_{L,\beta}  \frac{\bigg( \mathrm{dist}(x_{0}, \mathcal{S}^*(y)) \bigg)^{\frac{\beta (\beta -1) }{2}}}{\bigg(   1+ k \,C'h \, r \bigg)^{\frac{\beta-1}{2}}}  \le  C_{L,\beta}  \frac{\bigg( \mathrm{dist}(x_{0}, \mathcal{S}^*(y)) \bigg)^{\frac{\beta (\beta -1) }{2}}}{\bigg(   1+ k \,C'h  \bigg)^{\frac{\beta-1}{2}}}
        \end{align}
        where $C_{L,\beta} :=   \left(\frac{\beta}{\beta-1} \right) \frac{L ^{\frac{\beta (\beta -1)}{2}}}{ C_{\beta}^{\frac{\beta-1}{2} + \frac{1}{\beta }} } $, $ C' := (1 - \tfrac{Lh}{2}) C^{2/\beta}_{\beta} $.
\end{proof}

\subsection{Proof of Lemma \ref{contractionlema3_supplement} }
\begin{proof}
        We have the following inequalities by \textbf{C1-C2} for the function $-f$ (section 11.5 of \cite{boumal2023introduction}) for any $x \in S_1 $, any $y_1, y_2 \in S_2$ and $\forall \quad t\in [0,1] $:
    \begin{align*}
        f(x,\exp_{y_1}(tv)) &\ge f(x,y_1) + t\langle \nabla_y f(x,y_1) ,v\rangle_{y_1} + \frac{t^2}{2}\mu \norm{v}^2_{y_1}  \quad  ; \quad \exp_{y_1}(v) = y_2 \\
         f(x,\exp_{y_1}(tv)) &\le f(x,y_1) + t\langle \nabla_y f(x,y_1) ,v\rangle_{y_1} + \frac{t^2}{2}L \norm{v}^2_{y_1} \quad  ; \quad \exp_{y_1}(v) = y_2 
    \end{align*}
    where $ \norm{v}^2_{y_1} = \mathrm{dist}^2(y_{1}, y_{2} ) \, \, $ by a property of the Riemannian exponential since $\exp_{y_1}(v) = y_2  $.
    
    Interchanging $y_1, y_2$ in both inequalities followed by substituting $v$ with ${\Psi}_{y_{1} \to y_{2}}(-v)$ where ${\Psi}_{y_{1} \to y_{2}} $ is the parallel transport map from $ y_{1} $ to $ y_{2}$ along the curve $c_1(t)=\exp_{y_1}(tv)$ yields:
        \begin{align*}
        f(x,\exp_{y_2}(t{\Psi}_{y_{1} \to y_{2}}(-v))) &\ge f(x,y_2) + t\langle \nabla_y f(x,y_2) ,{\Psi}_{y_{1} \to y_{2}}(-v)\rangle_{y_2} + \frac{t^2}{2}\mu \underbrace{\norm{{\Psi}_{y_{1} \to y_{2}}(-v)}^2_{y_2}}_{=\mathrm{dist}^2(y_{1}, y_{2} )} \quad \forall \quad t\in [0,1] ; \\
         f(x,\exp_{y_2}(t{\Psi}_{y_{1} \to y_{2}}(-v))) &\le f(x,y_2) + t\langle \nabla_y f(x,y_2) ,{\Psi}_{y_{1} \to y_{2}}(-v)\rangle_{y_2} + \frac{t^2}{2}L \underbrace{\norm{{\Psi}_{y_{1} \to y_{2}}(-v)}^2_{y_2}}_{=\mathrm{dist}^2(y_{1}, y_{2} )} \quad \forall \quad t\in [0,1]  
    \end{align*}
    where we used the fact that $ \exp_{y_2}({\Psi}_{y_{1} \to y_{2}}(-v)) = y_1$.
     Setting $t= 1$, then adding the corresponding inequalities in $\mu ,L $ gives:
     \begin{align*}
       0 &\ge -\langle \nabla_y f(x,y_2) ,{\Psi}_{y_{1} \to y_{2}}(v)\rangle_{y_2} +\langle \nabla_y f(x,y_1) ,v\rangle_{y_1} + \mu \,\mathrm{dist}^2(y_{1}, y_{2} )  \\
      0 &\le -\langle \nabla_y f(x,y_2) ,{\Psi}_{y_{1} \to y_{2}}(v)\rangle_{y_2} +\langle \nabla_y f(x,y_1) ,v\rangle_{y_1} + L \, \mathrm{dist}^2(y_{1}, y_{2} )  
    \end{align*}
    Since $v = \exp_{y_1}^{-1}(y_2) \in T_{y_1}M$ and parallel transport preserves inner product, we can simplify the above inequalities as :
    \begin{align}
      L\, \mathrm{dist}^2(y_{1}, y_{2} ) \ge \langle  {\Psi}_{y_{2} \to y_{1}} (\nabla_y f(x, y_{2})) - \nabla_y f(x, y_1), \exp_{y_1}^{-1} (y_{2}) \rangle_{y_1} \ge \mu  \, \mathrm{dist}^2(y_{1}, y_{2} ) 
    \end{align}
    where ${\Psi}_{y_{2} \to y_{1}} $ is the parallel transport map from $ y_{2} $ to $ y_{1}$.
    
    \emph{Proof that } $\exp_{y_2}(\Psi_{y_1\to y_2}(-v)) = y_1$. Since $c_1(t) = \exp_{y_1}(tv)$ is a geodesic, its velocity is parallel so $\Psi_{y_1\to y_2}v = \Psi_{y_1\to y_2}c_1'(0)=c_1'(1)$. Letting $c_2(t)=c_1(1-t)$ gives $c_2'(t) = -c_1'(1-t)$. As a result
    \[
    \nabla_{c_2'(t)}c_2'(t) = \nabla_{c'(1-t)}c'(1-t) = 0 \text{ and } c_2'(0)=-c_1'(1) = \Psi_{y_1\to y_2}(-v).
    \]
    Thus, $t\mapsto c_2(t)$ and $t\mapsto \exp_{y_2}(t\Psi_{y_1\to y_2}(-v))$ both solve the same initial value problem so by uniqueness, we must have $c_2(t) = \exp_{y_2}(t\Psi_{y_1\to y_2}(-v))$. Thus, we get 
    \[
    \exp_{y_2}(\Psi_{y_1\to y_2}(-v)) = c_2(1) = c_1(0) = y_1.
    \]
\end{proof}

\subsection{Proof of Lemma \ref{contractionlema3} }
\begin{proof}
     Let $c$ be a path such that $c(0)=y_{k-1}$ and $c(1) =y_k$. Then  
    \begin{align*}
    \rm{dist}(y_{k+1},y_k) &= \rm{dist}(G(y_{k}), G(y_{k-1})) \\
                           &\leq \int_0^1 \norm{\frac{d}{dt}G\circ c(t)}_{c(t)} dt \\
                           &= \int_0^1 \norm{dG_{c(t)}(c'(t))}_{c(t)}dt  \\
                           &\leq \sup_{t\in [0,1]}\norm{dG_{c(t)}}_{op} \int_0^1 \norm{c'(t)}_{c(t)}dt    
    \end{align*}
    where $\norm{dG_{c(t)}}_{op}$ is the operator norm of $dG$. Taking $c$ to be $c(t)= \exp_{y_{k-1}}(th\nabla_yf(x,y_{k-1}))$  gives the identity
    \[
    \rm{dist}(G(y_k), G(y_{k-1})) \leq \sup_{t\in [0,1]}\norm{dG_{c(t)}}_{op} \rm{dist}(y_k, y_{k-1}).
    \]
    Fix a $t_0\in [0,1]$ and let $c(t_0)=p$. Let $c_1(t) = \exp_p(th\nabla_yf(x,p))$, then for $w\in T_p\cm$ the chain rule gives
    \begin{equation}\label{dG}
    dG(w) = d_1\exp_p(h\nabla_y f(x,p))(w) + d_2\exp_p(h\nabla_y f(x,p))(\nabla_w\nabla_y f(x,p)).    
    \end{equation}
    By Proposition \ref{expder} we can rewrite equation \eqref{dG} as
    \begin{align*}
        dG(w) = J(1)
    \end{align*}
    where $J$ is a Jacobi field along $c_1(t) = \exp_p(th\nabla_yf(x,p))$ satisfying 
    \[
    J(0) = w, \quad \text{ and }\quad D_tJ(0) = h\nabla_w\nabla_y f(x,p) =h\text{Hess}_yf(x,p)(w).
    \]
    Departing from the usual notation of the paper, here $\nabla_w$ is the covariant derivative in the direction $w$. Thus,
    \begin{align*}
        \norm{dG_p}_{op} &= \sup_{w\in T_p\cm} \frac{\norm{dG_p(w)}_{G(p)}}{\norm{w}_p}\\
                    &= \sup_{w\in T_p\cm } \frac{\norm{J(1)}_{G(p)}}{\norm{w}_p}\\
                    &= \sup_{w\in T_p\cm} \frac{\norm{\cp_{c_1(0)\to c_1(1)}^{-1}J(1)}_p}{\norm{w}_p}\\
                    &= \sup_{w\in T_p\cm} \frac{\norm{w+h\text{Hess}_yf(w) - \frac{1}{2}R(w,h\nabla_yf)h\nabla_yf + \mathcal{O}(h^3 \norm{w}_p)}_p}{\norm{w}_p} \quad (\text{Proposition }\ref{tayexp})\\
                    &\leq 1-h\mu + \mathcal{O}(h^2),
    \end{align*}
    since $\text{Hess}_pf \leq -\mu \cdot \text{Id}_{T_p\cm}$. By the symmetries of the Riemannian curvature tensor, we have
    \[
    \langle R(\nabla_y f,\nabla_yf)\nabla_yf, w\rangle_p = \langle R(w,\nabla_y f)\nabla_yf, \nabla_yf\rangle_p =0 \text{ for all } w\in T_p\cm.
    \]
    Let $W\subset T_p\cm$ be the linear subspace consisting of all vectors orthogonal to $\nabla_yf(x,p)$. For fixed $u,w\in W$, we write $u= \proj_w(u) + \proj_{w^{\perp}}(u) $ where $\proj_{w^{\perp}}(u)$ denotes the projection of $u$ onto the orthogonal complement of $w$. Consequently, 
    \begin{align}
    \frac{\abs{\langle R(w,\nabla_yf)\nabla_yf,u\rangle_p}}{\norm{w}_p\norm{u}_p} &\leq  \frac{\abs{\langle R(w,\nabla_yf)\nabla_yf,\proj_w(u)\rangle_p}}{\norm{w}_p\norm{u}_p} +  \frac{\abs{\langle R(w,\nabla_yf)\nabla_yf,\proj_{w^{\perp}}(u)\rangle_p}}{\norm{w}_p\norm{u}_p} \nonumber \\
    &\leq \frac{\abs{\langle R(w,\nabla_yf)\nabla_yf,\proj_{w}(u)\rangle_p}}{\norm{w}_p\norm{\proj_{w}(u)}_p} +  \frac{\abs{\langle R(w,\nabla_yf)\nabla_yf,\proj_{w^{\perp}}(u)\rangle_p}}{\norm{w}_p\norm{\proj_{w^{\perp}}(u)}_p} \label{secrest}\\
    &\leq \Theta\norm{\nabla_y f}_p^2 + \frac{1}{2}\Theta \norm{\nabla_y f}_p^2.
    \end{align}
    Here the estimate on the first term of \eqref{secrest} follows from the definition of sectional curvature, while the estimate on the second term follows from Berger's curvature estimate (equation (3) of \cite{kar70}).  Thus,
    \begin{align*}
        \sup_{w\neq 0\in T_pM} \frac{\norm{R(w,\nabla_yf)\nabla_yf}_p}{\norm{w}_p}
        &= \sup_{w\neq 0\in W} \frac{\norm{R(w,\nabla_yf)\nabla_yf}_p}{\norm{w}_p} \\
        &= \sup_{w\neq 0\in W} \sup_{u\neq 0\in W} \frac{\abs{\langle R(w,\nabla_yf)\nabla_yf,u\rangle_p}}{\norm{w}_p\norm{u}_p} \\
        &\leq \frac{3}{2}\Theta \norm{\nabla_y f}_p^2.
    \end{align*}
    Hence we get that \[ \eta =  1-h\mu + \frac{3}{4}\Theta h^2 \sup_{p \in S_2}\norm{\nabla_y f}_p^2 + \mathcal{O}(h^3) \approx 1-h\mu + \frac{3}{4}\Theta h^2 L^2  \,  \]
    where the constant in Big-O term above is supremum of the norm of third order terms from the expansion in Proposition \ref{tayexp}. The norm of such remainder terms will be bounded uniformly in $S_2$. 
    This completes the proof.
\end{proof}

\subsection{Proof of Lemma \ref{lem:length}}
\begin{proof}
We fix a $y$ and suppress it for brevity: we write $g_{k}:=\Dx f(x_{k},y)$ and
$\Delta_{k}:=f(x_{k},y)-f_*(y)\ge0$. Since $\Dx f(\cdot,y)$ is $L$-Lipschitz,
the descent lemma applied to gradient descent on the fiber $ S_1 \times y$ gives the
estimate
\begin{equation}\label{eq:sd}
  \Delta_{k}-\Delta_{k+1}\ \ge\ h\Bigl(1-\tfrac{Lh}{2}\Bigr)\norm{g_{k}}^{2}
  \ =\ h(1-\tfrac{Lh}{2})\,\norm{g_{k}}^{2}\ \ge\ 0 ,
\end{equation}
so $\{\Delta_{k}\}$ is nonincreasing. 

If $\Delta_{m}=0$ for some $m$, then $x_{m}$ minimizes $f(\cdot,y)$, hence
$g_{m}=0$ and $x_{k}=x_{m}$ for all $k\ge m$; so $\sum_{k\geq m} \norm{x_{k+1}-x_k} =0$. We may therefore assume $\Delta_{k}>0$ for all $k$.

Define the function $\varphi: [0,\infty) \to \mathbb{R}$
\begin{equation}\label{eq:phi}
  \varphi(s):=\frac{\beta}{\beta-1}\left(\frac{s^{\beta-1}}{C_{\beta}}\right)^{1/\beta},
  \qquad\text{so that}\qquad
  \varphi'(s)=\bigl(C_{\beta}\,s\bigr)^{-1/\beta}.
\end{equation}
Since $\beta\in(1,2]$, the exponent $\tfrac{\beta-1}{\beta}$ lies in
$(0,\tfrac12]$, so $\varphi$ is nonnegative, increasing and concave on
$[0,\infty)$. Condition \textbf{C3} for $x=x_k$ gives
$\norm{g_{k}}\ge(C_{\beta}\Delta_{k})^{1/\beta}$, which means
\begin{equation}\label{eq:desing}
  \varphi'(\Delta_{k})\,\norm{g_{k}}\ \ge\ 1 .
\end{equation}

Concavity of $\varphi$ together with $\Delta_{k+1}\le\Delta_{k}$ yields
$\varphi(\Delta_{k})-\varphi(\Delta_{k+1})\ge\varphi'(\Delta_{k})\bigl(\Delta_{k}-\Delta_{k+1}\bigr)$.
Combining this with \eqref{eq:sd} and then \eqref{eq:desing},
\begin{equation}\label{eq:key}
  \varphi(\Delta_{k})-\varphi(\Delta_{k+1})
  \ \ge\ h(1-\tfrac{Lh}{2})\,\varphi'(\Delta_{k})\norm{g_{k}}^{2}
  \ \ge\ h(1-\tfrac{Lh}{2})\,\norm{g_{k}}
  \ =\ (1-\tfrac{Lh}{2})\,\norm{x_{k+1}-x_{k}} ,
\end{equation}
the last equality being the definition of the gradient descent. Summing
\eqref{eq:key} over $k=0,1,\dots,N$ and using $\varphi\ge0$ gives
\begin{equation*}
 (1-\tfrac{Lh}{2})\sum_{k=0}^{N}\norm{x_{k+1}-x_{k}}
  \ \le\ \varphi(\Delta_{0})-\varphi(\Delta_{N+1})
  \ \le\ \varphi(\Delta_{0}) .
\end{equation*}
Letting $N\to\infty$, recalling \eqref{eq:phi} and using the \textbf{C3} bound as 
$$ L^{\beta}\mathrm{dist}^{\beta}(x, \mathcal{S}^*(y))  \ge \norm{\nabla_x f(x,y) }^{\beta}  \geq C_{\beta}\bigg(f(x,y) - f_*(y)\bigg) $$
proves \eqref{eq:length}.
\end{proof}

\section{Proofs for section \ref{sectionLyapunovconvergence} }\label{sectionLyapunovconvergenceappendix}

{\begin{lem}[Lipschitz continuity of the set--distance function]\label{lem:dist-hausdorff}
Let $(\mathcal{M},d)$ be a metric space and let $A,B\subseteq\mathcal{M}$ be nonempty. For $x\in\mathcal{M}$ and $S\subseteq\mathcal{M}$ nonempty write
\[
  \operatorname{dist}(x,S):=\inf_{s\in S}d(x,s),
  \qquad
  e(S,T):=\sup_{s\in S}\operatorname{dist}(s,T),
  \qquad
  \operatorname{dist}_H(A,B):=\max\bigl\{e(A,B),\,e(B,A)\bigr\},
\]
so that $e$ is the directed excess and $\operatorname{dist}_H$ the Hausdorff distance. Then for every $x\in\mathcal{M}$,
\begin{equation}\label{eq:dist-directed}
  \operatorname{dist}(x,A)\;\le\;\operatorname{dist}(x,B)+e(B,A),
\end{equation}
and consequently
\begin{equation}\label{eq:dist-lip}
  \bigl|\operatorname{dist}(x,A)-\operatorname{dist}(x,B)\bigr|\;\le\;\operatorname{dist}_H(A,B).
\end{equation}
In particular $S\mapsto\operatorname{dist}(x,S)$ is $1$-Lipschitz with respect to $\operatorname{dist}_H$, uniformly in $x$.
\end{lem}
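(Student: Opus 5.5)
The plan is to prove the directed inequality \eqref{eq:dist-directed} directly from the triangle inequality for $d$ by passing through an intermediate point of $B$. Then \eqref{eq:dist-lip} follows by symmetry. Fix $x\in\mathcal{M}$ and an arbitrary $b\in B$. For every $a\in A$ we have $d(x,a)\le d(x,b)+d(b,a)$. Taking the infimum over $a\in A$ on both sides gives
\[
\operatorname{dist}(x,A)\le d(x,b)+\operatorname{dist}(b,A)\le d(x,b)+e(B,A),
\]
where the last step uses $\operatorname{dist}(b,A)\le\sup_{b'\in B}\operatorname{dist}(b',A)=e(B,A)$. Since $b\in B$ was arbitrary, taking the infimum over $b\in B$ yields \eqref{eq:dist-directed}.

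If $e(B,A)=+\infty$, the inequality is trivially true. This can happen in a general metric space with unbounded sets, and we would adopt the convention that both sides live in $[0,\infty]$. In our application the sets are subsets of the compact $S_1$, so all quantities are finite.

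For \eqref{eq:dist-lip}, apply \eqref{eq:dist-directed} once as stated and once with the roles of $A$ and $B$ interchanged:
\[
\operatorname{dist}(x,A)-\operatorname{dist}(x,B)\le e(B,A)\le\operatorname{dist}_H(A,B),\qquad \operatorname{dist}(x,B)-\operatorname{dist}(x,A)\le e(A,B)\le\operatorname{dist}_H(A,B).
\]
Combining the two gives the absolute-value bound, provided $\operatorname{dist}(x,A)$ and $\operatorname{dist}(x,B)$ are finite, which holds because the sets are nonempty. The Lipschitz statement is then a restatement of \eqref{eq:dist-lip}, since the bound holds for every $x$ with constant $1$.

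There is no real obstacle here. The only point needing care is the order of taking infima and suprema: first fix $b$, take the infimum over $a$, bound by the excess, and only then take the infimum over $b$. We must also keep track of which directed excess appears, so that the sharper one-sided version \eqref{eq:dist-directed} is what the proof of Lemma \ref{RGA-MGD1-lem4z} uses.
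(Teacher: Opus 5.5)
Your proof is correct and follows the paper's argument. Both route the triangle inequality through an intermediate point $b\in B$, bound $\operatorname{dist}(b,A)$ by $e(B,A)$, and then swap $A$ and $B$ to get \eqref{eq:dist-lip}. The only difference is that you take the infimum over $a$ and then over $b$ directly, whereas the paper picks $\varepsilon$-near-minimizers and lets $\varepsilon\downarrow 0$; the two arguments are equivalent.
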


\begin{proof}
Fix $\varepsilon>0$. Since $B\neq\emptyset$, the definition of the infimum yields $b\in B$ with
\[
  d(x,b)\;\le\;\operatorname{dist}(x,B)+\varepsilon .
\]
Because $b\in B$ and $e(B,A)$ is the supremum of $\operatorname{dist}(\,\cdot\,,A)$ over $B$, we have $\operatorname{dist}(b,A)\le e(B,A)$. Since $A\neq\emptyset$, a second appeal to the definition of the infimum yields $a\in A$ with
\[
  d(b,a)\;\le\;\operatorname{dist}(b,A)+\varepsilon\;\le\;e(B,A)+\varepsilon .
\]
The triangle inequality in $(\mathcal{M},d)$ now gives
\[
  \operatorname{dist}(x,A)\;\le\;d(x,a)\;\le\;d(x,b)+d(b,a)\;\le\;\operatorname{dist}(x,B)+e(B,A)+2\varepsilon .
\]
Letting $\varepsilon\downarrow 0$ proves \eqref{eq:dist-directed}. Exchanging the roles of $A$ and $B$ gives $\operatorname{dist}(x,B)\le\operatorname{dist}(x,A)+e(A,B)$, and since $\operatorname{dist}_H(A,B)$ dominates both $e(A,B)$ and $e(B,A)$, the two bounds combine to \eqref{eq:dist-lip}.
\end{proof}
}

\subsection{Proof of Lemma \ref{RGA-MGD1-lem0} }
\begin{proof}
    We have that:
    \begin{align*}
       \mathrm{dist}_H(\mathcal{S}^*(y_{k+1}), \mathcal{S}^*(y_k)) 
        & = \max \{ \sup_{x^*(y_{k+1}) \in  \mathcal{S}^*(y_{k+1})}  \mathrm{dist}(x^*(y_{k+1}), \mathcal{S}^*(y_k)) , \sup_{x^*(y_{k}) \in  \mathcal{S}^*(y_{k})}  \mathrm{dist}(x^*(y_{k}), \mathcal{S}^*(y_{k+1})) \} 
        \end{align*}
        Bounding the first supremum (both are symmetric) we get:
        \begin{align*}
      \sup_{x^*(y_{k+1}) \in  \mathcal{S}^*(y_{k+1})}  \mathrm{dist}(x^*(y_{k+1}), \mathcal{S}^*(y_k))  & \underbrace{\le}_{\textbf{C3}} \sup_{x^*(y_{k+1}) \in  \mathcal{S}^*(y_{k+1})}   \left(\frac{\beta}{\beta-1} \right) C_{\beta}^{-1}\norm{\nabla_x f(x^*(y_{k+1}),y_{k}) }^{\beta - 1} .
    \end{align*}
Applying the fundamental theorem of calculus to each component of $t\mapsto \nabla_x(x^*(y_{k+1}), c(t))$ we get :
\[  \nabla_x f(x^*(y_{k+1}),y_{k}) ={\underbrace{\nabla_x f(x^*(y_{k+1}), y_{k+1})}_{=\mathbf{0}} +  \int_{0}^1 \langle\nabla_y \nabla_x f(x^*(y_{k+1}), c(t))  , c'(t) \rangle_{c(t)} dt }\]
for the geodesic $c(t)$ with $c(1) = y_k$, $c(0) = y_{k+1}$. Then we have the uniform bound for any $ {x^*(y_{k+1}) \in  \mathcal{S}^*(y_{k+1})}$:
\begin{align*}
    \norm{\nabla_x f(x^*(y_{k+1}),y_{k}) }  &= \norm{\int_{0}^1 \langle\nabla_y \nabla_x f(x^*(y_{k+1}), c(t))  , c'(t) \rangle_{c(t)} dt } \\
    & \le \sup_{t \in [0,1]} \norm{\nabla_y \nabla_x f(x^*(y_{k+1}), c(t))}_{c(t)}  \int_{0}^1  \norm{ c'(t)}_{c(t)} dt \\
    & \underbrace{\le}_{\textbf{C4}} L  \, \mathrm{dist}(y_{k+1},y_k)
\end{align*}
which completes first part of the proof. For the second part we have the bound:
\begin{align*}
    \mathrm{dist}_H(\mathcal{S}^*(y_{k+1}), \mathcal{S}^*(y_k)) 
          & \underbrace{\le}_{\textbf{C3}} \sup_{x^*(y_{k+1}) \in  \mathcal{S}^*(y_{k+1})}  \left(\frac{\beta}{\beta-1} \right) C_{\beta}^{-1}\norm{\nabla_x f(x^*(y_{k+1}),y_{k}) }^{\beta - 1} \\
          & \le \left(\frac{\beta}{\beta-1} \right) C_{\beta}^{-1} L^{\beta - 1} \, { \mathrm{dist}^{\beta - 1}(y_{k+1},y_k) } \\
          & =  \left(\frac{\beta}{\beta-1} \right) C_{\beta}^{-1} L^{\beta - 1} \,  \norm{ \exp^{-1}_{y_k}(y_{k+1}) }^{\beta - 1}_{y_k} \\
           & =  \left(\frac{\beta}{\beta-1} \right) C_{\beta}^{-1} L^{\beta - 1} h_1^{\beta - 1}\,  \norm{ \nabla_y f(x_k, y_{k}) }^{\beta - 1}_{y_k} \\
            & \le h_1^{\beta - 1} \left(\frac{\beta}{\beta-1} \right) C_{\beta}^{-1} L^{2\beta - 2} \, \mathrm{diam}^{\beta - 1}(S_2)
\end{align*}
where in the last step we used that $y_k \in x_k \times S_2$ and so $  \norm{ \nabla_y f(x_k, y_{k}) } \le L \, \mathrm{dist}(y_k, y^*(x_k))  \le L \, \mathrm{diam}(S_2) $.
This completes the second part of the proof.
\end{proof}

\subsection{Proof of Lemma \ref{RGA-MGD1-lem2} }
\begin{proof}
    From Lemma \ref{contractionlema2}, non-negativity of $ f(x_{k}, y_{k}) - f_*(y_{k})$ and triangle inequality we can write:
    \begin{align}
        [f(x_{k+1}, y_{k+1}) - f_*( y_{k+1}) ] &\le \bigg(\prod_{j=0}^{J-1} \rho_{k +\frac{j}{J}} \bigg) [f(x_{k}, y_{k}) - f_*(y_{k}) ]  +   \bigg(\prod_{j=0}^{J-1} \rho_{k +\frac{j}{J}} \bigg) |f(x_{k}, y_{k+1}) - f(x_{k}, y_{k})|  \nonumber \\ & + \bigg(\prod_{j=0}^{J-1} \rho_{k +\frac{j}{J}} \bigg)| f_*( y_{k+1}) - f_*( y_{k})|
    \end{align}
    Using Lipschitzness of $f(x_k, \cdot)$ the second summand is bounded $ |f(x_{k}, y_{k+1}) - f(x_{k}, y_{k})| \le L \, \mathrm{dist}(y_{k+1},y_k) $. The third summand will have the bound:
    \begin{align}
        | f_*( y_{k+1}) - f_*( y_{k})| &= \bigg\lvert \inf_{x \in \mathcal{S}^*(y_{k+1}) + \mathcal{B}_{\delta}(0)} f(x, y_{k+1}) - \inf_{x \in \mathcal{S}^*(y_k) + \mathcal{B}_{\delta}(0)} f(x, y_{k}) \bigg\rvert  \\
         & =  \bigg\lvert  f(\mathcal{S}^*(y_{k+1}), y_{k+1}) -  f(\mathcal{S}^*(y_{k}), y_{k}) \bigg\rvert  \\
         & \hspace{-3cm} \underbrace{\le}_{\textbf{for any } x^*(y_{k+1}) \in \mathcal{S}^*(y_{k+1})} \bigg\lvert  f(x^*(y_{k+1}), y_{k+1}) -  f(x^*(y_{k+1}), y_{k}) \bigg\rvert + \bigg\lvert  f(x^*(y_{k+1}), y_{k}) -  f(\mathcal{S}^*(y_{k}), y_{k}) \bigg\rvert \\
         & \hspace{-3cm} \underbrace{\le}_{\textbf{C3. and Lipschitz } f(x,\cdot) } L \, \mathrm{dist}(y_{k+1},y_k)  \, \, + \, \, \frac{1}{C_{\beta}}  \norm{\nabla_x f(x^*(y_{k+1}), y_{k}) }^{\beta} \\
        & \hspace{-3cm} = L \, \mathrm{dist}(y_{k+1},y_k)  \, \, + \, \, \frac{1}{C_{\beta}}  \norm{\underbrace{\nabla_x f(x^*(y_{k+1}), y_{k+1})}_{=\mathbf{0}} +  \int_{0}^1 \langle\nabla_y \nabla_x f(x^*(y_{k+1}), c(t))  , c'(t) \rangle_{c(t)} dt }^{\beta} \\
        & \hspace{-3cm} \underbrace{\le}_{\textbf{Cauchy Schwarz}} L \, \mathrm{dist}(y_{k+1},y_k)  \, \, + \, \, \frac{1}{C_{\beta}}  \norm{  \bigg(\sup_{t \in [0,1]} \norm{\nabla_y \nabla_x f(x^*(y_{k+1}), c(t))}_{c(t)} \bigg) \int_{0}^1 \norm{c'(t)}_{c(t)} dt }^{\beta} \\
        & \hspace{-3cm} \le L \, \mathrm{dist}(y_{k+1},y_k)  \, \, + \, \, \frac{L^{\beta}}{C_{\beta}} \mathrm{dist}^{\beta}(y_{k+1},y_k) 
    \end{align}
    where in the second summand of the third last step, we used the fundamental theorem of calculus to each component of $t\mapsto \nabla_x(x^*(y_{k+1}),c(t))$ and in the last step we used that $t\mapsto c(t)$ is the distance minimizing geodesic in the geodesically convex set $S_2$. Note that intuitively $ \nabla_y \nabla_x f(x^*(y_{k+1}), c(t))$ behaves like a Jacobian matrix with each row in the tangent space of $c(t)$ and the inner product $\langle\nabla_y \nabla_x f(x^*(y_{k+1}), c(t))  , c'(t) \rangle_{c(t)} $ is a vector defined coordinate wise by taking inner products in the tangent space of $c(t)$. Putting everything together completes the proof:
        \begin{align}
        [f(x_{k+1}, y_{k+1}) - f_*( y_{k+1}) ] &\le \bigg(\prod_{j=0}^{J-1} \rho_{k +\frac{j}{J}} \bigg) [f(x_{k}, y_{k}) - f_*(y_{k}) ]   +  L \bigg(\prod_{j=0}^{J-1} \rho_{k +\frac{j}{J}} \bigg) \mathrm{dist}(y_{k+1},y_k) \nonumber \\ &  + \bigg(\prod_{j=0}^{J-1} \rho_{k +\frac{j}{J}} \bigg) \bigg(L \, \mathrm{dist}(y_{k+1},y_k)  \, \, + \, \, \frac{L^{\beta}}{C_{\beta}} \mathrm{dist}^{\beta}(y_{k+1},y_k) \bigg) \\
        & \le  \bigg(\prod_{j=0}^{J-1} \rho_{k +\frac{j}{J}} \bigg) [f(x_{k}, y_{k}) - f_*(y_{k}) ]  \nonumber \\ & +  L \, \mathrm{dist}(y_{k+1},y_k)\bigg(\prod_{j=0}^{J-1} \rho_{k +\frac{j}{J}} \bigg) \bigg( 2 + \frac{L^{\beta -1}}{C_{\beta}} \mathrm{dist}^{\beta -1}(y_{k+1},y_k) \bigg) . 
    \end{align}  
    The last part follows from Lemma \ref{contractionlema2}, Lemma \ref{RGA-MGD1-lem0}, Lemma \ref{lem:dist-hausdorff} (triangle inequality) and concavity (hence sub-additivity) of function $ g(t) := t^{\beta -1}$ for $\beta \in (1,2]$ as follows:
    \begin{align*}
        \mathrm{dist}(x_{k+1}, \mathcal{S}^*(y_{k+1})) & \le  \tau_k^{\frac{\beta -1}{\beta}} \mathrm{dist}^{\beta -1}(x_{k}, \mathcal{S}^*(y_{k+1})) \\
        & \le  \tau_k^{\frac{\beta -1}{\beta}} \bigg( \mathrm{dist}(x_{k}, \mathcal{S}^*(y_k)) + \mathrm{dist}_H(\mathcal{S}^*(y_{k+1}), \mathcal{S}^*(y_k))\bigg)^{\beta -1} \\
        & \le  \tau_k^{\frac{\beta -1}{\beta}} \mathrm{dist}^{\beta -1}(x_{k}, \mathcal{S}^*(y_k)) +   \tau_k^{\frac{\beta -1}{\beta}} \mathrm{dist}_H^{\beta -1}(\mathcal{S}^*(y_{k+1}), \mathcal{S}^*(y_k)) \\
         & {\le} \tau_k^{\frac{\beta -1}{\beta}} \mathrm{dist}^{\beta -1}(x_{k}, \mathcal{S}^*(y_k)) +   \tau_k^{\frac{\beta -1}{\beta}} \bigg( L^{\beta -1} \, \left(\frac{\beta}{\beta-1} \right) C_{\beta}^{-1} \, \mathrm{dist}^{\beta - 1}(y_{k+1},y_k) \bigg)^{\beta -1}  \, .
    \end{align*}
    Further, if $ \mathrm{dist}(x_{k}, \mathcal{S}^*(y_{k})) <  \frac{C^{1/\beta}_{\beta}}{L} -   h_1^{\beta - 1}\left(\frac{\beta}{\beta-1} \right) C_{\beta}^{-1} L^{2\beta - 2}\, \mathrm{diam}^{\beta - 1}(S_2)$ for $h_1$ sufficiently small, then from triangle inequality in Lemma \ref{lem:dist-hausdorff} and Lemma \ref{RGA-MGD1-lem0} we get
    \[
    \mathrm{dist}(x_{k}, \mathcal{S}^*(y_{k+1})) \le \mathrm{dist}(x_{k}, \mathcal{S}^*(y_{k}))  + \mathrm{dist}_H(\mathcal{S}^*(y_{k+1}), \mathcal{S}^*(y_k))  < \frac{C^{1/\beta}_{\beta}}{L} .
    \]
   Hence from Lemma \ref{contractionlema2}, Lemma \ref{RGA-MGD1-lem0} and concavity (hence sub-additivity) of function $ g(t) := t^{\frac{\beta(\beta-1)}{2}}$ for $\beta \in (1,2]$ we have that:
        \begin{align*}
        \mathrm{dist}(x_{k+1}, \mathcal{S}^*(y_{k+1})) & \le  C_{L,\beta}  \frac{\bigg( \mathrm{dist}(x_{k}, \mathcal{S}^*(y_{k+1})) \bigg)^{\frac{\beta (\beta -1) }{2}}}{\bigg(   1+ J \,C'h_2  \bigg)^{\frac{\beta-1}{2}}} \\
       & \le C_{L,\beta}  \frac{\bigg( \mathrm{dist}(x_{k}, \mathcal{S}^*(y_k)) + \mathrm{dist}_H(\mathcal{S}^*(y_{k+1}), \mathcal{S}^*(y_k)) \bigg)^{\frac{\beta (\beta -1) }{2}}}{\bigg(   1+ J \,C'h_2  \bigg)^{\frac{\beta-1}{2}}} 
       \end{align*}
       \begin{align*}
        & \le  C_{L,\beta}  \frac{\bigg( \mathrm{dist}(x_{k}, \mathcal{S}^*(y_k)) \bigg)^{\frac{\beta (\beta -1) }{2}}}{\bigg(   1+ J \,C'h_2  \bigg)^{\frac{\beta-1}{2}}}  + C_{L,\beta}  \frac{\bigg(  \mathrm{dist}_H(\mathcal{S}^*(y_{k+1}), \mathcal{S}^*(y_k)) \bigg)^{\frac{\beta (\beta -1) }{2}}}{\bigg(   1+ J \,C'h_2  \bigg)^{\frac{\beta-1}{2}}} \\
        & \le  C_{L,\beta}  \frac{\bigg( \mathrm{dist}(x_{k}, \mathcal{S}^*(y_k)) \bigg)^{\frac{\beta (\beta -1) }{2}}}{\bigg(   1+ J \,C'h_2  \bigg)^{\frac{\beta-1}{2}}}  + C_{L,\beta}  \frac{\bigg(  L^{\beta -1} \, \left(\frac{\beta}{\beta-1} \right) C_{\beta}^{-1} \, \mathrm{dist}^{\beta - 1}(y_{k+1},y_k) \bigg)^{\frac{\beta (\beta -1) }{2}}}{\bigg(   1+ J \,C'h_2  \bigg)^{\frac{\beta-1}{2}}} 
    \end{align*}
        where $C_{L,\beta} :=   \left(\frac{\beta}{\beta-1} \right) \frac{L ^{\frac{\beta (\beta -1)}{2}}}{ C_{\beta}^{\frac{\beta-1}{2} + \frac{1}{\beta }} } $, $ C' := (1 - \tfrac{Lh_2}{2}) C^{2/\beta}_{\beta} $.
    This completes the proof.
\end{proof}

\subsection{Proof of Lemma \ref{RGA-MGD1-lem3} }
\begin{proof}
 Applying triangle inequality followed by Lemma \ref{contractionlema1} gives:
       \begin{align}
        \mathrm{dist}(y_{k+1},y^*(x_{k+1}))  & \le   \, \mathrm{dist}(y_{k+1},y^*(x_{k})) +  \, \mathrm{dist}(y^*(x_{k+1}),y^*(x_k)) \\ &\le  \wt\gamma \, \mathrm{dist}(y_{k},y^*(x_k)) +  \, \mathrm{dist}(y^*(x_{k+1}),y^*(x_k))
    \end{align}
    Next, upper bounding $  \mathrm{dist}(y^*(x_{k+1}),y^*(x_k)) $ gives:
    \begin{align}
         \mathrm{dist}(y^*(x_{k+1}),y^*(x_k)) &\underbrace{\le}_{\eqref{comparisonineq1a} \textbf{, Cauchy Schwarz}} \frac{1}{\mu} \norm{  {\Psi}_{y^*(x_{k+1}) \to y^*(x_{k})} (\nabla_y f(x_k, y^*(x_{k+1}))) - \underbrace{\nabla_y f(x_k ,y^*(x_k))}_{=\mathbf{0}} }_{y^*(x_k)}  \\
         &\underbrace{\le}_{\textbf{isometry of } {\Psi}} \frac{1}{\mu} \norm{   \nabla_y f(x_k, y^*(x_{k+1}))  }_{y^*(x_{k+1})} \\
         & \hspace{-4cm} \underbrace{=}_{\substack{\textbf{parallel transport via $\Phi $ } \\ \textbf{between isomorphic fibers at $x_k, x_{k+1}$ ,} \\  \Phi \, \, : \, \, x_k \times S_2 \, \, \to \, \, x_{k+1} \times S_2 }} \frac{1}{\mu} \norm{   {\Phi}_{x_k \to x_{k+1}}  (\nabla_y f(x_k, y^*(x_{k+1})) ) - \underbrace{\nabla_y f(x_{k+1} ,y^*(x_{k+1}))}_{= \mathbf{0}} }_{y^*(x_{k+1})} \\
         & \hspace{-2cm}\underbrace{\le}_{\textbf{Lipschitzness of } \nabla_y f(\cdot,y^*(x_{k+1}) )} \frac{L}{\mu} \norm{x_{k+1} - x_k}
    \end{align}
    and then putting everything together completes the proof. 
\end{proof}

\subsection{Proof of Lemma \ref{RGA-MGD1-lem4} }
\begin{proof}
    From the definition \eqref{RGA-MGD1} we have that $ \mathrm{dist}(y_{k+1},y_k) = \mathrm{dist}(G(x_k,y_{k}),G(x_{k-1},y_{k-1})) $. Using triangle inequality, contractiveness of $G(x, y) := \exp_{y}(h_1 \nabla_y f(x, y))$ in $y$ we can write:
    {
     \begin{align}
           \mathrm{dist}(G(x_k,y_{k}),G(x_{k-1},y_{k-1})) &\le \mathrm{dist}(G(x_k,y_{k}),G(x_{k},y_{k-1})) + \mathrm{dist}(G(x_k,y_{k-1}),G(x_{k-1},y_{k-1})) \\
           & \hspace{-1cm}\underbrace{\le}_{\textbf{Lemma \ref{contractionlema3}, C4}}   \eta \, \,\mathrm{dist}(y_{k-1},y_k)  +  L h_1 \norm{ \nabla_y f(x_k, y_{k-1}) -  \nabla_y f(x_{k-1}, y_{k-1})}_{y_{k-1}} \\
           & \underbrace{\le}_{\textbf{C4}}  \, \eta \, \,\mathrm{dist}(y_{k-1},y_k)  +  L^2 h_1 \norm{x_k - x_{k-1}} \label{beta1_2tempineq1} \\
           & \hspace{-1cm} \underbrace{\le}_{\textbf{Lemma } \ref{RGA-MGD1-lem4z}}  \, \eta \, \,\mathrm{dist}(y_{k-1},y_k) +   L^2 h_1  \frac{\beta \, L^{\beta-1} }{(1-\tfrac{Lh_2}{2})(\beta-1) C_{\beta}} \bigg( \mathrm{dist}^{\beta-1}(x_{k-1}, \mathcal{S}^*(y_{k-1}))  \nonumber \\ & +  \mathrm{dist}_H^{\beta-1}(\mathcal{S}^*(y_{k-1}), \mathcal{S}^*(y_k)) \bigg)  \label{I1'sourceqn}
    \end{align}
    }
    where $ \eta =  1-h_1\mu+\frac{3}{4}\Theta h_1^2\left(\sup_{p\in S_2} \norm{\nabla_yf}_p^2\right) \le 1-h_1\mu+\frac{3}{4}\Theta L^2 h_1^2 $ from \textbf{C4}. 
    Last, suppose $\mathrm{dist}_H(\mathcal{S}^*(y_{k-1}), \mathcal{S}^*(y_k)) $ satisfies a sharper Lipschitz estimate as opposed to the H\"{o}lder estimate in Lemma \ref{RGA-MGD1-lem0} with constant $ D_{\beta} >0$:
    \begin{align}
        \mathrm{dist}_H(\mathcal{S}^*(y_{k-1}), \mathcal{S}^*(y_k)) \le  D_{\beta} \, \mathrm{dist}(y_{k-1},y_k).
    \end{align}
    Then we have the following bound:
       \begin{align}
        \mathrm{dist}(G(x_k,y_{k}),G(x_{k-1},y_{k-1}))
           & \le  \, \eta \, \,\mathrm{dist}(y_{k-1},y_k) +   L^2 h_1  \frac{\beta \, L^{\beta-1} }{(1-\tfrac{Lh_2}{2})(\beta-1) C_{\beta}} \bigg( \mathrm{dist}^{\beta-1}(x_{k-1}, \mathcal{S}^*(y_{k-1})) + \nonumber \\ &   D_{\beta}^{(\beta - 1)}  \, \mathrm{dist}^{(\beta - 1)}(y_{k-1},y_k) \bigg) \, 
    \end{align}
    and for $\beta = 2$ we get:
        \begin{align}
        \mathrm{dist}(G(x_k,y_{k}),G(x_{k-1},y_{k-1}))
           & \le  \, \bigg(\eta +  \frac{2 \, L^{3}  D_{\beta} h_1 }{(1-\tfrac{Lh_2}{2}) C_{\beta}} \bigg) \, \,\mathrm{dist}(y_{k-1},y_k) +     \frac{2 \, L^{3}h_1 }{(1-\tfrac{Lh_2}{2}) C_{\beta}}  \mathrm{dist}(x_{k-1}, \mathcal{S}^*(y_{k-1})) .
    \end{align}
    The condition $ \bigg(\eta +  \frac{2 \, L^{3}  D_{\beta} h_1 }{(1-\tfrac{Lh_2}{2}) C_{\beta}} \bigg) <1$ for sufficiently small $h_1$ follows immediately from the definition of $\eta$ and the bound
    \[D_{\beta}  < \frac{(1-\tfrac{Lh_2}{2}) C_{\beta}} {2 \, L^{3}   }\mu   \, .\] 
\end{proof}

\section{Proofs for section \ref{sectionmaintheorems} }\label{sectionmaintheoremsappendix}

\subsection{Some technical bounds and supporting results}
We write 4 inequalities as follows:
    \begin{enumerate}
        \item [\textbf{I1.}] From Lemma \ref{RGA-MGD1-lem4}(a) for $\beta =2$, $Lh_2 <1$ we have that:
            \begin{align}
       \mathrm{dist}(y_{k+1},y_k)
           & \le  \, \bigg(\eta +  \frac{2 \, L^{3}  D_{\beta} h_1 }{(1-\tfrac{Lh_2}{2}) C_{\beta}} \bigg) \, \,\mathrm{dist}(y_{k-1},y_k)   +    \frac{2 \, L^{3}h_1 }{(1-\tfrac{Lh_2}{2}) C_{\beta}}  \mathrm{dist}(x_{k-1}, \mathcal{S}^*(y_{k-1}))
    \end{align}
    where $ \eta   = 1-h_1\mu+\frac{3}{4}\Theta L^2 h_1^2  + \mathcal{O}(h_1^3) $.
        \item [\textbf{I2.}] From Lemma \ref{RGA-MGD1-lem2}, remark \ref{taukbounremark} for $\beta =2$, $Lh_2 <1$ we have that:
        \begin{align}
        \mathrm{dist}(x_{k}, \mathcal{S}^*(y_{k})) & \le     \frac{2 L ( 1 - \frac{h_2 C_{\beta}}{2} )^{J/2} }{C_{\beta}} \mathrm{dist}(x_{k-1}, \mathcal{S}^*(y_{k-1}))  +   \frac{4 L^2 ( 1  - \frac{h_2 C_{\beta}}{2} )^{J/2} }{C^2_{\beta}} \, \mathrm{dist}(y_{k-1},y_k)    
    \end{align}
    where $ \frac{2 L ( 1  - \frac{h_2 C_{\beta}}{2} )^{J/2} }{C_{\beta}} < 1$ for any sufficiently large $J$. 
    \item  [\textbf{I1'.}] For $L h_2 <1 $ and arbitrary $\beta \in (1,2)$, from Lemma \ref{RGA-MGD1-lem4}(b) proof (inequality \eqref{I1'sourceqn}) for $J := J(k)$ we have that:
   \begin{align}
        \mathrm{dist}(y_{k+1},y_k)
           & \le  \, \eta \, \,\mathrm{dist}(y_{k-1},y_k) +     \frac{\beta \, L^{\beta+1} h_1 }{(1-\tfrac{Lh_2}{2})(\beta-1) C_{\beta}} \bigg( \mathrm{dist}^{\beta-1}(x_{k-1}, \mathcal{S}^*(y_{k-1}))  \nonumber \\ & +  \mathrm{dist}_H^{\beta-1}(\mathcal{S}^*(y_{k-1}), \mathcal{S}^*(y_k)) \bigg) \, .
    \end{align}
    \item [\textbf{I2'.}]   Let $ \mathrm{dist}(x_{k-1}, \mathcal{S}^*(y_{k-1})) <  \frac{C^{1/\beta}_{\beta}}{L} - h_1^{\beta - 1} \left(\frac{\beta}{\beta-1} \right) C_{\beta}^{-1} L^{2\beta - 2} \, \mathrm{diam}^{\beta - 1}(S_2)$ for $h_1$ sufficiently small, then for $L h_2 <1 $ and arbitrary $\beta \in (1,2)$, from Lemma \ref{RGA-MGD1-lem2} for $J := J(k)$ we have that:
        \begin{align}
        \mathrm{dist}(x_{k}, \mathcal{S}^*(y_{k}))
        & \le  C_{L,\beta}  \frac{\bigg( \mathrm{dist}(x_{k-1}, \mathcal{S}^*(y_{k-1})) \bigg)^{\frac{\beta (\beta -1) }{2}}}{\bigg(   1+ J(k) \,C'h_2  \bigg)^{\frac{\beta-1}{2}}}  + C_{L,\beta}  \frac{\bigg(  L^{\beta -1} \, \left(\frac{\beta}{\beta-1} \right) C_{\beta}^{-1} \, \mathrm{dist}^{\beta - 1}(y_{k-1},y_k) \bigg)^{\frac{\beta (\beta -1) }{2}}}{\bigg(   1+ J(k) \,C'h_2  \bigg)^{\frac{\beta-1}{2}}} 
    \end{align}
        where $C_{L,\beta} :=   \left(\frac{\beta}{\beta-1} \right) \frac{L ^{\frac{\beta (\beta -1)}{2}}}{ C_{\beta}^{\frac{\beta-1}{2} + \frac{1}{\beta }} } $, $ C' := (1 - \tfrac{Lh_2}{2}) C^{2/\beta}_{\beta} $.
    \end{enumerate}

\begin{theorem}\citep[Theorem 6.3.12]{horn2012matrix}\label{theomatperturb}
     Let $X, E \in \mathbb{R}^{n \times n}$ and let $q$ be a simple eigenvalue of $X$. Let $\mathbf{v}$ and $\mathbf{u}$ be, respectively, the right and left eigenvectors of $X$ corresponding to the eigenvalue $q$. Then,
     \begin{enumerate}
         \item  for each $\epsilon>0$, there exists a $\delta>0$ such that, $\forall p \in \mathbb{C}$ with $|p|<\delta$, there is a unique eigenvalue $q(p)$ of $ X+p E$ such that $\left|q(p)-q-p \frac{\mathbf{u}^H E \mathbf{v}}{\mathbf{u}^H \mathbf{v}}\right| \leq|p| \epsilon$,
         \item  $q(p)$ is continuous at $p=0$, and $\lim _{p \rightarrow 0} q(p)=q$,
         \item $q(p)$ is differentiable at $p=0,\left.\frac{d q(p)}{d p}\right|_{p=0}=\frac{\mathbf{u}^H E \mathbf{v}}{\mathbf{u}^H \mathbf{v}}$,
     \end{enumerate}
        where $(\cdot)^{H}$ is Hermitian operator.
\end{theorem}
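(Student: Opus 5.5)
The plan is to derive all three items from the analytic implicit function theorem applied to the characteristic polynomial, and to compute the derivative through the adjugate rather than through a perturbed eigenvector.

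\textbf{Setting up the implicit function theorem.} Consider the polynomial in two complex variables
\[
\chi(\lambda,p) := \det(\lambda I - X - pE),
\]
which is jointly analytic. Since $q$ is a simple eigenvalue of $X$, $\lambda=q$ is a simple root of $\chi(\cdot,0)$, so $\chi(q,0)=0$ and $\partial_\lambda \chi(q,0)\neq 0$. The holomorphic implicit function theorem then gives:
\begin{itemize}
\item a neighborhood $|p|<\delta_0$ and a neighborhood $|\lambda-q|<r$;
\item a unique analytic function $q(p)$ with $q(0)=q$ such that, in this bidisc, $\chi(\lambda,p)=0$ holds if and only if $\lambda=q(p)$.
\end{itemize}
This gives uniqueness of the eigenvalue of $X+pE$ near $q$. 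Continuity and differentiability at $p=0$ (item 2 and the existence part of item 3) follow immediately. If one wants to see directly that no other eigenvalue enters the disc, Rouché's theorem applied to $\chi(\cdot,p)$ versus $\chi(\cdot,0)$ on $|\lambda-q|=r$ shows that exactly one root lies inside for small $|p|$.

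\textbf{Computing the derivative.} By implicit differentiation, $q'(0) = -\partial_p\chi(q,0)/\partial_\lambda\chi(q,0)$. Jacobi's formula gives
\[
\partial_\lambda \chi(q,0) = \operatorname{tr}\operatorname{adj}(qI-X), \qquad \partial_p\chi(q,0) = -\operatorname{tr}\bigl(\operatorname{adj}(qI-X)\,E\bigr).
\]
Because $q$ is simple, $qI-X$ has rank exactly $n-1$, so $\operatorname{adj}(qI-X)$ has rank one. The identities
\[
(qI-X)\operatorname{adj}(qI-X) = 0 = \operatorname{adj}(qI-X)(qI-X)
\]
show that its columns lie in $\ker(qI-X)=\operatorname{span}\{\mathbf{v}\}$. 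Likewise its rows lie in the left kernel $\operatorname{span}\{\mathbf{u}^H\}$. Hence $\operatorname{adj}(qI-X)=c\,\mathbf{v}\mathbf{u}^H$ with $c\neq 0$.

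Substituting this rank-one form:
\begin{itemize}
\item $\partial_\lambda\chi(q,0)=c\,\mathbf{u}^H\mathbf{v}$, and since this quantity is nonzero, we also get $\mathbf{u}^H\mathbf{v}\neq 0$ for free;
\item $\partial_p\chi(q,0) = -c\,\mathbf{u}^H E\mathbf{v}$.
\end{itemize}
Therefore $q'(0)=\mathbf{u}^H E\mathbf{v}/\mathbf{u}^H\mathbf{v}$, which is item 3.

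\textbf{Item 1.} This is a restatement of differentiability at $0$. Given $\epsilon>0$, choose $\delta\le\delta_0$ so that
\[
|q(p)-q-p\,q'(0)|\le \epsilon|p| \quad \text{for } |p|<\delta,
\]
which is the definition of the derivative. The uniqueness of $q(p)$ is inherited from the implicit function theorem step.

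\textbf{Main obstacle.} The step needing the most care is the rank-one structure of the adjugate, equivalently $\mathbf{u}^H\mathbf{v}\neq0$. I would justify this as above using that $\operatorname{tr}\operatorname{adj}(qI-X)=\partial_\lambda\chi(q,0)$ equals the product of the nonzero numbers $q-q_j$ over the other eigenvalues $q_j$. This avoids a Jordan-form argument and the need to choose an analytic eigenvector branch.
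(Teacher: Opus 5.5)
Your proof is correct. The paper gives no argument of its own for this statement: it is imported from Horn and Johnson and used as a black box in the proof of Theorem~\ref{convergenceratethm_exactPL}. The relevant comparison is therefore with the usual textbook route.

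That route proceeds in four steps:
\begin{itemize}
\item It proves the biorthogonality fact $\mathbf{u}^H\mathbf{v}\neq 0$ separately.
\item It uses this to split off the simple eigenvalue by a similarity $S=[\mathbf{v}\ S_1]$ whose inverse has first row $\mathbf{u}^H/(\mathbf{u}^H\mathbf{v})$. This gives $S^{-1}XS=q\oplus X_1$ with $q$ not an eigenvalue of $X_1$.
\item It reads the first-order coefficient off the $(1,1)$ entry of $S^{-1}ES$, which is exactly $\mathbf{u}^HE\mathbf{v}/\mathbf{u}^H\mathbf{v}$.
\item A localization argument for the eigenvalues of the block matrix $S^{-1}(X+pE)S$ finishes the proof.
\end{itemize}

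You instead apply the holomorphic implicit function theorem to $\det(\lambda I-X-pE)$, and compute $q'(0)$ via Jacobi's formula and the rank-one identity $\operatorname{adj}(qI-X)=c\,\mathbf{v}\mathbf{u}^H$. This is shorter and gets $\mathbf{u}^H\mathbf{v}\neq 0$ for free, from $c\,\mathbf{u}^H\mathbf{v}=\prod_j(q-q_j)$ over the remaining eigenvalues $q_j$. It also proves more than is stated: the branch $q(p)$ is analytic near $p=0$ and stays a simple eigenvalue. The splitting argument, by contrast, stays entirely within matrix algebra. It also makes it easy to express $\delta$ through $\|S\|\,\|S^{-1}\|$ and the gap between $q$ and the spectrum of $X_1$.

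One detail in item~1 should be made explicit. Set $\kappa=\mathbf{u}^HE\mathbf{v}/\mathbf{u}^H\mathbf{v}$. The uniqueness there is among eigenvalues satisfying $|\lambda-q-p\kappa|\le\epsilon|p|$, so you must also take $\delta\le r/(|\kappa|+\epsilon)$. Any such eigenvalue then lies in the disc $|\lambda-q|<r$, where the implicit function theorem makes $q(p)$ the only root.
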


\begin{lem}\label{RGA-MGD1-lem4zz}
    Let $a_k \, , b_k \ge 0$ be sequences that satisfy the recursion:
    \[ a_{k+1} \le \alpha \, a_k \,  +  \,  B_1 \, b_{k+1} \quad \forall \, \, k\ge 0 \]
    for $\alpha \in (0,1)$, a constant $B_1>0$ and that 
    \[ b_{k} \le \sigma^k B_2 \, b_0  \quad \forall \, \, k\ge 1 \]
     for $\sigma \in (0,1)$, a constant $B_2>0$. Then for a constant $B_3 = \max \bigg\{ a_0, \frac{B_1  B_2 b_0 }{\max\{\alpha, \sigma\} - \min\{\alpha, \sigma\}} \bigg\} $ we have
     \[ a_k \le (\max\{\alpha, \sigma\})^{k} \, B_3 \quad \forall \, \, k\ge 1 .\]
\end{lem}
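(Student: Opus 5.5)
The plan is to unroll the recursion and compare the resulting convolution sum with a geometric series in $m:=\max\{\alpha,\sigma\}$. Write $n:=\min\{\alpha,\sigma\}$ and $X:=\frac{B_1B_2b_0}{m-n}$, so that $B_3=\max\{a_0,X\}$; throughout I assume $\alpha\neq\sigma$, since otherwise $B_3$ is undefined. Iterating $a_{j}\le \alpha a_{j-1}+B_1 b_{j}$ from $j=1$ to $k$ and inserting $b_j\le \sigma^j B_2 b_0$ gives
\[
a_k\;\le\;\alpha^k a_0+B_1B_2b_0\sum_{j=1}^{k}\alpha^{k-j}\sigma^{j}.
\]
Everything then reduces to bounding $\sum_{j=1}^k\alpha^{k-j}\sigma^j$ by $m^k$ times a constant. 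I split into the two orderings of $\alpha$ and $\sigma$.

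\emph{Case $\sigma>\alpha$.} Here the max form of $B_3$ can be reached directly by induction on $k$, and I would argue this way rather than through the unrolled sum. The claim is $a_k\le m^kB_3$ with $m=\sigma$. For $k=1$, $a_1\le \alpha a_0+B_1B_2b_0\sigma\le \alpha B_3+\sigma(\sigma-\alpha)B_3$. This is at most $\sigma B_3$ because $\alpha+\sigma^2-\sigma\alpha\le\sigma$ is equivalent to $\alpha(1-\sigma)\le\sigma(1-\sigma)$. For the inductive step, $a_{k+1}\le \alpha\sigma^kB_3+B_1B_2b_0\sigma^{k+1}$, and it suffices that $B_1B_2b_0\,\sigma\le(\sigma-\alpha)B_3$, i.e.\ $\sigma X\le B_3$. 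This holds since $\sigma<1$ and $X\le B_3$.

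\emph{Case $\alpha>\sigma$.} This is the step I expect to be the real obstacle. The same induction needs $B_1B_2b_0\sigma^{k+1}\le(\alpha-\alpha)\alpha^kB_3=0$, which fails, so one must instead use the unrolled sum:
\[
\sum_{j=1}^{k}\alpha^{k-j}\sigma^j=\alpha^k\sum_{j=1}^k(\sigma/\alpha)^j\le \alpha^k\frac{\sigma}{\alpha-\sigma}.
\]
This gives $a_k\le m^k\,(a_0+\sigma X)\le m^k\,(1+\sigma)B_3\le 2m^kB_3$. I do not expect the factor $1+\sigma$ can be removed. A quick check with $\alpha=0.9$, $\sigma=0.1$, $a_0=X$ and equality in both hypotheses gives $a_1=0.9X+0.1\cdot0.8X=0.98X>0.9X=mB_3$. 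So in this regime the lemma as stated appears to need $B_3$ replaced by $a_0+X$ (or by $2\max\{a_0,X\}$).

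I would therefore present the argument as follows. The unrolled bound $a_k\le m^k(a_0+X)$ holds in both cases, using the $\sigma$-dominant sum bound $\sum\le m^k\frac{m}{m-n}\le m^k\frac{1}{m-n}$. The sharper constant $\max\{a_0,X\}$ holds when $\sigma>\alpha$, by the induction above. I would flag the $\alpha>\sigma$ constant for correction. The correction only affects the constant in the eventual $\mathcal{O}(a^k)$ rate of Theorem \ref{convergenceratethm_exactPL}, not the linear rate $\max\{\alpha,\sigma\}$ itself.
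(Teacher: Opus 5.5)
Your proposal is correct, and it shows that the lemma as stated is false when $\alpha>\sigma$. Your counterexample holds. With equality in both hypotheses one has exactly $a_k=\alpha^k(a_0+\sigma X)-\sigma^{k+1}X$. Taking $a_0=X$ (so $B_3=X$) gives $a_k-\alpha^kB_3=\sigma X(\alpha^k-\sigma^k)>0$ for every $k\ge1$; for instance $a_1=0.98X>0.9X$ when $\alpha=0.9$, $\sigma=0.1$. Moreover $a_k/(\alpha^kB_3)\to1+\sigma$, so the factor $1+\sigma$ you found is sharp.

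The paper's proof uses the same unrolling as your $\alpha>\sigma$ case. It reaches $a_k\le\alpha^ka_0+m^kX$ with your $m=\max\{\alpha,\sigma\}$. Its sum $\sum_{j=0}^{k-1}\alpha^{k-1-j}\sigma^j$ is your sum divided by $\sigma$, which is a harmless loosening. Its last line then replaces this sum of two terms by $m^k\max\{a_0,X\}$, and that implication is the error. What the chain actually gives is your corrected bound $a_k\le m^k(a_0+X)\le 2m^kB_3$.

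Your real departure from the paper is the case $\sigma>\alpha$. Your induction needs only $\sigma X\le B_3$. It keeps the factor $\sigma$ in the forcing term that the paper's index shift discards, and this is what recovers the stated constant $\max\{a_0,X\}$ there. The paper's penultimate bound $\alpha^ka_0+\sigma^kX$ already exceeds $\sigma^k\max\{a_0,X\}$ when $a_0=X$, so even in this regime the paper's argument does not deliver the claim.

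Your assessment of the downstream effect is also right. In the proof of Theorem \ref{convergenceratethm_exactPL} the lemma only supplies an unspecified constant $D_0$, so the linear rate survives. One caveat remains: if the two rates inside $a$ coincide, $B_3$ is undefined. In that case one must either enlarge one rate slightly or accept a bound of order $k\,a^k$.
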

\begin{proof}
    We can recursively write:
    \begin{align}
        a_{k+1} &\le \alpha \, a_k \,  +  \,  B_1 \, b_{k+1} \\
   \implies a_k     & \le \alpha^k \, a_0 +  B_1  B_2 \sum_{j=0}^{k-1} \alpha^{k-1-j} \sigma^{j} \, b_0 \\
   & \le \alpha^k \, a_0 +  B_1  B_2 b_0  \alpha^{k-1}\sum_{j=0}^{k-1} \bigg(\frac{\sigma}{\alpha}\bigg)^{j} \,  = \alpha^k \, a_0 +  B_1  B_2 b_0  \sigma^{k-1}\sum_{j=0}^{k-1} \bigg(\frac{\alpha}{\sigma}\bigg)^{j} \, \\
   & = \alpha^k \, a_0 +  B_1  B_2 b_0  (\max\{\alpha, \sigma\})^{k-1}\sum_{j=0}^{k-1} \bigg(\frac{(\min\{\alpha, \sigma\})}{(\max\{\alpha, \sigma\})}\bigg)^{j} \\
   & \le \alpha^k \, a_0 +  B_1  B_2 b_0   \frac{(\max\{\alpha, \sigma\})^{k}}{\max\{\alpha, \sigma\} - \min\{\alpha, \sigma\}} \\
   \implies a_k &\le (\max\{\alpha, \sigma\})^{k} \, \max \bigg\{ a_0, \frac{B_1  B_2 b_0 }{\max\{\alpha, \sigma\} - \min\{\alpha, \sigma\}} \bigg\} .
    \end{align}
\end{proof}

\subsection{Proof of Theorem \ref{convergenceratethm_exactPL} }
\begin{proof}
   { \textbf{Iterate boundedness for $\beta = 2$ :}} \\ 
    We note that the sequence $\{y_k\}$ always stays bounded in the compact, geodesic convex set\footnote{We can assume without loss of generality that $x_k \times S_2$ is a closed geodesic ball with center $\wt y$, $x_k \times S_2$ contains $y_k, y^*(x_k)$ in its interior (\textbf{C2}) and that $\mathrm{dist}(y^*(x_k),\wt y)$ is sufficiently small uniformly for all $k$. Then one step of Riemannian ascent with small enough $h_1$ will stay inside $S_2$ due to contractive property (Lemma \ref{contractionlema1}) and hence $\{y_k\}_k$ will be bounded in $S_2$. } $x_k \times S_2$ by simple induction and contractiveness of the map $\exp_{\cdot}(h \nabla_y f(x, \cdot) ) $ from Lemma \ref{contractionlema1} for any $x$. Hence $ \mathrm{dist}(y_{k-1},y_k) \le \mathrm{diam}(S_2)$ for all $k$. We only need to show boundedness of $ \{x_k\}$ and in particular $\mathrm{dist}(x_{k}, \mathcal{S}^*(y_k)) < \delta/2$ for all $k$. We have that $\mathrm{dist}(x_{0}, \mathcal{S}^*(y_0)) < \delta/2$. Suppose for some $k$ we have $\mathrm{dist}(x_{k}, \mathcal{S}^*(y_k)) < \delta/2$ then from Lemma \ref{RGA-MGD1-lem2} we have 
       \begin{align*}
        \mathrm{dist}(x_{k+1}, \mathcal{S}^*(y_{k+1})) & {\le} \,\, \sqrt{\tau_k} \,  \mathrm{dist}(x_{k}, \mathcal{S}^*(y_k)) +   \sqrt{\tau_k} \,  \bigg( 2L C_{\beta}^{-1} \, \mathrm{dist}(y_{k+1},y_k) \bigg)  \\ 
        & {\le} \,\, \sqrt{\tau_k} \,  \frac{\delta}{2} +   \sqrt{\tau_k} \,  \bigg( 2L C_{\beta}^{-1} \, \mathrm{diam}(S_2) \bigg) \\
        & \le \frac{2 L ( 1 - \frac{h_2 C_{\beta}}{2} )^{J/2} }{C_{\beta}} \bigg(  \frac{\delta}{2} +  2L C_{\beta}^{-1} \, \mathrm{diam}(S_2)\bigg) < \delta/2
    \end{align*}
    where in the last step we used the bound $\sqrt{\tau_k} \le  \frac{2 L ( 1 - \frac{h_2 C_{\beta}}{2} )^{J/2} }{C_{\beta}} <1 $ for any large $J$ (see Remark \ref{taukbounremark}). Hence, by induction $ \{x_k\}_k$ stays bounded in $\{  \mathcal{S}^*(y_k)+\mathcal{B}_{\delta/2}( 0)\}_k$. Note that the local minima connected components in the sequence $\{ \mathcal{S}^*(y_k)\}_{k} $ are at most $\delta/4$ separated. 
    \\ \\
   {\textbf{Convergence analysis for $\beta = 2$ :}} \\
   Suppose $h_1\ll 1$ with $\frac{2 \, L^{3}h_1 }{(1-\tfrac{Lh_2}{2}) C_{\beta}} < 1 $ and $J$ satisfies the following bound:
\begin{align*}
   \frac{2 L ( 1 - \frac{h_2 C_{\beta}}{2} )^{J/2} }{C_{\beta}} \le \frac{2L}{C_{\beta}} \cdot \frac{2 L ( 1 - \frac{h_2 C_{\beta}}{2} )^{J/2} }{C_{\beta}}  &<\frac{2 \, L^{3}h_1 }{(1-\tfrac{Lh_2}{2}) C_{\beta}} \ll 1 \, .
\end{align*}
     Writing inequalities from \textbf{I1, I2} in matrix form and substituting {the bound for $h_1 \ll 1$ : $$\eta +  \frac{2 \, L^{3}  D_{\beta} h_1 }{(1-\tfrac{Lh_2}{2}) C_{\beta}}\le 1-h_1 \bigg(\mu - \frac{2 \, L^{3}  D_{\beta} }{(1-\tfrac{Lh_2}{2}) C_{\beta}} \bigg)+\frac{3}{4}\Theta L^2 h_1^2 + \mathcal{O}(h_1^3) < 1-  \frac{h_1}{2}\bigg(\mu - \frac{2 \, L^{3}  D_{\beta} }{(1-\tfrac{Lh_2}{2}) C_{\beta}} \bigg) < 1 $$ gives the following system:}  
\begin{align}
   \Bigg[\begin{array}{c}
          \mathrm{dist}(y_{k+1},y_k) \\
           \mathrm{dist}(x_{k}, \mathcal{S}^*(y_{k}))
    \end{array} \Bigg]   & \le  \Bigg[ \begin{array}{cc}
1-  \frac{h_1}{2}\bigg(\mu - \frac{2 \, L^{3}  D_{\beta} }{(1-\tfrac{Lh_2}{2}) C_{\beta}} \bigg)  &    \frac{2 \, L^{3}h_1 }{(1-\tfrac{Lh_2}{2}) C_{\beta}} \\
       \frac{4 L^2 ( 1 - \frac{h_2 C_{\beta}}{2})^{J/2} }{C^2_{\beta}} & \frac{2 L ( 1   - \frac{h_2 C_{\beta}}{2})^{J/2} }{C_{\beta}}
    \end{array} \Bigg]    \Bigg[\begin{array}{c}
         \mathrm{dist}(y_{k},y_{k-1}) \\
           \mathrm{dist}(x_{k-1}, \mathcal{S}^*(y_{k-1}))
    \end{array} \Bigg] 
    \end{align}
    \begin{align}
    & \le \Bigg[ \begin{array}{cc}
1-  \frac{h_1}{2}\bigg(\mu - \frac{2 \, L^{3}  D_{\beta} }{(1-\tfrac{Lh_2}{2}) C_{\beta}} \bigg)  &    \frac{2 \, L^{3}h_1 }{(1-\tfrac{Lh_2}{2}) C_{\beta}} \\
     \frac{2 \, L^{3}h_1 }{(1-\tfrac{Lh_2}{2}) C_{\beta}}  & \frac{2 \, L^{3}h_1 }{(1-\tfrac{Lh_2}{2}) C_{\beta}} 
    \end{array} \Bigg]    \Bigg[\begin{array}{c}
         \mathrm{dist}(y_{k},y_{k-1}) \\
           \mathrm{dist}(x_{k-1}, \mathcal{S}^*(y_{k-1}))
    \end{array} \Bigg]  \\
     & = \Bigg ( \underbrace{\Bigg[ \begin{array}{cc}
        1 &   0 \\
         0 &  0  
    \end{array} \Bigg] }_{X} + \, h_1 \underbrace{\Bigg[ \begin{array}{cc}
       -  \frac{1}{2}\bigg(\mu - \frac{2 \, L^{3}  D_{\beta} }{(1-\tfrac{Lh_2}{2}) C_{\beta}} \bigg)     &   \frac{2 \, L^{3} }{(1-\tfrac{Lh_2}{2}) C_{\beta}}    \\
           \frac{2 \, L^{3}}{(1-\tfrac{Lh_2}{2}) C_{\beta}}  \, &   \frac{2 \, L^{3} }{(1-\tfrac{Lh_2}{2}) C_{\beta}} 
    \end{array} \Bigg] }_{ E} \Bigg )  \Bigg[\begin{array}{c}
         \mathrm{dist}(y_{k},y_{k-1}) \\
           \mathrm{dist}(x_{k-1}, \mathcal{S}^*(y_{k-1}))
    \end{array} \Bigg] 
\end{align}
Observe that by using Theorem \ref{theomatperturb} on $X+ h_1 E $ above, for any $\epsilon>0$ there exists $\delta' >0$ such that the largest absolute eigenvalue $\lambda$ of the matrix $X+ h_1 E$ in the above equation, corresponding to the diagonal entry $1$ of the matrix $X$, satisfies the bound:
\[  \bigg| \lambda  -  \bigg(1 - h_1   \frac{ \langle \e_1, E \e_1 \rangle }{\langle \e_1, \e_1 \rangle }   \bigg) \bigg|  = \bigg| \lambda  -  \bigg(1 - \frac{h_1}{2}\bigg(\mu - \frac{2 \, L^{3}  D_{\beta} }{(1-\tfrac{Lh_2}{2}) C_{\beta}} \bigg)   \bigg) \bigg| \le h_1 \, \epsilon \]
where $\e_1 = [1,0]^T$ and $ h_1 < \delta'$. Then choosing $ \epsilon :=\frac{1}{4}\bigg(\mu - \frac{2 \, L^{3}  D_{\beta} }{(1-\tfrac{Lh_2}{2}) C_{\beta}} \bigg)   $ there exists $\delta' >0$ sufficiently small such that if $h_1 < \delta' $ then $|\lambda | \le \bigg(1 - \frac{h_1}{4}\bigg(\mu - \frac{2 \, L^{3}  D_{\beta} }{(1-\tfrac{Lh_2}{2}) C_{\beta}} \bigg)   \bigg) < 1 $. Since the spectral radius of the matrix $X+{ h_1} E$ given by $ \rho(X+{h_1} E) \le \bigg(1 - \frac{h_1}{4}\bigg(\mu - \frac{2 \, L^{3}  D_{\beta} }{(1-\tfrac{Lh_2}{2}) C_{\beta}} \bigg)    \bigg) $ is strictly less than $1$, we get that:
\begin{align}
  \opnorm{\Bigg[\begin{array}{c}
          \mathrm{dist}(y_{K+1},y_K) \\
           \mathrm{dist}(x_{K}, \mathcal{S}^*(y_{K}))
    \end{array} \Bigg]  }  & \le  \rho^K(X+{h_1} E) \opnorm{\Bigg[\begin{array}{c}
         \mathrm{dist}(y_{1},y_{0}) \\
           \mathrm{dist}(x_{0}, \mathcal{S}^*(y_{0}))
    \end{array} \Bigg]} \\ 
    & \le \bigg(1 - \frac{h_1}{4}\bigg(\mu - \frac{2 \, L^{3}  D_{\beta} }{(1-\tfrac{Lh_2}{2}) C_{\beta}} \bigg)    \bigg)^K \opnorm{\Bigg[\begin{array}{c}
         \mathrm{dist}(y_{1},y_{0}) \\
           \mathrm{dist}(x_{0}, \mathcal{S}^*(y_{0}))
    \end{array} \Bigg]}  \xrightarrow{ K \to \infty} 0  \label{rateiteratePL1*}
\end{align}
where $\opnorm{\cdot}$ is some subordinate norm induced by diagonalizing the matrix $X+{h_1} E $. Thus, 
\begin{align}
    \mathrm{dist}(x_{K}, \mathcal{S}^*(y_{K})) \le \bigg(1 - \frac{h_1}{4}\bigg(\mu - \frac{2 \, L^{3}  D_{\beta} }{(1-\tfrac{Lh_2}{2}) C_{\beta}} \bigg)    \bigg)^K C_{L, \mu,  C_{\beta}} \,\,  \mathrm{dist}(x_{0}, \mathcal{S}^*(y_{0})) \label{rateiteratePL1}
\end{align}
 for all $K >0$ and some constant $ C_{L, \mu,  C_{\beta}} >0$. Then from Lemma \ref{RGA-MGD1-lem4z} and \eqref{rateiteratePL1} we immediately have that $ \norm{x_{K}- x_{K-1}} \to 0$ as $K \to \infty$ at a linear rate, i.e., 
 $$ \norm{x_{K}- x_{K-1}} = \mathcal{O}\bigg( \bigg(1 - \frac{h_1}{4}\bigg(\mu - \frac{2 \, L^{3}  D_{\beta} }{(1-\tfrac{Lh_2}{2}) C_{\beta}} \bigg)    \bigg)^K\bigg)  $$
 and by Cauchy criteria of convergence $ \{x_k\}_k$ converges. Similarly $ \{y_k\}_k$ converges by \eqref{rateiteratePL1*} and Cauchy criteria of convergence.

Next, from Lemmas \ref{RGA-MGD1-lem3}, \ref{RGA-MGD1-lem4z} we have that:
\begin{align}
        \mathrm{dist}(y_{k+1},y^*(x_{k+1}))  &\le  \wt\gamma \, \mathrm{dist}(y_{k},y^*(x_k))   + \frac{ L}{\mu} \norm{x_{k+1} - x_k} \\
         & \le \wt\gamma \, \mathrm{dist}(y_{k},y^*(x_k))   + \frac{ L}{\mu} \, \bigg( \frac{2 \, L }{(1-\tfrac{Lh_2}{2}) C_{\beta}} \bigg( \mathrm{dist}(x_{k-1}, \mathcal{S}^*(y_{k-1})) \nonumber \\ & +  D_{\beta} \, \mathrm{dist}(y_{k}, y_{k-1}) \bigg)\bigg) \label{rateiteratePL2}
    \end{align}
    where $$ \wt\gamma = \sqrt{(1 -2 \mu h_1 + C_{\upsilon} L^2h_1^2)} <1 $$ for any sufficiently small $h_1$, $  C_{\upsilon} =\frac{\sqrt{|\upsilon|} \, \mathrm{diam}(S_2) }{\tanh (\sqrt{|\upsilon|} \, \mathrm{diam}(S_2))} $ and $\upsilon$ is the lower bound of the sectional curvature of $\mathcal{M}$. Then using Lemma \ref{RGA-MGD1-lem4zz} for \eqref{rateiteratePL1}, \eqref{rateiteratePL2} immediately gives the convergence:
    \begin{align}
        \mathrm{dist}(y_{K},y^*(x_K)) \le \alpha^K D_0 
    \end{align}
    for some constant $D_0$ and $\alpha = \max \bigg\{ \bigg(1 - \frac{h_1}{4}\bigg(\mu - \frac{2 \, L^{3}  D_{\beta} }{(1-\tfrac{Lh_2}{2}) C_{\beta}} \bigg)    \bigg), \sqrt{(1 -2 \mu h_1 + C_{\upsilon} L^2h_1^2)} \bigg\} < 1$. Then from \eqref{lyapunov-RGA-MGD} :
    \begin{align}
        V_{\delta}(x_K, y_K) &= \bigg( \sup_{y \in x_K \times S_2} f(x_K,y) -  f(x_K,y_K) \bigg) + \bigg(f(x_K,y_K) - \inf_{x \in (\mathcal{S}^*(y_K) + \mathcal{B}_{\delta}(0)) \times y_K} f(x,y_K)\bigg) \\
         & \underbrace{\le}_{\textbf{C4}}  L  \, \mathrm{dist}(y_{K},y^*(x_K)) + L \,  \mathrm{dist}(x_{K}, \mathcal{S}^*(y_{K})) \\
         & \le L\bigg(\max \bigg\{ \bigg(1 - \frac{h_1}{4}\bigg(\mu - \frac{2 \, L^{3}  D_{\beta} }{(1-\tfrac{Lh_2}{2}) C_{\beta}} \bigg)    \bigg), \sqrt{(1 -2 \mu h_1 + C_{\upsilon} L^2h_1^2)} \bigg\} \bigg)^K D_1 \label{VdeltaPLconvergencerate}
    \end{align}
    where $D_1 = \bigg( C_{L, \mu,  C_{\beta}} \,\,  \mathrm{dist}(x_{0}, \mathcal{S}^*(y_{0})) \, + \, D_0 \bigg)$. Next, $V_{\delta}(x_K, y_K) \ge 0 $ for all $K$ by iterate boundedness in $ ( \mathcal{S}^*(y_K)+\mathcal{B}_{\delta/2}( 0)) \times S_2 \subseteq  ( \mathcal{S}^*(y_0)+\mathcal{B}_{3\delta/4}( 0)) \times S_2  $ as a consequence of \textbf{C3}, $ \lim_{K\to \infty} V_{\delta}(x_K, y_K) = 0 $ from \eqref{VdeltaPLconvergencerate} and thus from {joint uniform continuity of $V_{\delta}(\cdot,\cdot)$ on $S_1 \times S_2$, the Cauchy convergence of the sequence $\{(x_k, y_k)\}_k$ from \eqref{RGA-MGD1} - \eqref{RGA-MGD2},} the iteration $\{(x_k, y_k)\}_k$ from \eqref{RGA-MGD1} - \eqref{RGA-MGD2} converges to some $(x^*,y^*) \in \text{int}((\mathcal{S}^*(y_0)+\mathcal{B}_{3\delta/4}( 0)) \times S_2) $ with $ V_{\delta}(x^*, y^*) = 0 $. \\
    {From \textbf{C3} there exists a connected component of local minima $\mathcal{S}^*(y^*) $ such that $ \mathrm{dist}_H(\mathcal{S}^*(y_0),\mathcal{S}^*(y^*)) < \delta/4$ and so $(x^*,y^*) \in \text{int}((\mathcal{S}^*(y^*)+\mathcal{B}_{\delta}( 0)) \times S_2) $. }
    \\ From $ V_{\delta}(x^*, y^*) = 0 $, $(x^*,y^*) \in \text{int}((\mathcal{S}^*(y^*)+\mathcal{B}_{\delta}( 0)) \times S_2) $ and Lemma \ref{RGA-MGD1-lem1}, $ (x^*,y^*) \in \text{int}(S_1 \times S_2)$ is a basin saddle point of $f$ and hence the sequence $\{(x_k, y_k)\}_k$ from \eqref{RGA-MGD1} - \eqref{RGA-MGD2} converges linearly to a basin saddle point of $f$ with a rate $\mathcal{O}( a^k)$ and the constant $a \in (0,1)$ is defined from \eqref{VdeltaPLconvergencerate}.
\end{proof}

\subsection{Proof of Theorem \ref{convergenceratethm_betaPL} }
\begin{proof} 
     { \textbf{Iterate boundedness for $\beta \in (1,2)$ :}} \\ 
      Similar to Theorem \ref{convergenceratethm_exactPL} the sequence $\{y_k\}$ always stays bounded in $\{x_k\} \times S_2$ by simple induction and contractiveness of the map $\exp_{\cdot}(h_1 \nabla_y f(x, \cdot) ) $ from Lemma \ref{contractionlema1} for any $x$. Hence $ \mathrm{dist}(y_{k-1},y_k) \le \mathrm{diam}(S_2)$ for all $k$. Without loss of generality or by shrinking $\delta$ assume 
    $$ \frac{\delta}{2} <  \frac{C^{1/\beta}_{\beta}}{L} -   h_1^{\beta - 1}\left(\frac{\beta}{\beta-1} \right) C_{\beta}^{-1} L^{2\beta - 2}\, \mathrm{diam}^{\beta - 1}(S_2) $$ with $h_1$ sufficiently small such that
    $ h_1^{\beta - 1}\left(\frac{\beta}{\beta-1} \right) C_{\beta}^{-1} L^{2\beta - 2} \, \mathrm{diam}^{\beta - 1}(S_2) < \frac{C^{1/\beta}_{\beta}}{L} $ . Suppose we have the parameter $J:= J(k) = k^{\frac{2\alpha}{(\beta-1)^2}} $ for some constant $\alpha >   0$ and $J(k)$ satisfies the following uniform lower bound for all $k \ge K_0$ and some large enough $K_0 \gg 1$ :
    \begin{align*}
        \bigg(   1+ J(k) \,C'h_2  \bigg)^{\frac{\beta-1}{2}} & \ge  \bigg(   1+ J(K_0) \,C'h_2  \bigg)^{\frac{\beta-1}{2}} \nonumber \\ & > \frac{2\Bigg(C_{L,\beta} \, \delta^{\frac{\beta (\beta -1) }{2}} +   C_{L,\beta} \bigg(  L^{\beta -1} \, \left(\frac{\beta}{\beta-1} \right) C_{\beta}^{-1} \, \bigg(\mathrm{diam}(S_2) \bigg)^{\beta - 1} \bigg)^{\frac{\beta (\beta -1) }{2}} \Bigg)}{\delta}  \, .
    \end{align*}
    The above condition is not vacuous because the L.H.S. in the above inequality is of the order $(J(K_0))^{\frac{\beta-1}{2}} \sim K_0^{\frac{\alpha}{\beta -1}}$ while the R.H.S. is bounded above by some constant.
   Assume for some $k > K_0$ we have $\mathrm{dist}(x_{k}, \mathcal{S}^*(y_k)) < \delta/2$, then from \textbf{I2'.} we get that for $k+1$ :
    \begin{align}
     \mathrm{dist}(x_{k+1}, \mathcal{S}^*(y_{k+1}))
        &  \le C_{L,\beta}  \frac{\bigg( \mathrm{dist}(x_{k}, \mathcal{S}^*(y_k)) \bigg)^{\frac{\beta (\beta -1) }{2}}}{\bigg(   1+ J(k+1) \,C'h_2  \bigg)^{\frac{\beta-1}{2}}}    + C_{L,\beta}  \frac{\bigg(  L^{\beta -1} \, \left(\frac{\beta}{\beta-1} \right) C_{\beta}^{-1} \, \mathrm{dist}^{\beta - 1}(y_{k-1},y_k) \bigg)^{\frac{\beta (\beta -1) }{2}}}{\bigg(   1+ J (k+1)\,C'h_2  \bigg)^{\frac{\beta-1}{2}}}  \\
        & \hspace{-2cm} <    \frac{ C_{L,\beta}\delta^{\frac{\beta (\beta -1) }{2}}}{\bigg(   1+ J(k+1) \,C'h_2  \bigg)^{\frac{\beta-1}{2}}}   \nonumber  +   \frac{  C_{L,\beta} \bigg(  L^{\beta -1} \, \left(\frac{\beta}{\beta-1} \right) C_{\beta}^{-1} \, \bigg(\mathrm{diam}(S_2) \bigg)^{\beta - 1} \bigg)^{\frac{\beta (\beta -1) }{2}}  }{\bigg(   1+ J (k+1)\,C'h_2  \bigg)^{\frac{\beta-1}{2}}} < \frac{\delta}{2} \\ \label{unifestimatelem0}
    \end{align}
  where the last bound holds for $ J(k+1) \gg 1$ whenever $ k \ge K_0 \gg 1$. Since $\mathrm{dist}(x_{K_0}, \mathcal{S}^*(y_{K_0})) < \delta/2$, \eqref{unifestimatelem0} completes the induction that $ \{x_k\}_{k \ge K_0}$ stays bounded in $\{  \mathcal{S}^*(y_k)+\mathcal{B}_{\delta/2}( 0)\}_{k \ge K_0}$. Note that the local minima connected components in the sequence $\{ \mathcal{S}^*(y_k)\}_{k \ge K_0} $ are at most $\delta/4$ separated.\\ \\
     {\textbf{Convergence analysis for $\beta \in (1, 2) $ :}} \\
     We use the inequalities \textbf{I1', I2'} for constant $h_2 < \frac{1}{L}$, $\eta = 1-h_1\mu+\frac{3}{4}\Theta L^2 h_1^2 + \mathcal{O}(h_1^3) < 1   $ for $ h_1 $ sufficiently small. 
Suppose for any large enough $k$ we uniformly have $\mathrm{dist}_H(\mathcal{S}^*(y_{k+1}), \mathcal{S}^*(y_k))  = \mathcal{O}(\mathrm{dist}(x_{k}, \mathcal{S}^*(y_{k})))  $ where $\mathcal{O}$ is the Big-O notation meaning $\mathrm{dist}_H(\mathcal{S}^*(y_{k-1}), \mathcal{S}^*(y_k))$ decays faster or equal to $\mathrm{dist}(x_{k}, \mathcal{S}^*(y_{k}))$. Then :
\begin{align}
   \mathrm{dist}_H(\mathcal{S}^*(y_{k-1}), \mathcal{S}^*(y_k)) & 
  \le C_2 \, \mathrm{dist}(x_{k-1},\mathcal{S}^*(y_{k-1}))
\end{align}
    for some constant $C_2>0$ independent of $k$. Using this bound in \textbf{I1'} yields:
       \begin{align}
     \mathrm{dist}(y_{k+1},y_k)
           & \le  \, \eta \, \,\mathrm{dist}(y_{k-1},y_k) +     \frac{\beta \, L^{\beta+1}  (1 + C_2^{\beta -1}) h_1 }{(1-\tfrac{Lh_2}{2})(\beta-1) C_{\beta}}  \mathrm{dist}^{\beta-1}(x_{k-1}, \mathcal{S}^*(y_{k-1})) 
         \label{beta12ratebound1}
    \end{align}
    and raising \textbf{I2'} to the power $\beta -1$ both sides followed by sub-additivity of concave function $t \mapsto t^{\beta -1}$ yields:
            \begin{align}
        \mathrm{dist}^{(\beta -1)}(x_{k}, \mathcal{S}^*(y_{k}))
        & \le  C^{(\beta -1)}_{L,\beta}  \frac{\bigg( \mathrm{dist}^{(\beta -1)}(x_{k-1}, \mathcal{S}^*(y_{k-1})) \bigg)^{\frac{\beta (\beta -1) }{2}}}{\bigg(   1+ J(k) \,C'h_2  \bigg)^{\frac{(\beta-1)^2}{2}}} \nonumber  \\ & + C^{(\beta -1)}_{L,\beta}  \frac{\bigg(  L^{\beta -1} \, \left(\frac{\beta}{\beta-1} \right) C_{\beta}^{-1} \, \mathrm{dist}^{\beta - 1}(y_{k-1},y_k) \bigg)^{\frac{\beta (\beta -1)^2 }{2}}}{\bigg(   1+ J(k) \,C'h_2  \bigg)^{\frac{(\beta-1)^2}{2}}} \label{beta12ratebound2}
    \end{align}
    Define $a_k := \mathrm{dist}^{(\beta -1)}(x_{k}, \mathcal{S}^*(y_{k}))$, $b_k := \mathrm{dist}(y_{k+1},y_k) $, $\theta = \frac{\beta (\beta -1)}{2} \in (0,1) $, a Lyapunov function $$F_k := a_k   + \lambda b_k $$ for some constant $\lambda >0$ specified later. Then $ a_k \le F_k$, $ b_k \le \frac{F_k}{\lambda} $ for all $k$. Let $J := J(k) = k^{\frac{2\alpha}{(\beta -1)^2}} $ for some constant $\alpha > 0$. From \eqref{beta12ratebound1}, \eqref{beta12ratebound2} we have then for some large enough constant $C>0$ and any large enough $k$ :
    \begin{align}
   \lambda b_k & \le \eta \,\lambda b_{k-1} + C \lambda h_1 a_{k-1}  \le \eta F_{k-1} + C \lambda h_1 F_{k-1}   \\
        a_k & \le \frac{C }{  k^{\alpha} } \bigg( a^{\theta}_{k-1} + b^{(\beta -1 )^2\theta}_{k-1} \bigg) \le \frac{C }{ k^{\alpha}} \bigg( F^{\theta}_{k-1} +   \bigg(\frac{F_{k-1}}{\lambda}\bigg)^{(\beta -1 )^2\theta} \bigg)
    \end{align}
    Adding the last 2 bounds gives the estimate:
    \begin{align}
        F_{k} = a_{k}   + \lambda b_{k}  & \le \eta F_{k-1} + C \lambda h_1 F_{k-1} + \frac{C }{ k^{\alpha}} \bigg( F^{\theta}_{k-1} +   \bigg(\frac{F_{k-1}}{\lambda}\bigg)^{(\beta -1 )^2\theta} \bigg) \\
    \implies  F_{k}  & \le (\eta  + C \lambda h_1 ) F_{k-1} + \frac{C (1+ \lambda)}{k^{\alpha}} F^{\theta}_{k-1} + \frac{C}{\lambda^{(\beta -1 )^2\theta}  k^{\alpha}}    F_{k-1}^{(\beta -1 )^2\theta} 
    \end{align}
    where we choose $\lambda = \frac{\mu}{2C} - \frac{3}{8C}\Theta L^2 h_1+ \mathcal{O}(h^2_1)  $ so that $$ (\eta + C \lambda h_1 )  \le  1- \mu h_1 +\frac{3}{4}\Theta L^2 h_1^2 + \mathcal{O}(h^3_1) + \frac{\mu}{2} h_1 - \frac{3}{8}\Theta L^2 h_1^2 = \frac{1+\eta}{2} $$ and thus we have:
    \begin{align}
       F_{k}  & \le \frac{1+\eta}{2}  F_{k-1} + \frac{C+\mu}{k^{\alpha}} F^{\theta}_{k-1} + \frac{C}{\lambda^{(\beta -1 )^2\theta}  k^{\alpha}}    F_{k-1}^{(\beta -1 )^2\theta} \\ 
       & \le \frac{1+\eta}{2}  F_{k-1}  + \frac{D}{  k^{\alpha}}    F_{k-1}^{(\beta -1 )^2\theta} \label{beta12ratebound3}
    \end{align}
    for some constant $D>0$ where $ \frac{1+\eta}{2} < 1$ and we assumed $F_{k-1} < 1$ in the last step without loss of generality and recalled $\beta - 1 \in (0,1)$. 
    \paragraph{Rate exponent estimation via ODE approximation:} Using ODE approximation of Euler discretization with $\frac{d F}{dt} \approx F_{k}  - F_{k-1} $, $F_k := F(t_k) \approx F(t)$, $k := t_k \approx t$ in the last bound, invoking $ h_1 \ll 1$ and dropping higher order terms of $h_1$ gives:
    \begin{align}
        \frac{d F}{dt} \le -\frac{1-\eta}{2}  F  + \frac{D}{  t^{\alpha}}    F^{(\beta -1 )^2\theta} \le -\frac{\mu h_1}{2}  F  + \frac{D}{  t^{\alpha}}    F^{(\beta -1 )^2\theta}   < 0 \quad \textbf{locally for large } t
    \end{align}
    since $ \alpha > 0 $ and $\beta \in (1,2)$. Thus $F$ decreases with large $t$ so $F_k$ must decrease for large $k$. From a Gronwall's type inequality the above ODE's solution satisfies $F \le H $ for all $t$ where $ H := H(t)$ is the solution of the ODE:
     \begin{align}
        \frac{d H}{dt}  = -\frac{\mu h_1}{2}  H  + \frac{D}{  t^{\alpha}}    H^{p}  
    \end{align}
    where $p = {(\beta -1 )^2\theta} \in (0,1)$ and thus the above ODE is Bernoulli type equation. Then letting $u = H^{1-p}$ so that $  \frac{d u}{dt}  = (1-p) H^{-p} \frac{d H}{dt}  $ we get that:
    \begin{align}
       H^{-p} \frac{d H}{dt}  &=  -\frac{\mu h_1}{2} H^{1-p} + \frac{D}{  t^{\alpha}}    \\
  \iff     \frac{d u}{dt} &=  -\frac{\mu h_1 (1-p)}{2}  u + \frac{D (1-p)}{  t^{\alpha}} 
    \end{align}
    which is linear first order ODE with integrating factor $q (t)= e^{\int  \frac{\mu h_1 (1-p)}{2} dt } = e^{ \frac{\mu h_1 (1-p)t}{2}} $ and the ODE solution is:
    \begin{align}
        u(t)  &=  e^{ -\frac{\mu h_1 (1-p)t}{2}} \int \frac{D}{  t^{\alpha}}  e^{ \frac{\mu h_1 (1-p)t}{2}} dt  \sim \frac{1}{  t^{\alpha}} \quad \textbf{ for large } t \textbf{ and $ \alpha >0$} \\
        \implies F(t) \le H(t) &= u^{\frac{1}{1-p}}(t) \sim \frac{1}{  t^{\frac{\alpha }{1-p}}} = t^{- \frac{\alpha}{1-(\beta -1 )^2\theta}} \quad \textbf{ for large } t \textbf{ and $ \alpha >0$}
    \end{align}
    where in the first step we used the asymptotics $  \int f(t) e^{g(t)} dt \sim \frac{f(t)}{g'(t)}e^{g(t)}  $ for large $t$, $f(t) > 0$ and increasing $ g(t) \geq 0$ and $ (\frac{f}{g'})' \to 0 $ as $t \to \infty$. From the last step, and for $J(k) = k^{\frac{2\alpha}{(\beta -1)^2}}$ we get from the discrete nonlinear recursion \eqref{beta12ratebound3} that $F_k \sim F(t)$ for large enough $k,t$ and hence:
    \begin{align}
        F_k \sim k^{- \frac{\alpha}{1-(\beta -1 )^2\theta}} \, .
    \end{align}
    Remarkably the same rate is obtained from the discrete nonlinear recursion \eqref{beta12ratebound3} if one guesses a polynomial decay rate. Suppose $F_k \sim k^{-c}$ for some $c>0$ and large enough $k$. Then equating the rate order from both sides of \eqref{beta12ratebound3} gives:
    \begin{align}
        k^{-c} &= k^{-c(\beta -1 )^2\theta - \alpha } \\
        \implies c & = \frac{\alpha}{1- (\beta -1)^2\theta} \\
        \implies  F_k &\sim k^{- \frac{\alpha}{1-(\beta -1 )^2\theta}} \, .
    \end{align}
    \paragraph{Formal derivation of convergence rate:}
    We now derive the polynomial convergence rate of $F_k \lesssim k^{- \frac{\alpha}{1-(\beta -1 )^2\theta}} $ obtained from the ODE discretization analysis using an induction argument. \\
     \textbf{Base case:} Let $F_{K_0} \le {C_0}{ {K_0}^{ -\frac{\alpha}{1-(\beta -1 )^2\theta}}}$ for some large enough constant $C_0 \ge 1$. This condition holds since for all $k \ge 0$ uniformly we have $F_k = a_k + \lambda b_k \le \delta^{\beta-1} + \lambda \mathrm{diam}(S_2) $ and hence one can choose $C_0$ large enough for any given $K_0$ such that $ {C_0}{ {K_0}^{ -\frac{\alpha}{1-(\beta -1 )^2\theta}}} \ge \delta^{\beta-1} + \lambda \mathrm{diam}(S_2) $. We also require that constants $C_0, K_0$ satisfy the condition:
     $$ \bigg( 1-\frac{\mu h_1 }{3}\bigg)  {C_0}{ \bigg(\frac{K_0}{K_0+1}\bigg)^{ -\frac{\alpha}{1-(\beta -1 )^2\theta}}} + {D}   \bigg({C_0}{ { \bigg(\frac{K_0}{K_0+1}\bigg)^{ -\frac{\alpha}{1-(\beta -1 )^2\theta}}} }\bigg)^{(\beta -1 )^2\theta} < C_0 \, ,$$
     where the constant $D$ comes from \eqref{beta12ratebound3}. This condition is non-vacuous and can be satisfied for large enough constants $C_0, K_0$ . For instance suppose $K_0$ is large enough so that $$ \bigg( 1-\frac{\mu h_1 }{3}\bigg) { \bigg(1+ \frac{1}{K_0}\bigg)^{ \frac{\alpha}{1-(\beta -1 )^2\theta}}} <  \bigg( 1-\frac{\mu h_1 }{4}\bigg) $$ and $ C_0$ satisfies 
     $ {{ { 2^{ \frac{\alpha(\beta -1 )^2\theta}{1-(\beta -1 )^2\theta}}} } D} < \frac{\mu h_1}{8} {C_0^{1-(\beta -1 )^2\theta} } $, $  {C_0} \ge { {K_0}^{ \frac{\alpha}{1-(\beta -1 )^2\theta}}}\bigg(\delta^{\beta-1} + \lambda \mathrm{diam}(S_2) \bigg) \, .$ Then
     \begin{align*}
         \bigg( 1-\frac{\mu h_1 }{3}\bigg)  {C_0}{ \bigg(\frac{K_0}{K_0+1}\bigg)^{ -\frac{\alpha}{1-(\beta -1 )^2\theta}}} + {D}   \bigg({C_0}{ { \bigg(\frac{K_0}{K_0+1}\bigg)^{ -\frac{\alpha}{1-(\beta -1 )^2\theta}}} }\bigg)^{(\beta -1 )^2\theta} & \nonumber \\  & \hspace{-8cm}=   \bigg( 1-\frac{\mu h_1 }{3}\bigg)  { \bigg(1+ \frac{1}{K_0}\bigg)^{ \frac{\alpha}{1-(\beta -1 )^2\theta}}} {C_0}+ \frac{{ { \bigg(1+ \frac{1}{K_0}\bigg)^{ \frac{\alpha(\beta -1 )^2\theta}{1-(\beta -1 )^2\theta}}} } D}{C_0^{1-(\beta -1 )^2\theta} }     C_0 
         \end{align*}
         \begin{align*}
         & \le   \bigg( 1-\frac{\mu h_1 }{3}\bigg)  { \bigg(1+ \frac{1}{K_0}\bigg)^{ \frac{\alpha}{1-(\beta -1 )^2\theta}}} {C_0}+ \frac{{ { 2^{ \frac{\alpha(\beta -1 )^2\theta}{1-(\beta -1 )^2\theta}}} } D}{C_0^{1-(\beta -1 )^2\theta} }     C_0 \\
         &  \le  \bigg(  1-\frac{\mu h_1 }{4} + \frac{\mu h_1 }{8} \bigg)C_0 = \bigg(1-\frac{\mu h_1 }{8}\bigg) C_0 < C_0 .
     \end{align*}
     \\ \\
     \textbf{Induction hypothesis:} Let $F_k \le {C_0}{ k^{ -\frac{\alpha}{1-(\beta -1 )^2\theta}}}$ for some $k \ge K_0$. \\ \\
     \textbf{Inductive step:} Since $ \mathrm{dist}(\mathcal{S}^*(y_{k}), \mathcal{S}^*(y_{k+1}))  \le C_2 \mathrm{dist}(x_{k}, \mathcal{S}^*(y_{k})) $, from \eqref{beta12ratebound3} for $h_1$ small enough so that $ \frac{1+\eta}{2} \le 1-\frac{\mu h_1 }{3}$ we have that:
     \begin{align}
       F_{k+1}   & \le \frac{1+\eta}{2}  F_{k}  + \frac{D}{  (k+1)^{\alpha}}    F_{k}^{(\beta -1 )^2\theta} \\
       & \hspace{-2cm} \le \bigg( 1-\frac{\mu h_1 }{3}\bigg)  {C_0}{ \bigg(\frac{k}{k+1}\bigg)^{ -\frac{\alpha}{1-(\beta -1 )^2\theta}}} { (k+1)^{ -\frac{\alpha}{1-(\beta -1 )^2\theta}}} + \frac{D}{  (k+1)^{\alpha}}    \bigg({C_0}{ { \bigg(\frac{k}{k+1}\bigg)^{ -\frac{\alpha}{1-(\beta -1 )^2\theta}}} (k+1)^{ -\frac{\alpha}{1-(\beta -1 )^2\theta}}}\bigg)^{(\beta -1 )^2\theta} \\
       & \hspace{-1cm} = \bigg(\bigg( 1-\frac{\mu h_1 }{3}\bigg)  {C_0}{ \bigg(\frac{k}{k+1}\bigg)^{ -\frac{\alpha}{1-(\beta -1 )^2\theta}}} + {D}   \bigg({C_0}{ { \bigg(\frac{k}{k+1}\bigg)^{ -\frac{\alpha}{1-(\beta -1 )^2\theta}}} }\bigg)^{(\beta -1 )^2\theta} \bigg) { (k+1)^{ -\frac{\alpha}{1-(\beta -1 )^2\theta}}} \\
       & \hspace{-1cm} \le C_0 { (k+1)^{ -\frac{\alpha}{1-(\beta -1 )^2\theta}}}
    \end{align}
    where the inequality $$ \bigg( 1-\frac{\mu h_1 }{3}\bigg)  {C_0}{ \bigg(\frac{k}{k+1}\bigg)^{ -\frac{\alpha}{1-(\beta -1 )^2\theta}}} + {D}   \bigg({C_0}{ { \bigg(\frac{k}{k+1}\bigg)^{ -\frac{\alpha}{1-(\beta -1 )^2\theta}}} }\bigg)^{(\beta -1 )^2\theta} < C_0 $$ is satisfied for any $k \ge K_0$ from the base case condition and the fact that the L.H.S. in the above inequality is a decreasing function of $k$ since $(\beta -1 )^2\theta \in (0,1) $. This completes the induction and we have $ F_k \sim k^{- \frac{\alpha}{1-(\beta -1 )^2\theta}} $ for all $k \ge K_0$. \\ \\
    Recalling that $F_k := a_k   + \lambda b_k $ where  $a_k := \mathrm{dist}^{\beta-1}(x_{k}, \mathcal{S}^*(y_{k}))$, $b_k := \mathrm{dist}(y_{k+1},y_k) $, $\theta = \frac{\beta (\beta -1)}{2} \in (0,1) $,  $\lambda >0$ is a constant, we immediately get that $\mathrm{dist}(x_{k}, \mathcal{S}^*(y_{k})) \lesssim k^{- \frac{\alpha}{(1-(\beta -1 )^2\theta)(\beta -1 )}}  $ and also $ \mathrm{dist}(y_{k+1},y_k)  \lesssim k^{- \frac{\alpha}{1-(\beta -1 )^2\theta}}  $ for all $k \ge K_0$. The constant $ \alpha$ is a free parameter that controls $J := J(k) = k^{\frac{2\alpha}{(\beta -1)^2}} $ with $\alpha  > 0$. From Lemma \ref{RGA-MGD1-lem4z} we have that for any $k \ge K_0$ :
    \begin{align*}
        \mathrm{dist}(x_{k}, x_{k-1}) & \lesssim  \mathrm{dist}^{\beta-1}(x_{k-1}, \mathcal{S}^*(y_{k-1})) +  \mathrm{dist}_H^{\beta-1}(\mathcal{S}^*(y_{k-1}), \mathcal{S}^*(y_k))  \\
        & \lesssim  \mathrm{dist}^{\beta-1}(x_{k-1}, \mathcal{S}^*(y_{k-1}))   + C_2^{\beta -1} \, \mathrm{dist}^{\beta-1}(x_{k}, \mathcal{S}^*(y_{k})) \lesssim k^{- \frac{\alpha}{1-(\beta -1 )^2\theta}}  
    \end{align*}
    and hence for $\alpha > {1-(\beta -1 )^2\theta}  $, by Cauchy convergence, the sequences $ \{x_k\}_k$, $ \{y_k\}_k$ converge.
    Next, we have that for any $k \ge K_0$ by triangle inequality:
    \begin{align*}
        \mathrm{dist}(y_{k}, y^*(x_{k})) & \le \mathrm{dist}(y_{k}, y_{k+1}) + \mathrm{dist}(y_{k+1}, y^*(x_{k+1})) + \mathrm{dist}(y^*(x_{k+1}), y^*(x_{k})) \\
        & \underbrace{\le}_{\textbf{Lemma \ref{RGA-MGD1-lem3}}} \mathrm{dist}(y_{k}, y_{k+1}) + \wt\gamma \, \mathrm{dist}(y_{k},y^*(x_k))   + \frac{ L}{\mu} \mathrm{dist}(x_{k}, x_{k+1})  + \frac{L}{\mu} \mathrm{dist}(x_{k}, x_{k+1})\\
  \implies     \mathrm{dist}(y_{k}, y^*(x_{k}))   & \le \frac{1}{1-\wt\gamma} \mathrm{dist}(y_{k}, y_{k+1})   + \frac{2L }{\mu(1- \wt\gamma)} \mathrm{dist}(x_{k}, x_{k+1}) \lesssim k^{- \frac{\alpha}{1-(\beta -1 )^2\theta}}  
    \end{align*}
    and hence for all $K \ge K_0$ from \eqref{lyapunov-RGA-MGD} we get :
    \begin{align}
        V_{\delta}(x_K, y_K) &= \bigg( \sup_{y \in x_K \times S_2} f(x_K,y) -  f(x_K,y_K) \bigg) + \bigg(f(x_K,y_K) - \inf_{x \in (\mathcal{S}^*(y_K) + \mathcal{B}_{\delta}(0)) \times y_K} f(x,y_K)\bigg) \\
         & \underbrace{\le}_{\textbf{C4}}  L  \, \mathrm{dist}(y_{K},y^*(x_K)) + L \,  \mathrm{dist}(x_{K}, \mathcal{S}^*(y_{K})) \\
         & \lesssim K^{- \frac{\alpha}{1-(\beta -1 )^2\theta}} . \label{VdeltaKLconvergencerate}
    \end{align}
    Further, $V_{\delta}(x_K, y_K) \ge 0 $ for all $K \ge K_0$ by iterate boundedness in $ ( \mathcal{S}^*(y_K)+\mathcal{B}_{\delta/2}( 0)) \times S_2 \subseteq  ( \mathcal{S}^*(y_{K_0})+\mathcal{B}_{3\delta/4}( 0)) \times S_2  $ as a consequence of \textbf{C3}, $ \lim_{K\to \infty} V_{\delta}(x_K, y_K) = 0 $ from \eqref{VdeltaKLconvergencerate} and thus from {joint uniform continuity of $V_{\delta}(\cdot,\cdot)$ on $S_1 \times S_2$, the Cauchy convergence of the sequence $\{(x_k, y_k)\}_k$ from \eqref{RGA-MGD1} - \eqref{RGA-MGD2},} the iteration $\{(x_k, y_k)\}_k$ from \eqref{RGA-MGD1} - \eqref{RGA-MGD2} converges to some $(x^*,y^*) \in \text{int}((\mathcal{S}^*(y_{K_0})+\mathcal{B}_{3\delta/4}( 0)) \times S_2) $ with $ V_{\delta}(x^*, y^*) = 0 $. \\
    {From \textbf{C3} there exists a connected component of local minima $\mathcal{S}^*(y^*) $ such that $ \mathrm{dist}_H(\mathcal{S}^*(y_{K_0}),\mathcal{S}^*(y^*)) < \delta/4$ and so $(x^*,y^*) \in \text{int}((\mathcal{S}^*(y^*)+\mathcal{B}_{\delta}( 0)) \times S_2) $. }
    \\ From $ V_{\delta}(x^*, y^*) = 0 $, $(x^*,y^*) \in \text{int}((\mathcal{S}^*(y^*)+\mathcal{B}_{\delta}( 0)) \times S_2) $ and Lemma \ref{RGA-MGD1-lem1}, $ (x^*,y^*) \in \text{int}(S_1 \times S_2)$ is a basin saddle point of $f$ and hence the sequence $\{(x_k, y_k)\}_k$ from \eqref{RGA-MGD1} - \eqref{RGA-MGD2} converges to a basin saddle point of $f$ at a rate 
    $\mathcal{O}(k^{- \frac{\alpha}{1-(\beta -1 )^2\theta}}) $.  
\end{proof}

\section{Tools from Riemannian geometry}\label{riemtools}
In this section we collect the necessary statements and results from Riemannian geometry that are needed in calculations.

\begin{theorem}(\citealp[Theorem 6.6.1]{jost})\label{jostdistthm} Let $(\cm,g)$ be a Riemannian $n$-manifold, let $p\in \cm$ and let $\exp_p:T_p\cm\to \cm$ be a diffeomoprhism on $\{v\in T_p\cm: \norm{v}_p \leq \rho\}$. Let the sectional curvature $\mathbf{K}$ of  $\cm$ in the closed ball
\[
 \bar{\mathcal{B}}_{\rho}(p) := \{ q\in \cm | d(p,q) \leq \rho\}
\]
satisfy 
\[
\lambda \leq \mathbf{K} \leq \mu, \text{ with } \lambda \leq 0, \mu \geq 0
\]
and suppose that $\rho < \frac{\pi}{2\sqrt{\mu}}$ in case $\mu >0$. Let $r(x) := d(x,p), \, k(x) :=\frac{1}{2}d^2(x,p)$. Then $k$ is smooth on $ \bar{\mathcal{B}}_{\rho}(p)$ and
\[
\nabla_pk(x) = -\exp_x^{-1}(p),
\]
and therefore $\norm{\nabla_pk(x)}_x = r(x)$. Moreover, we have 
\[
\sqrt{\mu} r(x) \cot(\sqrt{\mu}r(x)) \norm{v}_x^2 \leq \text{Hess }k(v,v) \leq \sqrt{-\lambda}r(x) \coth (\sqrt{-\lambda}r(x))\norm{v}_x^2
\]
for $x\in  \bar{\mathcal{B}}_{\rho}(p), v\in T_x M$.
\end{theorem}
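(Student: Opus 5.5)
The plan is to reduce the statement to Jacobi field comparison along radial geodesics from $p$, following the standard route.

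\emph{Step 1: smoothness and the gradient.} Since $\exp_p$ is a diffeomorphism on $\{\norm{v}_p\le\rho\}$, each $x\in\bar{\mathcal{B}}_{\rho}(p)$ is joined to $p$ by a unique minimizing geodesic $\gamma_x(t)=\exp_p(t\,\exp_p^{-1}(x)/r(x))$, $t\in[0,r(x)]$. Then
\[
k(x)=\tfrac12\norm{\exp_p^{-1}(x)}_p^2
\]
is the composition of a smooth inverse map with a smooth quadratic form, so $k$ is smooth. For the gradient I will use the first variation formula: for a curve $x(s)$ with $x(0)=x$ and $x'(0)=v$,
\[
\tfrac{d}{ds}\tfrac12 d^2(x(s),p)\big|_{s=0}=\langle r(x)\,\dot\gamma_x(r),v\rangle_x .
\]
Since $r(x)\dot\gamma_x(r)=-\exp_x^{-1}(p)$, this gives $\nabla k=-\exp_x^{-1}(p)$ and $\norm{\nabla k}_x=r(x)$.

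\emph{Step 2: splitting the Hessian.} Write $\partial_r=\dot\gamma_x(r)$. Away from $x=p$ we have $\mathrm{Hess}\,k=dr\otimes dr+r\,\mathrm{Hess}\,r$. Moreover $\mathrm{Hess}\,r(\partial_r,\cdot)=0$, because $\nabla_{\partial_r}\partial_r=0$ and $\norm{\nabla r}\equiv1$. Decompose $v=a\partial_r+w$ with $w\perp\partial_r$. Then
\[
\mathrm{Hess}\,k(v,v)=a^2+r\,\mathrm{Hess}\,r(w,w).
\]
Since $s\cot s\le1\le s\coth s$ for the relevant $s$, it suffices to prove
\[
\sqrt{\mu}\cot(\sqrt{\mu}\,r)\norm{w}^2\le\mathrm{Hess}\,r(w,w)\le\sqrt{-\lambda}\coth(\sqrt{-\lambda}\,r)\norm{w}^2 .
\]
The point $x=p$ is handled by continuity, or directly, since $\mathrm{Hess}\,k(p)=\mathrm{id}$.

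\emph{Step 3: the two comparison bounds.} Let $J$ be the normal Jacobi field along $\gamma_x$ with $J(0)=0$ and $J(r)=w$. It exists because $\gamma_x$ has no conjugate points on $[0,r]$: this follows from $\exp_p$ being a diffeomorphism, and from $r\le\rho<\pi/(2\sqrt{\mu})$ together with Rauch's theorem. Then
\[
\mathrm{Hess}\,r(w,w)=\langle D_tJ,J\rangle(r)=I(J,J),
\]
where
\[
I(V,V)=\int_0^r\bigl(\norm{D_tV}^2-\langle R(V,\dot\gamma)\dot\gamma,V\rangle\bigr)\,dt
\]
is the index form.

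For the \emph{upper bound}, I will use the index lemma: $J$ minimizes $I$ among fields with the same endpoints. Let $E$ be the parallel field with $E(r)=w$, and take the competitor $V(t)=\frac{\sinh(\sqrt{-\lambda}t)}{\sinh(\sqrt{-\lambda}r)}E(t)$. Using $\mathbf{K}\ge\lambda$, a one-line integration gives $I(V,V)\le\sqrt{-\lambda}\coth(\sqrt{-\lambda}r)\norm{w}^2$. When $\lambda=0$ the competitor is linear in $t$ and the bound reads $\norm{w}^2/r$.

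For the \emph{lower bound}, I will use a Riccati comparison; this is the step I expect to be the main obstacle, since the index lemma gives inequalities only in the other direction. Let $A(t)$ be the shape operator of the distance spheres, $A=D_t J\circ J^{-1}$ on the normal bundle. It satisfies
\[
A'+A^2+R_{\dot\gamma}=0,\qquad A(t)\sim \tfrac1t\,\mathrm{id}\ \text{as } t\to0 .
\]
Because $R_{\dot\gamma}\le\mu$, a scalar comparison with $a_\mu(t)=\sqrt{\mu}\cot(\sqrt{\mu}t)$ yields $A(t)\ge a_\mu(t)$ as long as $a_\mu$ stays finite. I will carry this out by considering $\lambda_{\min}(A)-a_\mu$ near $t=0$ and applying a maximum-principle or ODE comparison argument. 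The restriction $r<\pi/(2\sqrt{\mu})$ keeps $\cot$ finite and positive. As a fallback, I would instead invoke the Rauch comparison theorem in the form $\frac{d}{dt}\log\norm{J}\ge\frac{d}{dt}\log\sin(\sqrt{\mu}t)$ and evaluate at $t=r$.

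Combining Steps 2 and 3 and multiplying by $r$ gives the asserted two-sided bound on $\mathrm{Hess}\,k(v,v)$.
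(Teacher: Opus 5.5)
Your proposal is correct and is essentially the standard Jacobi-field comparison proof of the result the paper simply cites from Jost, since the paper gives no argument of its own. The ingredients match that proof: the splitting $\mathrm{Hess}\,k = dr\otimes dr + r\,\mathrm{Hess}\,r$, the identity $\mathrm{Hess}\,r(w,w)=\langle D_tJ(r),J(r)\rangle$, the index lemma with the $\sinh$ competitor for the upper bound, and a Rauch-type comparison for the lower bound. Of your two lower-bound options, the fallback is cleaner: the usual proof of Rauch's theorem shows $t\mapsto \|J(t)\|/\sin(\sqrt{\mu}\,t)$ is nondecreasing, and its log-derivative at $t=r$ is exactly $\langle D_tJ(r),J(r)\rangle/\|w\|^2\ge\sqrt{\mu}\cot(\sqrt{\mu}\,r)$, whereas the $\lambda_{\min}(A)$ Riccati route still needs care with the direction of the Dini derivative and the $1/t$ singularity at $t=0$.
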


\textbf{Definition.} Let $\ci_1, \ci_2\subset\R$ be intervals, and let $c:\ci_1 \to \cm$ be a geodesic. We say that a smooth map $\wt{\Gamma}: \ci_2\times \ci_1\to \cm$ is a \emph{variation through geodesics} if the curve $\wt{\Gamma}_s(t) = \wt{\Gamma}(s,t)$ is a geodesic for all $s\in \ci_2$. We define the \emph{variation field} $\partial_s\wt{\Gamma}: \ci_1\to T\cm$ as
\[
\partial_s\wt{\Gamma}(t) = \frac{\partial}{\partial s}\Big|_{s=0} \wt \Gamma(s,t).
\]

\textbf{Fact.} (\cite[Theorem 10.1]{lee2018}) Let $c:\ci_1\to \cm$ be a geodesic. If $\wt{\Gamma}:\ci_2\times \ci_1\to \cm$ is a variation of $c$ through geodesics then the variation field $\partial_s\wt{\Gamma}$ satisfies the \emph{Jacobi equation} 
\[
D_t^2J + R(J,c')c' = 0.
\]
Here $D_t$ denotes the covariant derivative along the curve $c$ and $R$ denotes the Riemann curvature tensor. A vector field $J$ along the geodesic $c$ is called a \emph{Jacobi field}.

The general existence theory of solutions to ODEs implies the following result.

\begin{proposition}(\cite[Proposition 10.2]{lee2018}) Let $(\cm,g)$ be a Riemannian manifold. Suppose $\ci\subset \R$ is an interval, $c:\ci\to \cm$ is a geodesic, $a\in \ci$, and $p=\ci(a)$. For every pair of vectors $v,w\in T_p\cm$, there is a unique Jacobi field $J$ along $c$ satisfying the initial conditions
\[
J(a) = v, \quad D_tJ(a) =w.
\]    
\end{proposition}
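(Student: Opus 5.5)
The plan is to convert the Jacobi equation into a linear system of ordinary differential equations for real-valued coefficient functions on $\ci$, and then invoke the standard existence and uniqueness theorem for linear ODEs, which yields solutions on the whole interval. To do this, choose an orthonormal basis $\{E_1(a),\dots,E_n(a)\}$ of $T_p\cm$ and extend each vector by parallel transport along $c$. This gives vector fields $E_i(t) := \cp_{c(a)\to c(t)}E_i(a)$, $t\in\ci$. Since parallel transport is a linear isometry, $\{E_i(t)\}$ is an orthonormal basis of $T_{c(t)}\cm$ for every $t$, and $D_tE_i\equiv 0$. Any smooth vector field $J$ along $c$ can then be written uniquely as $J(t)=\sum_i J^i(t)E_i(t)$ with smooth coefficients $J^i:\ci\to\R$. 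Because the frame is parallel, $D_tJ=\sum_i \dot J^i E_i$ and $D_t^2J=\sum_i \ddot J^i E_i$.

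Next, express the curvature term in this frame. By multilinearity of $R$, we have $R(J,c')c'=\sum_{i,j} J^i(t)\,R_{ij}(t)\,E_j(t)$, where
\[
R_{ij}(t):=\langle R(E_i(t),c'(t))c'(t),E_j(t)\rangle_{c(t)}
\]
are smooth functions of $t$, since $c$, $c'$, the frame and $R$ are all smooth. The Jacobi equation $D_t^2J+R(J,c')c'=0$ is therefore equivalent to the linear system
\[
\ddot J^j(t)+\sum_{i=1}^n R_{ij}(t)\,J^i(t)=0,\qquad j=1,\dots,n .
\]
The initial conditions $J(a)=v$ and $D_tJ(a)=w$ become $J^j(a)=v^j$ and $\dot J^j(a)=w^j$, where $v=\sum_j v^jE_j(a)$ and $w=\sum_j w^jE_j(a)$.

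Finally, rewrite the system as a first-order linear system in the $2n$ unknowns $(J^1,\dots,J^n,\dot J^1,\dots,\dot J^n)$ with smooth coefficient matrix on $\ci$. The linear ODE theorem gives a unique smooth solution defined on all of $\ci$, not just locally. Reassembling $J=\sum_i J^iE_i$ produces the required Jacobi field. Uniqueness follows because any Jacobi field with these initial data has coefficient functions solving the same initial value problem.

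The only point needing care is the global existence on $\ci$, as opposed to existence on a small neighborhood of $a$. It is secured by two facts. The parallel frame exists along the entire geodesic, because parallel transport is itself governed by a linear ODE. And linear systems with continuous coefficients have no finite-time blow-up, by the Gronwall bound applied to the first-order system.
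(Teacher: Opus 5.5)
Your proof is correct and matches what the paper has in mind. The paper gives no separate argument beyond noting that the result follows from general ODE existence theory, citing Lee. Lee's proof is exactly your reduction: take a parallel orthonormal frame along $c$ (reading the statement's $p=\ci(a)$ as $p=c(a)$, as you did), rewrite the Jacobi equation as a linear second-order system with smooth coefficients, and use the fact that such a system has a unique solution on all of $\ci$.
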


Let $\Omega \subset T\cm$ be the open set such that for each $p\in \cm$, $\Omega_p := \Omega\cap T_p\cm$ is the set consisting of all vectors $v\in T_p\cm$ such that there is a geodesic $c$ with 
\[
c(0) = p, c'(0) =v \quad \text{ and } c(1) \text{ is defined.}
\]
We know that the map $\exp:\Omega \to \cm$ given by $\exp(p,v) = \exp_p(v)$ is smooth.

\begin{proposition}\label{expder} Write the differential $d\exp: T_{(p,v)}\Omega\to T_{\exp(p,v)}\cm$ as\footnote{Here $\exp:\Omega\to \cm$ is the exponential map defined on an open subset of the tangent bundle $T\cm$, not to be confused with $\exp_p$ which is only defined on $T_p\cm$.}
\[
d\exp_{(p,v)} = (d_1\exp_{(p,v)}, d_2\exp_{(p,v)}). 
\]
Let $c:\ci\to \cm$ given by $c(t) = \exp_p(tv)$ be the geodesic with $c(0)= p$ and $c'(0) = v$ (where $\ci$ is some interval containing $0$). 
\begin{enumerate}[(a)]
    \item $d_1\exp_{(p,v)}(u) = J_1(1)$ where $J_1$ is a Jacobi field along $c$ such that $J_1(0) = u$ and $D_tJ_1(0) =0$.

    \item $d_2\exp_{(p,v)}(w) = J_2(1)$ where $J_2$ is a Jacobi field along $c$ such that $J_2(0) = 0$ and $D_tJ_2(0) = w$
\end{enumerate}
\end{proposition}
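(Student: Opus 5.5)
The plan is to reduce both parts to the standard fact that the differential of the exponential map along a geodesic is computed by a variation through geodesics, whose variation field is then a Jacobi field by the Fact quoted from \cite[Theorem 10.1]{lee2018}. In both cases I fix $(p,v)\in\Omega$ and a tangent vector to $\Omega$ at $(p,v)$. I choose a smooth curve $s\mapsto(\sigma(s),V(s))$ in $\Omega$ realizing that tangent vector, and I consider
\[
\wt\Gamma(s,t)=\exp_{\sigma(s)}(tV(s)).
\]
Since $\Omega$ is open and $\exp$ is smooth there, $\wt\Gamma$ is smooth for $s$ near $0$ and $t\in[0,1]$. Each $t\mapsto\wt\Gamma(s,t)$ is a geodesic, so $J(t):=\partial_s\wt\Gamma(0,t)$ is a Jacobi field along $c$. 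By the chain rule, $J(1)=d\exp_{(p,v)}(\sigma'(0),V'(0))$. It therefore suffices to identify $J(0)$ and $D_tJ(0)$ and then invoke uniqueness from \cite[Proposition 10.2]{lee2018}.

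For part (b), take $\sigma(s)\equiv p$ and $V(s)=v+sw$, a curve in the fiber $T_p\cm$. Then $\wt\Gamma(s,0)=p$ for all $s$, so $J(0)=0$. Next use the symmetry lemma $D_t\partial_s\wt\Gamma=D_s\partial_t\wt\Gamma$ and note that $\partial_t\wt\Gamma(s,0)=v+sw$ lies in the fixed vector space $T_p\cm$. This gives $D_tJ(0)=D_s(v+sw)|_{s=0}=w$.

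For part (a), interpret $d_1\exp_{(p,v)}(u)$ as the derivative in the horizontal direction: the point moves while $v$ is carried by parallel transport. So I take $\sigma$ to be a curve with $\sigma(0)=p$ and $\sigma'(0)=u$ (for instance $\sigma(s)=\exp_p(su)$), and $V(s)$ to be the parallel transport of $v$ along $\sigma$. Then $J(0)=\sigma'(0)=u$. By the symmetry lemma, $D_tJ(0)=D_s\partial_t\wt\Gamma(s,0)=D_sV(0)=0$, because $V$ is parallel. Uniqueness of Jacobi fields with these initial data gives $d_1\exp_{(p,v)}(u)=J_1(1)$. Similarly, part (b) follows with initial data $(0,w)$.

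The main obstacle is not computational. It is making precise the splitting $d\exp=(d_1\exp,d_2\exp)$, since $T_{(p,v)}T\cm$ has no canonical product decomposition. I will state explicitly that the splitting is the one induced by the Levi-Civita connection: horizontal lifts via parallel transport and the vertical subspace $T_p\cm$. This is also exactly how $d_1$ and $d_2$ are used in the chain-rule formula \eqref{dG}, where $\nabla_w\nabla_y f$ is a covariant derivative. With this convention fixed, the remaining steps — smoothness of $\wt\Gamma$, the symmetry lemma, and ODE uniqueness — are routine.
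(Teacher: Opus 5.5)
Your proposal is correct and follows essentially the same argument as the paper: the variations $\exp_{c_1(s)}(t\,\Psi_{0\to s}v)$ and $\exp_p(t(v+sw))$ through geodesics, with the initial data $J(0)$, $D_tJ(0)$ read off via the symmetry lemma $D_t\partial_s=D_s\partial_t$, followed by uniqueness of Jacobi fields. Your explicit remark that $(d_1\exp,d_2\exp)$ is the horizontal/vertical splitting induced by the Levi-Civita connection makes precise what the paper uses implicitly when it says the curve $s\mapsto(c_1(s),v(s))$ has velocity $(u,0)$ at $s=0$.
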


\begin{proof}
    Let $c_1:(-\epsilon,\epsilon)\to \cm$ be a curve such that 
    \[
    c_1(0) = p, \quad \text{ and }\quad c_1'(0) = u.
    \]
    Let $v(s)=\cp_{0\to s}v$ where $\cp_{0\to s}:T_{c_1(0)}\cm\to T_{c_1(s)}\cm$ is the parallel transport of $v$ along $c_1$. Then the map $\alpha:(-\epsilon,\epsilon)\times \ci\to \cm$ given by
    \[
    \alpha(s,t) = \exp_{c_1(s)}(tv(s))
    \]
    is a variation of $c$ through geodesics. Consequently, its variation field 
    \[
    J_1(t) = \frac{\partial}{\partial s}\Big|_{s=0} \alpha(s,t)
    \]
    is a Jacobi field with 
    \[
    J_1(0) = \frac{\partial}{\partial s}\Big|_{s=0}c_1(s) = u \quad \text{ and } \quad D_tJ_1(0) =D_t\partial_s\alpha|_{(0,0)} = D_s\partial_t\alpha|_{(0,0)}= D_sv(0) =0.
    \]
    The map $s\mapsto (c_1(s),v(s))$ is a curve from $(-\epsilon,\epsilon)\to \Omega$ whose velocity at $s=0$ is $(u,0)$. Thus
    \[
    d_1\exp_{(p,v)}(u) = \frac{\partial}{\partial s}\Big|_{s=0}\exp(c_1(s),v(s)) = \frac{\partial}{\partial s}\Big|_{s=0}\exp_{c_1(s)}v(s) =  \frac{\partial}{\partial s}\Big|_{s=0}\alpha(s,1) = J_1(1).
    \]
    This proves part (a).

    Let $c_2:(-\epsilon,\epsilon)\to T_p\cm$ be the map $c(s)=v+sw$. Then the map $\beta:(-\epsilon,\epsilon)\times \ci\to \cm$ given by
    \[
    \beta(s,t) = \exp_p(t(v+sw))
    \]
    is a variation of $c$ through geodesics. Consequently, its variation field 
    \[
    J_2(t) = \frac{\partial}{\partial s}\Big|_{s=0} \beta(s,t)
    \]
    is a Jacobi field with 
    \[
    J_2(0) = \frac{\partial}{\partial s}\Big|_{s=0}\beta(s,0) = 0 \quad \text{ and } \quad D_tJ_2(0) =D_t\partial_s\beta|_{(0,0)} = \partial_s D_t\beta|_{(0,0)} =\partial_s|_{s=0}(v+sw) =w.
    \]
    The map $s\mapsto (p,v+sw)$ is a curve from $(-\epsilon,\epsilon)\to \Omega$ whose velocity at $s=0$ is $(0,w)$. Thus
    \[
    d_2\exp_{(p,v)}(w)  = \frac{\partial}{\partial s}\Big|_{s=0}\exp_p(v+sw) =  \frac{\partial}{\partial s}\Big|_{s=0}\beta(s,1) = J_2(1).
    \]
    This proves part (b).
\end{proof}

\begin{proposition}
    Let $(\cm,g)$ be a Riemannian manifold, and let $c: \ci\to \cm$ be a geodesic defined on an interval $\ci\subset \R$. Then for any vector field $v$ along $c$ we have  
    \begin{equation}\label{cder}
    \frac{d^j}{dt^j}\cp_{c(t_0)\to c(t)}^{-1}v = \cp_{c(t_0)\to c(t)}^{-1} D_t^jv, \quad k = 1,2,\dots,
    \end{equation}
    for all $t_0,t\in I$. 
\end{proposition}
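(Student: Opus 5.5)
The identity follows by working in a parallel frame along $c$. Fix $t_0\in\ci$ and choose an orthonormal basis $\{e_1,\dots,e_m\}$ of $T_{c(t_0)}\cm$. Let $E_i(t):=\cp_{c(t_0)\to c(t)}e_i$ be the parallel extensions along $c$. By definition $D_tE_i\equiv 0$. Since parallel transport along $c$ is a linear isometry, $\{E_i(t)\}$ is an orthonormal frame of $T_{c(t)}\cm$ for every $t$. Any smooth vector field $v$ along $c$ can then be expanded as $v(t)=\sum_i v^i(t)E_i(t)$ with smooth coefficient functions $v^i(t)=\langle v(t),E_i(t)\rangle_{c(t)}$.

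Step one: compute $D_tv$ in this frame. By the Leibniz rule for the covariant derivative along a curve, and using $D_tE_i=0$, we get $D_tv=\sum_i \dot v^i(t)E_i(t)$. Iterating by induction on $j$ gives
\[
D_t^jv=\sum_i \frac{d^j v^i}{dt^j}(t)\,E_i(t), \qquad j=1,2,\dots .
\]

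Step two: pull back to the fixed vector space $T_{c(t_0)}\cm$. Since $\cp_{c(t_0)\to c(t)}^{-1}E_i(t)=e_i$, linearity gives
\[
\cp_{c(t_0)\to c(t)}^{-1}v(t)=\sum_i v^i(t)e_i .
\]
This is a curve in a single finite-dimensional vector space, so ordinary differentiation acts componentwise and yields $\sum_i \frac{d^jv^i}{dt^j}(t)e_i$. Applying $\cp^{-1}$ to the formula from step one gives exactly the same expression, which proves \eqref{cder}. The index $k$ in the statement should read $j$.

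The only point needing care is the claim $D_tE_i=0$ for all $t$, including $t<t_0$. It holds because $E_i$ is the unique parallel field along $c$ with $E_i(t_0)=e_i$, which exists on all of $\ci$ by linear ODE theory. Consistency of transport, $\cp_{c(t_0)\to c(t)}=\cp_{c(s)\to c(t)}\circ\cp_{c(t_0)\to c(s)}$, then makes the frame well defined. Note that the argument does not use that $c$ is a geodesic; any smooth curve works.
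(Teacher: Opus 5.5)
Your proof is correct and is essentially the paper's argument: expand $v$ in the parallel frame obtained by transporting a basis of $T_{c(t_0)}\mathcal{M}$ along $c$, use $D_tE_i=0$ to get $D_tv=\sum_i \dot v^i E_i$, and differentiate the coefficients in the fixed space $T_{c(t_0)}\mathcal{M}$. The paper transports the coordinate basis $\partial/\partial x^i|_{c(t_0)}$ instead of an orthonormal one, and gets the higher orders by reapplying the first-order case to $D_t^{j-1}v$; both differences are only cosmetic, and your remarks that $k$ should read $j$ and that the geodesic hypothesis is never used are also accurate.
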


\begin{proof} Let $c(t_0) = p$, and pick coordinates $\{x^i\}$ centered at $p$. Let $t\mapsto e_i(t)$ be the vector field obtained by parallel transporting $\frac{\partial}{\partial x^i}\big|_{p}$ along $c$, i.e.,
\[
e_i(t) = \cp_{c(t_0)\to c(t)}\left(\frac{\partial}{\partial t}\Big|_{c(t_0)}\right).
\]
Since $\left\{\frac{\partial}{\partial x^i}\Big|_{p}\right\}$ are linearly independent, the vectors $\{e_i(t)\}$ are linearly independent in $T_{c(t)}M$. Thus, we can write 
\[
v(t) = v^i(t)e_i(t)
\]
and consequently 
\[
\cp_{c(t_0)\to c(t)}^{-1}v(t) = v^i(t) \frac{\partial}{\partial x^i}\Big|_p.
\]
So we have 
\begin{align*}
    \frac{d}{dt}\cp_{c(t_0)\to c(t)}^{-1}v(t) &= \frac{dv^i}{dt}(t)\frac{\partial}{\partial x^i}\Big|_p\\
    &= \frac{dv^i}{dt}(t)\cp_{c(t_0)\to c(t)}^{-1}\left(e_i(t)\right)\\
    &= \cp_{c(t_0)\to c(t)}^{-1}\left(\frac{dv^i}{dt}(t) e_i(t)\right)\\
    &= \cp_{c(t_0)\to c(t)}^{-1}\left(D_tv(t)\right),
\end{align*}
since $D_te_i =0$ for all $i=1,2\dots, n$. Applying the same reasoning to $D_t^{j-1}v$ gives \eqref{cder}.
\end{proof}

\begin{proposition}\label{tayexp}
    Let $(\cm,g)$ be a Riemannian manifold. Suppose $\ci\subset \R$ is an interval, $c:\ci\to \cm$ is a geodesic, $t_0\in \ci$, satisfying 
    \[
    c(t_0) = p, \quad c'(t_0) = v.
    \]
    Let $J$ be a Jacobi field along $c$ satisfying the initial conditions
    \[
    J(t_0) = u, \quad D_tJ(t_0) = w.
    \]
    Then we have the Taylor expansion
    \[
    \cp_{c(t_0)\to c(t)}^{-1}J(t) = u + (t-t_0)w - \frac{1}{2}(t-t_0)^2 R(u,v)v - \frac{1}{3!}\left((\nabla_vR)(u,v)v + R(w,v)v \right)(t-t_0)^3 + \mathcal{O}((t-t_0)^4).
    \]
    Moreover, the coefficients of the terms that appear in $\mathcal{O}((t-t_0)^4)$ are of degree three or higher in $v$. 
\end{proposition}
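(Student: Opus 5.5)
The plan is to reduce the statement to an ordinary Taylor expansion of a smooth curve in the fixed vector space $T_p\cm$. Set $Z(t) := \cp_{c(t_0)\to c(t)}^{-1}J(t)\in T_p\cm$. Since $J$ is a smooth vector field along $c$ and parallel transport along a geodesic depends smoothly on $t$, $Z$ is a smooth curve in $T_p\cm$. Taylor's theorem then gives
\[
Z(t)=\sum_{k=0}^{3}\frac{(t-t_0)^k}{k!}Z^{(k)}(t_0)+\mathcal{O}((t-t_0)^4).
\]
By the preceding proposition (equation \eqref{cder}), $Z^{(k)}(t)=\cp_{c(t_0)\to c(t)}^{-1}D_t^kJ(t)$. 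At $t=t_0$ the transport is the identity, so $Z^{(k)}(t_0)=D_t^kJ(t_0)$. It therefore suffices to compute the covariant derivatives $D_t^kJ(t_0)$ for $k\le 3$.

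The orders $k=0,1$ are the initial conditions, $u$ and $w$. For $k=2$, the Jacobi equation gives $D_t^2J=-R(J,c')c'$, which at $t_0$ equals $-R(u,v)v$. For $k=3$, differentiate the Jacobi equation covariantly along $c$ using the Leibniz rule for the tensor $R$. Since $c$ is a geodesic, $D_tc'=0$, and the covariant derivative of $R$ along $c$ is $\nabla_{c'}R$. Hence
\[
D_t^3J=-(\nabla_{c'}R)(J,c')c'-R(D_tJ,c')c',
\]
which at $t_0$ equals $-(\nabla_vR)(u,v)v-R(w,v)v$. Dividing by $k!$ reproduces the stated coefficients.

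The remaining claim is that the coefficients hidden in $\mathcal{O}((t-t_0)^4)$ have degree at least three in $v$; I expect this bookkeeping to be the main obstacle. I would prove by induction on $k\ge 2$ that $D_t^kJ$ is a finite sum of terms of the form
\[
(\nabla^{a}_{c'\cdots c'}R)(\,\cdot\,,c')c',
\]
possibly with such curvature operators composed. Each term is linear in exactly one of $J$ or $D_tJ$, and the highest derivatives $D_t^2J$ are always eliminated by substituting the Jacobi equation. Assign weights: each $D_t$ counts $1$, each factor $c'$ counts $1$, each covariant derivative $\nabla_{c'}$ of $R$ counts $1$ (it carries one $c'$), $J$ counts $0$ and $D_tJ$ counts $1$. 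The inductive step uses three facts. The Leibniz rule preserves total weight, since $D_tc'=0$. Differentiating $R$ produces $\nabla_{c'}R$, adding one $c'$. Substituting $D_t^2J=-R(J,c')c'$ trades weight $2$ in $D_t$ for two copies of $c'$. Together these give: every term of $D_t^kJ$ has total weight $k$.

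Consequently, at $t_0$ a term linear in $u$ has $v$-degree exactly $k$, and a term linear in $w$ has $v$-degree $k-1$. Every coefficient of $(t-t_0)^k$ with $k\ge 4$ therefore has degree at least $3$ in $v$. For instance, at $k=4$ the term $-2(\nabla_vR)(w,v)v$ has degree exactly $3$, so the bound is attained. This also controls the remainder, since the $\mathcal{O}((t-t_0)^4)$ term is given by the Taylor integral of $Z^{(4)}$, which is built from the same expressions evaluated along $c$.
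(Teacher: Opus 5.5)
Your proposal is correct and follows the paper's proof. Both pass to the curve $\cp_{c(t_0)\to c(t)}^{-1}J(t)$ in the fixed space $T_p\cm$, use \eqref{cder} to identify its Taylor coefficients with $D_t^kJ(t_0)$, and compute these by differentiating the Jacobi equation with $D_tc'=0$. The only difference is in the degree claim: the paper computes $D_t^4J(t_0)$ explicitly and asserts the pattern continues, whereas your weight-counting induction proves it, since terms linear in $u$ have $v$-degree $k$ and terms linear in $w$ have $v$-degree $k-1$.
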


\begin{proof} The map $\wt{J}(t):= \cp_{c(t_0)\to c(t)}^{-1}J(t)$ is a map of the interval $\ci$ into a fixed vector space $T_p\cm$. So we have the Taylor series
\begin{align*}
    \wt{J}(t) &= \wt{J}(t_0) + \frac{d\wt{J}}{dt}(t_0)(t-t_0) + \frac{1}{2}\frac{d^2\wt{J}}{dt^2}(t_0) + \frac{1}{3!}\frac{d^3\wt{J}}{dt^3}(t_0)(t-t_0)^3 + \cdots \\
    &\underset{\text{by }\eqref{cder}}{=} J(t_0) + D_tJ(t_0)(t-t_0) + \frac{1}{2}D_t^2J(t_0)(t-t_0)^2 + \frac{1}{3!}D_t^3J(t_0)(t-t_0)^3 + \cdots. 
\end{align*}
By the initial conditions on $J$ we have
\[
J(t_0) = u, \quad \text{ and } \quad D_tJ(t_0) = w.
\]
Since $J$ is a Jacobi field it satisfies 
\begin{align*}
    D_t^2J &= - R(J, c')c' \\
    \implies D_t^2J(t_0) &= - R(J(t_0), c'(t_0))c'(t_0) = -R(u,v)v\\
    D_t^3J &= -D_t\left(R(J,c')c'\right)\\
           &= - (\nabla_{c'}R)(J,c')c' - R(\nabla_{c'}J,c')c' \quad \qquad (\text{since }\nabla_{c'}c'=0)\\
    \implies D_t^3J(t_0) &= - (\nabla_vR)(u,v)v - R(w,v)v\\
    D_t^4J &= - D_t\left( (\nabla_{c'}R)(J,c')c'\right) - D_t(R(\nabla_{c'}J,c')c') \\
    &= -(\nabla_{c'}\nabla_{c'}R)(J,c')c' - 2(\nabla_{c'}R)(\nabla_{c'}J,c')c' - R(D_t^2J,c')c' \\
    &= -(\nabla_{c'}\nabla_{c'}R)(J,c')c' - 2(\nabla_{c'}R)(\nabla_{c'}J,c')c' + R(R(J,c')c',c')c'\\
    \implies D_t^4J(t_0) &= -(\nabla_{v}\nabla_{v}R)(u,v)v - 2(\nabla_{v}R)(w,v)v + R(R(u,v)v,v)v.
\end{align*}
Thus, we see that $D_t^4J(t_0)$ is of degree $3$ in $v$. Repeated differentiation of the equation $D_t^2J +R(J,\gamma')\gamma'=0$ gives other Taylor coefficients and one can then see that these coefficients are at least of degree 3 in $v$. 
\end{proof}

\section{Proofs for section \ref{sectionDROBW} }\label{sectionDROBWappendix}
\subsection{\texorpdfstring{On the relaxation of constrained DRO problem \eqref{DRO1} to the penalized DRO problem \eqref{DRO1x}}{}}
Write $\V=(\muv,\Lambda)$, $\V^*=(\muv^*,\Lambda^*)$, and
\[
  \psi(\V):=\|\muv-\muv^*\|^2+d_{BW}^2(\Lambda,\Lambda^*),\qquad
  G_{\w}(\V):=\mathbb E_{\z\sim\mathbb P(\muv,\Lambda)}[\ell(\w;\z)] .
\]
Let $K_\xi:=\bar{\mathcal{B}}_\xi(\muv^*)\times\{\Lambda: d_{BW}(\Lambda,\Lambda^*)\le\xi\}$ denote the
ambiguity set of \eqref{DRO1}, and note $\{\psi\le\xi^2\}\subseteq K_\xi\subseteq\{\psi\le 2\xi^2\}$.
Let $\V_\epsilon(\w)$ be the maximizer of the
inner problem in \eqref{DRO1x}. Set
\[
  L_G:=\sup_{\w\in S_1,\;\V\in S_2}\big\|\operatorname{grad}_{\V} G_{\w}(\V)\big\|_{l_2 \oplus BW}<\infty 
\]
where $S_1, S_2$ are defined as in section \ref{convergenceratesBWsection} and $ \operatorname{grad}$ is the Riemannian gradient operator.
\begin{proposition}\label{prop:reduction}
Assume $K_\xi\subseteq S_2$ and that $\mathcal{L}(\cdot\,;\w)$ is $\mu$-strongly geodesically
concave on $S_2$ (Theorem~\ref{lagrangehessianestimatethm}). Fix $\epsilon>0$ and set $\xi:=\epsilon L_G/2$,
equivalently $\epsilon=2\xi/L_G$. Then, uniformly in $\w\in S_1$:
\begin{enumerate}
\item[(i)] $d\big(\V_\epsilon(\w),\V^*\big)\le \epsilon L_G/2=\xi$; in particular
      $\V_\epsilon(\w)\in K_\xi$, so it is feasible for \eqref{DRO1}.
\item[(ii)] Every maximizer $\V_\xi^\star(\w)$ of the inner problem in \eqref{DRO1} satisfies
      $d\big(\V_\epsilon(\w),\V_\xi^\star(\w)\big)\le(1+\sqrt2)\,\xi$.
\item[(iii)] $0\;\le\;\max_{\V \in K_{\xi}} G_{\w}\big(\V\big) -G_{\w}\big(\V_\epsilon(\w)\big)\;\le\;2\xi^2/\epsilon=\xi L_G$.
\end{enumerate}
\end{proposition}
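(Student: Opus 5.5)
The plan is to work entirely with the penalized objective $\mathcal{L}(\V;\w)=G_{\w}(\V)-\psi(\V)/\epsilon$ and compare it at $\V_\epsilon(\w)$ against $\V^*$ and against a constrained maximizer $\V_\xi^\star(\w)$, using only three ingredients: $\psi(\V)=d^2(\V,\V^*)$ on the product manifold (by the product distance lemma), the mean value inequality $|G_{\w}(\V)-G_{\w}(\V')|\le L_G\, d(\V,\V')$ on the geodesically convex set $S_2$, and optimality of $\V_\epsilon(\w)$ for $\mathcal{L}(\cdot;\w)$. All constants are uniform in $\w\in S_1$ because $L_G$ is a supremum over $S_1\times S_2$.

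For (i), compare with $\V^*$ exactly as in the proof of Lemma~\ref{localmaxexistencelem}. From $\mathcal{L}(\V_\epsilon;\w)\ge\mathcal{L}(\V^*;\w)$ and $\psi(\V^*)=0$ we get $\psi(\V_\epsilon)/\epsilon\le G_{\w}(\V_\epsilon)-G_{\w}(\V^*)\le L_G\, d(\V_\epsilon,\V^*)$. Since $\psi(\V_\epsilon)=d^2(\V_\epsilon,\V^*)$, dividing by the distance (the case of zero distance being trivial) gives $d(\V_\epsilon,\V^*)\le\epsilon L_G$. This is off by a factor $2$ from the claim, so I would instead use first-order optimality at the interior maximizer. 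Setting the Riemannian gradient to zero, and using $\operatorname{grad}\psi(\V)=-2\exp_{\V}^{-1}(\V^*)$ from Theorem~\ref{jostdistthm}, gives $\operatorname{grad}G_{\w}(\V_\epsilon)=-\tfrac{2}{\epsilon}\exp^{-1}_{\V_\epsilon}(\V^*)$. Taking norms yields $\tfrac{2}{\epsilon}d(\V_\epsilon,\V^*)\le L_G$, that is $d\le\epsilon L_G/2=\xi$. Then $\psi(\V_\epsilon)\le\xi^2$, and the inclusion $\{\psi\le\xi^2\}\subseteq K_\xi$ gives feasibility. This route requires $\V_\epsilon$ to lie in the interior of $S_2$, which Theorem~\ref{interiormaximaexistencethm} supplies.

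For (ii), use the triangle inequality through $\V^*$. Any $\V_\xi^\star\in K_\xi$ satisfies $\psi(\V_\xi^\star)\le 2\xi^2$, so $d(\V_\xi^\star,\V^*)\le\sqrt2\,\xi$. Combining this with (i) gives $d(\V_\epsilon,\V_\xi^\star)\le(1+\sqrt2)\xi$. Strong concavity is not needed for this step.

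For (iii), the lower bound $0$ is immediate from the feasibility in (i). For the upper bound, compare penalized values: $\mathcal{L}(\V_\epsilon)\ge\mathcal{L}(\V_\xi^\star)$ gives
\[
G_{\w}(\V_\xi^\star)-G_{\w}(\V_\epsilon)\le\frac{\psi(\V_\xi^\star)-\psi(\V_\epsilon)}{\epsilon}\le\frac{\psi(\V_\xi^\star)}{\epsilon}.
\]
Bounding $\psi(\V_\xi^\star)$ by $2\xi^2$ gives $2\xi^2/\epsilon=\xi L_G$, using $\epsilon=2\xi/L_G$.

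The main obstacle is the sharp constant in (i). The zeroth-order comparison only gives $\epsilon L_G$, so the argument relies on the gradient identity for $d^2_{BW}$, which needs two things: $\V_\epsilon$ must be an interior critical point, and $\V^*$ must lie within the radius where Theorem~\ref{jostdistthm} applies. Both hold on $S_2$ under the radius condition $r<\gamma^2\pi\norm{(\Lambda^*)^{-1}}_2^{-1}/\sqrt3$ used in Theorem~\ref{distancehessianestimatethm}.
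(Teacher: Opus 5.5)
Your proposal is correct and matches the paper's proof essentially step for step: (i) uses first-order optimality at the interior maximizer together with $\|\operatorname{grad}\psi(\V)\| = 2\,d(\V,\V^*)$ on the normal neighborhood; (ii) uses $\psi(\V_\xi^\star)\le 2\xi^2$ and the triangle inequality through $\V^*$; and (iii) compares penalized values at $\V_\epsilon$ and $\V_\xi^\star$ and then drops $\psi(\V_\epsilon)\ge 0$. You are also right that the zeroth-order comparison alone only gives $d(\V_\epsilon,\V^*)\le \epsilon L_G$, and this is exactly why the gradient identity is needed to obtain the factor $1/2$.
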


\begin{proof}
(i) First-order optimality for the inner problem of \eqref{DRO1x} gives
$\operatorname{grad}G_{\w}(\V_\epsilon)=\tfrac1\epsilon\operatorname{grad}\psi(\V_\epsilon)$.
Since $S_2$ is contained in a normal neighborhood of $\V^*$, $\psi$ is smooth there with
$\|\operatorname{grad}\psi(\V)\|=2\,d(\V,\V^*)$, so
$2d(\V_\epsilon,\V^*)/\epsilon=\|\operatorname{grad}G_{\w}(\V_\epsilon)\|\le L_G$.
Feasibility follows since $\psi(\V_\epsilon)\le\xi^2$ implies $\V_\epsilon\in K_\xi$.

(ii) $\V_\xi^\star\in K_\xi$ gives $\psi(\V_\xi^\star)\le 2\xi^2$, i.e.
$d(\V_\xi^\star,\V^*)\le\sqrt2\,\xi$; using this with (i) and the triangle inequality proves (ii).

(iii) The left inequality is (i) together with optimality of $\V_\xi^\star$. For the right,
optimality of $\V_\epsilon$ for the penalized objective and feasibility of $\V_\xi^\star$ give
\[
  G_{\w}(\V_\epsilon)-\tfrac1\epsilon\psi(\V_\epsilon)\;\ge\;
  G_{\w}(\V_\xi^\star)-\tfrac1\epsilon\psi(\V_\xi^\star)\;\ge\;\max_{\V \in K_{\xi}} G_{\w}\big(\V\big)-\tfrac{2\xi^2}{\epsilon},
\]
and $\psi(V_\epsilon)\ge0$. 
\end{proof}
\subsection{Supporting results}
{
\begin{lem}\label{hessiancalc1a}
    The following bound holds:
    \[  \langle (\z-\muv),S\Lambda^{-1}(\z-\muv)\rangle \le  \norm{S}_F \norm{  \Lambda^{-1}}_2 \norm{\Lambda}_2  (\z - \muv)^T {\Lambda}^{-1} (\z - \muv) \]
\end{lem}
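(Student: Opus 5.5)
Set $\u := \z - \muv$. The plan is to reduce the quadratic form $\langle \u, S\Lambda^{-1}\u\rangle$ to a weighted norm of $\u$ by inserting $\Lambda^{\pm 1/2}$, and then compare that weighted norm back to $\u^T\Lambda^{-1}\u$ through operator norms.

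Since $\Lambda \succ \mathbf{0}$, the square root $\Lambda^{1/2}$ exists, and we write $\v := \Lambda^{-1/2}\u$, so that $\norm{\v}^2 = \u^T\Lambda^{-1}\u$. Substituting $\u = \Lambda^{1/2}\v$ gives
\[ \langle \u, S\Lambda^{-1}\u\rangle = \v^T \Lambda^{1/2} S \Lambda^{-1/2}\v . \]
By Cauchy--Schwarz and submultiplicativity of the operator norm,
\[ \v^T \Lambda^{1/2} S \Lambda^{-1/2}\v \le \norm{\Lambda^{1/2}}_2 \norm{S}_2 \norm{\Lambda^{-1/2}}_2 \norm{\v}^2 . \]
Using $\norm{S}_2 \le \norm{S}_F$, this becomes
\[ \langle \u, S\Lambda^{-1}\u\rangle \le \norm{S}_F \norm{\Lambda^{1/2}}_2 \norm{\Lambda^{-1/2}}_2 \,\u^T\Lambda^{-1}\u . \]

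It remains to compare $\norm{\Lambda^{1/2}}_2\norm{\Lambda^{-1/2}}_2$ with $\norm{\Lambda}_2\norm{\Lambda^{-1}}_2$. Let $\lambda_{\max}$ and $\lambda_{\min}$ be the largest and smallest eigenvalues of $\Lambda$. Then
\[ \norm{\Lambda^{1/2}}_2\norm{\Lambda^{-1/2}}_2 = \sqrt{\lambda_{\max}/\lambda_{\min}} . \]
This is at most $\lambda_{\max}/\lambda_{\min} = \norm{\Lambda}_2\norm{\Lambda^{-1}}_2$, because the condition number is at least $1$. Combining the displays gives the stated bound, and in fact a slightly sharper one.

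The one point to check is whether the inner product is meant as $\u^T S \Lambda^{-1}\u$, which is the reading used above. If instead the symmetrized form $\tfrac12\u^T(S\Lambda^{-1}+\Lambda^{-1}S)\u$ is intended, the same computation applies, since both terms produce the same quadratic form. There is no real obstacle here; the argument is routine linear algebra.
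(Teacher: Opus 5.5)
Your proof is correct, and it reaches the bound by a slightly different route than the paper.

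The paper stays in the original coordinates. With $\mathbf{u}=\z-\muv$, it rewrites the form as $\mathrm{tr}(S\,\mathbf{u}\mathbf{u}^T\Lambda^{-1})$; this moves $S$ across the inner product, which uses $S=S^T$. It then applies Cauchy--Schwarz for the Frobenius inner product to get $\norm{S}_F\norm{\mathbf{u}}\norm{\Lambda^{-1}\mathbf{u}}$. It pays $\norm{\Lambda^{-1}}_2$ for $\norm{\Lambda^{-1}\mathbf{u}}\le\norm{\Lambda^{-1}}_2\norm{\mathbf{u}}$ (the paper writes this step as an equality, but it is an inequality). Finally it pays $\norm{\Lambda}_2$ for $\norm{\mathbf{u}}^2\le\norm{\Lambda}_2\,\mathbf{u}^T\Lambda^{-1}\mathbf{u}$.

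You instead whiten first and bound the similarity transform $\Lambda^{1/2}S\Lambda^{-1/2}$ in operator norm. This buys three things:
\begin{itemize}
\item a sharper intermediate constant, $\norm{S}_2\sqrt{\lambda_{\max}/\lambda_{\min}}$ instead of $\norm{S}_F\,\lambda_{\max}/\lambda_{\min}$;
\item no use of the symmetry of $S$ in the main argument;
\item agreement with the substitution $\Lambda^{-1/2}(\z-\muv)\mapsto\z$ that the paper makes right after invoking the lemma.
\end{itemize}

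The paper's route avoids matrix square roots and produces the $\norm{S}_F$ of the statement directly through trace duality. The price is the extra factor $\sqrt{\lambda_{\max}/\lambda_{\min}}$, which is harmless for how the lemma is used. Both arguments actually control $\bigl|\langle \z-\muv, S\Lambda^{-1}(\z-\muv)\rangle\bigr|$, which is what the later bound on $|u'(0)|$ requires.
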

\begin{proof}
    \begin{align*}
    \langle (\z-\muv),S\Lambda^{-1}(\z-\muv)\rangle & = \langle S (\z-\muv),\Lambda^{-1}(\z-\muv)\rangle \\
    & = \text{tr}( \langle S (\z-\muv),\Lambda^{-1}(\z-\muv)\rangle ) \\
     & = \text{tr}(  S  (\z-\muv)(\z-\muv)^T \Lambda^{-1}  ) \\
    & \le \norm{S}_F \norm{(\z-\muv)(\z-\muv)^T \Lambda^{-1}}_F \\
      & = \norm{S}_F \norm{(\z-\muv)} \norm{  \Lambda^{-1}(\z-\muv)}\\
      & = \norm{S}_F \norm{(\z-\muv)}^2 \norm{  \Lambda^{-1}}_2 \\
      & \le \norm{S}_F \norm{  \Lambda^{-1}}_2 \norm{\Lambda}_2  (\z - \muv)^T\Lambda^{-1}(\z - \muv)  
\end{align*}
where we used in the last step that $\norm{(\z-\muv)}^2 \le \norm{\Lambda}_2  (\z - \muv)^T\Lambda^{-1}(\z - \muv)$. This completes the proof.
\end{proof}
}

\subsection{Proof of Theorem \ref{losshessianestimatethm}}
\begin{proof}
  We want to calculate $\text{Hess}_{(\muv,\Lambda)}f$ where $f$ is the function
\[
f(\w; \Lambda, \muv) = \int_{\z \in \mathbb{R}^n} \ell(\w;\z)   \exp\bigg(-\frac{1}{2}\langle(\z -\muv),\Lambda^{-1}(\z - \muv)\rangle\bigg) \frac{d\z}{\sqrt{\det(2\pi \Lambda)}} .
\]
Let $(\nuv, S)\in  \Rn^n\oplus \bs^n \cong T_{(\muv,\Lambda)}(\Rn^n\times BW) $ be given so that $\text{tr}(S^TS)<1$ $\iff$ $ \norm{S}_F <1$. Then $I+tS$ is invertible for all $t\in [0,1]$ (since operator norm is bounded above by the Frobenius norm, $(I+tS)^{-1}=\sum_{k=0}^{\infty} (-1)^kt^kS^k$), and the curve $\tau:[0,1]\to \Rn^n\oplus \text{BW}$ given by
\begin{equation}\label{taudef}
\tau(t) = (\muv_t,\Lambda_t), \quad \Lambda_t = (I+tS)\Lambda(I+tS), \muv_t = \muv+t\nuv
\end{equation}
is a geodesic beginning at $(\muv,\Lambda)$ whose velocity at $t=0$ is $(\nuv, S\Lambda+\Lambda S)$.
Next, we have that
\[
\text{Hess}_{(\muv,\Lambda)}f((\nuv,  S\Lambda +\Lambda S),(\nuv,S\Lambda +\Lambda S)) = \frac{d^2}{dt^2}\Big|_{t=0}f(\tau(t)).
\]
{
We want to show that
\[
\frac{ \abs{\text{Hess}_{(\muv,\Lambda)}f((\nuv,  S\Lambda +\Lambda S),(\nuv,S\Lambda +\Lambda S))}}{\langle (\nuv,  S\Lambda +\Lambda S) , (\nuv,  S\Lambda +\Lambda S)\rangle_{l_2 \oplus BW}} = \frac{ \abs{\text{Hess}_{(\muv,\Lambda)}f((\nuv,  S\Lambda +\Lambda S),(\nuv,S\Lambda +\Lambda S))}}{ \norm{\nuv}^2 + tr(S \Lambda S)}  \leq C < \infty 
\]
uniformly for all $\nuv, S$. Since $S \Lambda S \succeq \mathbf{0}$, $\Lambda  \succ \mathbf{0}$  and
\[ tr(S \Lambda S) = tr(S \Lambda^{1/2} \Lambda^{1/2} S) = \norm{S \Lambda^{1/2} }^2_F \ge  \norm{S }^2_F  \norm{ \Lambda^{-1/2} }^{-2}_F \]
it suffices to show that 
\begin{align}
  \sup_{\nuv \in \mathbb{R}^n, \norm{S}_F < 1} \frac{ \abs{\text{Hess}_{(\muv,\Lambda)}f((\nuv,  S\Lambda +\Lambda S),(\nuv,S\Lambda +\Lambda S))}}{ \norm{\nuv}^2 +  \norm{S }^2_F  \norm{ \Lambda^{-1/2} }^{-2}_F} =  \sup_{\nuv \in \mathbb{R}^n, \norm{S}_F < 1} \frac{ \abs{\frac{d^2}{dt^2}\Big|_{t=0}f(\tau(t))}}{ \norm{\nuv}^2 +  \norm{S }^2_F  \norm{ \Lambda^{-1/2} }^{-2}_F}  \leq C < \infty    \label{holderestimate001}
\end{align}
}
So we need to calculate 
\begin{align*}
    &\frac{d^2}{dt^2}\Big|_{t=0} \exp\bigg(-\frac{1}{2}\langle(\z -\muv_t),\Lambda_t^{-1}(\z - \muv_t)\rangle\bigg) \frac{1}{\sqrt{\det(2\pi \Lambda_t)}}  \\
    &=\frac{d^2}{dt^2}\Big|_{t=0} \exp\bigg(-\frac{1}{2}\langle(\z -\muv_t),\Lambda_t^{-1}(\z - \muv_t)\rangle -\frac{1}{2}\log \det(2\pi\Lambda_t)\bigg) 
\end{align*}
Since $\frac{d^2}{dt^2}e^{-u(t)} = e^{-u(t)}(-u''(t)+u'(t)u'(t))$ we let 
\begin{equation}\label{utdef}
u(t) = \frac{1}{2}\langle(\z -\muv_t),\Lambda_t^{-1}(\z - \muv_t)\rangle +\frac{1}{2}\log \det(2\pi\Lambda_t),
\end{equation}
and calculate\\ 
{
\begin{align*}
    u'(t) &= \langle (\z-\muv_t)',\Lambda_t^{-1}(\z-\muv_t)\rangle - \frac{1}{2}\langle (\z-\muv_t),\Lambda_t^{-1}\Lambda_t'\Lambda_t^{-1}(\z-\muv_t)\rangle + \frac{1}{2} \text{tr}(\Lambda_t^{-1}\Lambda'_t) \\
    u''(t) &= \langle (\z-\muv_t)',\Lambda_t^{-1}(\z-\muv_t)'\rangle - \langle (\z-\muv_t)',\Lambda_t^{-1}\Lambda_t'\Lambda_t^{-1}(\z-\muv_t)\rangle \\
    &- \langle (\z-\muv_t)',\Lambda_t^{-1}\Lambda_t'\Lambda_t^{-1}(\z-\muv_t)\rangle -\frac{1}{2} \langle (\z-\muv_t), \Lambda_t^{-1}(-2\Lambda'_t\Lambda_t^{-1}\Lambda'_t + \Lambda_t'')\Lambda_t^{-1}(\z-\muv_t)\rangle \\
    &- \frac{1}{2} \text{tr}(\Lambda_t^{-1}\Lambda'_t\Lambda_t^{-1}\Lambda'_t) + \frac{1}{2} \text{tr}(\Lambda_t^{-1}\Lambda''_t)
\end{align*}
So 
\begin{align*}
    u'(0) &=  \langle -\nuv,\Lambda^{-1}(\z-\muv)\rangle - \frac{1}{2}\langle (\z-\muv),\Lambda^{-1}(S\Lambda+\Lambda S)\Lambda^{-1}(\z-\muv)\rangle + \frac{1}{2} \text{tr}(\Lambda^{-1}(S\Lambda+\Lambda S))\\
          &= \langle -\nuv,\Lambda^{-1}(\z-\muv)\rangle - \frac{1}{2}\langle (\z-\muv),(\Lambda^{-1} S + S\Lambda^{-1})(\z-\muv)\rangle + \text{tr}(S)\\
          &= -\langle \nuv, \Lambda^{-1}(\z -\muv)\rangle -\langle (\z-\muv),S\Lambda^{-1}(\z-\muv)\rangle + \text{tr}(S) \\
          \implies \abs{u'(0)} & \underbrace{\leq}_{\textbf{Lemma \ref{hessiancalc1a}}} \norm{\Lambda^{-1/2} \nuv}\norm{\Lambda^{-1/2}(\z-\muv)} + \norm{S}_F \norm{  \Lambda^{-1}}_2 \norm{\Lambda}_2\langle (\z-\muv), \Lambda^{-1}(\z-\muv)\rangle + \text{tr}(S) \\
          \implies \abs{u'(0)} & \underbrace{\leq}_{\norm{S}_F^2 = \text{tr}(S^2)} \norm{\Lambda^{-1/2} \nuv}\norm{\Lambda^{-1/2}(\z-\muv)} + \norm{S}_F \norm{  \Lambda^{-1}}_2 \norm{\Lambda}_2\langle (\z-\muv), \Lambda^{-1}(\z-\muv)\rangle + \sqrt{n}\norm{S}_F .
    \end{align*}     
    Then for $ \Lambda^{-1/2}(\z-\muv) \mapsto \z $ where $ \z \sim \mathcal{N}(\mathbf{0}_n, I)$ and using Jensen's inequality for $p > 1$ we get :
    \begin{align*}
          \abs{u'(0)}^p & \leq 3^{p-1}\bigg(\norm{\Lambda^{-1/2} }_F^p \norm{\nuv}^p\norm{\z}^p + \norm{S}^p_F \norm{\Lambda^{-1}}^p_2 \norm{\Lambda}^p_2\norm{\z}^{2p} + \sqrt{n}^p\norm{S}^p_F  \bigg)
    \end{align*}
    Then taking expectation operator both sides in the above bound w.r.t. the random vector $ \z \sim \mathcal{N}(\mathbf{0}_n, I)$ and using the fact that $\E[\norm{\z}^{q}] = 2^{q/2} \frac{\Gamma(\frac{n+q}{2}) }{\Gamma(\frac{n}{2}) } $ for any $q>-n$ (see \cite{vershynin2018high}) yields:
       \begin{align}
          \E[\abs{u'(0)}^p] & \leq 3^{p-1}\bigg(\norm{\Lambda^{-1/2} }_F^p \norm{\nuv}^p 2^{p/2} \frac{\Gamma(\frac{n+p}{2}) }{\Gamma(\frac{n}{2}) } + \norm{S}^p_F \norm{\Lambda^{-1}}^p_2 \norm{\Lambda}^p_2 2^{p} \frac{\Gamma(\frac{n+2p}{2}) }{\Gamma(\frac{n}{2}) } + \sqrt{n}^p\norm{S}^p_F  \bigg) \label{bwhessianestimate1a}
    \end{align}
    Further, using the last bound \eqref{bwhessianestimate1a} for $p=2q >2$ followed by the subadditivity of nonnegative concave function $g(t) := t^{1/q}$ for $q>1$ and $t\ge 0$, i.e. $ g(\sum_i t_i ) \le \sum_i g(t_i)$ for $t_i \ge 0$, we have the following estimate:
    \begin{align}
         \sup_{\nuv \in \mathbb{R}^n, \norm{S}_F < 1} \frac{ (\E[\abs{u'(0)}^{2q}])^{1/q}}{ \norm{\nuv}^2 +  \norm{S }^2_F  \norm{ \Lambda^{-1/2} }^{-2}_F}  &\le  3^{2-\frac{1}{q}}\Bigg( 2 \bigg(\frac{\Gamma(\frac{n+2q}{2}) }{\Gamma(\frac{n}{2}) } \bigg)^\frac{1}{q} \sup_{\nuv \in \mathbb{R}^n, \norm{S}_F < 1} \frac{ \norm{\Lambda^{-1/2} }_F^2 \norm{\nuv}^2 }{ \norm{\nuv}^2 +  \norm{S }^2_F  \norm{ \Lambda^{-1/2} }^{-2}_F} \nonumber \\ & + 2^{2} \bigg(\frac{\Gamma(\frac{n+4q}{2}) }{\Gamma(\frac{n}{2}) }\bigg)^\frac{1}{q} \norm{\Lambda^{-1}}^2_2 \norm{\Lambda}^2_2\sup_{\nuv \in \mathbb{R}^n, \norm{S}_F < 1} \frac{\norm{S}^2_F}{ \norm{\nuv}^2 + \norm{S }^2_F  \norm{ \Lambda^{-1/2} }^{-2}_F} \nonumber \\   &+ n\sup_{\nuv \in \mathbb{R}^n, \norm{S}_F < 1} \frac{\norm{S }^2_F}{ \norm{\nuv}^2 +  \norm{S }^2_F  \norm{ \Lambda^{-1/2} }^{-2}_F}  \Bigg) \\
         & \hspace{-0.2cm}\le 3^{2-\frac{1}{q}} \Bigg( 2 \norm{ \Lambda^{-1/2} }^{2}_F \bigg(\frac{\Gamma(\frac{n+2q}{2}) }{\Gamma(\frac{n}{2}) } \bigg)^\frac{1}{q}  + 4 \norm{\Lambda^{-1}}^2_2 \norm{\Lambda}^2_2\bigg(\frac{\Gamma(\frac{n+4q}{2}) }{\Gamma(\frac{n}{2}) }\bigg)^\frac{1}{q} + n  \Bigg)  \label{bwhessianestimate1a0}
    \end{align}
    }

   { Next, we have the estimate on $ u''(0)$
    \begin{align*}      
    u''(0)&= \langle \nuv,\Lambda^{-1} \nuv\rangle - \langle -\nuv,\Lambda^{-1}(S\Lambda + \Lambda S)\Lambda^{-1}(\z-\muv)\rangle \\
    &- \langle -\nuv,\Lambda^{-1}(S\Lambda+\Lambda S)\Lambda^{-1}(\z-\muv)\rangle \nonumber \\ & -\frac{1}{2} \langle (\z-\muv), \Lambda^{-1}(-2(S\Lambda+\Lambda S)\Lambda^{-1}(S\Lambda + \Lambda S) + 2S\Lambda S)\Lambda^{-1}(\z-\muv)\rangle \\
    &- \frac{1}{2} \text{tr}(\Lambda^{-1}(S\Lambda + \Lambda S)\Lambda^{-1}(S\Lambda + \Lambda S)) + \frac{1}{2} \text{tr}(\Lambda^{-1}2S\Lambda S) \\
   u''(0) &= \langle \nuv,\Lambda^{-1} \nuv\rangle +2\langle \Lambda^{-1/2}(S\Lambda + \Lambda S)\Lambda^{-1} \nuv,\Lambda^{-1/2}(\z-\muv)\rangle \nonumber \\ & -\frac{1}{2} \langle \Lambda^{-1/2}(\z-\muv), \Lambda^{-1/2}(-2(S\Lambda+\Lambda S)\Lambda^{-1}(S\Lambda + \Lambda S) + 2S\Lambda S)\Lambda^{-1/2}\Lambda^{-1/2}(\z-\muv)\rangle \\
    &- \frac{1}{2} \text{tr}(\Lambda^{-1}(S\Lambda + \Lambda S)\Lambda^{-1}(S\Lambda + \Lambda S)) +  \text{tr}(\Lambda^{-1}S\Lambda S) \\
    \implies \abs{u''(0)} & \le \norm{\Lambda^{-1}}\norm{\nuv}^2 + 2 \norm{ \Lambda^{-1/2}(S\Lambda + \Lambda S)\Lambda^{-1} \nuv} \norm{\Lambda^{-1/2}(\z-\muv)} \nonumber \\
    & +  \norm{\Lambda^{-1/2}(-(S\Lambda+\Lambda S)\Lambda^{-1}(S\Lambda + \Lambda S) + S\Lambda S)\Lambda^{-1/2}} \norm{\Lambda^{-1/2}(\z-\muv)}^2 \nonumber \\ 
    & + \frac{1}{2}\norm{\Lambda^{-1}}^2_F \norm{S\Lambda + \Lambda S}^2_F + \norm{\Lambda^{-1}}_F\text{tr}(S\Lambda S) \\
  \implies \abs{u''(0)}  & \le \norm{\Lambda^{-1}}_F\norm{\nuv}^2 + 4 \norm{\Lambda^{-1/2}}^2_F\norm{\Lambda^{1/2}}_F\norm{S}_F \norm{ \nuv} \norm{\Lambda^{-1/2}(\z-\muv)} \nonumber \\
    & + \bigg(\norm{\Lambda^{-1/2}(S\Lambda+\Lambda S)\Lambda^{-1/2}}^2_F + \norm{\Lambda^{-1/2} S \Lambda^{1/2}}^2_F \bigg) \norm{\Lambda^{-1/2}(\z-\muv)}^2 \nonumber \\ 
    & + 2\norm{\Lambda^{-1}}^2_F \norm{\Lambda}^2_F \norm{S}^2_F + \norm{\Lambda^{-1}}_F\norm{\Lambda}_F \norm{S}^2_F
\end{align*}
Then for $ \Lambda^{-1/2}(\z-\muv) \mapsto \z $ where $ \z \sim \mathcal{N}(\mathbf{0}_n, I)$ and using Jensen's inequality for $p > 1$ we get :
    \begin{align*}
           \abs{u''(0)}^p & \leq 6^{p-1}\Bigg(\norm{\Lambda^{-1}}^p_F\norm{\nuv}^{2p} + 4^p \norm{\Lambda^{-1/2}}^{2p}_F\norm{\Lambda^{1/2}}^p_F\norm{S}^p_F \norm{ \nuv}^p \norm{\z}^p \nonumber \\
    & + 4^p \norm{\Lambda^{-1/2}}^{4p}_F \norm{\Lambda}^{2p}_F \norm{S}^{2p}_F \norm{\z}^{2p} + \norm{\Lambda^{-1/2} }^{2p}_F \norm{\Lambda^{1/2} }^{2p}_F \norm{S}^{2p}_F \norm{\z}^{2p} \nonumber \\ 
    & + 2^p\norm{\Lambda^{-1}}^{2p}_F \norm{\Lambda}^{2p}_F \norm{S}^{2p}_F + \norm{\Lambda^{-1}}^p_F\norm{\Lambda}^p_F \norm{S}^{2p}_F \Bigg)
    \end{align*}
     Taking expectation operator both sides in the above bound w.r.t. the random vector $ \z \sim \mathcal{N}(\mathbf{0}_n, I)$ and again using the fact that $\E[\norm{\z}^{q}] = 2^{q/2} \frac{\Gamma(\frac{n+q}{2}) }{\Gamma(\frac{n}{2}) } $ for any $q>-n$ (see \cite{vershynin2018high}) yields:
       \begin{align}
          \E[\abs{u''(0)}^p] & \leq 6^{p-1}\Bigg(\norm{\Lambda^{-1}}^p_F\norm{\nuv}^{2p} + 4^p \norm{\Lambda^{-1/2}}^{2p}_F\norm{\Lambda^{1/2}}^p_F\norm{S}^p_F \norm{ \nuv}^p 2^{p/2} \frac{\Gamma(\frac{n+p}{2}) }{\Gamma(\frac{n}{2}) } \nonumber \\
    & + 4^p\norm{\Lambda^{-1/2}}^{4p}_F \norm{\Lambda}^{2p}_F \norm{S}^{2p}_F 2^{p} \frac{\Gamma(\frac{n+2p}{2}) }{\Gamma(\frac{n}{2}) } + \norm{\Lambda^{-1/2} }^{2p}_F \norm{\Lambda^{1/2} }^{2p}_F \norm{S}^{2p}_F 2^{p} \frac{\Gamma(\frac{n+2p}{2}) }{\Gamma(\frac{n}{2}) } \nonumber \\ 
    & + 2^p\norm{\Lambda^{-1}}^{2p}_F \norm{\Lambda}^{2p}_F \norm{S}^{2p}_F + \norm{\Lambda^{-1}}^p_F\norm{\Lambda}^p_F \norm{S}^{2p}_F \Bigg) \label{bwhessianestimate1b}
    \end{align}
    Further, using the last bound \eqref{bwhessianestimate1b} for $p=q >1$ followed by the subadditivity of nonnegative concave function $g(t) := t^{1/q}$ for $q>1$ and $t\ge 0$, i.e. $ g(\sum_i t_i ) \le \sum_i g(t_i)$ for $t_i \ge 0$, we have the following estimate:
    \begin{align}
         \sup_{\nuv \in \mathbb{R}^n, \norm{S}_F < 1} \frac{ (\E[\abs{u''(0)}^{q}])^{1/q}}{ \norm{\nuv}^2 +  \norm{S }^2_F  \norm{ \Lambda^{-1/2} }^{-2}_F}  &\le  6^{1-\frac{1}{q}}\Bigg( \norm{\Lambda^{-1}}_F\sup_{\nuv \in \mathbb{R}^n, \norm{S}_F < 1} \frac{\norm{\nuv}^{2}}{ \norm{\nuv}^2 +  \norm{S }^2_F  \norm{ \Lambda^{-1/2} }^{-2}_F} \nonumber \\
    &  \hspace{-4cm} + 4 \sqrt{2}  \norm{\Lambda^{-1/2}}^{2}_F\norm{\Lambda^{1/2}}_F   \bigg(\frac{\Gamma(\frac{n+q}{2}) }{\Gamma(\frac{n}{2}) } \bigg)^{1/q} \sup_{\nuv \in \mathbb{R}^n, \norm{S}_F < 1} \underbrace{\frac{\norm{S}_F \norm{ \nuv}}{ \norm{\nuv}^2 +  \norm{S }^2_F  \norm{ \Lambda^{-1/2} }^{-2}_F}}_{ \le \frac{\norm{S}_F \norm{ \nuv}}{ 2 \norm{\nuv} \norm{S }_F  \norm{ \Lambda^{-1/2} }^{-1}_F} \textbf{ by AM-GM}} \nonumber \\
    & \hspace{-4cm} + \bigg(2\norm{\Lambda^{-1/2} }^{2}_F \norm{\Lambda^{1/2} }^{2}_F +  8\norm{\Lambda^{-1/2}}^{4}_F \norm{\Lambda}^2_F \bigg)   \bigg(\frac{\Gamma(\frac{n+2q}{2}) }{\Gamma(\frac{n}{2}) }\bigg)^{1/q}  \sup_{\nuv \in \mathbb{R}^n, \norm{S}_F < 1} \frac{ \norm{S}^2_F}{ \norm{\nuv}^2 +  \norm{S }^2_F  \norm{ \Lambda^{-1/2} }^{-2}_F} \nonumber \\
    &  \hspace{-4cm} + \bigg( 2\norm{\Lambda^{-1}}^{2}_F \norm{\Lambda}^{2}_F  + \norm{\Lambda^{-1}}_F\norm{\Lambda}_F  \bigg)  \sup_{\nuv \in \mathbb{R}^n, \norm{S}_F < 1} \frac{\norm{S}^{2}_F}{ \norm{\nuv}^2 +  \norm{S }^2_F  \norm{ \Lambda^{-1/2} }^{-2}_F} \Bigg) \\
         & \hspace{-4cm} \le  6^{1-\frac{1}{q}}\Bigg( \norm{\Lambda^{-1}}_F  + 2 \sqrt{2}  \norm{\Lambda^{-1/2}}^{3}_F\norm{\Lambda^{1/2}}_F   \bigg(\frac{\Gamma(\frac{n+q}{2}) }{\Gamma(\frac{n}{2}) } \bigg)^{1/q}  \nonumber \\
    & \hspace{-4cm} + \bigg(2\norm{\Lambda^{-1/2} }^{4}_F \norm{\Lambda^{1/2} }^{2}_F +  8\norm{\Lambda^{-1/2}}^{6}_F \norm{\Lambda}^2_F \bigg)   \bigg(\frac{\Gamma(\frac{n+2q}{2}) }{\Gamma(\frac{n}{2}) }\bigg)^{1/q}  \nonumber \\
    &  \hspace{-4cm} + \bigg( 2\norm{\Lambda^{-1}}^{2}_F \norm{\Lambda}^{2}_F  + \norm{\Lambda^{-1}}_F\norm{\Lambda}_F  \bigg)  \norm{ \Lambda^{-1/2} }^{2}_F \Bigg)  \label{bwhessianestimate1b0}
    \end{align}
}

{Then the Hessian of the expected loss is given by:
\begin{align}
\frac{d^2}{dt^2} f(\w; \Lambda_t, \muv_t) \bigg\vert_{t=0} &= \int_{\z \in \mathbb{R}^n} \ell(\w;\z)  \frac{d^2}{dt^2}\bigg\vert_{t=0} e^{-u(t)} d\z \\
&= \int_{\z \in \mathbb{R}^n} \ell(\w;\z)  \bigg( e^{-u(0)}(-u''(0)+(u'(0))^2)\bigg) d\z \\
& = \E [ \ell(\w;\z) (u'(0))^2] - \E [ \ell(\w;\z) u''(0)] \\
\implies \abs{\frac{d^2}{dt^2} f(\w; \Lambda_t, \muv_t) \bigg\vert_{t=0}} & \leq (\E [ |\ell(\w;\z)|^p])^{1/p}  (\E [ |u'(0)|^{2q}])^{1/q} \nonumber \\ &  + (\E [ |\ell(\w;\z)|^p])^{1/p}  (\E [ |u''(0)|^{q}])^{1/q} \label{bwhessianestimate1c}
\end{align}
where expectation is with respect to a Gaussian measure with parameters $\muv, \Lambda$ and the last step follows from triangle and H\"{o}lder inequality for $ 1/p + 1/q =1$, $p,q \in [1,\infty]$.
}
{Using the estimates \eqref{bwhessianestimate1a0}, \eqref{bwhessianestimate1b0} in \eqref{bwhessianestimate1c} for $q \ge 1$ yields the following estimate:
\begin{align}
  \sup_{\nuv \in \mathbb{R}^n, \norm{S}_F < 1} \frac{ \abs{\frac{d^2}{dt^2}\Big|_{t=0}f(\tau(t))}}{ \norm{\nuv}^2 +  \norm{S }^2_F  \norm{ \Lambda^{-1/2} }^{-2}_F}  &\leq  (\E [ |\ell(\w;\z)|^p])^{1/p} \Bigg( \sup_{\nuv \in \mathbb{R}^n, \norm{S}_F < 1} \frac{ (\E [ |u'(0)|^{2q}])^{1/q}}{ \norm{\nuv}^2 +  \norm{S }^2_F  \norm{ \Lambda^{-1/2} }^{-2}_F}  \nonumber \\ & + \sup_{\nuv \in \mathbb{R}^n, \norm{S}_F < 1} \frac{ (\E [ |u''(0)|^{q}])^{1/q}}{ \norm{\nuv}^2 +  \norm{S }^2_F  \norm{ \Lambda^{-1/2} }^{-2}_F} \Bigg) \\
  & \hspace{-5cm}\leq  (\E [ |\ell(\w;\z)|^p])^{1/p} \Bigg[ 3^{2-\frac{1}{q}} \Bigg( 2 \norm{ \Lambda^{-1/2} }^{2}_F\bigg(\frac{\Gamma(\frac{n+2q}{2}) }{\Gamma(\frac{n}{2}) } \bigg)^\frac{1}{q}  + 4 \norm{ \Lambda }^{2}_2\norm{ \Lambda^{-1} }^{2}_2\bigg(\frac{\Gamma(\frac{n+4q}{2}) }{\Gamma(\frac{n}{2}) }\bigg)^\frac{1}{q} + n  \Bigg)   \nonumber \\ & \hspace{-5cm} +  6^{1-\frac{1}{q}}\Bigg( \norm{\Lambda^{-1}}_F  + 2 \sqrt{2}  \norm{\Lambda^{-1/2}}^{3}_F\norm{\Lambda^{1/2}}_F   \bigg(\frac{\Gamma(\frac{n+q}{2}) }{\Gamma(\frac{n}{2}) } \bigg)^{1/q}  \nonumber \\
    & \hspace{-2cm} + \bigg(2\norm{\Lambda^{-1/2} }^{4}_F \norm{\Lambda^{1/2} }^{2}_F +  8\norm{\Lambda^{-1/2}}^{6}_F \norm{\Lambda}^2_F \bigg)   \bigg(\frac{\Gamma(\frac{n+2q}{2}) }{\Gamma(\frac{n}{2}) }\bigg)^{1/q}  \nonumber \\
    &  \hspace{-2cm} + \bigg( 2\norm{\Lambda^{-1}}^{2}_F \norm{\Lambda}^{2}_F  + \norm{\Lambda^{-1}}_F\norm{\Lambda}_F  \bigg)  \norm{ \Lambda^{-1/2} }^{2}_F \Bigg) \Bigg] \\
    & \hspace{-2cm}  = C_{\Lambda, q, n} (\E [ |\ell(\w;\z)|^p])^{1/p} \\
    & \hspace{-4cm} \underbrace{\le}_{\eqref{Taylor13co} \, , 1/p +1/q =1 \, , \, p \ge 1}  C_{\Lambda, q, n} (C''_{p,\Lambda^*,n} \times \big( \mathbb{E}_{\z \sim \probP(\muv^*,\Lambda^*)}\left[\abs{\ell(\w;\z)}^{pq}\right] \big)^{1/q}  )^{1/p}
    \underbrace{<}_{\textbf{A2}} \infty  \label{bwhessianestimate1d}
\end{align}
where 
\begin{align}
   C_{\Lambda, q, n}  & = \Bigg[ 3^{2-\frac{1}{q}} \Bigg( 2 \norm{ \Lambda^{-1/2} }^{2}_F\bigg(\frac{\Gamma(\frac{n+2q}{2}) }{\Gamma(\frac{n}{2}) } \bigg)^\frac{1}{q}  + 4 \norm{ \Lambda }^{2}_2\norm{ \Lambda^{-1} }^{2}_2\bigg(\frac{\Gamma(\frac{n+4q}{2}) }{\Gamma(\frac{n}{2}) }\bigg)^\frac{1}{q} + n  \Bigg)   \nonumber \\ & \hspace{0cm} +  6^{1-\frac{1}{q}}\Bigg( \norm{\Lambda^{-1}}_F  + 2 \sqrt{2}  \norm{\Lambda^{-1/2}}^{3}_F\norm{\Lambda^{1/2}}_F   \bigg(\frac{\Gamma(\frac{n+q}{2}) }{\Gamma(\frac{n}{2}) } \bigg)^{1/q}  \nonumber \\
    & \hspace{0cm} + \bigg(2\norm{\Lambda^{-1/2} }^{4}_F \norm{\Lambda^{1/2} }^{2}_F +  8\norm{\Lambda^{-1/2}}^{6}_F \norm{\Lambda}^2_F \bigg)   \bigg(\frac{\Gamma(\frac{n+2q}{2}) }{\Gamma(\frac{n}{2}) }\bigg)^{1/q}  \nonumber \\
    &  \hspace{0cm} + \bigg( 2\norm{\Lambda^{-1}}^{2}_F \norm{\Lambda}^{2}_F  + \norm{\Lambda^{-1}}_F\norm{\Lambda}_F  \bigg)  \norm{ \Lambda^{-1/2} }^{2}_F \Bigg) \Bigg]   \label{bwhessianestimate1d0}
\end{align}
and for $ \norm{\Lambda}_F \ge 1 \, , \, \norm{\Lambda^{-1/2} }_F \ge 1$, $ C_{\Lambda, q, n} \sim_{q} \bigg(\frac{\Gamma(\frac{n+4q}{2}) }{\Gamma(\frac{n}{2}) }\bigg)^\frac{1}{q}\norm{\Lambda}^2_F \norm{\Lambda^{-1/2} }^{6}_F $ is uniformly bounded by using the estimates \eqref{Taylor13ao}, \eqref{Taylor13bo} as follows:
$$ C_{\Lambda, q, n} \sim_{q} \bigg(\frac{\Gamma(\frac{n+4q}{2}) }{\Gamma(\frac{n}{2}) }\bigg)^\frac{1}{q}\norm{\Lambda}^2_F \norm{\Lambda^{-1/2} }^{6}_F \sim_{q} \bigg(\frac{\Gamma(\frac{n+4q}{2}) }{\Gamma(\frac{n}{2}) }\bigg)^\frac{1}{q}\norm{(\Lambda^*)^{1/2}}^4_F \bigg( \frac{\sqrt{n} \norm{(\Lambda^*)^{-1}}_F}{(1-r \norm{(\Lambda^*)^{-1/2}}_F)}\bigg)^{6} \,  $$
and 
\begin{align*}
C''_{p,\Lambda^*,n}  & \lesssim_p n \max\{ 1, r^{2}\} \norm{(\Lambda^*)^{-1/2}}_F^{4} \norm{(\Lambda^*)^{1/2}}_F^{4} \big(\frac{\Gamma(\frac{n+4p}{2}) }{\Gamma(\frac{n}{2}) }\big)^{1/p}
\end{align*}
 for $ \norm{\Lambda^*}_F \ge 1 \, , \, \norm{(\Lambda^*)^{-1/2} }_F \ge 1 $ from \eqref{Taylor13co}.
Thus, from \eqref{holderestimate001} we have shown that 
\begin{align}
  \sup_{\nuv \in \mathbb{R}^n, \norm{S}_F < 1} \frac{ \abs{\text{Hess}_{(\muv,\Lambda)}f((\nuv,  S\Lambda +\Lambda S),(\nuv,S\Lambda +\Lambda S))}}{ \norm{\nuv}^2 +  \norm{S }^2_F  \norm{ \Lambda^{-1/2} }^{-2}_F} =  \sup_{\nuv \in \mathbb{R}^n, \norm{S}_F < 1} \frac{ \abs{\frac{d^2}{dt^2}\Big|_{t=0}f(\tau(t))}}{ \norm{\nuv}^2 +  \norm{S }^2_F  \norm{ \Lambda^{-1/2} }^{-2}_F}  \leq C < \infty   .
\end{align}
}  
\end{proof}

\subsection{Proof of Theorem \ref{distancehessianestimatethm}}
\begin{proof}
    From Lemma \ref{secboundlemma} for $r = (1-\gamma)\sqrt{\lambda_n}$ , $\gamma \in (0,1)$, where $\lambda_n = \norm{\Lambda^{-1}}^{-1}_2 $ and the ball given by
    \[
    \bar{\mathcal{B}}_r(\mathbf{0}) = \{ V\in T_{\Lambda}\texttt{SPD}_n | g_{\Lambda}(V,V) \leq r^2\}, \text{ and let } \bar{\mathcal{B}}_r(\Lambda) = \exp_{\Lambda}(\bar{\mathcal{B}}_r(\mathbf{0})),
    \] 
     the sectional curvature $\mathbf{K}_{BW}$ of the BW metric satisfies 
    \[
        0\leq \mathbf{K}_{BW}(\Lambda') \leq \frac{3}{4\gamma^4\lambda_n^2} = \frac{3 \norm{\Lambda^{-1}}^{2}_2}{4\gamma^4}
    \]
    for all $\Lambda' \in \bar{\mathcal{B}}_r(\Lambda)$. {Let $\gamma$ be such that $ (1-\gamma)\norm{\Lambda^{-1}}^{-1/2}_2 < \frac{  \gamma^2 \pi \norm{\Lambda^{-1}}^{-1}_2}{\sqrt{3} } $ . This inequality in $\gamma \in (0,1)$ is not vacuous and will be easily satisfied for any fixed positive definite $\Lambda$ whenever $\gamma$ is arbitrary close to $1$. As a consequence we get that $ \sqrt{ \frac{3 \norm{\Lambda^{-1}}^{2}_2}{4\gamma^4}} \times r < \pi/2 $.}
Then from Theorem \ref{jostdistthm}, for the distance squared function $k(x) :=\frac{1}{2}d^2(x,p) = \frac{1}{2} r^2(x)$ for $\mu = \frac{3 \norm{\Lambda^{-1}}^{2}_2}{4\gamma^4} $, $r(x) \le (1-\gamma)\sqrt{\lambda_n} = (1-\gamma) \norm{\Lambda^{-1}}^{-1/2}_2$, and for any $ x \in
\bar{\mathcal{B}}_{\rho}(p) := \{ q\in M | d(p,q) \leq \rho < \frac{\pi}{2 \sqrt{\mu}}\}$ we have that for any $ v \in T_{x}M$ :
\begin{align}
  \sqrt{\mu} r(x) \cot(\sqrt{\mu}r(x))  &\leq  \frac{\text{Hess }k(v,v)}{\norm{v}_x^2 } \leq \sqrt{-\lambda}r(x) \coth (\sqrt{-\lambda}r(x)) \\
  \implies  (1- \gamma)\norm{\Lambda^{-1}}^{1/2}_2\frac{\sqrt{3} }{2\gamma^2}  \cot \Bigg((1- \gamma)\norm{\Lambda^{-1}}^{1/2}_2\frac{\sqrt{3} }{2\gamma^2} \Bigg)   &\leq  \frac{\text{Hess }k(v,v)}{\norm{v}_x^2 } \leq \lim_{\lambda \to 0-} \sqrt{-\lambda}r(x) \coth (\sqrt{-\lambda}r(x))  \\
  \implies 0 < (1- \gamma)\norm{\Lambda^{-1}}^{1/2}_2\frac{\sqrt{3} }{2\gamma^2}  \cot \Bigg((1- \gamma)\norm{\Lambda^{-1}}^{1/2}_2\frac{\sqrt{3} }{2\gamma^2} \Bigg)   &\leq  \frac{\text{Hess }k(v,v)}{\norm{v}_x^2 } \leq  1
\end{align}
where the lower bound in the second last step follows from the strict monotonicity (strictly decreasing) of $g(t) := t \cot{t}$ on $(0, \pi/2) $.
{Then for $ d_{BW}(\Lambda,\Lambda^*) \le  (1-\gamma)\norm{(\Lambda^*)^{-1}}^{-1/2}_2 $ and $\gamma \in (0,1)$ such that $(1-\gamma)\norm{(\Lambda^*)^{-1}}^{-1/2}_2 < \frac{  \gamma^2 \pi \norm{(\Lambda^*)^{-1}}^{-1}_2}{\sqrt{3} }$ , the squared distance function in BW metric between $\Lambda, \Lambda^* $ given by $d_{BW}^2(\Lambda,\Lambda^*)$ satisfies :
\begin{align}
    0 < (1- \gamma)\norm{(\Lambda^*)^{-1}}^{1/2}_2\frac{\sqrt{3} }{\gamma^2}  \cot \Bigg((1- \gamma)\norm{(\Lambda^*)^{-1}}^{1/2}_2\frac{\sqrt{3} }{2\gamma^2} \Bigg)   &\leq  \frac{\text{Hess}_{(\muv,\Lambda)} d_{BW}^2(\Lambda,\Lambda^*)(V,V)}{\text{tr} (L_{\Lambda}(V) \Lambda L_{\Lambda}(V)) } \leq  2  \label{bwhessianestimate1e}
\end{align}
for any $V \in T_{\Lambda}(\texttt{SPD}_n )$.
}
\end{proof}

\subsection{Proof of Theorem \ref{lagrangehessianestimatethm}}
\begin{proof}
    For any fixed $\w$ the Lagrangian $\mathcal{L}(\muv,\Lambda;\w) $  admits a Hessian whose scaled quadratic form in the direction $ (\nuv,S\Lambda +\Lambda S)$ on the joint tangent space is given by :
\begin{align}
    \frac{ {\text{Hess}_{(\muv,\Lambda)}\mathcal{L}((\nuv,  S\Lambda +\Lambda S),(\nuv,S\Lambda +\Lambda S))}}{\langle (\nuv,  S\Lambda +\Lambda S) , (\nuv,  S\Lambda +\Lambda S)\rangle_{l_2 \oplus BW}} = \frac{ {\text{Hess}_{(\muv,\Lambda)}\mathcal{L}((\nuv,  S\Lambda +\Lambda S),(\nuv,S\Lambda +\Lambda S))}}{ \norm{\nuv}^2 + \tr(S \Lambda S)} .
\end{align}
Let $ f(\w; \Lambda , \muv) =  \mathbb{E}_{\z \sim \probP(\muv,\Lambda)} [\ell(\w;\z) ]$ and thus from \eqref{bwhessianestimate1d}, \eqref{bwhessianestimate1e} we get
\begin{align}
      \sup_{\nuv \in \mathbb{R}^n, \norm{S}_F < 1} \frac{ {\text{Hess}_{(\muv,\Lambda)}\mathcal{L}((\nuv,  S\Lambda +\Lambda S),(\nuv,S\Lambda +\Lambda S))}}{ \norm{\nuv}^2 + \tr(S \Lambda S)} & =   \sup_{\nuv \in \mathbb{R}^n, \norm{S}_F < 1} \frac{ {\text{Hess}_{(\muv,\Lambda)}f((\nuv,  S\Lambda +\Lambda S),(\nuv,S\Lambda +\Lambda S))}}{ \norm{\nuv}^2 + \tr(S \Lambda S)} \nonumber \\ 
      & \hspace{-5cm}- \frac{1}{\epsilon}\inf_{\nuv \in \mathbb{R}^n, \norm{S}_F < 1} \frac{ {\text{Hess}_{(\muv,\Lambda)}\norm{\muv - \muv^*}^2(  \nuv,\nuv)} + {\text{Hess}_{(\muv,\Lambda)}d_{BW}^2(\Lambda,\Lambda^*)(  S\Lambda +\Lambda S,S\Lambda +\Lambda S)}}{ \norm{\nuv}^2 + \tr(S \Lambda S)} \\
      & \hspace{-5cm} \le C_{\Lambda, q, n} (\E [ |\ell(\w;\z)|^p])^{1/p} \nonumber \\ & \hspace{-6cm} - \frac{1}{\epsilon}\inf_{\nuv \in \mathbb{R}^n, \norm{S}_F < 1} \frac{ 2 \norm{\nuv}^2 + (1- \gamma)\norm{(\Lambda^*)^{-1}}^{1/2}_2\frac{\sqrt{3} }{\gamma^2}  \cot \Bigg((1- \gamma)\norm{(\Lambda^*)^{-1}}^{1/2}_2\frac{\sqrt{3} }{2\gamma^2} \Bigg) \tr(S \Lambda S)}{ \norm{\nuv}^2 + \tr(S \Lambda S)} \\
         & \hspace{-6cm} \le C_{\Lambda, q, n} (\E [ |\ell(\w;\z)|^p])^{1/p}  - \frac{1}{\epsilon}(1- \gamma)\norm{(\Lambda^*)^{-1}}^{1/2}_2\frac{\sqrt{3} }{\gamma^2}  \cot \Bigg((1- \gamma)\norm{(\Lambda^*)^{-1}}^{1/2}_2\frac{\sqrt{3} }{2\gamma^2} \Bigg)  \label{bwhessianestimate1f} \\
          & \hspace{-6cm} \underbrace{\le}_{\eqref{Taylor13co} \, , 1/p +1/q =1 \, , \, p \ge 1}  C_{\Lambda, q, n} (C''_{p,\Lambda^*,n} \times \big( \mathbb{E}_{\z \sim \probP(\muv^*,\Lambda^*)}\left[\abs{\ell(\w;\z)}^{pq}\right] \big)^{1/q}  )^{1/p} \nonumber \\ & \hspace{-6cm} - \frac{1}{\epsilon}(1- \gamma)\norm{(\Lambda^*)^{-1}}^{1/2}_2\frac{\sqrt{3} }{\gamma^2}  \cot \Bigg((1- \gamma)\norm{(\Lambda^*)^{-1}}^{1/2}_2\frac{\sqrt{3} }{2\gamma^2} \Bigg) \label{bwhessianestimate1fx}
\end{align}
for $\gamma \in (0,1)$ and $(1-\gamma)\norm{(\Lambda^*)^{-1}}^{-1/2}_2 < \frac{  \gamma^2 \pi \norm{(\Lambda^*)^{-1}}^{-1}_2}{\sqrt{3} }$.
Then suppose if $$ 0 < \epsilon \le \frac{(1- \gamma)\norm{(\Lambda^*)^{-1}}^{1/2}_2\frac{\sqrt{3} }{\gamma^2}  \cot \Bigg((1- \gamma)\norm{(\Lambda^*)^{-1}}^{1/2}_2\frac{\sqrt{3} }{2\gamma^2} \Bigg)}{2C_{\Lambda, q, n}  (C''_{p,\Lambda^*,n} \times \big( \mathbb{E}_{\z \sim \probP(\muv^*,\Lambda^*)}\left[\abs{\ell(\w;\z)}^{pq}\right] \big)^{1/q})^{1/p}} $$ we have that the Lagrangian $\mathcal{L}$ is at least $C_{\Lambda, q, n}   (C''_{p,\Lambda^*,n} \times \big( \mathbb{E}_{\z \sim \probP(\muv^*,\Lambda^*)}\left[\abs{\ell(\w;\z)}^{pq}\right] \big)^{1/q})^{1/p}  $ geodesically strongly concave in $ (\muv,\Lambda)$ on the {product of closed balls of radius $r = (1- \gamma) \norm{(\Lambda^*)^{-1}}_2^{-1/2}$} centered at $(\muv^*,\Lambda^*)$ for any $ {\w \in \mathbb{R}^d}$ from assumption \textbf{A2}. Similarly using the upper bound from \eqref{bwhessianestimate1e} we get
\begin{align}
      \sup_{\nuv \in \mathbb{R}^n, \norm{S}_F < 1} \frac{ \abs{\text{Hess}_{(\muv,\Lambda)}\mathcal{L}((\nuv,  S\Lambda +\Lambda S),(\nuv,S\Lambda +\Lambda S))}}{ \norm{\nuv}^2 + \tr(S \Lambda S)} & \le   \sup_{\nuv \in \mathbb{R}^n, \norm{S}_F < 1} \frac{ \abs{\text{Hess}_{(\muv,\Lambda)}f((\nuv,  S\Lambda +\Lambda S),(\nuv,S\Lambda +\Lambda S))}}{ \norm{\nuv}^2 + \tr(S \Lambda S)} \nonumber \\ 
      & \hspace{-6cm} + \frac{1}{\epsilon}\sup_{\nuv \in \mathbb{R}^n, \norm{S}_F < 1} \frac{ \abs{\text{Hess}_{(\muv,\Lambda)}\norm{\muv - \muv^*}^2(  \nuv,\nuv) + \text{Hess}_{(\muv,\Lambda)}d_{BW}^2(\Lambda,\Lambda^*)(  S\Lambda +\Lambda S,S\Lambda +\Lambda S)}}{ \norm{\nuv}^2 + \tr(S \Lambda S)} 
      \end{align}
      \begin{align}
      &  \le C_{\Lambda, q, n} (\E [ |\ell(\w;\z)|^p])^{1/p}  + \frac{1}{\epsilon}\sup_{\nuv \in \mathbb{R}^n, \norm{S}_F < 1} \frac{ 2 \norm{\nuv}^2 + 2 \tr(S \Lambda S)}{ \norm{\nuv}^2 + tr(S \Lambda S)} \\
         &  \le C_{\Lambda, q, n} (\E [ |\ell(\w;\z)|^p])^{1/p}  +  \frac{2}{\epsilon} \label{bwhessianestimate1g} \\
         &  \underbrace{\le}_{\eqref{Taylor13co} \, , 1/p +1/q =1 \, , \, p \ge 1} C_{\Lambda, q, n} (C''_{p,\Lambda^*,n} \times \big( \mathbb{E}_{\z \sim \probP(\muv^*,\Lambda^*)}\left[\abs{\ell(\w;\z)}^{pq}\right] \big)^{1/q})^{1/p}  + \frac{2}{\epsilon}  \label{bwhessianestimate1gx}\\ 
         &   < \infty 
\end{align}
for any $\epsilon >0$ from assumption \textbf{A2}. Thus $\mathcal{L}$ is at most $C_{\Lambda, q, n}  (C''_{p,\Lambda^*,n} \times \big( \mathbb{E}_{\z \sim \probP(\muv^*,\Lambda^*)}\left[\abs{\ell(\w;\z)}^{pq}\right] \big)^{1/q})^{1/p} + \frac{2}{\epsilon}  $ gradient Lipschitz continuous in $ (\muv,\Lambda)$ on the {product of closed balls of radius $r = (1- \gamma) \norm{(\Lambda^*)^{-1}}_2^{-1/2}$} centered at $(\muv^*,\Lambda^*)$ for any $ {\w \in \mathbb{R}^d}$. Finally $C_{\Lambda, q, n} $ can be further bounded uniformly in $\Lambda$ by using the estimates \eqref{Taylor13ao}, \eqref{Taylor13bo} as follows:
$$ C_{\Lambda, q, n} \sim_{q} \bigg(\frac{\Gamma(\frac{n+4q}{2}) }{\Gamma(\frac{n}{2}) }\bigg)^\frac{1}{q} \norm{\Lambda}^2_F \norm{\Lambda^{-1/2} }^{6}_F \sim_{q} \bigg(\frac{\Gamma(\frac{n+4q}{2}) }{\Gamma(\frac{n}{2}) }\bigg)^\frac{1}{q}\norm{(\Lambda^*)^{1/2}}^4_F \bigg( \frac{\sqrt{n} \norm{(\Lambda^*)^{-1}}_F}{(1-r \norm{(\Lambda^*)^{-1/2}}_F)}\bigg)^{6}  := C_{\Lambda^*, q, n}  \,  $$
and hence the constant $C_{\Lambda, q, n}  $ can be replaced by the universal constant $C_{\Lambda^*, q, n} $ everywhere from \eqref{bwhessianestimate1fx} onward to \eqref{bwhessianestimate1gx}.
\end{proof}

\subsection{Proof of Theorem \ref{interiormaximaexistencethm}}
\begin{proof}
    We consider the Lagrangian $\hat{\mathcal{L}} := -\epsilon \mathcal{L} $ where $\mathcal{L}$ follows from the definition \ref{Lagrangiandefn} so that:
$$ \hat{\mathcal{L}}(\muv,\Lambda;\w) :=  \bigg(  \norm{\muv - \muv^*}^2 + d^2_{BW}(\Lambda ,\Lambda^*)\bigg) -\epsilon \mathbb{E}_{\z \sim \probP(\muv,\Lambda)} [\ell(\w;\z) ]   $$ 
and suppose in the context of Lemma \ref{localmaxexistencelem}, 
$$F(\muv,\Lambda) :=   \bigg(  \norm{\muv - \muv^*}^2 + d^2_{BW}(\Lambda ,\Lambda^*)\bigg) \, ,$$ 
$$H(\muv,\Lambda) :=  -\mathbb{E}_{\z \sim \probP(\muv,\Lambda)} [\ell(\w;\z) ]  \, \quad , \quad G := F + \epsilon H := \hat{\mathcal{L}} $$ 
Let $(\muv, \Lambda) $ be any arbitrary point inside the {product of closed balls of radius $r = (1- \gamma) \norm{(\Lambda^*)^{-1}}_2^{-1/2}$} centered at $(\muv^*, \Lambda^*)$, for $\gamma \in (0,1)$ and $(1-\gamma)\norm{(\Lambda^*)^{-1}}^{-1/2}_2 < \frac{  \gamma^2 \pi \norm{(\Lambda^*)^{-1}}^{-1}_2}{\sqrt{3} }$. Then by local $ L := C_{\Lambda^*, q, n} (C''_{p,\Lambda^*,n} \times \big( \mathbb{E}_{\z \sim \probP(\muv^*,\Lambda^*)}\left[\abs{\ell(\w;\z)}^{pq}\right] \big)^{1/q}  )^{1/p} $ -gradient Lipschitz continuity of $H$ (applying Theorem \ref{losshessianestimatethm} and then replacing $C_{\Lambda, q, n} $ by the universal constant $C_{\Lambda^*, q, n}$ as in the proof of Theorem \ref{lagrangehessianestimatethm}) , local $\mu $ -geodesic strong convexity (Theorem \ref{lagrangehessianestimatethm}) of $G, F$ where 
\begin{align*}
    \mu := & (1- \gamma)\norm{(\Lambda^*)^{-1}}^{1/2}_2\frac{\sqrt{3} }{\gamma^2}  \cot \Bigg((1- \gamma)\norm{(\Lambda^*)^{-1}}^{1/2}_2\frac{\sqrt{3} }{2\gamma^2} \Bigg)  - \epsilon C_{\Lambda^*, q, n} (C''_{p,\Lambda^*,n}  \big( \mathbb{E}_{\z \sim \probP(\muv^*,\Lambda^*)}\left[\abs{\ell(\w;\z)}^{pq}\right] \big)^{1/q}  )^{1/p} \nonumber \\  & > (1- \gamma)\norm{(\Lambda^*)^{-1}}^{1/2}_2\frac{\sqrt{3} }{2\gamma^2}  \cot \Bigg((1- \gamma)\norm{(\Lambda^*)^{-1}}^{1/2}_2\frac{\sqrt{3} }{2\gamma^2} \Bigg) > 0
\end{align*}
for
$$ 0 < \epsilon < \frac{(1- \gamma)\norm{(\Lambda^*)^{-1}}^{1/2}_2\frac{\sqrt{3} }{\gamma^2}  \cot \Bigg((1- \gamma)\norm{(\Lambda^*)^{-1}}^{1/2}_2\frac{\sqrt{3} }{2\gamma^2} \Bigg)}{2C_{\Lambda^*, q, n}  (C''_{p,\Lambda^*,n} \times \big( \mathbb{E}_{\z \sim \probP(\muv^*,\Lambda^*)}\left[\abs{\ell(\w;\z)}^{pq}\right] \big)^{1/q})^{1/p}}   $$
and $L >\mu >0$ are constants independent of $\w$ by \textbf{A2}, we conclude from Lemma \ref{localmaxexistencelem} that the unique minimizer of $ \hat{\mathcal{L}}$, or equivalently the unique maximizer $(\muv^{\#}, \Lambda^{\#})$  of $ \mathcal{L} $ exists inside the {product of closed balls of radius $r = (1- \gamma) \norm{(\Lambda^*)^{-1}}_2^{-1/2}$} centered at $(\muv^*, \Lambda^*)$, with
\[ d_{l_2 \oplus BW}\bigg((\muv^{\#}, \Lambda^{\#}), (\muv^*, \Lambda^*)\bigg) \leq 2\epsilon L / \mu < \frac{4C_{\Lambda^*, q, n}  (C''_{p,\Lambda^*,n} \times \big( \mathbb{E}_{\z \sim \probP(\muv^*,\Lambda^*)}\left[\abs{\ell(\w;\z)}^{pq}\right] \big)^{1/q})^{1/p}} {(1- \gamma)\norm{(\Lambda^*)^{-1}}^{1/2}_2\frac{\sqrt{3} }{\gamma^2}  \cot \Bigg((1- \gamma)\norm{(\Lambda^*)^{-1}}^{1/2}_2\frac{\sqrt{3} }{2\gamma^2} \Bigg)} \epsilon \, ,\]
provided 
\begin{align}
    \epsilon &< \min \bigg\{ \frac{\mu r}{2 L} , \frac{(1- \gamma)\norm{(\Lambda^*)^{-1}}^{1/2}_2\frac{\sqrt{3} }{\gamma^2}  \cot \Bigg((1- \gamma)\norm{(\Lambda^*)^{-1}}^{1/2}_2\frac{\sqrt{3} }{2\gamma^2} \Bigg)}{2C_{\Lambda^*, q, n}  (C''_{p,\Lambda^*,n} \times \big( \mathbb{E}_{\z \sim \probP(\muv^*,\Lambda^*)}\left[\abs{\ell(\w;\z)}^{pq}\right] \big)^{1/q})^{1/p}}   \bigg \}  \\
    \impliedby  \epsilon &< \frac{(1- \gamma)\norm{(\Lambda^*)^{-1}}^{1/2}_2\frac{\sqrt{3} }{\gamma^2}  \cot \Bigg((1- \gamma)\norm{(\Lambda^*)^{-1}}^{1/2}_2\frac{\sqrt{3} }{2\gamma^2} \Bigg)}{2C_{\Lambda^*, q, n}  (C''_{p,\Lambda^*,n} \times \big( \mathbb{E}_{\z \sim \probP(\muv^*,\Lambda^*)}\left[\abs{\ell(\w;\z)}^{pq}\right] \big)^{1/q})^{1/p}} \times \min \{\tfrac{r}{2} ,1\}
\end{align}
and the last step follows by substituting the $\mu $ lower bound and $L$.  
\end{proof}

\section{Gradient Calculations}\label{sectiongradientcalc}
\subsection{Gradient of the statistical loss}
{For gradient computation in BW metric, simplify $u'(0)$ (where $u(t)$ is given by \eqref{utdef}), get $\frac{d}{dt}\Big|_{t=0}f(\tau(t)) $ (where $\tau(t)$ is the curve \eqref{taudef}), use $$\langle \nabla_{(\muv,\Lambda)} f(\muv,\Lambda), (\nuv, S \Lambda + \Lambda S)\rangle_{l_2 \oplus BW} = \frac{d}{dt}\Big|_{t=0}f(\tau(t))  $$ to identify gradient $\nabla_{\Lambda} f(\muv,\Lambda)$ by setting $\nuv = 0$ in:
\begin{align*}
    \langle \nabla_{(\muv, \Lambda)} f(\muv,\Lambda), (\nuv, S \Lambda + \Lambda S)\rangle_{l_2 \oplus BW} &=  \langle \nabla_{\muv} f(\muv,\Lambda), \nuv\rangle + \langle \nabla_{\Lambda} f(\muv,\Lambda), S \Lambda + \Lambda S\rangle_{BW} \\ &=  \langle \nabla_{\muv} f(\muv,\Lambda), \nuv\rangle + \text{tr}(L_{\Lambda}(\nabla_{\Lambda} f(\muv,\Lambda)) \Lambda S) 
\end{align*}
}

Since $ \langle \nabla_{(\muv, \Lambda)} f(\muv,\Lambda), (\nuv, S\Lambda + \Lambda S)\rangle_{l_2 \oplus BW} = \frac{d}{dt}\Big|_{t=0}f(\tau(t)) $ or equivalently 
$$\langle \nabla_{(\muv, \Lambda)} f(\muv,\Lambda), (\nuv, S\Lambda + \Lambda S)\rangle_{l_2 \oplus BW} =  \langle \nabla_{\muv} f(\muv,\Lambda), \nuv\rangle + \underbrace{\langle \nabla_{\Lambda} f(\muv,\Lambda),  S\Lambda + \Lambda S\rangle_{BW}}_{ = \text{tr}(L_{\Lambda}(\nabla_{\Lambda} f(\muv,\Lambda)) \Lambda S)} = \frac{d}{dt}\Big|_{t=0}f(\tau(t)) . $$
Using the substitution $u(t) =\bigg(\frac{1}{2}\langle(\z -\muv_t),\Lambda_t^{-1}(\z - \muv_t)\rangle +\frac{1}{2}\log \det(2\pi\Lambda_t)\bigg)$ we first simplify $ \frac{d}{dt}\Big|_{t=0}f(\tau(t)) $ as follows:
\begin{align}
    \frac{d}{dt}\Big|_{t=0}f(\tau(t)) &=    \frac{d}{dt}\Big|_{t=0} \int_{\z \in \mathbb{R}^n} \ell(\w;\z)   \exp\bigg(-\frac{1}{2}\langle(\z -\muv_t),\Lambda_t^{-1}(\z - \muv_t)\rangle\bigg) \frac{d\z}{\sqrt{\det(2\pi \Lambda_t)}} \\
     & \hspace{-1.5cm}\underbrace{=}_{\substack{\textbf{Dominated convergence} \\ \textbf{theorem}}}     \int_{\z \in \mathbb{R}^n}    \frac{d}{dt}\Big|_{t=0}\exp\bigg(-\frac{1}{2}\langle(\z -\muv_t),\Lambda_t^{-1}(\z - \muv_t)\rangle -\frac{1}{2}\log \det(2\pi\Lambda_t)\bigg) { \ell(\w;\z) d\z} \\
     & =     \int_{\z \in \mathbb{R}^n}    -\exp (-u(0)) u'(0) { \ell(\w;\z) d\z} \\
     & =     \int_{\z \in \mathbb{R}^n}    -\exp \bigg(-\frac{1}{2}\langle(\z -\muv),\Lambda^{-1}(\z - \muv)\rangle\bigg)  u'(0) { \ell(\w;\z) } \frac{d\z}{\sqrt{\det(2\pi \Lambda)}}
\end{align}
For $\nuv = 0$ we have:
\begin{align}
    \frac{d}{dt}\Big|_{t=0}f(\tau(t)) &= \nonumber \\ & \hspace{-2.5cm}      \int_{\z \in \mathbb{R}^n}    -\exp \bigg(-\frac{1}{2}\langle(\z -\muv),\Lambda^{-1}(\z - \muv)\rangle\bigg)  \bigg(- \frac{1}{2}\langle (\z-\muv),(\Lambda^{-1} S + S\Lambda^{-1})(\z-\muv)\rangle + \text{tr}(S)\bigg) { \ell(\w;\z) } \frac{d\z}{\sqrt{\det(2\pi \Lambda)}}\\
    & \hspace{-2.5cm}   =   \int_{\z \in \mathbb{R}^n}    -\exp \bigg(-\frac{1}{2}\langle(\z -\muv),\Lambda^{-1}(\z - \muv)\rangle\bigg)  \bigg(- \frac{1}{2} \text{tr}\bigg(\langle (\z-\muv),\Lambda^{-1} S (\z-\muv)\rangle\bigg) - \frac{1}{2} \text{tr}\bigg(\langle (\z-\muv),S\Lambda^{-1} (\z-\muv)\rangle\bigg)  \nonumber \\ & + \text{tr}(S)\bigg) { \ell(\w;\z) } \frac{d\z}{\sqrt{\det(2\pi \Lambda)}}\\
    & \hspace{-2.5cm}   =   \int_{\z \in \mathbb{R}^n}    -\exp \bigg(-\frac{1}{2}\langle(\z -\muv),\Lambda^{-1}(\z - \muv)\rangle\bigg)  \bigg(- \frac{1}{2}\text{tr}\bigg( (\z-\muv)(\z-\muv)^T\Lambda^{-1} S \bigg) - \frac{1}{2}\text{tr}\bigg( (\z-\muv)(\z-\muv)^T S\Lambda^{-1} \bigg) \nonumber \\ & + \text{tr}(S)\bigg) { \ell(\w;\z) } \frac{d\z}{\sqrt{\det(2\pi \Lambda)}}\\
     & \hspace{-2.5cm}   =   \int_{\z \in \mathbb{R}^n}    -\exp \bigg(-\frac{1}{2}\langle(\z -\muv),\Lambda^{-1}(\z - \muv)\rangle\bigg)  \text{tr}\bigg( (I - (\z-\muv)(\z-\muv)^T\Lambda^{-1} ) S\bigg)  { \ell(\w;\z) } \frac{d\z}{\sqrt{\det(2\pi \Lambda)}}\\
       & \hspace{-2.5cm}   =   \int_{\z \in \mathbb{R}^n}    -\exp \bigg(-\frac{1}{2}\langle(\z -\muv),\Lambda^{-1}(\z - \muv)\rangle\bigg)  \text{tr}\bigg( (\Lambda^{-1} - \Lambda^{-1}(\z-\muv)(\z-\muv)^T\Lambda^{-1} ) \Lambda S\bigg)  { \ell(\w;\z) } \frac{d\z}{\sqrt{\det(2\pi \Lambda)}}\\
        & \hspace{-2.5cm}   =     \text{tr}\bigg( \bigg(\int_{\z \in \mathbb{R}^n}    -\exp \bigg(-\frac{1}{2}\langle(\z -\muv),\Lambda^{-1}(\z - \muv)\rangle\bigg) (\Lambda^{-1} - \Lambda^{-1}(\z-\muv)(\z-\muv)^T\Lambda^{-1} )  { \ell(\w;\z) } \frac{d\z}{\sqrt{\det(2\pi \Lambda)}} \bigg)\Lambda S\bigg) \\
         & \hspace{-2.5cm}   =     \text{tr}\bigg( X \Lambda S\bigg)
\end{align}
where $ \nabla_{\Lambda} f(\muv; \Lambda) =  X \Lambda + \Lambda X$
and 
{
\begin{align}
    X = \int_{\z \in \mathbb{R}^n}    \exp \bigg(-\frac{1}{2}\langle(\z -\muv),\Lambda^{-1}(\z - \muv)\rangle\bigg) (  \Lambda^{-1}(\z-\muv)(\z-\muv)^T\Lambda^{-1} -\Lambda^{-1})  { \ell(\w;\z) } \frac{d\z}{\sqrt{\det(2\pi \Lambda)}} \, .\label{gradcalc}
\end{align}
We note that $\nabla_{\muv} f(\muv; \Lambda) $ can be calculated similarly, the calculation is simpler so we omit it. 
}

\subsection{Gradient of quadratic regularizer}
{Consider the squared distance in BW metric between $\Lambda, \Lambda^*  $ as follows:
$$ d_{BW}^2(\Lambda,\Lambda^*) := {\text{tr}\bigg(\Lambda + \Lambda^* - 2( \Lambda^{1/2} \Lambda^*  \Lambda^{1/2})^{1/2}  \bigg)} .$$ So to find gradient of $d_{BW}^2(\Lambda,\Lambda^*)$ at $\Lambda$, we set $ u(t) := d_{BW}^2(\Lambda_t,\Lambda^*)$ and find 
\[
\frac{d}{dt}\Big|_{t=0} u(t), \quad \text{ where }\Lambda_t = (I+tS)\Lambda(I+tS) \quad \text{and}
\]
\[
\frac{d}{dt}\Big|_{t=0} u(t) = g_{BW}(\nabla_{\Lambda}d_{BW}^2(\Lambda,\Lambda^*), S\Lambda + \Lambda S) = \text{tr}(L_{\Lambda}(\nabla_{\Lambda}d_{BW}^2(\Lambda,\Lambda^*)) \Lambda S) \quad \forall \quad S \in \mathbb{S}^n
\]
}
 Simplifying $ \frac{d}{dt}\Big|_{t=0} u(t)$ and using $d_{BW}(\Lambda_t,\Lambda^*) = d_{BW}(\Lambda^*, \Lambda_t)$ yields:
\begin{align}
    u'(0) & = \frac{d}{dt}\Big|_{t=0}  \text{tr}\bigg(\Lambda^* + \Lambda_t - 2( \sqrt{\Lambda^*} (I +tS)\sqrt{\Lambda}\sqrt{\Lambda} (I +tS)\sqrt{\Lambda^*})^{1/2}  \bigg) \\
     & = \frac{d}{dt}\Big|_{t=0}  \text{tr}\bigg(\Lambda^* + \Lambda_t - 2\bigg( (\sqrt{\Lambda^*} (I +tS)\sqrt{\Lambda})(\sqrt{\Lambda^*} (I +tS)\sqrt{\Lambda})^T\bigg)^{1/2}  \bigg) \\
          & \underbrace{=}_{\textbf{Lemma } \ref{lyapunovlem1}} \text{tr}(S \Lambda + \Lambda S) - \text{tr}\bigg( \bigg(\sqrt{\Lambda^*} {\Lambda} \sqrt{\Lambda^*} \bigg)^{-1/2} \bigg( \sqrt{\Lambda^*} S {\Lambda} \sqrt{\Lambda^*} + \sqrt{\Lambda^*} {\Lambda} S\sqrt{\Lambda^*}\bigg)\bigg)\\
     & =  \text{tr}\bigg(2\Lambda S - {\Lambda} \sqrt{\Lambda^*}\bigg(\sqrt{\Lambda^*} {\Lambda} \sqrt{\Lambda^*} \bigg)^{-1/2} \sqrt{\Lambda^*} S   -  \sqrt{\Lambda^*}  \bigg(\sqrt{\Lambda^*} {\Lambda} \sqrt{\Lambda^*} \bigg)^{-1/2}\sqrt{\Lambda^*} {\Lambda} S \bigg) \\
      & =  \text{tr}\bigg(2\Lambda S -\sqrt{\Lambda^*}\bigg(\sqrt{\Lambda^*} {\Lambda} \sqrt{\Lambda^*} \bigg)^{-1/2} \sqrt{\Lambda^*}  {\Lambda}  S   -  \sqrt{\Lambda^*}  \bigg(\sqrt{\Lambda^*} {\Lambda} \sqrt{\Lambda^*} \bigg)^{-1/2}\sqrt{\Lambda^*} {\Lambda} S \bigg) \\
     & =  \text{tr}\bigg(\bigg(2I  -  \sqrt{\Lambda^*}\bigg(\sqrt{\Lambda^*} {\Lambda} \sqrt{\Lambda^*} \bigg)^{-1/2} \sqrt{\Lambda^*}     -  \sqrt{\Lambda^*}  \bigg(\sqrt{\Lambda^*} {\Lambda} \sqrt{\Lambda^*} \bigg)^{-1/2}\sqrt{\Lambda^*}  \bigg) {\Lambda}  S \bigg) \\
     & =  \text{tr}\bigg( L_{ \Lambda}(\nabla_{\Lambda}   d_{BW}^2(\Lambda,\Lambda^*)  ) \Lambda S \bigg) 
\end{align}
and hence
\begin{align}
    L_{ \Lambda}(\nabla_{\Lambda}   d_{BW}^2(\Lambda,\Lambda^*)  ) & = 2 \bigg(I  -  \sqrt{\Lambda^*}\bigg(\sqrt{\Lambda^*} {\Lambda} \sqrt{\Lambda^*} \bigg)^{-1/2} \sqrt{\Lambda^*} \bigg) \\
    & = 2(I-J_1)  \quad , \quad J_1 = \sqrt{\Lambda^*}\bigg(\sqrt{\Lambda^*} {\Lambda} \sqrt{\Lambda^*} \bigg)^{-1/2} \sqrt{\Lambda^*} = J^{-1}_{\Lambda^*, \Lambda} \\
    \implies \nabla_{\Lambda}   d_{BW}^2(\Lambda,\Lambda^*) & = \Lambda  L_{ \Lambda}(\nabla_{\Lambda}   d_{BW}^2(\Lambda,\Lambda^*)  ) +  L_{ \Lambda}(\nabla_{\Lambda}   d_{BW}^2(\Lambda,\Lambda^*)  ) \Lambda \\
    & = 2\Lambda  ( I - J_1) + 2( I - J_1)  \Lambda
\end{align}
Next, observe that 
\[
\Lambda J_1 =  (\sqrt{\Lambda^*})^{-1}  \sqrt{\Lambda^*}  \Lambda \sqrt{\Lambda^*}\bigg(\sqrt{\Lambda^*} {\Lambda} \sqrt{\Lambda^*} \bigg)^{-1/2} \sqrt{\Lambda^*} (\Lambda^*)^{-1} \Lambda^* = (\sqrt{\Lambda^*})^{-1}\bigg(\sqrt{\Lambda^*} {\Lambda} \sqrt{\Lambda^*} \bigg)^{1/2} (\sqrt{\Lambda^*})^{-1} \Lambda^* = J_{\Lambda^*, \Lambda} \Lambda^* ,
\]
therefore
\[
\Lambda J_{\Lambda^*, \Lambda}^{-1} =  J_{\Lambda^*, \Lambda} \Lambda^* \implies  J_{\Lambda^*, \Lambda}^{-1} \Lambda J_{\Lambda^*, \Lambda}^{-1} =   \Lambda^* 
\]
and since
\[
    J^{-1}_{\Lambda, \Lambda^*} = \sqrt{\Lambda}\bigg(\sqrt{\Lambda} {\Lambda^*} \sqrt{\Lambda} \bigg)^{-1/2} \sqrt{\Lambda} 
\]
we get that
\[
J^{-1}_{\Lambda, \Lambda^*} = \sqrt{\Lambda}\bigg(\sqrt{\Lambda} { J_{\Lambda^*, \Lambda}^{-1} \Lambda J_{\Lambda^*, \Lambda}^{-1}} \sqrt{\Lambda} \bigg)^{-1/2} \sqrt{\Lambda} = \sqrt{\Lambda}\bigg(\sqrt{\Lambda} { J_{\Lambda^*, \Lambda}^{-1} } \sqrt{\Lambda} \bigg)^{-1} \sqrt{\Lambda} =  J_{\Lambda^*, \Lambda} = J_1^{-1}
\]
or equivalently
\[
J_1 = J_{\Lambda, \Lambda^*} = J^{-1}_{\Lambda^*, \Lambda} = (\sqrt{\Lambda})^{-1}\bigg(\sqrt{\Lambda} {\Lambda^*} \sqrt{\Lambda} \bigg)^{1/2} (\sqrt{\Lambda})^{-1}.
\]
Then
\begin{align}
    \nabla_{\Lambda}   d_{BW}^2(\Lambda,\Lambda^*)  &=  2\Lambda  ( I - J_1) + 2( I - J_1)  \Lambda   \\
      & =  2\bigg(  \Lambda ( I - J_{\Lambda, \Lambda^*}) + ( I - J_{\Lambda, \Lambda^*}) \Lambda  \bigg) 
\end{align}
where
\[
    J_{\Lambda, \Lambda^*} = (\sqrt{\Lambda})^{-1}\bigg(\sqrt{\Lambda} {\Lambda^*} \sqrt{\Lambda} \bigg)^{1/2} (\sqrt{\Lambda})^{-1}
\]
is the standard OT map from $\Lambda $ to $\Lambda^*$.

{
\begin{lem}\label{lyapunovlem1}
    Let $X(t)$ be a matrix function such that $X(t) \in \texttt{SPD}_n$ in some open interval around $t=0$. Then the following hold for any $t$ in that open interval:
    \begin{itemize}
        \item $ \frac{d}{dt} \text{tr} (\sqrt{ X(t)}) = \frac{1}{2} \text{tr}\bigg((X(t))^{-1/2} \frac{d}{dt} X(t) \bigg) $
        \item $ \frac{d^2}{dt^2} \text{tr} (\sqrt{ X(t)}) = \frac{1}{2}  \text{tr}\bigg(-(X(t))^{-1/2}  L_{\sqrt{X}} \bigg(\frac{d}{dt}X \bigg)  (X(t))^{-1/2}  \frac{d}{dt} X(t)  + (X(t))^{-1/2}\frac{d^2}{dt^2}X(t)  \bigg)   $
    \end{itemize}
\end{lem}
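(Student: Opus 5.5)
The plan is to use the spectral calculus of $X(t)$, with the first identity as the base step and the second obtained by differentiating it once more.

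\textbf{First identity.} Fix $t$. I would write $Y(t):=\sqrt{X(t)}$, which is smooth in $t$ because the square root is analytic on $\texttt{SPD}_n$. Differentiating $Y^2=X$ gives
\[
Y\dot Y+\dot Y Y=\dot X ,
\]
so $\dot Y$ is exactly the symmetric solution $L_{\sqrt X}(\dot X)$ of the Sylvester equation used in the definition of the BW metric. I would then multiply this equation on the left by $Y^{-1}$ and take traces. The cyclic property gives
\[
\text{tr}(Y^{-1}\dot Y Y)=\text{tr}(\dot Y),
\]
and therefore
\[
2\,\text{tr}(\dot Y)=\text{tr}(Y^{-1}\dot X).
\]
Since $\frac{d}{dt}\text{tr}(Y)=\text{tr}(\dot Y)$, this is the first bullet.

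\textbf{Second identity.} I would differentiate
\[
\frac{d}{dt}\text{tr}\sqrt{X}=\tfrac12\,\text{tr}(Y^{-1}\dot X)
\]
once more, using the product rule:
\[
\frac{d^2}{dt^2}\text{tr}\sqrt{X}=\tfrac12\,\text{tr}\Big(\tfrac{d}{dt}(Y^{-1})\,\dot X\Big)+\tfrac12\,\text{tr}(Y^{-1}\ddot X).
\]
The derivative of the inverse is
\[
\tfrac{d}{dt}(Y^{-1})=-Y^{-1}\dot Y Y^{-1}.
\]
Substituting $\dot Y=L_{\sqrt X}(\dot X)$ from the first step turns the first term into
\[
-\tfrac12\,\text{tr}\big(X^{-1/2}L_{\sqrt X}(\dot X)X^{-1/2}\dot X\big),
\]
which is the stated second bullet.

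\textbf{Main obstacle.} The only step needing care is justifying that $t\mapsto\sqrt{X(t)}$ is differentiable, and that its derivative is the unique solution of the Sylvester equation. I would handle this with the inverse function theorem applied to the map $Y\mapsto Y^2$ on $\texttt{SPD}_n$. Its differential is $H\mapsto YH+HY$, which is invertible on $\mathbb{S}^n$ because $Y\succ 0$: in an eigenbasis of $Y$ with eigenvalues $y_i$ it acts entrywise by $H_{ij}\mapsto (y_i+y_j)H_{ij}$, and $y_i+y_j>0$. So the square root is a smooth local inverse, and uniqueness of the Sylvester solution, already noted in the paper's footnote, identifies $\dot Y$. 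Everything else is routine trace manipulation.
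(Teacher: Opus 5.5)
Your proposal is correct and follows the paper's proof essentially step for step: differentiate $\sqrt{X}\sqrt{X}=X$ to get $\frac{d}{dt}\sqrt{X}=L_{\sqrt{X}}\big(\frac{d}{dt}X\big)$, left-multiply by $X^{-1/2}$ and use cyclicity of the trace, then differentiate once more using $\frac{d}{dt}(Y^{-1})=-Y^{-1}\dot Y Y^{-1}$ and substitute the Sylvester solution. Your inverse-function-theorem justification that $t\mapsto\sqrt{X(t)}$ is differentiable with derivative given by that Sylvester solution is something the paper assumes without comment.
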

\begin{proof}
The first part follows from the calculation below: 
    \begin{align}
        \frac{d}{dt}(\sqrt{X}\sqrt{X}) =  \sqrt{X}\frac{d}{dt}\sqrt{X} +  \frac{d}{dt}\sqrt{X} \sqrt{X} &= \frac{d}{dt}X    \\
        \implies  \frac{d}{dt}\sqrt{X}  &=  L_{\sqrt{X}} \bigg(\frac{d}{dt}X \bigg) \\
        \implies  \frac{d}{dt}\sqrt{X} + \sqrt{X}^{-1} \frac{d}{dt}\sqrt{X} \sqrt{X} &=  \sqrt{X}^{-1} \frac{d}{dt}X \\
        \implies \text{tr}\bigg(\frac{d}{dt}\sqrt{X} + \sqrt{X}^{-1} \frac{d}{dt}\sqrt{X} \sqrt{X}\bigg) = 2\frac{d}{dt} \text{tr} (\sqrt{ X}) &= \text{tr} \bigg(\sqrt{X}^{-1} \frac{d}{dt}X\bigg)
    \end{align}
    which proves the first part.
    The second part follows similarly:
    \begin{align}
        \frac{d^2}{dt^2} \text{tr} (\sqrt{ X(t)})  & = \frac{1}{2}  \frac{d}{dt} \text{tr}\bigg((X(t))^{-1/2} \frac{d}{dt} X(t) \bigg) \\
        & = \frac{1}{2}  \text{tr}\bigg(-(X(t))^{-1/2}  \frac{d}{dt} (X(t))^{1/2} (X(t))^{-1/2}  \frac{d}{dt} X(t)  + (X(t))^{-1/2}\frac{d^2}{dt^2}X(t)  \bigg) \\
        & = \frac{1}{2}  \text{tr}\bigg(-(X(t))^{-1/2}  L_{\sqrt{X}} \bigg(\frac{d}{dt}X \bigg)  (X(t))^{-1/2}  \frac{d}{dt} X(t)  + (X(t))^{-1/2}\frac{d^2}{dt^2}X(t)  \bigg) 
    \end{align}
\end{proof}
}

\section{Proofs for section \ref{RGAconvergencesection} }\label{sectionRGAconvergenceappendix}

\subsection{Proof of Lemma \ref{suplemPLbound0}}

\begin{proof}
    Let $ f(\w; \probP) := \mathbb{E}_{\z \sim \probP} [\ell(\w;\z) ] $ and suppose $\w \in \mathcal{W}$. Then 
\[
\mathcal{S}^*(\probP) \cap \overline{\mathcal{W}} : = \arg \inf_{\w \in \mathcal{W} } f(\w, \probP)
\]
The inclusion $\mathcal{S}^*(\probP) \cap \mathcal{W}   \subseteq \arg \inf_{\w \in \mathcal{W} } f(\w, \probP) $ is straightforward. To show that $\mathcal{S}^*(\probP) \cap  \overline{\mathcal{W}}   \supseteq \arg \inf_{\w \in \mathcal{W} } f(\w, \probP) $, let $ \w^* \in \mathcal{S}^*(\probP) \cap \partial \mathcal{W} $ be arbitrary. Take a converging sequence $\{\w_j\}$ in $ \mathcal{S}^*(\probP) \cap  {\mathcal{W}}$ with limit $\w^*$, then the inequality \eqref{QGtypegrowth1a} holds along that converging sequence $\{\w_j\}$ and by passing the limit and using continuity of $f(\cdot, \probP)$, $\nabla_{\w} f(\cdot, \probP)$ we will get that $f(\w^*; \probP) = \inf_{\w \in \mathcal{W} } f(\w; \probP)  $.
In the other direction first notice that 
$$\arg \inf_{\w \in \mathcal{W} } f(\w, \probP) =  \arg \inf_{\w \in \mathcal{W} } f(\w, \probP) \cap \mathrm{int}(\mathcal{W})  \bigcup \arg \inf_{\w \in \mathcal{W} } f(\w, \probP) \cap \partial \mathcal{W} $$ and thus $ \arg \inf_{\w \in \mathcal{W} } f(\w, \probP) \cap \mathrm{int}(\mathcal{W})  \subseteq \mathcal{S}^*(\probP) \cap \mathcal{W} $ due to the fact that $f$ is $\mathcal{C}^1$ and $\arg \inf_{\w \in \mathcal{W} } f(\w, \probP) \cap \mathrm{int}(\mathcal{W}) \subset \mathcal{W} $. If $\w^* \in \arg \inf_{\w \in \mathcal{W} } f(\w, \probP) \cap \partial \mathcal{W}$ then $\w^*$ is a limit of some convergent sequence in $ \arg \inf_{\w \in \mathcal{W} } f(\w, \probP) \cap \mathrm{int}(\mathcal{W})  $, thereby giving $ \nabla_{\w} f(\w^*, \probP)  = \mathbf{0} $ by continuity of $\nabla_{\w} f(\cdot, \probP)$. Hence we get $  \arg \inf_{\w \in \mathcal{W} } f(\w, \probP) \cap \partial \mathcal{W} \subseteq \mathcal{S}^*(\probP) \cap \overline{\mathcal{W}} $.

Next, we write the inequality in \textbf{A2'} as:
\[
\norm{\nabla_{\w} f(\w;\probP) }^{\beta} \geq C_\beta (f(\w; \probP) - \inf_{\w \in \mathcal{W}} f(\w, \probP))
\]
 Then under assumptions \textbf{A1-A2}, \textbf{A2'} for any $\w \in \mathcal{W} $ we derive the appropriate growth rate of $f(\w; \probP) $ in $\w$ for any $ (\w, \probP) \in \mathcal{W} $.  Consider the function:
\[
\wt g(\w) := (f(\w; \probP) - f_*(\probP))^{(\beta -1)/\beta}
\]
where $\probP$ has been suppressed in $\wt g$ for convenience and $ f_*(\probP) = \inf_{\w \in \mathcal{W}} f(\w, \probP)$. Observe that $\wt g \geq 0$ on $\mathcal{W}$ and $\wt g(\w) = 0$ on $\mathcal{W}$ iff $\w \in \mathcal{S}^*(\probP) $. By chain rule of derivative and using the fact that $(\beta -1)/\beta < 1 $, we get for any $\w \in \mathcal{W}$ from \textbf{A2'}, that 
{
\begin{align}
   \norm{\nabla_{\w} \wt g(\w)}^{\beta}  =  \bigg(\frac{\beta-1}{\beta}\bigg)^{\beta}\frac{\norm{  \nabla_{\w} f(\w; \probP)  }^{\beta}}{(f(\w; \probP) - f_*(\probP))} \geq C_{\beta}\left(\frac{\beta-1}{\beta} \right)^{\beta}. \label{PLtypebound1a}
\end{align}
}
{
Consider the first order initial value problem
\[
  \dot{\w} = -\left(\frac{\beta}{\beta-1} \right)^2C_{\beta}^{-2/\beta} \nabla_{\w} \wt g(\w(t)) \quad; \quad \w(0) := \w_0 \in \mathcal{V} \backslash \mathcal{S}^*(\probP) \quad \mathcal{V}  \Subset \mathcal{W} 
\]
where $\mathcal{V}$ is one of the connected components of some sub-level set of $g$ given by $\{ \w : \wt g(\w) \leq R\}$ and $\mathcal{W}$ separates $\mathcal{V} $ from other connected components of this sub-level set. 
}

Recall that $\wt g$ is coercive in $\w$ for all $\probP$ since $f$ is corcive by \textbf{A1.} 
 The above ODE corresponds to a gradient flow on $\wt g$ and it has a stationary solution in $ \mathcal{S}^*(\probP)$. A simple calculation along the solution curve of this ODE yields:
 \[
 \frac{d }{dt} \wt g(\w(t)) = \langle \nabla_{\w} \wt g(\w(t)) , \dot \w(t) \rangle = - {\left(\frac{\beta}{\beta-1} \right)^2C_{\beta}^{-2/\beta}} \norm{\nabla_{\w} \wt g(\w(t))}^2 < 0 
 \]
 and hence $\wt g$ decreases monotonically along the solution curve of the ODE. Since $\wt g$ is lower bounded on $\mathcal{W}$, i.e., $\wt g \geq 0$, and the flow of $\w(t)$ always stays in the sub-level set component $\mathcal{V} $ by monotonicity of $\wt g(\w(t))$, it must be that $\wt g(\w(t))$ converges by Monotone convergence theorem. In fact, from \eqref{PLtypebound1a} and Gronwall's inequality we get that:
 \begin{align}
     \frac{d }{dt} \wt g(\w(t)) & = - {\left(\frac{\beta}{\beta-1} \right)^2C_{\beta}^{-2/\beta}}\norm{\nabla_{\w} \wt g(\w(t))}^2  < -1 \label{continuity0} \\
     \implies 0 \leq \wt g(\w(t)) & \leq e^{-t} \quad \forall  \quad t \geq 0   \\
     \implies \lim_{t \to \infty} \wt g(\w(t)) &= 0  \label{continuity1} \\
     \implies \lim_{t \to \infty}  f(\w(t); \probP) & =  f_*(\probP) = \inf_{\w \in \mathcal{W}} f(\w, \probP)
 \end{align}
 Let $\mathcal{T}$ be the set of omega limit points (occurring infinitely often) of the flow curve $ \{\w(t) : t \in [0, \infty)\}$ defined as follows:
 \[
 \mathcal{T} = \{ \v : \quad \forall \quad T \quad \text{ and } \quad \forall \quad  \epsilon > 0 \quad \exists \quad t> T \quad \text{such that} \quad \norm{\w(t) - \v} < \epsilon \}
 \]
 Then $ \wt g(\mathcal{T}) = 0$ from \eqref{continuity1} and continuity of $\wt g$, implying $\mathcal{T} = \mathcal{S}^*(\probP) $ and thus $ \mathrm{dist}(\mathcal{T}, \mathcal{S}^*(\probP)) = 0$. 
 
Next, suppose the flow of $\w(t)$ hits $ \mathcal{S}^*(\probP)$ at some time $T \in (0, \infty]$.
By fundamental theorem of calculus on $\wt g$ and using the bound ${\left(\frac{\beta}{\beta-1} \right)^2C_{\beta}^{-2/\beta}}\norm{\nabla_{\w} \wt g(\w(t))}^2 \geq 1$ for any $\w \in \mathcal{W}$  we can write,
\begin{align}
    \wt g(\w(0)) - \wt g(\w(T)) &= -\int_{0}^{T} \langle \nabla_{\w}\wt g(\w) , \dot\w\rangle dt = \int_{0}^{T} {\left(\frac{\beta}{\beta-1} \right)^2C_{\beta}^{-2/\beta}} \norm{\nabla_{\w} \wt g(\w(t))}^2 dt\geq T \\
    \implies \wt g(\w(0)) &= \wt g(\w(0)) - \inf_{\w \in \mathcal{W}} \wt g(\w) = \wt g(\w(0)) - \wt g(\w(T))  \geq T
\end{align}
Hence, the time for the ODE curve to hit $ \mathcal{S}^*(\probP)$ is upper bounded. The length of this flow curve from $\w(0)$ to $\w(T)$ can be lower bounded as:
\begin{align}
 {\left(\frac{\beta}{\beta-1} \right)^2C_{\beta}^{-2/\beta}}\int_{0}^T \norm{\nabla_{\w} \wt g(\w(t))} dt   = \int_{0}^T \norm{\dot \w} dt  \geq \mathrm{dist}(\w(0), \mathcal{S}^*(\probP))
\end{align}
Then integrating \eqref{continuity0}, using the above bound in \eqref{continuity0} along with $ \left(\frac{\beta}{\beta-1} \right)C_{\beta}^{-1/\beta} \norm{\nabla_{\w} \wt g(\w)} \geq 1 $ from \eqref{PLtypebound1a} we get :
{
\begin{align}
 \wt g(\w(0)) =\wt g(\w(0)) - \wt g(\w(T)) & = \int_{0}^{T} \left(\frac{\beta}{\beta-1} \right)^2C_{\beta}^{-2/\beta}\norm{\nabla_{\w} \wt g(\w(t))}^2 dt \nonumber \\
  &\geq \left(\frac{\beta}{\beta-1} \right)C_{\beta}^{-1/\beta} \int_{0}^{T} \norm{\nabla_{\w} \wt g(\w(t))} dt \nonumber \\
  &\geq \left(\frac{\beta}{\beta-1} \right)^{-1}C_{\beta}^{1/\beta} \mathrm{dist}(\w(0), \mathcal{S}^*(\probP)) 
\end{align}
}
and since $\w(0) \in \mathcal{V} \backslash \mathcal{S}^*(\probP)  $ was arbitrary, we get that
\[
\wt g(\w) := (f(\w; \probP) - f_*(\probP))^{(\beta -1)/\beta} \geq {\left(\frac{\beta}{\beta-1} \right)^{-1}C_{\beta}^{1/\beta}} \mathrm{dist}(\w, \mathcal{S}^*(\probP))
\]
or equivalently
\begin{align}
 f(\w; \probP) - f_*(\probP) \geq {\left(\frac{\beta}{\beta-1} \right)^{\frac{-\beta}{\beta-1}}C_{\beta}^{\frac{1}{\beta-1}}}\bigg( \mathrm{dist}(\w, \mathcal{S}^*(\probP))\bigg)^{\frac{\beta}{\beta -1}}   \label{QGtypegrowth0}
\end{align}
Combining this with the inequality from \textbf{A2'} we immediately have the required gradient bound:
{
\begin{align}
  \norm{\nabla_{\w} f(\w;\probP) }^{\beta}  \geq C_{\beta}\bigg(f(\w; \probP) - f_*(\probP)\bigg) \geq \left(\frac{\beta}{\beta-1} \right)^{\frac{-\beta}{\beta-1}}C_{\beta}^{\frac{\beta}{\beta-1}}\bigg( \mathrm{dist}(\w, \mathcal{S}^*(\probP))\bigg)^{\frac{\beta}{\beta -1}}  \nonumber \\
   \implies \norm{\nabla_{\w} f(\w;\probP) } \geq C_{\beta}^{\frac{1}{\beta-1}}\bigg(\frac{\beta}{\beta-1}\bigg)^{\frac{-1}{\beta-1}}\bigg( \mathrm{dist}(\w, \mathcal{S}^*(\probP))\bigg)^{\frac{1}{\beta -1}} \label{QGtypegrowth1a*}
\end{align}
which completes the proof.
}
\end{proof}

\section{Second order Taylor expansions on \texorpdfstring{$\mathbb{R}^n \times BW$}{} }\label{taylormetricappendix}
Recall that the exponential map for the BW metric is given by
\begin{equation}\label{expBW}
\exp:\Rn^n\times U(\Lambda) \to \Rn^n\times \texttt{SPD}_n, \quad \exp_{(\muv,\Lambda)}(\nuv, S) = (\muv + \nuv, (I+L_{\Lambda}(S))\Lambda(I+L_{\Lambda}(S)))
\end{equation}
where $U(\Lambda) = \{ S\in \mathbb{S}^n| I+L_{\Lambda}(S) \in \texttt{SPD}_n\}$.

Now let 
    \[
    g(\muv, \Lambda) = \exp\left(- \frac{1}{2}\langle(\z -\muv),\Lambda^{-1}(\z - \muv)\rangle -\frac{1}{2}\log \det(2\pi\Lambda)\right). 
    \]
    Then it follows from \eqref{gradcalc} that
    \begin{align}
        \nabla_{(\muv,\Lambda)} g &= \left( \nabla_{\muv}g, \nabla_{\Lambda}g\right)  \nonumber \\
        &:=g(\muv,\Lambda) \left(\Lambda^{-1}(\z-\muv),   (\Lambda^{-1}(\z-\muv)(\z-\muv)^{T} +(\z-\muv)(\z-\muv)^{T}\Lambda^{-1} )-2I\right) \label{tayf}.
    \end{align}

For the sake of brevity, we write 
\begin{align}
\z-\muv = \zeta, &\quad \wt{S} = L_{\Lambda}(S) \text{ and }    \nonumber \\
P_{\Lambda}(S) &= S\Lambda + \Lambda S \text{ for all } S\in \mathbb{S}^n, \Lambda \in \texttt{SPD}_n \label{Pdef}.
\end{align}
To calculate the Taylor series of the gradient of statistical loss, we write equation \eqref{tayf} in exponential coordinates. Writing $\z-\muv = \zeta$ and $\wt{S} = L_{\Lambda}(S)$ for the sake of brevity we have 
\begin{align*}
    \frac{(\nabla_{\muv} g)\circ \exp_{(\muv,\Lambda)}(\nuv, S)}{g(\exp_{(\muv,\Lambda)}(\nuv, S))} &=  (I+\wt{S})^{-1}\Lambda^{-1}(I+\wt{S})^{-1}(\zeta-\nuv)    \\
     \frac{(\nabla_{\Lambda} g)\circ \exp_{(\muv,\Lambda)}(\nuv, S)}{g(\exp_{(\muv,\Lambda)}(\nuv, S))}&=  (I+\wt{S})^{-1}\Lambda^{-1}(I+\wt{S})^{-1}(\zeta-\nuv)(\zeta-\nuv)^{T} \\
     &\quad \quad \qquad +(\zeta-\nuv)(\zeta-\nuv)^{T}(I+\wt{S})^{-1}\Lambda^{-1}(I+\wt{S})^{-1}- 2I.
\end{align*}
We have for $ \norm{\wt{S}}_F < 1$ that
\begin{align*}
    (I+\wt{S})^{-1}\Lambda^{-1}(I+\wt{S})^{-1} &= \sum_{j,k=0}^{\infty} (-1)^{j+k}\wt{S}^j\Lambda^{-1}\wt{S}^k \\
    &= \Lambda^{-1} - \wt{S}\Lambda^{-1} - \Lambda^{-1} \wt{S} + (\wt{S}^2\Lambda^{-1} + \Lambda^{-1}\wt{S}^2 + \wt{S}\Lambda^{-1}\wt{S}) + o(2) \\
    &=\Lambda^{-1} - \Lambda^{-1}S \Lambda^{-1} + \underset{Q_e}{\underbrace{(\wt{S}^2\Lambda^{-1} + \Lambda^{-1}\wt{S}^2 + \wt{S}\Lambda^{-1}\wt{S})}} + \underbrace{o(2)}_{= o( \norm{\Lambda^{-1}}_F \norm{\wt{S}}_F^2)}.
\end{align*}
Consequently\footnote{We note that the $ o(2)$ term from here onward implies the $ o\big( \norm{\wt{S}}_F^2\big)$ term and henceforth we will always refer to it as $o(2)$ for notational brevity.} ,
\begin{align*}
    &\quad\langle (\zeta-\nuv), (I+\wt{S})^{-1}\Lambda^{-1}(I+\wt{S})^{-1}(\zeta -\nuv) \rangle \\ 
    &=\langle (\zeta-\nuv),\left(\Lambda^{-1} - \Lambda^{-1}S \Lambda^{-1} + Q_e\right)(\zeta -\nuv) \rangle + o(2)\\
    &= \langle \zeta, \left(\Lambda^{-1} - \Lambda^{-1}S \Lambda^{-1} + Q_e\right)\zeta\rangle - 2\langle \zeta, \left(\Lambda^{-1} - \Lambda^{-1}S \Lambda^{-1} + Q_e\right)\nuv\rangle + \langle \nuv, \left(\Lambda^{-1} - \Lambda^{-1}S \Lambda^{-1} + Q_e\right)\nuv \rangle + o(2)\\
    &= \langle\zeta, \Lambda^{-1}\zeta\rangle - \langle \Lambda^{-1}\zeta, S\Lambda^{-1}\zeta\rangle + \langle\zeta, Q_e\zeta\rangle - 2\langle \zeta,\Lambda^{-1}\nuv\rangle + 2\langle \Lambda^{-1}\zeta, S\Lambda^{-1}\nuv\rangle + \langle\nuv, \Lambda^{-1}\nuv\rangle + o(2) \\
    &= \langle\zeta, \Lambda^{-1}\zeta\rangle - \langle \Lambda^{-1}\zeta, S\Lambda^{-1}\zeta\rangle -  2\langle \zeta,\Lambda^{-1}\nuv\rangle + 2\langle \Lambda^{-1}\zeta, S\Lambda^{-1}\nuv\rangle + \langle\nuv, \Lambda^{-1}\nuv\rangle +\langle\zeta, Q_e\zeta\rangle +o(2) \\
    &= \langle \zeta, \Lambda^{-1}\zeta\rangle + \underset{T(\nuv,S)}{\underbrace{\langle \zeta, -2\Lambda^{-1}\nuv - \Lambda^{-1}S\Lambda^{-1}\zeta\rangle}} + \, \, \, \underset{\cq(\nuv,S)}{\underbrace{2\langle \zeta, \Lambda^{-1}S\Lambda^{-1}\nuv\rangle + \langle\nuv, \Lambda^{-1}\nuv\rangle + \langle \zeta, Q_e\zeta\rangle}} + o(2). 
\end{align*}
Thus, 
\begin{align*}
    \exp\left(- \frac{1}{2}\langle (\zeta-\nuv), (I+\wt{S})^{-1}\Lambda^{-1}(I+\wt{S})^{-1}(\zeta -\nuv) \rangle \right) &= \exp\left(- \frac{1}{2}\langle \zeta, \Lambda^{-1}\zeta\rangle -\frac{1}{2}T(\nuv,S) -\frac{1}{2}\cq(\nuv, S) +o(2) \right) \\
    & \hspace{-3cm}= {\exp\left(- \frac{1}{2}\langle \zeta, \Lambda^{-1}\zeta\rangle \right)}\left(1 -\frac{1}{2}T(\nuv,S) -\frac{1}{2}\cq(\nuv, S) +\frac{1}{8}T^2(\nuv,S) +o(2)\right).
\end{align*}
The last step was obtained by second order Taylor expansion of $ \exp\left( -\frac{1}{2}T(\nuv,S) -\frac{1}{2}\cq(\nuv, S) +o(2) \right) $ in $(\nuv,S)$ about the origin $(\mathbf{0},\mathbf{0})$.
The Taylor expansion of determinant is  
\begin{align*}
    \det(I+\wt{S}) &= 1 + \tr(\wt{S}) + \frac{1}{2}(\tr(\wt{S})^2 - \tr(\wt{S}^2)) + o(2)\\
    \frac{1}{\det(I+\wt{S})} &= 1- \tr(\wt{S}) - \frac{1}{2}(\tr(\wt{S})^2 - \tr(\wt{S}^2)) + \tr(\wt{S})^2 +o(2) \\
    &= 1- \tr(\wt{S}) + \frac{1}{2}(\tr(\wt{S})^2 + \tr(\wt{S}^2)) +o(2). 
\end{align*}
Putting all these together, we get
\begin{align}
    &\quad g(\exp_{(\muv,\Lambda)}(\nuv,S)) \\
    &= g(\muv,\Lambda) \left(1 -\frac{1}{2}T(\nuv,S) -\frac{1}{2}\cq(\nuv, S) +\frac{1}{8}T^2(\nuv,S) +o(2)\right)\left(  1- \tr(\wt{S}) + \frac{1}{2}(\tr(\wt{S})^2 + \tr(\wt{S}^2)) +o(2)\right) \\
    &= g(\muv,\Lambda) \left(1 - \frac{1}{2}T(\nuv,S)-\tr(\wt{S}) +\underset{Q_g}{\underbrace{\frac{1}{2}(\tr(\wt{S})^2+\tr(\wt{S}^2))+ \frac{1}{8}T^2(\nuv,S)-\frac{1}{2}\cq(\nuv,S) +\frac{1}{2}T(\nuv,S)\tr(\wt{S})}}\right) +o(2). \label{Taylor01}
\end{align}
We now write the Taylor expansion of $(\nabla_{\muv} g)\circ \exp_{(\muv,\Lambda)}$ as 
\begin{align}
    &\quad (\nabla_{\muv} g)\circ \exp_{(\muv,\Lambda)}(\nuv, S) \nonumber \\
    &= g(\exp_{(\muv,\Lambda)}(\nuv, S))  (I+\wt{S})^{-1}\Lambda^{-1}(I+\wt{S})^{-1}(\zeta-\nuv) \nonumber \\
    &=g(\muv,\Lambda) \left(1 - \frac{1}{2}T(\nuv,S)-\tr(\wt{S}) +\underset{Q_g}{\underbrace{\frac{1}{2}(\tr(\wt{S})^2+\tr(\wt{S}^2))+ \frac{1}{8}T^2(\nuv,S)-\frac{1}{2}\cq(\nuv,S) +\frac{1}{2}T(\nuv,S)\tr(\wt{S})}}\right) \nonumber \\
    &\times \left(\Lambda^{-1} - \Lambda^{-1}S \Lambda^{-1} + \underset{Q_e}{\underbrace{(\wt{S}^2\Lambda^{-1} + \Lambda^{-1}\wt{S}^2 + \wt{S}\Lambda^{-1}\wt{S})}} \right)(\zeta - \nuv) +o(2) \nonumber \\
    &\hspace{-1cm} = g(\muv,\Lambda) \left(\Lambda^{-1} -\frac{1}{2}T(\nuv, S)\Lambda^{-1} - \tr(\wt{S})\Lambda^{-1} - \Lambda^{-1}S\Lambda^{-1}+Q_g\Lambda^{-1}+Q_e +\left(\frac{1}{2}T(\nuv, S)+ \tr(\wt{S}) \right)\Lambda^{-1}S\Lambda^{-1}\right)(\zeta -\nuv) \nonumber\\
    &\qquad \qquad \qquad \qquad \qquad \qquad \hspace{10cm} +o(2)\nonumber \\
    &= g(\muv,\Lambda)\left[\Lambda^{-1}\zeta - \frac{1}{2}T(\nuv,S) \Lambda^{-1}\zeta - \tr(\wt{S})\Lambda^{-1}\zeta -\Lambda^{-1}S\Lambda^{-1}\zeta -\Lambda^{-1}\nuv\right] \nonumber \\
    &+ g(\muv, \Lambda)\left[\left(Q_g\Lambda^{-1}+Q_e +\left(\frac{1}{2}T(\nuv, S)+ \tr(\wt{S}) \right)\Lambda^{-1}S\Lambda^{-1} \right)\zeta +\left(\frac{1}{2}T(\nuv,S)\Lambda^{-1}+\tr(\wt{S})\Lambda^{-1}S\Lambda^{-1}\right)\nuv \right]+ o(2). \label{Taylor12}
\end{align}
Next, 
\begin{align}
    &\quad (I+\wt{S})^{-1}\Lambda^{-1}(I+\wt{S})^{-1}(\zeta-\nuv)(\zeta-\nuv)^{T} 
     +(\zeta-\nuv)(\zeta-\nuv)^{T}(I+\wt{S})^{-1}\Lambda^{-1}(I+\wt{S})^{-1}  \nonumber\\
     &= (\Lambda^{-1}-\Lambda^{-1}S\Lambda^{-1}+Q_e)(\zeta\zeta^T - \zeta\nuv^T -\nuv\zeta^T+\nuv\nuv^T) + (\zeta\zeta^T - \zeta\nuv^T -\nuv\zeta^T+\nuv\nuv^T)(\Lambda^{-1}-\Lambda^{-1}S\Lambda^{-1}+Q_e) +o(2) \nonumber\\
     &= \Lambda^{-1}\zeta\zeta^T +\zeta\zeta^T\Lambda^{-1} - \Lambda^{-1}S\Lambda^{-1}\zeta\zeta^T - \zeta\zeta^T\Lambda^{-1}S\Lambda^{-1}-\Lambda^{-1}\zeta\nuv^T-\Lambda^{-1}\nuv\zeta^T -\zeta\nuv^T\Lambda^{-1}-\nuv\zeta^T\Lambda^{-1}  \nonumber\\
     &+\Lambda^{-1}S\Lambda^{-1}(\zeta\nuv^T+\nuv\zeta^T)+(\zeta\nuv^T + \nuv\zeta^T)\Lambda^{-1}S\Lambda^{-1} +\Lambda^{-1}\nuv\nuv^T + \nuv \nuv^T\Lambda^{-1} + \Lambda^{-1}Q_e +Q_e\Lambda^{-1}+ o(2)\nonumber \\
     &= P_{\Lambda^{-1}}(\zeta\zeta^T) - \Lambda^{-1}S\Lambda^{-1}\zeta\zeta^T - \zeta\zeta^T\Lambda^{-1}S\Lambda^{-1}-P_{\Lambda^{-1}}(\zeta\nuv^T+\nuv\zeta^T)  \nonumber\\
     &\underset{Q_l}{\underbrace{+\Lambda^{-1}S\Lambda^{-1}(\zeta\nuv^T+\nuv\zeta^T)-(\zeta\nuv^T + \nuv\zeta^T)\Lambda^{-1}S\Lambda^{-1} +P_{\Lambda^{-1}}(\nuv\nuv^T +Q_e)}} + o(2)\nonumber
\end{align}
where $P_{\Lambda^{-1}}$ is the operator given by \eqref{Pdef}.
The Taylor expansion of $(\nabla_{\Lambda}g)\circ \exp_{(\muv, \Lambda)}$ is
\begin{align}
         &\quad (\nabla_{\Lambda} g)\circ \exp_{(\muv,\Lambda)}(\nuv, S) \nonumber \\
         &=g(\muv,\Lambda) \left(1 - \frac{1}{2}T(\nuv,S)-\tr(\wt{S}) +Q_g\right) \nonumber \\
         &\times \left[P_{\Lambda^{-1}}(\zeta\zeta^T) - \Lambda^{-1}S\Lambda^{-1}\zeta\zeta^T - \zeta\zeta^T\Lambda^{-1}S\Lambda^{-1}-P_{\Lambda^{-1}}(\zeta\nuv^T+\nuv\zeta^T) +Q_l-2I\right] \nonumber \\
         &= g(\muv,\Lambda)\left[P_{\Lambda^{-1}}(\zeta\zeta^T)- 2I - \Lambda^{-1}S\Lambda^{-1}\zeta\zeta^T -\zeta\zeta^T\Lambda^{-1}S\Lambda^{-1} - P_{\Lambda^{-1}}(\zeta\nuv^T+\nuv\zeta^T) \right] \nonumber\\
         &+ g(\muv, \Lambda)\left[\left(2I - P_{\Lambda^{-1}}(\zeta\zeta^T)\right)\left(\frac{1}{2}T(\nuv,S) +\tr(\wt{S}) \right)+\left( P_{\Lambda^{-1}}(\zeta\zeta^T) -2I \right)Q_g  +Q_l \right] \nonumber \\
         &+ g(\muv, \Lambda)\left[\left( \Lambda^{-1}S\Lambda^{-1}\zeta\zeta^T + \zeta\zeta^T\Lambda^{-1}S\Lambda^{-1}+P_{\Lambda^{-1}}(\zeta\nuv^T+\nuv\zeta^T)\right)\left(\frac{1}{2}T(\nuv, S) + \tr(\wt{S}) \right)\right] +o(2)\label{Taylor22}
\end{align}

Integrating equations \eqref{Taylor12} and \eqref{Taylor22} w.r.t. $ \z \in \mathbb{R}^n$, after multiplying the loss $\ell(\w;\z) $, we get
\begin{align}
    \nabla_{(\muv,\Lambda)}\mathbb{E}_{\z \sim \probP(\exp_{(\muv,\Lambda)}(\nuv, S))} [\ell(\w;\z) ]  = \left(\nabla_{\muv}\mathbb{E}_{\z \sim \probP(\exp_{(\muv,\Lambda)}(\nuv, S))} [\ell(\w;\z) ], \nabla_{\Lambda}\mathbb{E}_{\z \sim \probP(\exp_{(\muv,\Lambda)}(\nuv, S))} [\ell(\w;\z) ] \right)  \label{Taylor007}
\end{align}
where\footnote{We emphasize that the expectation operator $ \mathbb{E}_{\z \sim \probP}$ in the last step \eqref{Taylor007} is w.r.t. the push forward probability measure $ \probP(\exp_{(\muv,\Lambda)}(\nuv, S))$. The Taylor expansions then allow us to work with the expectation operator $ \mathbb{E}_{\z \sim \probP}$ at origin, i.e., $\probP(\exp_{(\muv,\Lambda)}(\mathbf{0}, \mathbf{0})) $ which is simply the Gaussian measure with parameters $\muv, \Lambda$. } the Taylor series of the components of the gradient are
\begin{align}
    & \nabla_{\muv}\mathbb{E}_{\z \sim \probP(\exp_{(\muv,\Lambda)}(\nuv, S))} [\ell(\w;\z) ](\nuv,\wt{S}) \nonumber \\
    &= \mathbb{E}_{\z \sim \probP(\muv,\Lambda)}\left[\ell(\w;\z)\left(\left[\Lambda^{-1}\zeta - \frac{1}{2}T(\nuv,S) \Lambda^{-1}\zeta - \tr(\wt{S})\Lambda^{-1}\zeta -\Lambda^{-1}S\Lambda^{-1}\zeta -\Lambda^{-1}\nuv\right] \right)\right] \nonumber \\
    &+ \mathbb{E}_{\z \sim \probP(\muv,\Lambda)}\left[\ell(\w;\z)\left(\left(Q_g\Lambda^{-1}+Q_e +\left(\frac{1}{2}T(\nuv, S)+ \tr(\wt{S}) \right)\Lambda^{-1}S\Lambda^{-1} \right)\zeta \right)\right] \nonumber \\ & + \mathbb{E}_{\z \sim \probP(\muv,\Lambda)}\left[\ell(\w;\z)\left(\left(\frac{1}{2}T(\nuv,S)\Lambda^{-1}+\tr(\wt{S})\Lambda^{-1}S\Lambda^{-1}\right)\nuv +  o(2) \right)\right], \label{Taylor13}
\end{align}
and 
\begin{align}
    &\quad \nabla_{\Lambda}\mathbb{E}_{\z \sim \probP(\exp_{(\muv,\Lambda)}(\nuv, S))} [\ell(\w;\z) ](\nuv,\wt{S}) \nonumber \\
    &= \mathbb{E}_{\z \sim \probP(\muv,\Lambda)}\left[\ell(\w;\z)\left(P_{\Lambda^{-1}}(\zeta\zeta^T) - \Lambda^{-1}S\Lambda^{-1}\zeta\zeta^T - \zeta\zeta^T\Lambda^{-1}S\Lambda^{-1}-P_{\Lambda^{-1}}(\zeta\nuv^T+\nuv\zeta^T) -2I\right)\right] \nonumber \\
     &+ \mathbb{E}_{\z \sim \probP(\muv,\Lambda)}\left[\ell(\w;\z) \left(\left(2I - P_{\Lambda^{-1}}(\zeta\zeta^T)\right)\left(\frac{1}{2}T(\nuv,S) +\tr(\wt{S}) \right)+\left(P_{\Lambda^{-1}}(\zeta\zeta^T)-2I \right)Q_g +Q_l \right)   \right] \nonumber \\
         &+ \mathbb{E}_{\z \sim \probP(\muv,\Lambda)}\left[\ell(\w;\z) \left(\left( \Lambda^{-1}S\Lambda^{-1}\zeta\zeta^T + \zeta\zeta^T\Lambda^{-1}S\Lambda^{-1}+P_{\Lambda^{-1}}(\zeta\nuv^T+\nuv\zeta^T)\right)\left(\frac{1}{2}T(\nuv, S) + \tr(\wt{S}) \right) +o(2) \right)\right]  \label{Taylor23}
\end{align}

\section{Uniform estimate computations in a compact neighborhood of \texorpdfstring{$ (\muv^*,\Lambda^*)$}{}  and for all $\w \in \mathbb{R}^d$}\label{sectionuniflipestimation}
In this section we show that various quantities crucial to our analysis have bounded norm on the {product of closed balls of radius $r = (1- \gamma) \norm{(\Lambda^*)^{-1}}_2^{-1/2}$ with $\gamma \in (0,1)$} centered\footnote{Without loss of generality to keep things compact, we take a bounded perturbation of $ \muv^*$ even though the Euclidean space has an infinite injectivity radius and $\norm{\nuv}$ can be arbitrary large even when $\norm{\wt{S}}_F < 1$. For simplicity we can take $\norm{\nuv} \le 1 $, i.e. a larger ball for $\muv$, by assuming small enough $r$ without loss of generality. } at $(\muv^*,\Lambda^*)$ for any given $\w$. \\
\subsection{Uniform estimates of $ \norm{\Lambda^{1/2}}_F$ , $ \norm{\Lambda^{-1/2}}_F$ :}\label{appendixunifestimatenormlambda}
Since $\Lambda = (I+L_{\Lambda^*}(S))\Lambda^*(I+L_{\Lambda^*}(S)) = \Lambda^* + S + L_{\Lambda^*}(S)\Lambda^* L_{\Lambda^*}(S) $ and $ \wt{S} = L_{\Lambda^*}(S)$ is symmetric with $ \norm{\wt{S}}_F < 1$ it must be that:
\begin{align}
    \norm{\Lambda^{1/2}}_F & = \norm{(I+L_{\Lambda^*}(S)) (\Lambda^*)^{1/2}}_F \le (1 + \norm{L_{\Lambda^*}(S)}_F) \norm{(\Lambda^*)^{1/2}}_F \le 2 \norm{(\Lambda^*)^{1/2}}_F \, . \label{Taylor13ao}
\end{align}
Next, $$ d_{BW}(\Lambda, \Lambda^*) = \norm{S}_{BW} = \sqrt{\text{tr}(L_{\Lambda^*}(S)\Lambda^* L_{\Lambda^*}(S))} = \norm{(\Lambda^*)^{1/2} L_{\Lambda^*}(S) }_F $$ and thus 
\[  r > d_{BW}(\Lambda, \Lambda^*) = \norm{(\Lambda^*)^{1/2} L_{\Lambda^*}(S) }_F \ge    \norm{(\Lambda^*)^{-1/2}  }^{-1}_F  \norm{ L_{\Lambda^*}(S) }_F \]
Hence, for $r <  \norm{(\Lambda^*)^{-1/2}}^{-1}_F $ we will get $ \norm{\wt{S}}_F \le r \norm{(\Lambda^*)^{-1/2}}_F < 1$ and then
\begin{align}
    \norm{\Lambda^{-1/2}}_F  = \norm{(I+L_{\Lambda^*}(S))^{-1} (\Lambda^*)^{-1/2}}_F \le \norm{(I+L_{\Lambda^*}(S))^{-1}}_F \norm{(\Lambda^*)^{-1/2}}_F &\le \frac{\sqrt{n} \norm{(\Lambda^*)^{-1}}_F}{(1-\norm{L_{\Lambda^*}(S)}_F)}  \\
    & \le     \frac{\sqrt{n} \norm{(\Lambda^*)^{-1}}_F}{(1-r \norm{(\Lambda^*)^{-1/2}}_F)} \, . \label{Taylor13bo}
\end{align}
\begin{rem}
    We remind that the uniform estimates in this section are derived using Taylor expansions that hold for $ \norm{\wt S}_F <1$, where $ \wt{S} = L_{\Lambda^*}(S)$, so that $\Lambda = (I+L_{\Lambda^*}(S))\Lambda^*(I+L_{\Lambda^*}(S)) = \Lambda^* + S + L_{\Lambda^*}(S)\Lambda^* L_{\Lambda^*}(S) $ is invertible. But $\Lambda$ may be invertible even when $ \norm{\wt S}_F  \ge 1$. We can however show that such $\Lambda$ will not be inside the ball of radius $r = (1- \gamma) \norm{(\Lambda^*)^{-1}}_2^{-1/2}$ centered at $\Lambda^*$. Since $d_{BW}(\Lambda, \Lambda^*) = \sqrt{ \text{tr}(\wt S \Lambda^* \wt S)} $, for $ \norm{\wt S}_F \ge 1$, $\wt S \in \mathbb{S}^n$ we must have 
    \[ r^2 \le \norm{(\Lambda^*)^{-1}}^{-1}_2 \le \norm{\wt S}_F^2  \norm{(\Lambda^*)^{-1}}^{-1}_2  \le \text{tr}((\wt S)^2 \Lambda^* ) =  \text{tr}(\wt S \Lambda^* \wt S) =  d^2_{BW}(\Lambda, \Lambda^*) \]
    and hence $d_{BW}(\Lambda, \Lambda^*) <r$ implies that $ \norm{\wt S}_F <1$. Therefore the uniform estimates derived in this section for $ \norm{\wt S}_F <1$ will hold uniformly on the closed ball $ \{ \Lambda \, \vert \, d_{BW}(\Lambda, \Lambda^*) \le r \}$.
\end{rem}

\subsection{Uniform estimates of \texorpdfstring{ $\norm{ \nabla_{\w}  \nabla_{(\muv,\Lambda)} \mathbb{E}_{\z \sim \probP(\muv,\Lambda)} [\ell(\w ;\z) ]}_{op}$ , $\norm{   \nabla_{(\muv,\Lambda)} \mathbb{E}_{\z \sim \probP(\muv,\Lambda)} [\ell(\w ;\z) ]}_{op}$}{} :}\label{mixedderivativeappendix}
 By the dominated convergence theorem (along with assumption \textbf{A2}) and joint $\mathcal{C}^2$ smoothness of the loss $\ell(\cdot \, ; \, \cdot)$ we can write:
\[   \nabla_{\w}\nabla_{(\muv,\Lambda)} \mathbb{E}_{\z \sim \probP(\muv,\Lambda)} [\ell(\w;\z) ] = \nabla_{(\muv,\Lambda)} \mathbb{E}_{\z \sim \probP(\muv,\Lambda)} [\nabla_{\w}\ell(\w;\z) ]\]
and thus we have
\begin{align}
    & \big(\nabla_{\muv}\mathbb{E}_{\z \sim \probP(\exp_{(\muv,\Lambda)}(\nuv, S))} [\nabla_{\w}\ell(\w;\z) ](\nuv,\wt{S}) \big)\nonumber \\
    &= \mathbb{E}_{\z \sim \probP(\muv,\Lambda)}\left[\left(\left[\Lambda^{-1}\zeta - \frac{1}{2}T(\nuv,S) \Lambda^{-1}\zeta - \tr(\wt{S})\Lambda^{-1}\zeta -\Lambda^{-1}S\Lambda^{-1}\zeta -\Lambda^{-1}\nuv\right] \right)   (\nabla_{\w}\ell(\w;\z))^T\right] \nonumber \\
    &+ \mathbb{E}_{\z \sim \probP(\muv,\Lambda)}\left[\left(\left(Q_g\Lambda^{-1}+Q_e +\left(\frac{1}{2}T(\nuv, S)+ \tr(\wt{S}) \right)\Lambda^{-1}S\Lambda^{-1} \right)\zeta \right)   (\nabla_{\w}\ell(\w;\z))^T \right] \nonumber \\ & + \mathbb{E}_{\z \sim \probP(\muv,\Lambda)}\left[\left(\left(\frac{1}{2}T(\nuv,S)\Lambda^{-1}+\tr(\wt{S})\Lambda^{-1}S\Lambda^{-1}\right)\nuv +  o(2) \right)  (\nabla_{\w}\ell(\w;\z))^T \right], \label{Taylor13a}
\end{align}
where taking norm\footnote{Operator norms $\norm{ \nabla_{\w}  \nabla_{(\muv,\Lambda)} \mathbb{E}_{\z \sim \probP(\muv,\Lambda)} [\ell(\w ;\z) ]}_{op}$ , $\norm{   \nabla_{(\muv,\Lambda)} \mathbb{E}_{\z \sim \probP(\muv,\Lambda)} [\ell(\w ;\z) ]}_{op}$ are induced by the Riemannian metric at $(\muv,\Lambda) $.} both sides in \eqref{Taylor13a}, applying Cauchy-Schwarz, triangle and H\"older inequalities repeatedly with $ 1/p + 1/q = 1$ , $p \ge 1 $ and simplifying gives:
\begin{align}
    & \norm{\big(\nabla_{\muv}\mathbb{E}_{\z \sim \probP(\exp_{(\muv,\Lambda)}(\nuv, S))} [\nabla_{\w}\ell(\w;\z) ](\nuv,\wt{S}) \big)}_F\nonumber \\
    &  \le \big( \mathbb{E}_{\z \sim \probP}\left[\norm{\nabla_{\w}\ell(\w;\z)}^q\right] \big)^{1/q} \Bigg(\norm{\Lambda^{-1/2}}_F  \big(\mathbb{E}_{\z \sim \probP}\left[ \norm{\Lambda^{-1/2}\zeta}^p \right]\big)^{1/p}  \nonumber \\ & + \frac{1}{2} \norm{\Lambda^{-1/2}}_F \big(\mathbb{E}_{\z \sim \probP}\left[\abs{T(\nuv,S)}^{2p} \right]\big)^{1/2p} \big(\mathbb{E}_{\z \sim \probP}\left[  \norm{\Lambda^{-1/2}\zeta}^{2p} \right]\big)^{1/2p} \nonumber \\ & + \big(\sqrt{n} \norm{\wt{S}}_F\norm{\Lambda^{-1/2}}_F  +\norm{\Lambda^{-1}S\Lambda^{-1/2}}_F\big) \big(\mathbb{E}_{\z \sim \probP}\left[ \norm{\Lambda^{-1/2}\zeta}^p \right] \big)^{1/p} + \norm{\Lambda^{-1}\nuv} \nonumber \\
    &+ \Bigg( \norm{\Lambda^{-1/2}}_F \big(\mathbb{E}_{\z \sim \probP(\muv,\Lambda)}\left[ \abs{Q_g}^{2p}  \right] \big)^{1/2p}  + \norm{\Lambda^{1/2}}_F\big(\mathbb{E}_{\z \sim \probP(\muv,\Lambda)}\left[ \norm{Q_e}_F^{2p}  \right] \big)^{1/2p} \nonumber \\ & + \norm{\Lambda^{-1}S\Lambda^{-1/2}}_F\Bigg( \frac{1}{2}\big(\mathbb{E}_{\z \sim \probP(\muv,\Lambda)}\left[ \abs{T(\nuv, S)}^{2p}  \right] \big)^{1/2p} + \sqrt{n} \norm{\wt{S}}_F\Bigg)  \Bigg) \big(\mathbb{E}_{\z \sim \probP}\left[  \norm{\Lambda^{-1/2}\zeta}^{2p} \right]\big)^{1/2p} \nonumber \\ & + \Bigg( \frac{1}{2}\big(\mathbb{E}_{\z \sim \probP(\muv,\Lambda)}\left[ \abs{T(\nuv, S)}^{p}  \right] \big)^{1/p} \norm{\Lambda^{-1} \nuv} + \sqrt{n} \norm{\wt{S}}_F \norm{\Lambda^{-1}S\Lambda^{-1} \nuv} + o(2)\Bigg)  \Bigg) , \label{Taylor13ab}
\end{align}
and using the estimates from section \ref{auxillaryestimatessec} in \eqref{Taylor13ab} along with the bounds $$ \norm{\wt{S}}_F<1 \iff  \norm{L_{\Lambda}(S)}_F<1 \implies \norm{{S}}_F< 2 \norm{\Lambda}_F \norm{\wt{S}}_F < 2 \norm{\Lambda}_F , \norm{\nuv} < 1$$ for $(\muv, \Lambda) = (\muv^*, \Lambda^*) $ we will get that:
\begin{align}
   \norm{\big(\nabla_{\muv}\mathbb{E}_{\z \sim \probP(\exp_{(\muv^*,\Lambda^*)}(\nuv, S))} [\nabla_{\w}\ell(\w;\z) ](\nuv,\wt{S}) \big)}_F \le C_{p,\Lambda^*,n} \times \big( \mathbb{E}_{\z \sim \probP(\muv^*, \Lambda^*) }\left[\norm{\nabla_{\w}\ell(\w;\z)}^q\right] \big)^{1/q}    \label{Taylor23ax}
\end{align}
for some universal constant $C_{p,\Lambda^*,n} $ where $p \ge 1 $. 

A similar calculation by replacing $\nabla_{\w}\ell(\w;\z) $ with $ \ell(\w;\z)$ will yield:
\begin{align}
   \norm{\big(\nabla_{\muv}\mathbb{E}_{\z \sim \probP(\exp_{(\muv^*,\Lambda^*)}(\nuv, S))} [\ell(\w;\z) ](\nuv,\wt{S}) \big)} \le C_{p,\Lambda^*,n} \times \big( \mathbb{E}_{\z \sim \probP(\muv^*, \Lambda^*) }\left[\abs{\ell(\w;\z)}^q\right] \big)^{1/q}    \label{Taylor23ay}
\end{align}
for the same universal constant $C_{p,\Lambda^*,n} $ where $p \ge 1 $. 

Next, adopting the numerator layout notation of $ \text{vec} ( \nabla_{\w} f ) = \frac{\partial \text{vec} (f)}{\partial \w^T} $ from \cite{magnus1985matrix} we will have
\begin{align}
    &\quad \text{vec}\big(\nabla_{\Lambda}\mathbb{E}_{\z \sim \probP(\exp_{(\muv,\Lambda)}(\nuv, S))} [\nabla_{\w}\ell(\w;\z) ](\nuv,\wt{S})\big) \nonumber \\
    &= \mathbb{E}_{\z \sim \probP(\muv,\Lambda)}\left[ \text{vec}\left(P_{\Lambda^{-1}}(\zeta\zeta^T) - \Lambda^{-1}S\Lambda^{-1}\zeta\zeta^T - \zeta\zeta^T\Lambda^{-1}S\Lambda^{-1}-P_{\Lambda^{-1}}(\zeta\nuv^T+\nuv\zeta^T) -2I\right)  \otimes  (\nabla_{\w}\ell(\w;\z))^T  \right] \nonumber \\
     &+ \mathbb{E}_{\z \sim \probP(\muv,\Lambda)}\left[\text{vec} \left(\left(2I - P_{\Lambda^{-1}}(\zeta\zeta^T)\right)\left(\frac{1}{2}T(\nuv,S) +\tr(\wt{S}) \right)+\left(P_{\Lambda^{-1}}(\zeta\zeta^T)-2I\right)Q_g +Q_l\right)   \otimes  (\nabla_{\w}\ell(\w;\z))^T   \right] \nonumber \\
         & \hspace{-1cm}+ \mathbb{E}_{\z \sim \probP(\muv,\Lambda)}\left[ \text{vec}\left(\left( \Lambda^{-1}S\Lambda^{-1}\zeta\zeta^T + \zeta\zeta^T\Lambda^{-1}S\Lambda^{-1}+P_{\Lambda^{-1}}(\zeta\nuv^T+\nuv\zeta^T)\right)\left(\frac{1}{2}T(\nuv, S) + \tr(\wt{S}) \right) +o(2) \right)  \otimes  (\nabla_{\w}\ell(\w;\z))^T \right]  \label{Taylor23a}
\end{align}
and by similar norm calculations\footnote{We omit norm computations in \eqref{Taylor23a} due to their tedious nature and also for brevity. The computations are similar to those in \eqref{Taylor13ab}.} as in \eqref{Taylor13ab}, using the estimates from section \ref{auxillaryestimatessec} along with the bounds $$ \norm{\wt{S}}_F<1 \iff  \norm{L_{\Lambda}(S)}_F<1 \implies \norm{{S}}_F< 2 \norm{\Lambda}_F \norm{\wt{S}}_F < 2 \norm{\Lambda}_F , \norm{\nuv} < 1$$ for $(\muv, \Lambda) = (\muv^*, \Lambda^*) $ we will get that
\begin{align}
    \norm{\text{vec}\big(\nabla_{\Lambda}\mathbb{E}_{\z \sim \probP(\exp_{(\muv^*,\Lambda^*)}(\nuv, S))} [\nabla_{\w}\ell(\w;\z) ](\nuv,\wt{S})\big) }_F &\le C'_{p,\Lambda^*,n} \times \big( \mathbb{E}_{\z \sim \probP(\muv^*, \Lambda^*) }\left[\norm{\nabla_{\w}\ell(\w;\z)}^q\right] \big)^{1/q} \label{Taylor23az}
\end{align}
for some universal constant $C'_{p,\Lambda^*,n} $ and $1/p + 1/q = 1$ where $p \ge 1 $.

Last, we need an estimate (in the BW metric) at the point $\Lambda = (I+L_{\Lambda^*}(S))\Lambda^*(I+L_{\Lambda^*}(S))$ , where $ \wt{S} = L_{\Lambda^*}(S) $, for the following norm: 
{
\begin{align*}
    \norm{\text{vec}\big(\nabla_{\Lambda}\mathbb{E}_{\z \sim \probP(\exp_{(\muv^*,\Lambda^*)}(\nuv, S))} [\nabla_{\w}\ell(\w;\z) ](\nuv,\wt{S})\big) }_{\text{BW}} &:= \sqrt{ \sum_{i=1}^d \text{tr}(L_{\Lambda}(V_i) \Lambda L_{\Lambda}(V_i) )  } 
\end{align*}
}
 where $V_i = \nabla_{\Lambda}\mathbb{E}_{\z \sim \probP(\exp_{(\muv^*,\Lambda^*)}(\nuv, S))} \big[ \frac{\partial\ell(\w;\z)}{\partial{\w_i}} \big](\nuv,\wt{S}) \in  T_{\Lambda} \texttt{SPD}_n $. Using the standard estimate \cite{horn2012matrix} for the operator norm of $L_{V} $ where $V \succ \mathbf{0}$ from the Sylvester equation $$ X V + VX = G \iff L_{V}(G) = X \implies \norm{X}_F \le \norm{ L_{V}}_{op} \norm{G}_F  \le \frac{1}{2}\norm{V^{-1}}_2 \norm{G}_F $$  
 gives the estimate
 $$ \norm{L_{\Lambda}(V_i)}_F   \le \frac{1}{2}\norm{\Lambda^{-1}}_2 \norm{V_i}_F $$
 and invoking the trivial upper bound  
 $$ \norm{V_i}_F \le \norm{\text{vec}\big(\nabla_{\Lambda}\mathbb{E}_{\z \sim \probP(\exp_{(\muv^*,\Lambda^*)}(\nuv, S))} [\nabla_{\w}\ell(\w;\z) ](\nuv,\wt{S})\big) }_F \quad \forall \quad 1 \le i \le d \, ,$$
 we get the following estimate:
 {
\begin{align}
    \norm{\text{vec}\big(\nabla_{\Lambda}\mathbb{E}_{\z \sim \probP(\exp_{(\muv^*,\Lambda^*)}(\nuv, S))} [\nabla_{\w}\ell(\w;\z) ](\nuv,\wt{S})\big) }_{\text{BW}} & = \sqrt{ \sum_{i=1}^d \text{tr}(L_{\Lambda}(V_i) \Lambda^{1/2} \Lambda^{1/2} L_{\Lambda}(V_i) )  } \\
    & =  \sqrt{ \sum_{i=1}^d \norm{\Lambda^{1/2} L_{\Lambda}(V_i)}^2_F } \\
    & \le \norm{\Lambda^{1/2}}_F\sqrt{  \sum_{i=1}^d \norm{ L_{\Lambda}(V_i)}^2_F } \\
    & \le \frac{1}{2}\norm{\Lambda^{-1}}_2  \norm{\Lambda^{1/2}}_F\sqrt{  \sum_{i=1}^d \norm{V_i}^2_F } 
        \end{align}
    \begin{align}
    &  \le \frac{\sqrt{d}}{2}\norm{\Lambda^{-1/2} \Lambda^{-1/2}}_F  \norm{\Lambda^{1/2}}_F  \norm{\text{vec}\big(\nabla_{\Lambda}\mathbb{E}_{\z \sim \probP(\exp_{(\muv^*,\Lambda^*)}(\nuv, S))} [\nabla_{\w}\ell(\w;\z) ](\nuv,\wt{S})\big) }_F \\
    &  \underbrace{\le}_{\eqref{Taylor23az}}  \frac{\sqrt{d}}{2}\norm{\Lambda^{-1/2}}^2_F  \norm{\Lambda^{1/2}}_F C'_{p,\Lambda^*,n} \times \big( \mathbb{E}_{\z \sim \probP(\muv^*, \Lambda^*) }\left[\norm{\nabla_{\w}\ell(\w;\z)}^q\right] \big)^{1/q} \\
     & \underbrace{\le}_{\eqref{Taylor13ao}, \eqref{Taylor13bo}}  {\sqrt{d}\norm{(\Lambda^*)^{1/2}}_F} \bigg(\frac{\sqrt{n} }{(1-r \norm{(\Lambda^*)^{-1/2}}_F)}\bigg)^2  C'_{p,\Lambda^*,n} \times \big( \mathbb{E}_{\z \sim \probP(\muv^*, \Lambda^*) }\left[\norm{\nabla_{\w}\ell(\w;\z)}^q\right] \big)^{1/q} 
         \end{align}
    \begin{align}
 \implies \norm{\text{vec}\big(\nabla_{\Lambda}\mathbb{E}_{\z \sim \probP(\exp_{(\muv^*,\Lambda^*)}(\nuv, S))} [\nabla_{\w}\ell(\w;\z) ](\nuv,\wt{S})\big) }_{\text{BW}}   & \le C'_{p,\Lambda^*,n, d, r} \times \big( \mathbb{E}_{\z \sim \probP(\muv^*, \Lambda^*) }\left[\norm{\nabla_{\w}\ell(\w;\z)}^q\right] \big)^{1/q} \label{Taylor23bz}
\end{align}
for some universal constant $C'_{p,\Lambda^*,n,d,r} $ and $1/p + 1/q = 1$ where $p \ge 1 $. \\
A similar calculation by replacing $\nabla_{\w}\ell(\w;\z) $ with $ \ell(\w;\z)$ will yield:
\begin{align}
\norm{\text{vec}\big(\nabla_{\Lambda}\mathbb{E}_{\z \sim \probP(\exp_{(\muv^*,\Lambda^*)}(\nuv, S))} [\ell(\w;\z) ](\nuv,\wt{S})\big) }_{\text{BW}}   & \le \tilde{C}'_{p,\Lambda^*,n, r} \times \big( \mathbb{E}_{\z \sim \probP(\muv^*, \Lambda^*) }\left[\abs{\ell(\w;\z)}^q\right] \big)^{1/q}     \label{Taylor23by}
\end{align}
for the universal constant\footnote{Notice that $ \norm{\text{vec}\big(\nabla_{\Lambda}\mathbb{E}_{\z \sim \probP(\exp_{(\muv^*,\Lambda^*)}(\nuv, S))} [\ell(\w;\z) ](\nuv,\wt{S})\big) }_{\text{BW}}$ estimate will not require a coordinate wise splitting over $d$ dimensions as done in the steps leading to \eqref{Taylor23bz}.} $ \tilde{C}'_{p,\Lambda^*,n, r} = \frac{1}{\sqrt{d}} C'_{p,\Lambda^*,n, d, r} $ where $C'_{p,\Lambda^*,n,d,r} $ is the constant from \eqref{Taylor23bz} , $p \ge 1 $ and $1/p + 1/q = 1$ . 
}
 \subsection{Uniform estimates of \texorpdfstring{$ {   \mathbb{E}_{\z \sim \probP(\muv,\Lambda)} [\abs{\ell(\w ;\z)}^p ]}$}{} for any $p \geq 1$ :}
From the Taylor expansion \eqref{Taylor01} 
\begin{align*}
  g(\exp_{(\muv,\Lambda)}(\nuv,S)) 
    &= g(\muv,\Lambda) \left(1 - \frac{1}{2}T(\nuv,S)-\tr(\wt{S}) +{Q_g} +o(2) \right) 
\end{align*}
 we have that :
\begin{align}
     \mathbb{E}_{\z \sim \probP(\exp_{(\muv,\Lambda)}(\nuv,S))} [\abs{\ell(\w ;\z)}^p ] & =  \int_{\z} \abs{\ell(\w ;\z)}^p g(\exp_{(\muv,\Lambda)}(\nuv,S)) d\z \\
     & =  \int_{\z} \abs{\ell(\w ;\z)}^p g(\muv,\Lambda) \left(1 - \frac{1}{2}T(\nuv,S)-\tr(\wt{S}) +{Q_g} +o(2) \right) d\z \\
     &  \le \mathbb{E}_{\z \sim \probP(\muv,\Lambda)} \bigg[\abs{\ell(\w ;\z)}^p \left(1 + \frac{1}{2} \abs{T(\nuv,S)} + \sqrt{n}\norm{\wt{S}}_F +\abs{Q_g} +o(2) \right) \bigg] \\
     & \hspace{-4.5cm} \underbrace{\le}_{\textbf{H\"older}} \big( \mathbb{E}_{\z \sim \probP(\muv,\Lambda)}\left[\abs{\ell(\w;\z)}^{pq}\right] \big)^{1/q}  \left(1 + \frac{1}{2} (\mathbb{E}_{\z \sim \probP(\muv,\Lambda)} [ \abs{T(\nuv,S)}^p ])^{1/p}  + \sqrt{n}\norm{\wt{S}}_F + (\mathbb{E}_{\z \sim \probP(\muv,\Lambda)} [\abs{Q_g}^p])^{1/p} +o(2) \right) .\label{Taylor010}
\end{align}
Then using the estimates from section \ref{auxillaryestimatessec} along with the bounds $$ \norm{\wt{S}}_F<1 \iff  \norm{L_{\Lambda}(S)}_F<1 \implies \norm{{S}}_F< 2 \norm{\Lambda}_F \norm{\wt{S}}_F < 2 \norm{\Lambda}_F , \norm{\nuv} < 1$$ for $(\muv, \Lambda) = (\muv^*, \Lambda^*) $ in \eqref{Taylor010} we will get that
\begin{align}
    \mathbb{E}_{\z \sim \probP(\exp_{(\muv^*,\Lambda^*)}(\nuv,S))} [\abs{\ell(\w ;\z)}^p ] \le C''_{p,\Lambda^*,n} \times \big( \mathbb{E}_{\z \sim \probP(\muv^*,\Lambda^*)}\left[\abs{\ell(\w;\z)}^{pq}\right] \big)^{1/q}  \label{Taylor13co}
\end{align}
for some universal constant $C''_{p,\Lambda^*,n} $ and $1/p + 1/q = 1$ where $p \ge 1 $ and
\begin{align*}
C''_{p,\Lambda^*,n}  & \lesssim_p n \max\{ 1, r^{2}\} \norm{(\Lambda^*)^{-1/2}}_F^{4} \norm{(\Lambda^*)^{1/2}}_F^{4} \big(\frac{\Gamma(\frac{n+4p}{2}) }{\Gamma(\frac{n}{2}) }\big)^{1/p}
\end{align*}
for $ \norm{\Lambda}_F \ge 1 \, , \, \norm{\Lambda^{-1/2} }_F \ge 1$ from section \ref{auxillaryestimatessec}.

 \subsection{Uniform estimates of \texorpdfstring{$\norm{ \text{Hess}_{(\muv,\Lambda)} \big( \mathcal{L}(\muv,\Lambda; \w) \big)  }_{op}$}{} :}
Recall that 
\[
\text{Hess}_{(\muv,\Lambda)} \big( \mathcal{L}(\muv,\Lambda; \w) \big)  :=\text{Hess}_{(\muv,\Lambda)} \bigg( \mathbb{E}_{\z \sim \probP(\muv,\Lambda)} [\ell(\w ;  \z) ] - \frac{1}{\epsilon} \bigg(  \norm{\muv - \muv^*}^2 + d^2_{BW}(\Lambda ,\Lambda^*)\bigg) \bigg) 
\]
and for any $(\muv,\Lambda) $ in the {product of closed balls of radius $r$} centered at $(\muv^*,\Lambda^*)$, from prior Hessian estimates \eqref{bwhessianestimate1g} we have:
\begin{align}
      \sup_{\nuv \in \mathbb{R}^n, \norm{S}_F < 1} \frac{ \abs{\text{Hess}_{(\muv,\Lambda)}\mathcal{L}((\nuv,  S\Lambda +\Lambda S),(\nuv,S\Lambda +\Lambda S))}}{ \norm{\nuv}^2 + tr(S \Lambda S)} &  \le C_{\Lambda, q, n} (\mathbb{E}_{\z \sim \probP(\muv,\Lambda)} [ |\ell(\w;\z)|^p])^{1/p}  + \frac{2}{\epsilon} 
\end{align}
where $ C_{\Lambda, q, n} \sim_{n,q} \norm{\Lambda}^2_F \norm{\Lambda^{-1/2} }^{6}_F $ from \eqref{bwhessianestimate1d0} , $1/p +1/q =1$ , $q \ge 1$ and $ C_{\Lambda, q, n} $ is uniformly bounded above for any compact set where $\Lambda \succ \mathbf{0} $. Using the estimates \eqref{Taylor13ao}, \eqref{Taylor13bo} and \eqref{Taylor13co} we get that:
\begin{align*}
    C_{\Lambda, q, n} (\mathbb{E}_{\z \sim \probP(\muv,\Lambda)} [ |\ell(\w;\z)|^p])^{1/p}  &\lesssim_{n,q} \norm{\Lambda^{1/2} }^4_F \norm{\Lambda^{-1/2} }^{6}_F   (\mathbb{E}_{\z \sim \probP(\muv,\Lambda)} [ |\ell(\w;\z)|^p])^{1/p} \\
    & \hspace{-2.5cm} \lesssim_{n,q} \norm{\Lambda^{1/2}}^4_F \norm{\Lambda^{-1/2} }^{6}_F   \bigg( C''_{p,\Lambda^*,n} \times \big( \mathbb{E}_{\z \sim \probP(\muv^*,\Lambda^*)}\left[\abs{\ell(\w;\z)}^{pq}\right] \big)^{1/q} \bigg)^{1/p} \\
     & \hspace{-2.5cm} \lesssim_{n,q} (C''_{p,\Lambda^*,n})^{1/p} \times { \norm{ 2 (\Lambda^*)^{1/2}}^4_F} \bigg(\frac{\sqrt{n} }{(1-r \norm{(\Lambda^*)^{-1/2}}_F)}\bigg)^6 \big( \mathbb{E}_{\z \sim \probP(\muv^*,\Lambda^*)}\left[\abs{\ell(\w;\z)}^{pq}\right] \big)^{1/{pq}} \\
     & \hspace{-2.5cm} \le C''_{p,\Lambda^*,n, r} \times \big( \mathbb{E}_{\z \sim \probP(\muv^*,\Lambda^*)}\left[\abs{\ell(\w;\z)}^{pq}\right] \big)^{1/{pq}}
\end{align*}
and\footnote{Observe that the constant $C''_{p,\Lambda^*,n, r} $ omits $q$ dependence since it already depends on $p$ and $1/p +1/q =1$.} hence we get that
\begin{align}
    \sup_{\nuv \in \mathbb{R}^n, \norm{S}_F < 1} \frac{ \abs{\text{Hess}_{(\muv,\Lambda)}\mathcal{L}((\nuv,  S\Lambda +\Lambda S),(\nuv,S\Lambda +\Lambda S))}}{ \norm{\nuv}^2 + tr(S \Lambda S)} &  \le C''_{p,\Lambda^*,n, r} \times \big( \mathbb{E}_{\z \sim \probP(\muv^*,\Lambda^*)}\left[\abs{\ell(\w;\z)}^{pq}\right] \big)^{1/{pq}}  + \frac{2}{\epsilon} \label{Taylor13do}
\end{align}
for some universal constant $C''_{p,\Lambda^*,n,r} $ and $1/p + 1/q = 1$ where $p \ge 1 $.
\\ \\
\subsection{Uniform estimates of \texorpdfstring{$ \norm{   \mathbb{E}_{\z \sim \probP(\muv,\Lambda)} [\nabla^2_{\w}\ell(\w ;\z) ]}_{F}$ , $ \norm{\mathbb{E}_{\z \sim \probP(\muv,\Lambda)} [\nabla_{\w}\ell(\w ;\z) ]}$}{} :}
From the Taylor expansion \eqref{Taylor01} 
\begin{align*}
  g(\exp_{(\muv,\Lambda)}(\nuv,S)) 
    &= g(\muv,\Lambda) \left(1 - \frac{1}{2}T(\nuv,S)-\tr(\wt{S}) +{Q_g} +o(2) \right) 
\end{align*}
 we have that :
\begin{align}
      \norm{\mathbb{E}_{\z \sim \probP(\exp_{(\muv,\Lambda)}(\nuv,S))} [\nabla^2_{\w} {\ell(\w ;\z)} ]}_F & \le  \int_{\z} \norm{\nabla^2_{\w} \ell(\w ;\z)}_F  g(\exp_{(\muv,\Lambda)}(\nuv,S)) d\z \\
     & =  \int_{\z} \norm{\nabla^2_{\w} \ell(\w ;\z)}_F  g(\muv,\Lambda) \left(1 - \frac{1}{2}T(\nuv,S)-\tr(\wt{S}) +{Q_g} +o(2) \right) d\z \\
     &  \le \mathbb{E}_{\z \sim \probP(\muv,\Lambda)} \bigg[\norm{\nabla^2_{\w} \ell(\w ;\z)}_F \left(1 + \frac{1}{2} \abs{T(\nuv,S)} + \sqrt{n}\norm{\wt{S}}_F +\abs{Q_g} +o(2) \right) \bigg] \\
     & \hspace{-6.5cm} \underbrace{\le}_{\textbf{H\"older}} \big( \mathbb{E}_{\z \sim \probP(\muv,\Lambda)}\left[\norm{\nabla^2_{\w} \ell(\w ;\z)}_F^{q}\right] \big)^{1/q}  \left(1 + \frac{1}{2} \mathbb{E}_{\z \sim \probP(\muv,\Lambda)} [ \abs{T(\nuv,S)}^p ]^{1/p}  + \sqrt{n}\norm{\wt{S}}_F + \mathbb{E}_{\z \sim \probP(\muv,\Lambda)} [\abs{Q_g}^p]^{1/p} +o(2) \right) .\label{Taylor010z}
\end{align}
Then using the estimates from section \ref{auxillaryestimatessec} along with the bounds $$ \norm{\wt{S}}_F<1 \iff  \norm{L_{\Lambda}(S)}_F<1 \implies \norm{{S}}_F< 2 \norm{\Lambda}_F \norm{\wt{S}}_F < 2 \norm{\Lambda}_F , \norm{\nuv} < 1$$ for $(\muv, \Lambda) = (\muv^*, \Lambda^*) $ in \eqref{Taylor010z} we will get that
\begin{align}
    \norm{\mathbb{E}_{\z \sim \probP(\exp_{(\muv,\Lambda)}(\nuv,S))} [\nabla^2_{\w} {\ell(\w ;\z)} ]}_F \le C'''_{p,\Lambda^*,n} \times \big( \mathbb{E}_{\z \sim \probP(\muv^*,\Lambda^*)}\left[\norm{\nabla^2_{\w} \ell(\w ;\z)}_F^q \right] \big)^{1/q}  \label{Taylor13eo}
\end{align}
for some universal constant $C'''_{p,\Lambda^*,n} \sim_p C''_{p,\Lambda^*,n} $ and $1/p + 1/q = 1$ where $p \ge 1 $. 

By similar calculations, replacing $ \nabla^2_{\w} {\ell(\w ;\z)}$ with $\nabla_{\w} {\ell(\w ;\z)}$ we also have the estimate:
\begin{align}
    \norm{\mathbb{E}_{\z \sim \probP(\exp_{(\muv,\Lambda)}(\nuv,S))} [\nabla_{\w} {\ell(\w ;\z)} ]} \le C'''_{p,\Lambda^*,n} \times \big( \mathbb{E}_{\z \sim \probP(\muv^*,\Lambda^*)}\left[\norm{\nabla_{\w} \ell(\w ;\z)}^q \right] \big)^{1/q}  \label{Taylor13fo}
\end{align}
for the same universal constant $C'''_{p,\Lambda^*,n} $ as in \eqref{Taylor13eo} and $1/p + 1/q = 1$ where $p \ge 1 $. 

\subsection{Uniform estimates of \texorpdfstring{$ \norm{  \nabla_{(\muv, \Lambda)} \mathcal{L}(\muv,\Lambda; \w) }_{(\muv, \Lambda)}$}{}   :}
By MVT on the manifold $\mathbb{R}^n \times BW$ for the function $\nabla_{(\muv, \Lambda)}  \mathcal{L}(\cdot \, , \, \cdot \, ; \w) $ we write : 
\[
\nabla_{(\muv, \Lambda)} \mathcal{L}(\muv,\Lambda; \w)  = {\Psi}_{0 \to 1} (\nabla_{(\muv, \Lambda)}  \mathcal{L}(\muv^*,\Lambda^* ; \w))  + \int_{t=0}^1 {\Psi}_{t \to 1}\big( \text{Hess}_{(\muv,\Lambda)} \big( \mathcal{L}(\muv_t,\Lambda_t \, ; \, \w) \big) [ c'(t)]  \big) dt
\]
where $c(t) = (\muv_t, \Lambda_t)$ is the geodesic with $c(0) = (\muv^*,\Lambda^*)$ , $c(1) = (\muv,\Lambda)$ , ${\Psi}$ is the parallel transport operator. Then from MVT, $ {\nabla_{(\muv, \Lambda)} \mathcal{L}(\muv^*,\Lambda^* ; \w)} ={\nabla_{(\muv, \Lambda)} \mathbb{E}_{\z \sim \probP(\muv,\Lambda)} [\ell(\w ;\z) ] \vert_{(\muv^*,\Lambda^*)} } $ and the fact that parallel transport has unit operator norm we have the following estimate :
\begin{align}
 \norm{  \nabla_{(\muv, \Lambda)} \mathcal{L}(\muv,\Lambda; \w)  }_{(\muv, \Lambda)} & \le   \norm{{\Psi}_{0 \to 1}}_{op}\norm{  {\nabla_{(\muv, \Lambda)} \mathcal{L}(\muv^*,\Lambda^* ; \w)} }_{(\muv^*, \Lambda^*)}  \nonumber \\ & + \int_{t=0}^1 \norm{{\Psi}_{t \to 1}}_{op} \norm{ \text{Hess}_{(\muv,\Lambda)} \big( \mathcal{L}(\muv_t,\Lambda_t \, ; \, \w) \big)  }_{op}\norm{c'(t)}_{c(t)} dt \\
    \norm{  \nabla_{(\muv, \Lambda)} \mathcal{L}(\muv,\Lambda; \w)  }_{(\muv, \Lambda)} & \underbrace{\le}_{\textbf{Jensen's}}  \norm{  \nabla_{\muv} \mathbb{E}_{\z \sim \probP(\muv,\Lambda)} [\ell(\w ;\z) ] \vert_{\muv^*}} + \norm{  \nabla_{ \Lambda} \mathbb{E}_{\z \sim \probP(\muv,\Lambda)} [\ell(\w ;\z) ] \vert_{\Lambda^*}}_{BW}  \nonumber \\ & + \sup_{t \in [0,1]}\norm{ \text{Hess}_{(\muv,\Lambda)} \big( \mathcal{L}(\muv_t,\Lambda_t \, ; \, \w) \big)  }_{op}\text{dist}((\muv, \Lambda),(\muv^*,\Lambda^*)) \\
    & \hspace{-3cm} \underbrace{\le}_{\eqref{Taylor23ay} , \eqref{Taylor23by}, \eqref{Taylor13do} } C_{p,\Lambda^*,n} \times \big( \mathbb{E}_{\z \sim \probP(\muv^*, \Lambda^*) }\left[\abs{\ell(\w;\z)}^q\right] \big)^{1/q}   + \tilde{C}'_{p,\Lambda^*,n, r} \times \big( \mathbb{E}_{\z \sim \probP(\muv^*, \Lambda^*) }\left[\abs{\ell(\w;\z)}^q\right] \big)^{1/q}  \nonumber \\ & + \bigg(  C''_{p,\Lambda^*,n, r} \times \big( \mathbb{E}_{\z \sim \probP(\muv^*,\Lambda^*)}\left[\abs{\ell(\w;\z)}^{pq}\right] \big)^{1/{pq}}  + \frac{2}{\epsilon} \bigg) \times \sqrt{2} r  \label{Taylor13dot}
\end{align}
for $p \ge 1$, $ 1/p +1/q =1$.
\subsection{Uniform estimates of \texorpdfstring{$ \norm{ d\exp_{(\muv, \Lambda),\v} }_{op}$}{}   :}
From the proof of Proposition \ref{expder}, for the map $\alpha:(-\epsilon,\epsilon)\times [0,1] \to \mathbb{R}^n \times BW$ given by $$  \alpha(s,t) = \exp_{c(s)}(t {\Psi}_{0 \to s} (\v)) = (\muv + s  \nuv ,  (I+ s L_{\Lambda}(S))\Lambda(I+ s L_{\Lambda}(S))) \quad , \quad (\nuv , S ) = t \v  $$  the Jacobi field is given by :
   \[
    J_1(t) = \frac{\partial}{\partial s}\Big|_{s=0} \alpha(s,t)  = \big(  \nuv , S   \big) = t\v \, .
    \]
    Similarly, for the map $\beta :(-\epsilon,\epsilon)\times [0,1] \to \mathbb{R}^n \times BW$ given by 
    $$  \beta(s,t) = \exp_{(\muv, \Lambda)}(t(\v+s\u))  = (\muv + t (  \nuv + s \tilde{\nuv}) ,  (I+ t L_{\Lambda}(S + s \wt{S}))\Lambda(I+ t L_{\Lambda}(S + s \wt{S}))) \quad , \v+ s \u:= (\nuv , S ) + s(\wt{\nuv},\wt{S})   $$ with $ \norm{L_{\Lambda}(S)  }_F < 1 $ , the Jacobi field is given by :
    \begin{align*}
        J_2(t) = \frac{\partial}{\partial s}\Big|_{s=0} \beta(s,t) &= \bigg (t \tilde{\nuv} , \frac{\partial}{\partial s}\Big|_{s=0}\big( I + t (S + s \wt{S}) + t^2  L_{\Lambda}(S + s \wt{S})\Lambda  L_{\Lambda}(S + s \wt{S}) \big) \bigg) \\
        &= \bigg (t \tilde{\nuv} , t \wt{S} + t^2 (L_{\Lambda}( \wt{S})\Lambda  L_{\Lambda}(S)  + L_{\Lambda}(S)\Lambda  L_{\Lambda}( \wt{S}))  \bigg)
    \end{align*}
Then $ d\exp_{(\muv, \Lambda),\v}  = (d_{(\muv, \Lambda)}\exp_{(\muv, \Lambda), \v}  ; d_{\v}\exp_{(\muv, \Lambda) , \v}  )$ and
 \[
    d_{(\muv, \Lambda)}\exp_{(\muv, \Lambda), \v} ({\nuv},{S}) = J_1(1) \quad , \quad  d_{\v}\exp_{(\muv, \Lambda) , \v} (\wt{\nuv},\wt{S}) =  J_2(1).
    \]
    Hence the following uniform estimates hold:
    \begin{align}
       \norm{ d_{(\muv, \Lambda)}\exp_{(\muv, \Lambda), \v} }_{op} = \sup_{{(\nuv,S )}} \frac{ \norm{d_{(\muv, \Lambda)}\exp_{(\muv, \Lambda), \v} ({\nuv},{S})}_{(\muv, \Lambda)}}{\norm{(\nuv,S )}_{(\muv, \Lambda)}} =\sup_{{(\nuv,S )}} \frac{\norm{J_1(1)}_{(\muv, \Lambda)}}{\norm{(\nuv,S )}_{(\muv, \Lambda)}} = 1  \label{Taylor13dota}
    \end{align}
        \begin{align}
       \norm{ d_{\v}\exp_{(\muv, \Lambda), \v} }_{op} = \sup_{{(\wt\nuv,\wt S )}} \frac{\norm{J_2(1)}_{(\muv, \Lambda)}}{\norm{(\wt\nuv,\wt S )}_{(\muv, \Lambda)}} &= \sup_{{(\wt\nuv,\wt S )}} \frac{1}{\norm{(\wt\nuv,\wt S )}_{(\muv, \Lambda)}} \norm{\bigg ( \tilde{\nuv} ,  \wt{S} + \underbrace{ (L_{\Lambda}( \wt{S})\Lambda  L_{\Lambda}(S)  + L_{\Lambda}(S)\Lambda  L_{\Lambda}( \wt{S}))}_{= V}  \bigg)}_{(\muv, \Lambda)} \\
       & \le 1 + \sup_{{(\wt\nuv,\wt S )}} \frac{\sqrt{  \text{tr}(L_{\Lambda}(V) \Lambda L_{\Lambda}(V)) }}{\norm{(\wt\nuv,\wt S )}_{(\muv, \Lambda)}} \\
        & = 1 + \sup_{{(\wt\nuv,\wt S )}} \frac{\norm{L_{\Lambda}(V) (\Lambda)^{1/2} }_F }{\sqrt{ \norm{\wt{\nuv}}^2 + \text{tr}(L_{\Lambda}(\wt S) \Lambda L_{\Lambda}(\wt S)) } } \\
        & \le 1 + \sup_{{\wt S }} \frac{\norm{L_{\Lambda}(V) (\Lambda)^{1/2} }_F }{\norm{L_{\Lambda}(\wt S) (\Lambda)^{1/2} }_F } \le 1 + \sup_{{\wt S }} \frac{\norm{L_{\Lambda}(V)  }_F \norm{ (\Lambda)^{1/2} }_F }{\norm{L_{\Lambda}(\wt S)  }_F \norm{(\Lambda)^{-1/2} }^{-1}_F }
    \end{align}
    Then using the following standard estimate from Sylvester equation \cite{horn2012matrix} for any $W$ and $\Lambda \succ \mathbf{0}$
    $$  \norm{L_{\Lambda}(W)  }_F \le  \frac{1}{2}\norm{\Lambda^{-1}}_F \norm{W}_F  $$  in the last step along with $ \norm{L_{\Lambda}(S)  }_F < 1 $ we get:
    \begin{align}
        \norm{ d_{\v}\exp_{(\muv, \Lambda), \v} }_{op} & \le  1 + \sup_{{\wt S }} \frac{2\norm{L_{\Lambda}(S)  }_F \norm{L_{\Lambda}(\wt S)  }_F \norm{ (\Lambda) }_F\norm{ (\Lambda)^{1/2} }_F \norm{ (\Lambda)^{-1} }_F }{2 \norm{L_{\Lambda}(\wt S)  }_F \norm{(\Lambda)^{-1/2} }^{-1}_F } \\
        & \le  1 + {  \norm{ (\Lambda)^{1/2} }^3_F \norm{ (\Lambda)^{-1/2} }^3_F } \\
         & \underbrace{\le}_{\eqref{Taylor13ao}, \eqref{Taylor13bo}} { 1 +  8  \bigg( \frac{\sqrt{n} \norm{(\Lambda^*)^{-1}}_F}{(1-r \norm{(\Lambda^*)^{-1/2}}_F)}\bigg)^3 \norm{ (\Lambda^*)^{1/2} }^3_F  }  \, . \label{Taylor13dotb}
    \end{align}
\subsection{Verification of uniform Lipschitz estimates} \label{uniformlipschitzfinalappendix}
On the {product of closed balls of radius $r = (1- \gamma) \norm{(\Lambda^*)^{-1}}_2^{-1/2}$ with $\gamma \in (0,1)$}, centered at $(\muv^*,\Lambda^*)$, and any given compact set $\mathcal{K} \Subset \mathbb{R}^d$ with $\w \in \mathcal{K}$, the following hold under assumptions \textbf{A1-A2} :
\begin{itemize}
    \item The function $\mathcal{L}(\muv,\Lambda; \w) $ is uniformly Lipschitz continuous with respect to $(\muv,\Lambda) \in \bar{\mathcal{B}}_r(\muv^*) \times \bar{\mathcal{B}}_r(\Lambda^*)$ for any $\w \in \mathcal{K}$ from \eqref{Taylor13dot} and is uniformly Lipschitz continuous in $\w$ for any $(\muv,\Lambda) \in \bar{\mathcal{B}}_r(\muv^*) \times \bar{\mathcal{B}}_r(\Lambda^*)$ from \eqref{Taylor13fo}
    \item The function $  \nabla_{\w} \mathcal{L}(\muv,\Lambda; \w) $ is uniformly Lipschitz continuous with respect to $(\muv,\Lambda) \in \bar{\mathcal{B}}_r(\muv^*) \times \bar{\mathcal{B}}_r(\Lambda^*)$ for any $\w \in \mathcal{K}$ from \eqref{Taylor23ax}, \eqref{Taylor23bz} and is uniformly Lipschitz continuous in $\w \in \mathcal{K}$ for any $(\muv,\Lambda) \in \bar{\mathcal{B}}_r(\muv^*) \times \bar{\mathcal{B}}_r(\Lambda^*)$ from \eqref{Taylor13eo}
    \item The function $  \nabla_{(\muv,\Lambda)} \mathcal{L}(\muv,\Lambda; \w) $ is uniformly Lipschitz continuous in $\w \in \mathcal{K}$ for any $(\muv,\Lambda) \in \bar{\mathcal{B}}_r(\muv^*) \times \bar{\mathcal{B}}_r(\Lambda^*)$ from \eqref{Taylor23ax}, \eqref{Taylor23bz} and is uniformly Lipschitz continuous with respect to $(\muv,\Lambda) \in \bar{\mathcal{B}}_r(\muv^*) \times \bar{\mathcal{B}}_r(\Lambda^*)$ for any $\w \in \mathcal{K}$ from \eqref{Taylor13do}
      \item The functions $ \nabla_{\w} \nabla_{(\muv,\Lambda)} \mathcal{L}(\muv,\Lambda; \w) $ , $  \nabla_{(\muv,\Lambda)} \mathcal{L}(\muv,\Lambda; \w) $ have bounded operator norm uniformly for any $\w \in \mathcal{K}$ and any $(\muv,\Lambda) \in \bar{\mathcal{B}}_r(\muv^*) \times \bar{\mathcal{B}}_r(\Lambda^*)$ from \eqref{Taylor23ax}, \eqref{Taylor23bz} , \eqref{Taylor13dot} 
      \item The function $ \exp_{(\muv, \Lambda)}(\v) $ is uniformly Lipschitz continuous for any $\v \in T_{(\muv, \Lambda)}(\mathbb{R}^n \times BW)$ with $\exp_{(\muv, \Lambda)}(\v) \in \bar{\mathcal{B}}_r(\muv^*) \times \bar{\mathcal{B}}_r(\Lambda^*)$ from \eqref{Taylor13dota}, \eqref{Taylor13dotb}
\end{itemize}
As a consequence we can define a uniform Lipschitz parameter $L := L_{n,d,p,\Lambda^*, r, \epsilon}$ by taking the max over the uniform Lipschitz constants from \eqref{Taylor13dot}, \eqref{Taylor13fo}, \eqref{Taylor23ax}, \eqref{Taylor23bz}, \eqref{Taylor13eo}, \eqref{Taylor13do}, \eqref{Taylor13dota}, \eqref{Taylor13dotb} and the uniform operator norms from \eqref{Taylor23ax}, \eqref{Taylor23bz} , \eqref{Taylor13dot}.

\subsection{Moment computations for the terms \texorpdfstring{$ T(\nuv,S), \, Q_e, \, Q_g, \, P_{\Lambda^{-1}}(\zeta\nuv^T+\nuv\zeta^T), \,  P_{\Lambda^{-1}}(\zeta\zeta^T), $ and $ \Lambda^{-1}S\Lambda^{-1}\zeta\zeta^T + \zeta\zeta^T\Lambda^{-1}S\Lambda^{-1} $}{} }\label{auxillaryestimatessec}
For the random vector $ \z \sim \mathcal{N}(\mathbf{0}_n, I)$ we have that $\E[\norm{\z}^{q}] = 2^{q/2} \frac{\Gamma(\frac{n+q}{2}) }{\Gamma(\frac{n}{2}) } $ for any $q>-n$ (see \cite{vershynin2018high}). Thus\footnote{In the following calculations we suppress the notation $\probP(\muv,\Lambda) $ with $\probP$ for brevity.} using the facts that $ \Lambda^{-1/2}\zeta \sim \mathcal{N}(\mathbf{0}_n, I)$ and $ \norm{\wt{S}}_F<1 \iff  \norm{L_{\Lambda}(S)}_F<1 \implies \norm{{S}}_F< 2 \norm{\Lambda}_F \norm{\wt{S}}_F < 2 \norm{\Lambda}_F , \norm{\nuv} \le r$, we have the following estimates for any $ p \ge 1$ :
\begin{align}
    \mathbb{E}_{\z \sim \probP}\left[ \norm{\Lambda^{-1/2}\zeta}^p \right] & = 2^{p/2} \frac{\Gamma(\frac{n+p}{2}) }{\Gamma(\frac{n}{2}) } .
\end{align}
\begin{align}
    \mathbb{E}_{\z \sim \probP}\left[ \abs{T(\nuv,S)}^p \right] & =    \mathbb{E}_{\z \sim \probP}\left[ \abs{\langle \zeta, 2\Lambda^{-1}\nuv + \Lambda^{-1}S\Lambda^{-1}\zeta\rangle}^p \right] \\
    & \underbrace{\le}_{\textbf{Jensen's}} 2^{p-1}\mathbb{E}_{\z \sim \probP}\left[ \abs{\langle \zeta, 2\Lambda^{-1}\nuv \rangle}^p + \abs{\langle \zeta,\Lambda^{-1}S\Lambda^{-1}\zeta\rangle}^p \right] \\
    & \le 2^{2p-1} \norm{\Lambda^{-1/2}\nuv}^p\mathbb{E}_{\z \sim \probP}\left[ \norm{\Lambda^{-1/2}\zeta}^p \right] + 2^{p-1}  \norm{\Lambda^{-1/2} S \Lambda^{-1/2}}^p\mathbb{E}_{\z \sim \probP} \left[  \norm{\Lambda^{-1/2}\zeta}^{2p} \right] \\
        & = 2^{ \frac{5p}{2}-1} \norm{\Lambda^{-1/2}\nuv}^p \frac{\Gamma(\frac{n+p}{2}) }{\Gamma(\frac{n}{2}) }  + 2^{2p-1}  \norm{\Lambda^{-1/2} S \Lambda^{-1/2}}^p  \frac{\Gamma(\frac{n+2p}{2}) }{\Gamma(\frac{n}{2}) } \\
        & \le 2^{ \frac{5p}{2}-1} \norm{\Lambda^{-1/2}}_F^p \norm{\nuv}^p \frac{\Gamma(\frac{n+p}{2}) }{\Gamma(\frac{n}{2}) }  + 2^{2p-1} \norm{\Lambda^{-1/2}}_F^{2p} \norm{ S }_F^p  \frac{\Gamma(\frac{n+2p}{2}) }{\Gamma(\frac{n}{2}) } \\
        & \lesssim_p \max\{ 1, r^p\} \norm{\Lambda^{-1/2}}_F^{2p} \norm{\Lambda}_F^{p} \frac{\Gamma(\frac{n+2p}{2}) }{\Gamma(\frac{n}{2}) }
\end{align}
for $ \norm{\Lambda}_F \ge 1 \, , \, \norm{\Lambda^{-1/2} }_F \ge 1$.
\begin{align}
    \mathbb{E}_{\z \sim \probP}\left[ \norm{Q_e}_F^p \right] & =   \mathbb{E}_{\z \sim \probP}\left[ \norm{(\wt{S}^2\Lambda^{-1} + \Lambda^{-1}\wt{S}^2 + \wt{S}\Lambda^{-1}\wt{S})}_F^p \right]  \\
    & = \norm{(\wt{S}^2\Lambda^{-1} + \Lambda^{-1}\wt{S}^2 + \wt{S}\Lambda^{-1}\wt{S})}_F^p =  \norm{Q_e}_F^p   \quad, \quad \wt{S} = L_{\Lambda}(S) \\
    & \hspace{-1cm} \underbrace{\le}_{\textbf{Jensen's}}  3^{p-1}\norm{\wt{S}^2}_F^p \norm{\Lambda^{-1} }_F^p\le 3^{p-1} \norm{\Lambda^{-1/2} }_F^{2p} .
\end{align}
\begin{align}
     \mathbb{E}_{\z \sim \probP}\left[ \abs{ \cq(\nuv,S)}^p \right]    & =  \mathbb{E}_{\z \sim \probP}\left[ \abs{2\langle \zeta, \Lambda^{-1}S\Lambda^{-1}\nuv\rangle + \langle\nuv, \Lambda^{-1}\nuv\rangle + \langle \zeta, Q_e\zeta\rangle}^p \right] \\
      & \hspace{-1cm} \underbrace{\le}_{\textbf{Jensen's}} 3^{p-1}\mathbb{E}_{\z \sim \probP}\left[ 2^p\abs{\langle \zeta, \Lambda^{-1}S\Lambda^{-1}\nuv\rangle}^p + \abs{\langle\nuv, \Lambda^{-1}\nuv\rangle}^p + \abs{\langle \Lambda^{-1/2} \zeta, \Lambda^{1/2} Q_e \Lambda^{1/2}\Lambda^{-1/2}\zeta\rangle}^p \right] \\
      & \le 2 \cdot 6^{p-1} \norm{\Lambda^{-1/2}S\Lambda^{-1}\nuv}^p\mathbb{E}_{\z \sim \probP}\left[ \norm{\Lambda^{-1/2}\zeta}^p \right]  + 3^{p-1}\abs{\langle\nuv, \Lambda^{-1}\nuv\rangle}^p \nonumber \\ & + 3^{p-1} \norm{\Lambda^{1/2} Q_e \Lambda^{1/2}}^p\mathbb{E}_{\z \sim \probP}\left[\norm{ \Lambda^{-1/2}\zeta}^{2p} \right] \\
       & \le 2^{p/2 + 1} \cdot 6^{p-1} \norm{\Lambda^{-1/2}}_F^p \norm{\Lambda^{-1}}_F^p\norm{S}_F^p \norm{\nuv}^p \frac{\Gamma(\frac{n+p}{2}) }{\Gamma(\frac{n}{2}) }   + 3^{p-1} \norm{\Lambda^{-1}}_F^p \norm{\nuv}^{2p}   \nonumber \\ & + 2 \cdot 6^{p-1} \norm{\Lambda^{1/2}}_F^{2p}  \norm{Q_e}_F^p  \frac{\Gamma(\frac{n+2p}{2}) }{\Gamma(\frac{n}{2}) } \\
       & \lesssim_{p} \max\{1,r^{2p}\} \frac{\Gamma(\frac{n+2p}{2}) }{\Gamma(\frac{n}{2}) }  \norm{\Lambda^{-1/2}}_F^{3p} \norm{\Lambda^{1/2}}_F^{2p}
\end{align}
for $ \norm{\Lambda}_F \ge 1 \, , \, \norm{\Lambda^{-1/2} }_F \ge 1$.
\begin{align}
    \mathbb{E}_{\z \sim \probP}\left[ \abs{Q_g}^p \right] & =       \mathbb{E}_{\z \sim \probP}\left[ \abs{\frac{1}{2}(\tr(\wt{S})^2+\tr(\wt{S}^2))+ \frac{1}{8}T^2(\nuv,S)-\frac{1}{2}\cq(\nuv,S) +\frac{1}{2}T(\nuv,S)\tr(\wt{S})}^p \right] \\
     & \hspace{-1cm} \underbrace{\le}_{\textbf{Jensen's}} 5^{p-1}  \mathbb{E}_{\z \sim \probP}\left[ \frac{1}{2^p}\abs{tr(\wt{S})^2}^p+ \frac{1}{2^p}\abs{\tr(\wt{S}^2)}^p + \frac{1}{8^p}\abs{T^2(\nuv,S)}^p + \frac{1}{2^p}\abs{\cq(\nuv,S)}^p +\frac{1}{2^p}\abs{T(\nuv,S)\tr(\wt{S})}^p \right] 
     \end{align}
     \begin{align}
     & =  5^{p-1} \bigg(\frac{1}{2^p}\abs{tr(\wt{S})^2}^p+ \frac{1}{2^p}\abs{\tr(\wt{S}^2)}^p \bigg) +   \frac{5^{p-1}}{8^p} \mathbb{E}_{\z \sim \probP}\left[  \abs{T(\nuv,S)}^{2p} \right] +  \frac{5^{p-1}}{2^p} \mathbb{E}_{\z \sim \probP}\left[ \abs{\cq(\nuv,S)}^p  \right] \nonumber \\
     & +  \frac{5^{p-1}}{2^p} \abs{\tr(\wt{S})}^p \mathbb{E}_{\z \sim \probP}\left[   \abs{T(\nuv,S)}^p \right] \\
          & \le  5^{p-1} \norm{\wt{S}}_F^{2p} \frac{n^p + 1}{2^p}  +   \frac{5^{p-1}}{8^p} \mathbb{E}_{\z \sim \probP}\left[  \abs{T(\nuv,S)}^{2p} \right] +  \frac{5^{p-1}}{2^p} \mathbb{E}_{\z \sim \probP}\left[ \abs{\cq(\nuv,S)}^p  \right] \nonumber \\
     & +  \frac{5^{p-1} n^{p/2}}{2^p} \norm{\wt{S}}_F^{p}  \mathbb{E}_{\z \sim \probP}\left[   \abs{T(\nuv,S)}^p \right] \\
     & \lesssim_p n^p \max\{ 1, r^{2p}\} \norm{\Lambda^{-1/2}}_F^{4p} \norm{\Lambda^{1/2}}_F^{4p} \frac{\Gamma(\frac{n+4p}{2}) }{\Gamma(\frac{n}{2}) }
\end{align}
for $ \norm{\Lambda}_F \ge 1 \, , \, \norm{\Lambda^{-1/2} }_F \ge 1$.
\begin{align}
    \mathbb{E}_{\z \sim \probP}\left[ \norm{P_{\Lambda^{-1}}(\zeta\nuv^T+\nuv\zeta^T)}_F^p \right] & \underbrace{\le}_{\textbf{Jensen's}}      2^{p} \norm{\Lambda^{-1/2}}^p_F\mathbb{E}_{\z \sim \probP}\left[ \norm{(\Lambda^{-1/2}\zeta\nuv^T+\Lambda^{-1/2}\nuv\zeta^T)}^p_F \right] \\
    & \underbrace{\le}_{\textbf{Jensen's}}    \frac{1}{2} \cdot  4^{p} \norm{\Lambda^{-1/2}}^p_F\mathbb{E}_{\z \sim \probP}\left[ \norm{\Lambda^{-1/2}\zeta\nuv^T}^p_F + \norm{\nuv ( \Lambda^{-1/2}\zeta)^T}^p_F\right] \\
    & \le 4^{p} \norm{\Lambda^{-1/2}}^p_F \norm{\nuv}\mathbb{E}_{\z \sim \probP}\left[ \norm{\Lambda^{-1/2}\zeta}^p_F \right] \\
    & = 4^{p} \norm{\Lambda^{-1/2}}^p_F \norm{\nuv} 2^{p/2} \frac{\Gamma(\frac{n+p}{2}) }{\Gamma(\frac{n}{2}) } .
\end{align}
\begin{align}
    \mathbb{E}_{\z \sim \probP}\left[ \norm{P_{\Lambda^{-1}}(\zeta\zeta^T)}_F^p \right] & \underbrace{\le}_{\textbf{Jensen's}}      2^{p-1} \mathbb{E}_{\z \sim \probP}\left[ \norm{\Lambda^{-1/2} \Lambda^{-1/2}\zeta\zeta^T \Lambda^{-1/2} \Lambda^{1/2}}^p_F + \norm{\Lambda^{1/2} \Lambda^{-1/2}\zeta\zeta^T\Lambda^{-1/2} \Lambda^{-1/2}}^p_F\right]\\
    & \le 2^{p} \norm{\Lambda^{-1/2}}^p_F \norm{\Lambda^{1/2}}^p_F \mathbb{E}_{\z \sim \probP}\left[ \norm{\Lambda^{-1/2}\zeta}^{2p}_F \right] \\
    & = 4^{p} \norm{\Lambda^{-1/2}}^p_F \norm{\Lambda^{1/2}}^p_F \frac{\Gamma(\frac{n+2p}{2}) }{\Gamma(\frac{n}{2}) } .
\end{align}
\begin{align}
   \mathbb{E}_{\z \sim \probP}\left[ \norm{\Lambda^{-1}S\Lambda^{-1}\zeta\zeta^T + \zeta\zeta^T\Lambda^{-1}S\Lambda^{-1}}_F^p \right] & \underbrace{\le}_{\textbf{Jensen's}} 2^{p-1}      \mathbb{E}_{\z \sim \probP}\left[ \norm{\Lambda^{-1}S\Lambda^{-1}\zeta\zeta^T}_F^p + \norm{\zeta\zeta^T\Lambda^{-1}S\Lambda^{-1}}_F^p \right]  \\
   & =  2^{p}   \,  \mathbb{E}_{\z \sim \probP}\left[ \norm{\Lambda^{-1}S\Lambda^{-1}\zeta\zeta^T}_F^p \right]  \\
   & = 2^{p}   \,  \mathbb{E}_{\z \sim \probP}\left[ \norm{\Lambda^{-1}S\Lambda^{-1/2} \Lambda^{-1/2}\zeta \Lambda^{1/2} \Lambda^{-1/2}\zeta^T}_F^p \right] \\
   & \le 2^{p} \norm{\Lambda^{-1}S\Lambda^{-1/2}}_F^p  \,  \mathbb{E}_{\z \sim \probP}\left[ \norm{ \Lambda^{-1/2}\zeta}_F^{2p} \norm{\Lambda^{1/2}}_F^p \right] \\
   & \le 2^{p} \norm{\Lambda^{-1/2}}_F^{3p}  \norm{\Lambda^{1/2}}_F^p \norm{S}^p_F   \,  \mathbb{E}_{\z \sim \probP}\left[ \norm{ \Lambda^{-1/2}\zeta}_F^{2p} \right] \\
   & \le 8^{p} \norm{\Lambda^{-1/2}}_F^{3p}  \norm{\Lambda^{1/2}}_F^{3p}   \, \frac{\Gamma(\frac{n+2p}{2}) }{\Gamma(\frac{n}{2}) } \, .
\end{align}

\section{Examples validating assumptions}

\subsection{On geodesic nonconvexity of the statistical risk with respect to probability measure}\label{appendixcountereg1}

Let $z \sim  \mathcal{N}(m,1)$ where $\mathcal{N}(m,1)$ is Gaussian distribution on $\mathbb{R}$ with mean $m$ and unit variance. Consider the loss function $f(w,z, y) := (wz^2-y)^2$ with quartic dependence on $z$ where $w$ is the parameter and $y$ is the label.
The population risk for $f$ is
\[
  \mathcal{R}(m,w) \;=\; \mathbb E \big[(wz^2-y)^2\big]
  \;=\; w^2\,\mathbb E[z^4] \;-\; 2wy\,\mathbb E[z^2] \;+\; y^2 .
\]
Using the Gaussian moments (variance $1$)
\[
  \mathbb E[z^2] = m^2+1, \qquad \mathbb E[z^4] = m^4+6 m^2+3,
\]
we obtain the closed form
\begin{equation}
  {\,\mathcal{R}(m,w) \;=\; w^2\big(m^4+6m^2+3\big) \;-\; 2wy\big(m^2+1\big) \;+\; y^2\,}
  \label{eq:risk}
\end{equation}

Collecting \eqref{eq:risk} by powers of $m$,
\[
 \mathcal{R}(m,w) \;=\; w^2m^4 \;+\; \big(6w^2-2wy\big)m^2 \;+\; \big(3w^2-2wy+y^2\big),
\]
is a quartic $am^4+bm^2+c$ with $a=w^2\ge 0$ and $b = 2w(3w-y)$. Its second
derivative in $m$ is
\[
  \mathcal{R}''(m) \;=\; 12w^2m^2 \;+\; 2b, \qquad
  \mathcal{R}''(0) \;=\; 4w(3w-y).
\]
For $w>0$, the map $m\mapsto \mathcal{R}(m,w)$ is convex if $y\le 3w$, and is a
strict double well \textup{(}nonconvex, with a spurious critical point at
$m=0$\textup{)} if $y>3w$. In the nonconvex regime the two global
minimizers are located at
\[
  m^* \;=\; \pm\sqrt{\frac{y-3w}{w}}.
\]
Further adding any sufficiently small quadratic growth ($\epsilon \ll 1$) to the loss $\mathcal{R}$ as $ \mathcal{R}(m,w) + \frac{\epsilon}{2} |m - m^*|$ will still preserve nonconvexity of $\mathcal{R}$ in $m$ by continuity.
\subsection{A solvable toy model: critical manifold and its Lipschitz stability}\label{appendixcountereg2}

Let $z$ be a random variable with mean $\mathbb{E}[z] = m$ and
variance $\operatorname{Var}(z) = s^2$, and let $(x_1,x_2)\in\mathbb{R}^2$ be
non-random. Define
\[
f(x_1,x_2;m,s^2) \;:=\; (x_1-m)^2 + s^2 .
\]

Then
\begin{align*}
f(x_1,x_2;m,s^2)
&= x_1^2 + m^2 - 2x_1m + s^2 \\
&= \mathbb{E}[x_1^2] + \big(\mathbb{E}[z]\big)^2 - 2\,\mathbb{E}[x_1 z] + s^2 \\
&= \mathbb{E}[x_1^2] + \Big(\mathbb{E}[z^2] - \operatorname{Var}(z)\Big)
   - 2\,\mathbb{E}[x_1 z] + s^2 \\
&= \mathbb{E}\big[(x_1-z)^2\big] \;-\; \operatorname{Var}(z) \;+\; s^2 \\
&= \mathbb{E}\big[(x_1-z)^2\big],
\end{align*}
so
\[
f(x_1,x_2;m,s^2) \;=\; \mathbb{E}\big[(x_1-z)^2\big] \, .
\]

The critical set of $f$ is
\[
\mathcal{S}^*(m) \;=\; \big\{(x_1,x_2)\in\mathbb{R}^2 : x_1 = m\big\}
\;=\; m\times\mathbb{R},
\]
a full affine line in $\mathbb{R}^2$, not an isolated point.

For any
$m_0,m_1\in\mathbb{R}$,
\[
\text{dist}_H\big(\mathcal{S}^*(m_0),\,\mathcal{S}^*(m_1)\big) \;=\; |m_0-m_1|,
\]
where $\text{dist}_H(\cdot, \cdot)$ denotes Hausdorff distance in $\mathbb{R}^2$.

Let $f_* := \inf_{(x_1,x_2)} f(x_1,x_2;m,s^2) = s^2$ which is attained exactly on $\mathcal{S}^*(m)=m\times\mathbb{R}$, so
\[
f(x_1,x_2;m,s^2) - f_* = (x_1-m)^2+s^2 - s^2 = (x_1-m)^2.
\]
Further,
\[
\|\nabla f(x_1,x_2;m,s^2)\|^2
= \big(2(x_1-m)\big)^2 + 0^2
= 4(x_1-m)^2.
\]
Hence we have:
\[
\|\nabla f(x_1,x_2;m,s^2)\|^2 \;=\; 4(x_1-m)^2 \;=\; 4\big(f(x_1,x_2;m,s^2)-f_*\big),
\]
which implies the Polyak--{\L}ojasiewicz inequality ($\beta = 2$) :
\[
\|\nabla f(x_1,x_2;m,s^2)\|^2 \;\ge\; 2\mu_{PL}\big(f(x_1,x_2;m,s^2)-f_*\big)
\]
So $f$ satisfies the PL inequality globally on $\mathbb{R}^2$, with sharp constant $\mu_{PL}=2$.

\end{document}